\renewcommand*{\eqref}[1]{%
	\hyperref[{#1}]{\textup{\tagform@{\ref*{#1}}}}%
}
\setlist[enumerate,1]{label={\textup{(\roman*)}}}
\theoremstyle{plain}
\newtheorem{theorem}{Theorem}[section]
\newtheorem{lemma}[theorem]{Lemma}
\newtheorem{corollary}[theorem]{Corollary}
\newtheorem{proposition}[theorem]{Proposition}
\newtheorem{convention}[theorem]{Convention}
\theoremstyle{definition}
\newtheorem{remark}[theorem]{Remark}
\newtheorem{definition}[theorem]{Definition}
\newtheorem{example}[theorem]{Example}
\numberwithin{equation}{section}
\def\L1loc{L^1_{\text{loc}}}
\begin{document}

\title{On the properties of rearrangement-invariant quasi-Banach function spaces}
\author{Anna Musilov{\' a}, Ale\v s Nekvinda, Dalimil Pe\v sa, and Hana Tur{\v c}inov{\' a}}

\address{Anna Musilov{\' a}, Department of Mathematical Analysis, Faculty of Mathematics and
	Physics, Charles University, Sokolovsk\'a~83,
	186~75 Praha~8, Czech Republic}
\email{anna.musilova@mff.cuni.cz}
\urladdr{0009-0005-2600-5082}

\address{Ale\v s Nekvinda, Department of Mathematics, Faculty of Civil Engineereng,
	Czech Technical University, Th\' akurova 7, 16629 Prague~6,
	Czech Republic}
\email{ales.nekvinda@cvut.cz}
\urladdr{0000-0001-6303-5882}

\address{Dalimil Pe{\v s}a, Department of Mathematical Analysis, Faculty of Mathematics and
	Physics, Charles University, Sokolovsk\'a~83,
	186~75 Praha~8, Czech Republic}
\email{pesa@karlin.mff.cuni.cz}
\urladdr{0000-0001-6638-0913}

\address{Hana Tur{\v c}inov{\' a}, Department of Mathematical Analysis, Faculty of Mathematics and Physics, Charles University, Sokolovsk{\' a} 83, 186 75 Praha 8, Czech Republic
	and Department of Mathematics, Faculty of Electrical Engineering, Czech Technical University in Prague, Technick{\' a} 2, 166 27 Praha 6, Czech Republic}
\email{turcinova@karlin.mff.cuni.cz}
\urladdr{0000-0002-5424-9413}

\subjclass[2010]{46E30, 46A16}
\keywords{quasi-Banach function space, rearrangement-invariant space, Luxemburg representation theorem, fundamental function, endpoint spaces}

\thanks{The first and fourth authors have been supported by the Primus research programme PRIMUS/21/SCI/002 of Charles University. The second, third, and fourth authors have been supported by the grant 23-04720S of the Czech Science Foundation. The third author has been further supported by the Charles University Grant Agency, project No.~234522, the grant 24-10505S of the Czech Science Foundation, and Charles University Research program No.~UNCE24/SCI/005. The fourth author has been further supported by the grant 21-01976S of the Czech Science Foundation and the Charles University Research program No.~UNCE/SCI/023.}

\begin{abstract}
This paper explores some important aspects of the theory of rearrangement-invariant quasi-Banach function spaces. We focus on two main topics. Firstly, we prove an analogue of the Luxemburg representation theorem for rearrangement-invariant quasi-Banach function spaces over resonant measure spaces. Secondly, we develop the theory of fundamental functions and endpoint spaces.
\end{abstract}

\date{\today}

\maketitle

\makeatletter
   \providecommand\@dotsep{2}
\makeatother

\section{Introduction}
The class of quasi-Banach function spaces has attracted a decent amount of attraction in the recent years as evidenced for example by the papers \cite{Baena-MiretGogatishvili22}, \cite{CaetanoGogatishvili16}, \cite{EdmundsKerman00}, \cite{GuoZhao20}, \cite{HaleNaibo23}, \cite{Haroske07Notes}, \cite{Ho16}, \cite{Ho20}, \cite{Ho20-Interpolation}, \cite{Ho23}, \cite{Kolwicz18}, \cite{KolwiczLesnik19}, \cite{LoristNieraeth23}, \cite{MalyL12}, \cite{MalyL16}, \cite{NekvindaPesa20}, \cite{Nieraeth23}, \cite{PanYang23}, \cite{Pesa22}, \cite{SawanoHo17}, \cite{SunYang22}, \cite{WangYang20}, and \cite{WangYang23} (see Introduction of \cite{NekvindaPesa20} for a more detailed overview). The reason for this is probably that the class is more general than the more classical abstract classes of function spaces, e.g.~the Banach function spaces, and it includes many quasinormed spaces that often appear naturally in estimates for important operators. The most natural example of such space is $L^{1, \infty}$, also known as weak $L^1$, that appears in the endpoint estimates of the Hardy--Littlewood maximal operator and many other important operators in harmonic analysis. On the other hand, the class is still quite manageable and many of the useful tools available in the more classical settings are still present. Hence, the class is proving to be a good setting for developing many of the theories that were previously restricted to the more classical and smaller classes.

In this paper, we aim to continue the work of \cite{NekvindaPesa20} where we started developing the general theory of quasi-Banach function spaces and provided several useful tools that should make their handling significantly easier. This time, we focus solely on the subclass of rearrangement-invariant quasi-Banach function spaces and cover two important topics. First is the Luxemburg representation theorem, which is a cornerstone in the classical theory of rearrangement-invariant Banach function spaces. As illustrated below, the absence of this result for the wider class of rearrangement-invariant quasi-Banach function spaces was a gaping hole in the theory which caused many problems. Thus the extension we provide should prove very useful and allow the theory to further develop and expand its applications. The second topic we cover is the theory related to the concepts of fundamental function and endpoint spaces, an area that also has many significant applications in the classical theory.

Let us now discuss the two above mentioned topics in more detail. The first is an extension to the class of r.i.~quasi-Banach function spaces of the fundamental and extremely important Luxemburg representation theorem. In its classical form, this result states that to every rearrangement-invariant Banach function norm $\lVert \cdot \lVert_X$ over an arbitrary resonant measure space corresponds a norm, traditionally denoted $\lVert \cdot \lVert_{\overline{X}}$, which is of the same class but acting over the appropriate subset of $[0, \infty)$ equipped with the classical Lebesgue measure and which satisfies for every measurable function $f$ defined on the original measure space that
\begin{equation*}
	\lVert f \lVert_X = \lVert f^* \lVert_{\overline{X}},
\end{equation*}
where $f^*$ is the non-increasing rearrangement of $f$ (see Theorem~\ref{TheoremLuxemburgRepresentation} for the precise formulation and Sections~\ref{SectionNon-increasingRearrangement} and \ref{SectionFunctionNormsQuasinorms} for the relevant definitions). 

This result was first obtained by Luxemburg in \cite{Luxemburg67} (with \cite[Chapter~2, Theorem~4.10]{BennettSharpley88} serving as the standard modern reference) and has since proved to be a crucial component in many aspects of the theory of rearrangement-invariant Banach function spaces. Its importance lies in that it allows one to work with the easily manageable functions defined on an interval, which makes possible many constructions that cannot be done on abstract measure spaces. Its many applications include the construction of optimal targets and domains for Sobolev embeddings such as in \cite{AlbericoCianchi2018}, \cite{BreitCianchi21}, \cite{CianchiPick15}, \cite{Mihula21}, \cite{Mihula21Poincare}, \cite{Musil18}, \cite{MusilPick23}, and \cite{Vybiral07}; Gagliardo--Nirenberg inequalities and their applications as in \cite{FiorenzaFormica19}, \cite{LesnikRoskovec23}, and \cite{MolchanovaRoskovec21}; compactness of Sobolev embeddings as in \cite{Slavikova15}; description of optimal domains and targets for the boundedness of many classical operators as in \cite{CianchiMusil23}, \cite{EdmundsMihula20}, and \cite{MusilPick23}; reducing estimates for $K$-functionals to one-dimensional inequalities as in \cite{Baena-MiretGogatishvili22}; Korn-type inequalities as in \cite{BreitCianchi17}; constructing function spaces in an abstract manner as in \cite{Pesa22} and \cite{Turcinova23}.

The fact that the Luxemburg representation theorem has until now been restricted to the normed case has proved to be an obstacle encountered by many, and some partial or ad hoc solutions have been developed. For example, in \cite{Baena-MiretGogatishvili22}, \cite{Ho16}, \cite{Ho20}, \cite{Ho20-Interpolation}, and \cite{Ho23} the issue was sidestepped by assuming a~priori that the r.i.~quasi-Banach function norm in question, acting over more general measure space, is derived from some r.i.~quasi-Banach function norm defined over $(0, \infty)$, which then obviously served as the needed representation. Another approach can be used when the focus is on a more specific class of function spaces for which the quasinorm is given by a formula that is independent of the underlying measure space, as then the space over $(0, \infty)$ that is constructed using the same formula typically provides the required representation; this approach has been successfully applied e.g.~in \cite{KrepelaMihula22}, \cite{PesaLK}, and \cite[Section~6]{Turcinova23}. On the other hand, when one wishes to work with r.i.~quasi-Banach function spaces in abstract, the lack of Luxemburg representation theorem has proven to be an insurmountable obstacle, forcing authors to avoid this issue by working only in restricted setting; this was the case of e.g.~\cite{Pesa22}, \cite{Turcinova23}.

This topic is the content of Section~\ref{SectionRepresentation} where we prove in Theorem~\ref{TheoremRepresentation} that the Luxemburg representation theorem holds even in the wider class of r.i.~quasi-Banach function spaces over any resonant measure space, although the properties of the representation functor, so to say, are different for non-atomic and completely atomic measure spaces (as is to be expected, given the same is true for the classical normed case). Needless to say, the classical proof as presented e.g.~in \cite[Chapter~2, Theorem~4.10]{BennettSharpley88} cannot be used in this wider setting, as it is based on the Köthe reflexivity of Banach function spaces, i.e.~the fact that every Banach function space $X$ satisfies $X = X''$ with equal norms (with $X''$ being the second associate space, also called the second Köthe dual, of $X$), which is in fact a characterisation of Banach function spaces. Hence, the proof had to be developed from scratch and it turned out that each of the cases of resonant measure spaces (i.e.~non-atomic and completely atomic spaces) required their own distinct construction with unique challenges.

The second topic of this paper is the study of the concepts of fundamental function and endpoint spaces in the context of r.i.~quasi-Banach function spaces. This theory is classical for r.i.~Banach function spaces and has also proven to be extremely useful. For example, it has been employed to characterise the optimal Sobolev embeddings for the class of Orlicz spaces in \cite{CianchiMusil19} and \cite{Musil18} and also for more general optimality problems for Orlicz spaces in \cite{MusilPick23}. It is related to the validity of Gagliardo--Nirenberg inequalities, see e.g.~\cite{FiorenzaFormica19}. It plays an important role in the research of optimal embeddings of generalised Besov spaces, see e.g.~\cite{BashirKaradzhov11} or \cite{MartinMilman06}. It has been studied in relation to various questions of the theories of interpolation and extrapolation in \cite{Berezhnoi13}, \cite{CobosFernandez-Cabrera17}, \cite{CwikelNilsson85}, \cite{CwikelPustylnik00}, \cite{Haroske07}, \cite{Maligranda83}, \cite{MalyL12Interpolation}, \cite{MusilOlhava17}, \cite{Pustylnik01}, and \cite{Sharpley71}. This theory is also related to the study of properties of embeddings, such as it being almost compact (see \cite{Fernandez-MartinezManzano10} and \cite{Slavikova12}) or maximally non-compact (see \cite{MalyMihulaUNP}). Other areas of applications include generalised duality as in \cite{MaligrandaPersson89}, problems related to some types of PDEs as in \cite{CaicedoCuevas21}, and various properties of Banach spaces as in \cite{AstashkinSukochev07} or \cite{dePagterSukochev20}. The importance of this topic is further illustrated by the fact that it is included in most of the books that present the theory of rearrangement-invariant Banach function spaces, e.g.~\cite{BennettSharpley88}, \cite{KreinPetunin82}, and \cite{FucikKufner13}.

Section~\ref{SectionFundFuncResearch} is devoted to this topic, as we characterise which functions serve as a fundamental function of some r.i.~quasi-Banach function space, show that for each such function there corresponds a space, which we call the weak Marcinkiewicz endpoint space, that is the largest r.i.~quasi-Banach function space with this fundamental function. This space is not new, as it is closely related to the classical Marcinkiewicz endpoint space, see e.g.~\cite[Section~7.10]{FucikKufner13}, and so its natural definition has appeared before, but always as a side note to the study of the more classical endpoint spaces. Consequently, the properties of this type of spaces were so far studied neither deeply nor comprehensively, and many interesting questions have therefore been open until now. We rectify this in Section~\ref{SectionFundFuncResearch}, and also provide some results showing what can be inferred about a space from the properties of its fundamental function and how the concept of fundamental function interacts with that of the associate space.

\section{Preliminaries}

The purpose of this section is to establish the theoretical background that serves as the foundation for our research. The definitions and notation is intended to be as standard as possible. The usual reference for most of this theory is \cite{BennettSharpley88}.

Throughout this paper we will denote by $(\mathcal{R}, \mu)$, and occasionally by $(\mathcal{S}, \nu)$, some arbitrary (totally) $\sigma$-finite measure space. Given a $\mu$-measurable set $E \subseteq \mathcal{R}$ we will denote its characteristic function by $\chi_E$. By $\mathcal{M}(\mathcal{R}, \mu)$ we will denote the set of all extended complex-valued $\mu$-measurable functions defined on $\mathcal{R}$. As is customary, we will identify functions that coincide $\mu$-almost everywhere. We will further denote by $\mathcal{M}_0(\mathcal{R}, \mu)$ and $\mathcal{M}_+(\mathcal{R}, \mu)$ the subsets of $\mathcal{M}(\mathcal{R}, \mu)$ containing, respectively, the functions finite $\mu$-almost everywhere and the non-negative functions.

When there is no risk of confusing the reader, we will abbreviate $\mu$-almost everywhere, $\mathcal{M}(\mathcal{R}, \mu)$, $\mathcal{M}_0(\mathcal{R}, \mu)$, and $\mathcal{M}_+(\mathcal{R}, \mu)$ to $\mu$-a.e., $\mathcal{M}$, $\mathcal{M}_0$, and $\mathcal{M}_+$, respectively.
 
When $X$ is a set and $f, g: X \to \mathbb{C}$ are two maps satisfying that there is some positive and finite constant $C$, depending only on $f$ and $g$, such that $\lvert f(x) \rvert \leq C \lvert g(x) \rvert$ for all $x \in X$, we will denote this by $f \lesssim g$. We will also write $f \approx g$, or sometimes say that $f$ and $g$ are equivalent, whenever both $f \lesssim g$ and $g \lesssim f$ are true at the same time. We choose this general definition because we will use the symbols ``$\lesssim$'' and ``$\approx$'' with both functions and functionals. 

When $X, Y$ are two topological linear spaces, we will denote by $Y \hookrightarrow X$ that $Y \subseteq X$ and that the identity mapping $I : Y \rightarrow X$ is continuous.

As for some special cases, we will denote by $\lambda^n$ the classical $n$-dimensional Lebesgue measure, with the exception of the $1$-dimensional case where we simply write $\lambda$. We will further denote by $m$ the counting measure over $\mathbb{N}$. When $p \in (0, \infty]$ we will denote by $L^p$ the classical Lebesgue space (of functions in $\mathcal{M}(\mathcal{R}, \mu)$), defined for finite $p$ as the set
\begin{equation*}
	L^p = \left \{ f \in \mathcal{M}(\mathcal{R}, \mu); \; \int_{\mathcal{R}} \lvert f \rvert^p \: d\mu < \infty \right \},
\end{equation*}
equipped with the customary (quasi-)norm
\begin{equation*}
	\lVert f \rVert_p = \left ( \int_ {\mathcal{R}} \lvert f \rvert^p \: d\mu \right )^{\frac{1}{p} },
\end{equation*}
and through the usual modifications for $p=\infty$. In the special case when $(\mathcal{R}, \mu) = (\mathbb{N}, m)$ we will denote this space by $l^p$. Note that in this paper we consider $0$ to be an element of $\mathbb{N}$.

We will work extensively with the $\Delta_2$-condition.

\begin{definition}
	Let $\varphi: [0, \infty) \to [0, \infty)$ be non-decreasing. We say that $\varphi$ satisfies the $\Delta_2$-condition if there exists a constant $C_{\varphi} \in [1, \infty)$ such that it holds for all $t \in [0, \infty)$ that
	\begin{equation*}
		\varphi(2t) \leq C_{\varphi} \varphi(t).
	\end{equation*}
\end{definition}

The following statement is a simple consequence of the definition; we list it here together with its proof for the sake of completeness and since it appears that the relationship is not that well known.

\begin{proposition} \label{PropositionD2ImpliesSubadditivity}
	Let $\varphi: [0, \infty) \to [0, \infty)$ be a non-decreasing function that satisfies the $\Delta_2$-condition with constant $C_{\varphi}$. Then it is also subadditive up to this constant, i.e.~it holds for every $s,t \in [0, \infty)$ that
	\begin{equation*}
		\varphi(s+t) \leq C_{\varphi} (\varphi(s) + \varphi(t)).
	\end{equation*}
\end{proposition}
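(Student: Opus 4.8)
The plan is to reduce the statement to a single application of the $\Delta_2$-condition by exploiting monotonicity together with the symmetry of the desired inequality. Fix $s, t \in [0, \infty)$. Since the right-hand side $C_\varphi(\varphi(s) + \varphi(t))$ is symmetric in $s$ and $t$, I may assume without loss of generality that $s \leq t$, so that $t = \max\{s, t\}$.

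The key geometric observation is that $s + t \leq 2t$. First I would invoke the fact that $\varphi$ is non-decreasing to obtain $\varphi(s+t) \leq \varphi(2t)$, and then apply the $\Delta_2$-condition at the point $t$ to get $\varphi(2t) \leq C_\varphi \varphi(t)$. Chaining these two estimates yields
\begin{equation*}
	\varphi(s+t) \leq C_\varphi \varphi(t).
\end{equation*}

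To finish, I would bound $\varphi(t)$ from above by $\varphi(s) + \varphi(t)$, which is legitimate precisely because $\varphi$ takes values in $[0, \infty)$ and hence $\varphi(s) \geq 0$. Multiplying through by $C_\varphi$ then gives the claimed inequality $\varphi(s+t) \leq C_\varphi(\varphi(s) + \varphi(t))$.

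There is no genuine obstacle in this argument; the proof is entirely elementary. The only points that require a moment of care are the symmetry reduction that lets me assume $s \leq t$ (so that the smaller argument can be absorbed into $2t$) and the use of the non-negativity of $\varphi$ in the final step, which is what converts the single term $\varphi(t)$ into the symmetric sum $\varphi(s) + \varphi(t)$. Both the monotonicity and the codomain $[0, \infty)$ are part of the hypotheses, so nothing beyond the definition of the $\Delta_2$-condition is needed.
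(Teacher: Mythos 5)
Your proof is correct and follows essentially the same route as the paper: the paper assumes $s \leq t$, applies the $\Delta_2$-condition first to get $\varphi(s+t) \leq C_\varphi \varphi\left(\frac{s+t}{2}\right)$ and then monotonicity via $\frac{s+t}{2} \leq t$, whereas you apply monotonicity first via $s+t \leq 2t$ and then $\Delta_2$ at $t$ — the same two steps in the opposite order, concluding identically with non-negativity.
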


\begin{proof}
	Let $s,t \in [0, \infty)$ and assume without loss of generality that $s \leq t$, i.e.~$\frac{s+t}{2} \leq t$. We now use the $\Delta_2$-condition, monotonicity, and non-negativity of $\varphi$ to compute
	\begin{equation*}
		\varphi(s+t) \leq C_{\varphi} \varphi \left (\frac{s + t}{2} \right ) \leq C_{\varphi} \varphi(t) \leq C_{\varphi} (\varphi(s) + \varphi(t)).
	\end{equation*}
\end{proof}

We will also use the following easy observation:

\begin{proposition} \label{PropositionD2Max}
	Let $\varphi_1: [0, \infty) \to [0, \infty)$ and $\varphi_2: [0, \infty) \to [0, \infty)$ be non-decreasing functions that satisfy the $\Delta_2$-condition with constants $C_{\varphi_1}$ and $C_{\varphi_2}$, respectively. Then the function $\varphi: [0, \infty) \to [0, \infty)$ defined for $t \in [0, \infty)$ by
	\begin{equation*}
		\varphi(t) = \max \{ \varphi_1(t), \varphi_2(t)\}
	\end{equation*}
	also satisfies the $\Delta_2$-condition with the constant $C_{\varphi} = \max \{C_{\varphi_1}, C_{\varphi_2} \}$.
\end{proposition}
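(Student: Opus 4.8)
The plan is to verify the $\Delta_2$-inequality for $\varphi$ directly, fixing an arbitrary $t \in [0, \infty)$ and estimating each of the two constituent functions separately before recombining them via the maximum. Before that, I would briefly record that $\varphi$ is indeed an admissible function for the statement: as the pointwise maximum of two non-decreasing functions it is non-decreasing, and as the maximum of two non-negative functions it maps $[0, \infty)$ into $[0, \infty)$, so the conclusion is at least meaningful.

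The core of the argument is a short chain of inequalities performed for each index $i \in \{1, 2\}$. First I would apply the $\Delta_2$-condition for $\varphi_i$ to get $\varphi_i(2t) \leq C_{\varphi_i} \varphi_i(t)$. Then I would replace $C_{\varphi_i}$ by the larger constant $C_{\varphi} = \max\{C_{\varphi_1}, C_{\varphi_2}\}$, which is legitimate because $\varphi_i(t) \geq 0$, and replace $\varphi_i(t)$ by $\varphi(t)$, which is legitimate because $\varphi_i(t) \leq \max\{\varphi_1(t), \varphi_2(t)\} = \varphi(t)$. This yields $\varphi_i(2t) \leq C_{\varphi} \varphi(t)$ for both $i = 1$ and $i = 2$.

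Since both $\varphi_1(2t)$ and $\varphi_2(2t)$ are bounded above by the single quantity $C_{\varphi} \varphi(t)$, their maximum is as well, and so
\begin{equation*}
	\varphi(2t) = \max\{\varphi_1(2t), \varphi_2(2t)\} \leq C_{\varphi} \varphi(t).
\end{equation*}
As $t$ was arbitrary, this establishes the $\Delta_2$-condition for $\varphi$ with the claimed constant $C_{\varphi}$.

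I do not anticipate any real obstacle here; the statement is essentially immediate once one observes the two elementary monotonicities, namely that the constant may be enlarged to the common value $C_{\varphi}$ and that each $\varphi_i$ is dominated by their maximum. The only point requiring the slightest care is the order of these two replacements, but both are justified purely by the non-negativity and the defining property of the maximum, so the proof is a direct computation rather than anything that needs a clever idea.
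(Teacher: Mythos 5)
Your proof is correct: the chain $\varphi_i(2t) \leq C_{\varphi_i}\varphi_i(t) \leq C_{\varphi}\varphi(t)$ for $i \in \{1,2\}$, followed by taking the maximum on the left, is exactly the direct verification the paper has in mind — indeed the paper labels this an easy observation and omits the proof entirely. Nothing is missing, and the order of the two replacements is handled correctly.
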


\subsection{Non-increasing rearrangement} \label{SectionNon-increasingRearrangement}
We now present the crucial concept of the non-increasing rearrangement of a function and state some of its properties that will be important for our work. We proceed in accordance with \cite[Chapter~2]{BennettSharpley88}.

\begin{definition}	
	The distribution function $f_*$ of a function $f \in \mathcal{M}$ is defined for $s \in [0, \infty)$ by
	\begin{equation*}
		f_*(s) = \mu(\{ t \in \mathcal{R}; \; \lvert f(t) \rvert > s \}).
	\end{equation*}	
	The non-increasing rearrangement $f^*$ of the said function is then defined for $t \in [0, \infty)$ by
	\begin{equation*}
		f^*(t) = \inf \{ s \in [0, \infty); \; f_*(s) \leq t \}.
	\end{equation*}
\end{definition}

For the basic properties of the distribution function and the non-increasing rearrangement, with proofs, see \cite[Chapter~2, Proposition~1.3]{BennettSharpley88} and \cite[Chapter~2, Proposition~1.7]{BennettSharpley88}, respectively. We consider those properties to be classical and well known and we will be using them without further explicit reference; let us, however, just point out one critical property we will use several times in the paper, that is that we have for all $f,g \in \mathcal{M}$ and all $t \in [0, \infty)$ that
\begin{equation*}
	(f+g)^* (t) \leq f^* \left( \frac{t}{2} \right) + g^* \left( \frac{t}{2} \right).
\end{equation*}

An important concept used in the paper is that of equimeasurability.

\begin{definition} \label{DEM}
	We say that the functions $f \in \mathcal{M}(\mathcal{R}, \mu)$ and $g \in \mathcal{M}(\mathcal{S}, \nu)$ are equimeasurable if $f^* = g^*$, where the non-increasing rearrangements are computed with respect to the appropriate measures.
\end{definition}

It is not hard to show that two functions are equimeasurable if and only if their distribution functions coincide too (with each distribution function being considered with respect to the appropriate measure).

A very significant classical result is the Hardy--Littlewood inequality. For proof, see for example \cite[Chapter~2, Theorem~2.2]{BennettSharpley88}.

\begin{theorem} \label{THLI}
	It holds for all $f, g \in \mathcal{M}$ that
	\begin{equation*}
		\int_\mathcal{R} \lvert fg \rvert \: d\mu \leq \int_0^{\infty} f^*g^* \: d\lambda.
	\end{equation*}
\end{theorem}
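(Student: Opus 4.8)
The plan is to use the layer-cake representation of non-negative functions together with Tonelli's theorem to rewrite both sides as double integrals over $(0,\infty)^2$ that can then be compared directly. First I would record, for the left-hand side, the identities $\abs{f(x)} = \int_0^\infty \chi_{\{\abs{f} > s\}}(x)\,ds$ and $\abs{g(x)} = \int_0^\infty \chi_{\{\abs{g} > \tau\}}(x)\,d\tau$, so that $\abs{f(x)g(x)} = \int_0^\infty\!\!\int_0^\infty \chi_{\{\abs{f}>s\}}(x)\,\chi_{\{\abs{g}>\tau\}}(x)\,ds\,d\tau$. Integrating over $\mathcal{R}$ and invoking Tonelli (all integrands are non-negative) yields
\begin{equation*}
	\int_\mathcal{R} \abs{fg}\,d\mu = \int_0^\infty\!\!\int_0^\infty \mu\bigl(\{\abs{f}>s\}\cap\{\abs{g}>\tau\}\bigr)\,ds\,d\tau .
\end{equation*}
Since the measure of an intersection never exceeds the measure of either factor, the integrand is bounded above by $\min\{f_*(s), g_*(\tau)\}$.

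Next I would apply the same device to the right-hand side, this time on $(0,\infty)$ equipped with $\lambda$. The decisive observation is that, because $f^*$ is non-increasing, its super-level set $\{t : f^*(t) > s\}$ is an interval of the form $[0, a_s)$; and because $f$ and $f^*$ are equimeasurable one has $a_s = \lambda(\{f^* > s\}) = f_*(s)$, with the analogous statement for $g^*$. Consequently $\{f^* > s\}\cap\{g^* > \tau\} = [0,\min\{f_*(s),g_*(\tau)\})$, whose Lebesgue measure is exactly $\min\{f_*(s),g_*(\tau)\}$. The layer-cake identities for $f^*$ and $g^*$, followed by another use of Tonelli, then give
\begin{equation*}
	\int_0^\infty f^* g^*\,d\lambda = \int_0^\infty\!\!\int_0^\infty \min\{f_*(s), g_*(\tau)\}\,ds\,d\tau .
\end{equation*}
Combining the two displays with the pointwise bound from the first step closes the argument.

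The step I expect to demand the most care is the identification $\{t : f^*(t) > s\} = [0, f_*(s))$: this is precisely where the passage from the abstract measure $\mu$ to the concrete Lebesgue measure $\lambda$ takes place, and it rests on pairing the monotonicity of $f^*$ with the equimeasurability of $f$ and $f^*$ (equivalently, on the fact that the distribution function of $f^*$ coincides with $f_*$). Once this interval description is available, the remainder of the proof — the two layer-cake expansions and the two applications of Tonelli — is entirely routine.
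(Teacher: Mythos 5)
Your proof is correct, and it is essentially the classical argument: the paper itself gives no proof of Theorem~\ref{THLI} but cites \cite[Chapter~2, Theorem~2.2]{BennettSharpley88}, whose proof rests on exactly the same two ingredients you use, namely the intersection bound $\mu(E \cap F) \leq \min\{\mu(E), \mu(F)\}$ and a layer decomposition of $\lvert f \rvert$ and $\lvert g \rvert$ (there carried out via monotone approximation by simple functions, in your version via the layer-cake identity and Tonelli). Your key step $\{t;\; f^*(t) > s\} = [0, f_*(s))$ is also sound, since it is just the standard equivalence $f^*(t) > s \iff f_*(s) > t$, which follows from the definition of $f^*$ together with the right-continuity of $f_*$.
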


It follows directly from this result that it holds for every $f,g \in \mathcal{M}$ that
\begin{equation} \label{HLI_sup}
	\sup_{\substack{\tilde{g} \in \mathcal{M} \\ \tilde{g}^* = g^*}} \int_{\mathcal{R}} \lvert f \tilde{g} \rvert \: d\mu \leq \int_0^{\infty} f^*g^* \: d\lambda.
\end{equation}
This motivates the definition of resonant measure spaces as those spaces where we have equality in \eqref{HLI_sup}.

\begin{definition}
	A $\sigma$-finite measure space $(\mathcal{R}, \mu)$ is said to be resonant if it holds for all $f, g \in \mathcal{M}(\mathcal{R}, \mu)$ that
	\begin{equation*}
		\sup_{\substack{\tilde{g} \in \mathcal{M} \\ \tilde{g}^* = g^*}} \int_\mathcal{R} \lvert f \tilde{g} \rvert \: d\mu = \int_0^{\infty} f^* g^* \: d\lambda.
	\end{equation*}
\end{definition}

The property of being resonant is a crucial one. Luckily, there is a straightforward characterisation of resonant measure spaces which we present below. For proof and further details see \cite[Chapter~2, Theorem~2.7]{BennettSharpley88}.

\begin{theorem} \label{TheoremCharResonance}
	A $\sigma$-finite measure space is resonant if and only if it is either non-atomic or completely atomic with all atoms having equal measure.
\end{theorem}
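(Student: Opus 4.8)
The plan is to prove the two implications separately, after one preliminary reduction. Note first that the inequality ``$\leq$'' in the definition of resonance is never at issue: it holds for every $\sigma$-finite space by \eqref{HLI_sup}. Thus resonance is exactly the assertion that, for each pair $f,g$, the supremum of $\int_{\mathcal{R}}\lvert f\tilde{g}\rvert\,d\mu$ over all $\tilde{g}$ equimeasurable with $g$ can be pushed up to the Hardy--Littlewood value $\int_0^\infty f^*g^*\,d\lambda$ of Theorem~\ref{THLI}; and since everything in sight depends only on moduli, it suffices to treat $f,g\in\Mpl$.

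For the sufficiency I would handle the two admissible types of space by different devices. If $(\mathcal{R},\mu)$ is non-atomic, the key is a measure-preserving rearrangement: for a fixed $f$ one constructs a measure-preserving map $\sigma\colon\mathcal{R}\to[0,\mu(\mathcal{R}))$ such that $f=f^*\circ\sigma$ a.e., which is possible precisely because a non-atomic $\sigma$-finite measure lets one carve out subsets of any prescribed measure, so the level sets of $f$ can be laid out along the interval in decreasing order. Setting $\tilde{g}=g^*\circ\sigma$ then produces a function equimeasurable with $g$ (as $\sigma$ is measure-preserving and $g^*$ is equimeasurable with $g$) for which $\int_{\mathcal{R}}f\tilde{g}\,d\mu=\int_0^\infty f^*g^*\,d\lambda$, so the supremum is in fact attained. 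If instead $(\mathcal{R},\mu)$ is completely atomic with every atom of measure $a$, then every function is constant on each atom, $f^*$ and $g^*$ are step functions with steps of length $a$, and $\int_0^\infty f^*g^*\,d\lambda=a\sum_k f_k^*g_k^*$, where $(f_k^*)$ and $(g_k^*)$ are the non-increasing rearrangements of the sequences of atom-values. Since the admissible $\tilde{g}$ correspond exactly to permutations of the atom-values of $g$, the classical rearrangement inequality for finite (or convergent) sums shows that the supremum of $a\sum_k f_k\lvert\tilde{g}_k\rvert$ equals $a\sum_k f_k^*g_k^*$ and is attained, which is the desired equality.

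For the necessity I would argue by contraposition, exhibiting for each forbidden configuration a pair $(f,g)$ for which the supremum falls strictly short. A space that is neither non-atomic nor completely atomic with equal atoms must either (A) possess an atom $A$ of measure $a$ together with a non-atomic part of positive measure, or (B) be completely atomic with two atoms $A_1,A_2$ of distinct measures $a_1<a_2$. In case (B) take $f=\chi_{A_2}$ and $g=\chi_{A_1}$: then $\int_0^\infty f^*g^*\,d\lambda=a_1>0$, whereas any $\tilde{g}$ equimeasurable with $g$ is the indicator of a union of atoms of total measure $a_1$, which cannot contain the indivisible atom $A_2$ of larger measure, so $\int_{\mathcal{R}}\lvert f\tilde{g}\rvert\,d\mu=0$ for every admissible $\tilde{g}$. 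In case (A) choose disjoint subsets $N_1,N_2$ of the non-atomic part with $\mu(N_1)=\mu(N_2)=\delta$ for some $0<\delta<a/2$, and set $f=\chi_A$, $g=2\chi_{N_1}+\chi_{N_2}$; then $\int_0^\infty f^*g^*\,d\lambda=3\delta>0$, but since $\tilde{g}$ must be constant on the atom $A$ while the only value of $g$ taken on a set of measure at least $a$ is $0$, every admissible $\tilde{g}$ vanishes on $A$, forcing $\int_{\mathcal{R}}\lvert f\tilde{g}\rvert\,d\mu=0$. In either case resonance fails, which completes the contrapositive.

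I expect the genuine difficulty to lie in the non-atomic half of the sufficiency, namely the rigorous construction of the measure-preserving map $\sigma$ with $f=f^*\circ\sigma$. One must deal with the intervals on which $f^*$ is constant (where $\sigma$ is pinned down only up to a measure-preserving shuffle), with the possibly infinite total measure of $\mathcal{R}$, and with the set where $f$ vanishes, and at each step one leans on non-atomicity to split sets into pieces of arbitrary prescribed measure. The atomic computation and the two counterexamples, by contrast, are entirely elementary.
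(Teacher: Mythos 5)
Your proof cannot be compared line-by-line with the paper, because the paper does not prove this statement at all: it imports it from \cite[Chapter~2, Theorem~2.7]{BennettSharpley88}. Judged on its own merits, your necessity argument by contraposition is correct and complete in outline: the dichotomy into cases (A) and (B) is exhaustive for $\sigma$-finite spaces, and in both cases the key point — that a function equimeasurable with $g$ is a.e.\ constant on an atom, and a nonzero constant value would force a level set of measure at least that of the atom, contradicting the distribution of $g$ — goes through. The completely atomic half of sufficiency is also essentially right, with two caveats: equimeasurable $\tilde{g}$ need \emph{not} be exact permutations of the atom-values (e.g.\ $g \equiv 1$ on infinitely many atoms is equimeasurable with the function equal to $1$ on all atoms but one and $0$ there), and the supremum need not be attained (take atom-values $1-\tfrac1k$, so that $f^* \equiv 1$ is realised by no rearrangement); but the supremum \emph{equality}, which is all that resonance asks, survives via finite truncations and the rearrangement inequality, so these are patchable.

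The genuine gap is in the non-atomic half of sufficiency, and it is not the technical difficulty you anticipate. A measure-preserving $\sigma \colon \mathcal{R} \to [0,\mu(\mathcal{R}))$ with $f = f^* \circ \sigma$ a.e.\ simply does not exist for general $f$ when $\mu(\mathcal{R}) = \infty$: if $f = \chi_E$ with $\mu(E) = \mu(\mathcal{R}\setminus E) = \infty$, then $f_*(s) = \infty$ for $s \in [0,1)$, hence $f^* \equiv 1$ on $[0,\infty)$ and $f^* \circ \sigma \equiv 1 \neq f$ for \emph{every} $\sigma$; more generally, whenever $\mu(\{f>0\}) = \infty$ but $f$ vanishes on a set of positive measure, one has $f^* > 0$ a.e., so the zero set of $f$ has nowhere to go. No amount of carving out sets of prescribed measure repairs this — the object you want is nonexistent, not merely hard to build. (Consistently with this, the supremum in the definition of resonance is in general \emph{not} attained on infinite-measure non-atomic spaces; this is precisely the distinction between resonant and strongly resonant spaces in \cite[Chapter~2]{BennettSharpley88}, where strong resonance forces $\mu(\mathcal{R}) < \infty$.) The standard repair is to use your construction only where it is valid — a Ryff-type theorem does hold on \emph{finite} non-atomic measure spaces \cite[Chapter~2, Corollary~7.6]{BennettSharpley88} — and to obtain resonance in the limit: exhaust $\mathcal{R}$ by sets of finite measure, or argue with simple functions, showing that for every $N$ and $\varepsilon > 0$ there is an admissible $\tilde{g}$ with $\int_{\mathcal{R}} f\tilde{g}\,d\mu \geq \int_0^N f^*g^*\,d\lambda - \varepsilon$, and then let $N \to \infty$; combined with the upper bound \eqref{HLI_sup}, which, as you correctly note, is free from Theorem~\ref{THLI}, this yields the supremum equality without any attainment claim.
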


Finally, we will also need the concept of elementary maximal function, sometimes also called the maximal non-increasing rearrangement, which is defined as the Hardy transform of the non-increasing rearrangement.

\begin{definition}
	The elementary maximal function $f^{**}$ of $f \in \mathcal{M}$ is defined for $t \in (0, \infty)$ by 
	\begin{equation*}
		f^{**}(t) = \frac{1}{t} \int_0^{t} f^*(s) \: ds.
	\end{equation*} 
\end{definition} 

\subsection{Banach function norms and quasinorms} \label{SectionFunctionNormsQuasinorms}

\begin{definition}
	Let $\lVert \cdot \rVert : \mathcal{M}(\mathcal{R}, \mu) \rightarrow [0, \infty]$ be a mapping satisfying $\lVert \, \lvert f \rvert \, \rVert = \lVert f \rVert$ for all $f \in \mathcal{M}$. We say that $\lVert \cdot \rVert$ is a Banach function norm if its restriction to $\mathcal{M}_+$ satisfies the following axioms:
	\begin{enumerate}[label=\textup{(P\arabic*)}, series=P]
		\item \label{P1} it is a norm, in the sense that it satisfies the following three conditions:
		\begin{enumerate}[ref=(\theenumii)]
			\item \label{P1a} it is positively homogeneous, i.e.\ $\forall a \in \mathbb{C} \; \forall f \in \mathcal{M}_+ : \lVert a f \rVert = \lvert a \rvert \lVert f \rVert$,
			\item \label{P1b} it satisfies $\lVert f \rVert = 0 \Leftrightarrow f = 0$  $\mu$-a.e.,
			\item \label{P1c} it is subadditive, i.e.\ $\forall f,g \in \mathcal{M}_+ \: : \: \lVert f+g \rVert \leq \lVert f \rVert + \lVert g \rVert$,
		\end{enumerate}
		\item \label{P2} it has the lattice property, i.e.\ if some $f, g \in \mathcal{M}_+$ satisfy $f \leq g$ $\mu$-a.e., then also $\lVert f \rVert \leq \lVert g \rVert$,
		\item \label{P3} it has the Fatou property, i.e.\ if  some $f_n, f \in \mathcal{M}_+$ satisfy $f_n \uparrow f$ $\mu$-a.e., then also $\lVert f_n \rVert \uparrow \lVert f \rVert $,
		\item \label{P4} $\lVert \chi_E \rVert < \infty$ for all $E \subseteq \mathcal{R}$ satisfying $\mu(E) < \infty$,
		\item \label{P5} for every $E \subseteq \mathcal{R}$ satisfying $\mu(E) < \infty$ there exists some finite constant $C_E$, dependent only on $E$, such that the inequality $ \int_E f \: d\mu \leq C_E \lVert f \rVert $ is true for all $f \in \mathcal{M}_+$.
	\end{enumerate} 
\end{definition}

\begin{definition}
	Let $\lVert \cdot \rVert : \mathcal{M}(\mathcal{R}, \mu) \rightarrow [0, \infty]$ be a mapping satisfying $\lVert \, \lvert f \rvert \, \rVert = \lVert f \rVert$ for all $f \in \mathcal{M}$. We say that $\lVert \cdot \rVert$ is a quasi-Banach function norm if its restriction to $\mathcal{M}_+$ satisfies the axioms \ref{P2}, \ref{P3} and \ref{P4} of Banach function norms together with a weaker version of axiom \ref{P1}, namely
	\begin{enumerate}[label=\textup{(Q\arabic*)}]
		\item \label{Q1} it is a quasinorm, in the sense that it satisfies the following three conditions:
		\begin{enumerate}[ref=(\theenumii)]
			\item \label{Q1a} it is positively homogeneous, i.e.\ $\forall a \in \mathbb{C} \; \forall f \in \mathcal{M}_+ : \lVert af \rVert = \lvert a \rvert \lVert f \rVert$,
			\item \label{Q1b} it satisfies  $\lVert f \rVert = 0 \Leftrightarrow f = 0$ $\mu$-a.e.,
			\item \label{Q1c} there is a constant $C\geq 1$, called the modulus of concavity of $\lVert \cdot \rVert$, such that it is subadditive up to this constant, i.e.
			\begin{equation*}
				\forall f,g \in \mathcal{M}_+ : \lVert f+g \rVert \leq C(\lVert f \rVert + \lVert g \rVert).
			\end{equation*}
		\end{enumerate}
	\end{enumerate}
\end{definition}

Usually, it is assumed that the modulus of concavity is the smallest constant for which the part \ref{Q1c} of \ref{Q1} holds. We will follow this convention, even though the value will be of little consequence for our results. 

\begin{definition}
	Let $\lVert \cdot \rVert_X$ be a (quasi-)Banach function norm. We say that $\lVert \cdot \rVert_X$ is rearrangement-invariant, abbreviated r.i., if $\lVert f\rVert_X = \lVert g \rVert_X$ whenever $f, g \in \mathcal{M}$ are equimeasurable (in the sense of Definition~\ref{DEM}).
\end{definition}

\begin{definition}
	Let $\lVert \cdot \rVert_X$ be a (quasi-)Banach function norm. We then define the corresponding (quasi-)Banach function space $X$ as the set
	\begin{equation*}
		X = \left \{ f \in \mathcal{M};  \; \lVert f \rVert_X < \infty \right \}.
	\end{equation*}
	
	Furthermore, we will say that $X$ is rearrangement-invariant whenever $\lVert \cdot \rVert_X$ is.
\end{definition}

Before we present an overview of the properties of the above mentioned spaces, let us give some examples. We focus on the most classical ones as well as those that we need in the paper.

\begin{example}
	The Lebesgue spaces $L^p$, as introduces above, are r.i.~Banach function spaces for $p \in [1, \infty]$ and r.i.~quasi-Banach function spaces for $p \in (0,1)$.
\end{example}

\begin{example}
	Let $p, q \in (0, \infty]$. The Lorentz spaces $L^{p,q}$, defined for finite $q$ via the functional
	\begin{equation*}
		\lVert f \rVert_{p,q} = \int_0^{\infty} t^{\frac{q}{p} - 1} (f^*(t))^q \: dt
	\end{equation*}
	and for infinite $q$ via the functional
	\begin{equation*}
		\lVert f \rVert_{p,q} = \sup_{t \in [0, \infty)} f^{\frac{1}{p}} f^*(t),
	\end{equation*}
	are r.i.~quasi-Banach function spaces if and only if either $p < \infty$ or both $p = \infty$ and $q = \infty$. Moreover, they are r.i.~Banach function spaces (possibly up to equivalence of quasinorms) if and only if one of the following three conditions holds:
	\begin{enumerate}
		\item $p = q = 1$,
		\item $p \in (1, \infty)$ and $q \in [1, \infty]$,
		\item $p = q = \infty$.
	\end{enumerate}
\end{example}

\begin{example} \label{ExampleInter&Sum}
	The spaces $L^1 + L^{\infty}$ and $L^{1} \cap L^{\infty}$, which we define via the functionals
	\begin{align*}
		\lVert f \rVert_{L^1 + L^{\infty}} &= \int_0^{1} f^* \: d\lambda, \\
		\lVert f \rVert_{L^{1} \cap L^{\infty}} &= f^*(0) + \int_0^{\infty} f^* \: d\lambda,
	\end{align*}
	are r.i.~Banach function spaces. The above presented norms are equivalent to those obtained via the classical abstract constructions for sums and intersections of Banach spaces, see e.g.~\cite[Chapter~2, Section~6]{BennettSharpley88}.
\end{example}

Finally, let us also provide a definition of an important subspace of an arbitrary quasi-Banach function space, namely the subspace of functions having absolutely continuous quasinorm. 
\begin{definition} \label{DefACqN}
	Let $\lVert \cdot \rVert_X$ be a quasi-Banach function norm and let $X$ be the corresponding quasi-Banach function space. We say that a function $f \in X$ has absolutely continuous quasinorm if it holds that $\lVert f \chi_{E_k} \rVert_X \rightarrow 0$ whenever $E_k$ is a sequence of $\mu$-measurable subsets of $\mathcal{R}$ such that $\chi_{E_k} \rightarrow 0$ $\mu$-a.e. The set of all such functions is denoted $X_a$.
	
	If $X_a = X$, i.e.~every $f \in X$ has absolutely continuous quasinorm, we further say that the space $X$ itself has absolutely continuous quasinorm.
\end{definition}

Let us now move to listing some important properties of r.i.~quasi-Banach function spaces. We begin with two that hold in the wider class of quasi-Banach function spaces:
 
\begin{theorem} \label{TC}
	Let $\lVert \cdot \rVert_X$ be a quasi-Banach function norm and let $X$ be the corresponding quasi-Banach function space. Then $X$ is complete.
\end{theorem}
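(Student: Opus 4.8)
The plan is to follow the classical strategy for proving completeness of a normed lattice, modified to absorb the failure of the genuine triangle inequality. I would first record the reduction that it suffices to produce, for an arbitrary Cauchy sequence $(f_n)$ in $X$, a subsequence converging in $X$: a Cauchy sequence possessing a convergent subsequence must converge to the same limit, since for $\varepsilon > 0$ one may choose the Cauchy threshold together with a subsequence term within $\tfrac{\varepsilon}{2C}$ of the candidate limit and combine them through the quasi-triangle inequality \ref{Q1c}, where $C \ge 1$ denotes the modulus of concavity. So the whole task becomes: extract from $(f_n)$ a subsequence that converges both pointwise $\mu$-a.e.\ and in quasinorm, and check the two limits agree.

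Next I would extract a subsequence $(f_{n_k})$ converging so rapidly that $\lVert f_{n_{k+1}} - f_{n_k} \rVert_X \le (2C)^{-k}$, which is possible precisely because $(f_n)$ is Cauchy. Writing $g_k = f_{n_{k+1}} - f_{n_k}$ and iterating \ref{Q1c} yields, for the increasing partial sums of $\sum_k \lvert g_k \rvert$, the bound
\begin{equation*}
	\Big\lVert \sum_{k=1}^N \lvert g_k \rvert \Big\rVert_X \le \sum_{k=1}^N C^{k} \lVert g_k \rVert_X \le \sum_{k=1}^\infty 2^{-k} = 1 .
\end{equation*}
The decisive point — and the only place where the quasinorm setting genuinely departs from the Banach case — is that the rapid decay $(2C)^{-k}$ was arranged exactly to defeat the geometric factor $C^k$ generated by iterating \ref{Q1c}; this is what I expect to be the main obstacle. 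Since the partial sums increase pointwise to $g := \sum_k \lvert g_k \rvert$, the Fatou property \ref{P3} gives $\lVert g \rVert_X \le 1 < \infty$. A function of finite quasinorm is finite $\mu$-a.e.\ (otherwise on a set $E$ of positive finite measure it would dominate $N \chi_E$ for every $N$, forcing an infinite quasinorm via \ref{P2} and \ref{Q1b}), so $\sum_k \lvert g_k(x) \rvert < \infty$ for $\mu$-a.e.\ $x$. Hence the telescoping sums $f_{n_K} = f_{n_1} + \sum_{k=1}^{K-1} g_k$ converge absolutely $\mu$-a.e., and I set $f$ to be their pointwise limit, an element of $\mathcal{M}$.

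Finally I would verify convergence in quasinorm. Pointwise $\mu$-a.e.\ one has $\lvert f - f_{n_K} \rvert \le \sum_{k \ge K} \lvert g_k \rvert$, so by the lattice property \ref{P2} and a further application of \ref{P3} to the increasing partial sums on the right,
\begin{equation*}
	\lVert f - f_{n_K} \rVert_X \le \Big\lVert \sum_{k \ge K} \lvert g_k \rvert \Big\rVert_X \le \sum_{k \ge K} C^{k-K+1} \lVert g_k \rVert_X \le \sum_{k \ge K} C^{1-K} 2^{-k} = (2C)^{1-K} ,
\end{equation*}
which tends to $0$ as $K \to \infty$. Since each $f_{n_K} \in X$ and, for $K$ large, $\lVert f - f_{n_K} \rVert_X < \infty$, the quasi-triangle inequality \ref{Q1c} gives $f \in X$. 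Thus $(f_{n_K})$ converges to $f$ in $X$, and by the opening reduction the entire sequence $(f_n)$ converges to $f$, establishing that $X$ is complete.
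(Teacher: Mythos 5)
Your proof is correct. Note that the paper itself does not prove Theorem~\ref{TC} at all --- it is quoted from the literature, attributed to Caetano, Gogatishvili and Opic \cite{CaetanoGogatishvili16} --- so there is no in-paper proof to compare against; but your argument is exactly the standard Riesz--Fischer scheme adapted to the quasinormed lattice setting, and every step checks out. The two points that genuinely matter in this setting are both handled properly: first, since quasi-Banach function norms lack \ref{P5}, the Bennett--Sharpley route via convergence in measure on finite-measure sets is unavailable, and you correctly replace it by the Fatou property \ref{P3} applied to the increasing partial sums of $\sum_k \lvert g_k \rvert$, together with the observation (via \ref{P2}, \ref{Q1a}, \ref{Q1b}) that finite quasinorm forces finiteness $\mu$-a.e.; second, the geometric growth $C^k$ produced by iterating \ref{Q1c} is exactly neutralized by the $(2C)^{-k}$ decay you build into the subsequence, so the tail estimate $\lVert f - f_{n_K} \rVert_X \leq (2C)^{1-K}$ is sound (the iterated quasi-triangle inequality gives $\lVert \sum_{k=K}^{M} \lvert g_k \rvert \rVert_X \leq \sum_{k=K}^{M} C^{k-K+1} \lVert g_k \rVert_X$, and \ref{P3} passes this to the full tail). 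This weighting trick also lets you bypass the Aoki--Rolewicz theorem, which is the other common route and which would require verifying that the renormed functional retains the lattice and Fatou structure; your direct argument is cleaner in that respect.
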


\begin{theorem} \label{TEQBFS}
	Let $\lVert \cdot \rVert_X$ and $\lVert \cdot \rVert_Y$ be quasi-Banach function norms and let $X$ and $Y$ be the corresponding quasi-Banach function spaces. If $X \subseteq Y$ then also $X \hookrightarrow Y$.
\end{theorem}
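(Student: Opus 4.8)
The plan is to deduce the continuity of the inclusion from the closed graph theorem, with the only substantial work being the verification that the graph is closed. First I would observe that, by Theorem~\ref{TC}, both $X$ and $Y$ are complete, and that the topology induced by a quasinorm admits a countable neighbourhood base at the origin and is therefore metrizable by a translation-invariant metric (equivalently, one may invoke the Aoki--Rolewicz theorem to replace each quasinorm by an equivalent $p$-norm). Hence $X$ and $Y$ are $F$-spaces and the identity $I \colon X \to Y$ is a linear map between $F$-spaces, so by the closed graph theorem it suffices to show that its graph is closed: whenever $f_n \to f$ in $X$ and $f_n \to g$ in $Y$ for some $f \in X$ and $g \in Y$, we must have $f = g$ $\mu$-a.e.

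The crux of the argument, and the step I expect to be the main obstacle, is a lemma stating that quasinorm convergence forces convergence in measure on every set of finite measure; the difficulty is that, unlike in the Banach case, we cannot appeal to axiom~\ref{P5}, so this must be extracted from \ref{P2}, \ref{P3}, \ref{P4}, and \ref{Q1c} alone. The heart of this lemma is the sub-claim that for every $E \subseteq \mathcal{R}$ with $\mu(E) < \infty$ and every $\delta > 0$ one has
\[
	c(E, \delta) := \inf \{ \lVert \chi_A \rVert_X : A \subseteq E, \ \mu(A) \geq \delta \} > 0.
\]
To prove it I would argue by contradiction: if $c(E,\delta) = 0$, choose sets $A_n \subseteq E$ with $\mu(A_n) \geq \delta$ and $\lVert \chi_{A_n} \rVert_X \leq (2C)^{-n}$, where $C$ is the modulus of concavity. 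A repeated application of \ref{Q1c} gives $\lVert \sum_{n=1}^m \chi_{A_n} \rVert_X \leq \sum_{n=1}^m C^n \lVert \chi_{A_n} \rVert_X \leq 1$ for every $m$, and the Fatou property \ref{P3} then yields $\lVert \sum_{n=1}^\infty \chi_{A_n} \rVert_X \leq 1 < \infty$, so that $g := \sum_{n=1}^\infty \chi_{A_n}$ is finite $\mu$-a.e. On the other hand, the reverse Fatou lemma for sets in the finite measure space $E$ gives $\mu(\limsup_n A_n) \geq \limsup_n \mu(A_n) \geq \delta > 0$, while $g = \infty$ on $\limsup_n A_n$, a contradiction.

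Granting the sub-claim, the lemma follows quickly: if $\lVert h_n \rVert_X \to 0$ but $h_n \not\to 0$ in measure on some finite-measure set $E$, then along a subsequence the sets $A_n = \{ x \in E : \lvert h_n(x) \rvert > \varepsilon \}$ satisfy $\mu(A_n) \geq \delta$ for suitable $\varepsilon, \delta > 0$, whence $\lvert h_n \rvert \geq \varepsilon \chi_{A_n}$ and, by \ref{P2} and \ref{Q1a}, $\lVert h_n \rVert_X \geq \varepsilon\, c(E,\delta) > 0$, contradicting $\lVert h_n \rVert_X \to 0$. Since the underlying space is $\sigma$-finite, convergence in measure on every finite-measure set permits a diagonal extraction of a subsequence converging $\mu$-a.e.

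To finish, I would apply the lemma with $h_n = f_n - f$: convergence $f_n \to f$ in $X$ produces a subsequence $f_{n_k} \to f$ $\mu$-a.e. This subsequence still converges to $g$ in $Y$, so a second application of the lemma (now for $\lVert \cdot \rVert_Y$) gives a further subsequence converging $\mu$-a.e. to $g$; as it also converges $\mu$-a.e. to $f$, uniqueness of almost-everywhere limits yields $f = g$ $\mu$-a.e. Thus the graph of $I$ is closed, and the closed graph theorem provides a finite constant $C'$ with $\lVert f \rVert_Y \leq C' \lVert f \rVert_X$ for all $f \in X$, which is precisely the assertion $X \hookrightarrow Y$.
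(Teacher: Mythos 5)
Your proof is correct and takes essentially the same approach as the paper's: Theorem~\ref{TEQBFS} is proved in \cite{NekvindaPesa20} by exactly this route, namely showing that quasinorm convergence implies convergence in measure on sets of finite measure (so that the spaces embed continuously into $\mathcal{M}_0$ with the topology of local convergence in measure) and then applying the closed graph theorem for $F$-spaces, with completeness supplied by Theorem~\ref{TC}. Your sub-claim that $\inf\{\lVert \chi_A \rVert_X : A \subseteq E,\ \mu(A) \geq \delta\} > 0$, extracted from \ref{Q1c} and the Fatou property \ref{P3} alone, is precisely the substitute for axiom \ref{P5} that the quasi-Banach setting requires.
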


Both of these results have been known for a long time in the context of Banach function spaces but they have been only recently extended to quasi-Banach function spaces. Theorem~\ref{TC} has been first obtained by Caetano, Gogatishvili and Opic in \cite{CaetanoGogatishvili16} while Theorem~\ref{TEQBFS} has been proved by the second and third authors in \cite{NekvindaPesa20}. 

Let us now turn to the spaces that are also rearrangement-invariant, as they are the focus of this paper. An important property of r.i.~quasi-Banach function spaces over $([0, \infty), \lambda)$ is that the dilation operator is bounded on those spaces, as stated in the following theorem. This is a classical result in the context of r.i.~Banach function spaces which has been recently extended to r.i.~quasi-Banach function spaces by the second and third authors in \cite{NekvindaPesa20} (for the classical version see for example \cite[Chapter~3, Proposition~5.11]{BennettSharpley88}).

\begin{definition} \label{DDO}
	Let $t \in (0, \infty)$. The dilation operator $D_t$ is defined on $\mathcal{M}([0, \infty), \lambda)$ by the formula
	\begin{equation*}
		D_tf(s) = f(ts),
	\end{equation*}
	where $f \in \mathcal{M}([0, \infty), \lambda)$, $s \in (0, \infty)$.
\end{definition}

\begin{theorem} \label{TDRIS}
	Let $X$ be an r.i.~quasi-Banach function space over $([0, \infty), \lambda)$ and let $t \in (0, \infty)$. Then $D_t: X \rightarrow X$ is a bounded operator.
\end{theorem}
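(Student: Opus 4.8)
The plan is to reduce the statement in three stages — first to non-increasing functions, then to the single dilation $D_{1/n}$ for an appropriate integer $n$, and finally to an explicit equimeasurable decomposition — and to control the resulting sum by the quasi-triangle inequality. As a preliminary observation, a direct computation with distribution functions gives, for every $f \in \mathcal{M}([0,\infty),\lambda)$ and every $t \in (0,\infty)$, that $(D_t f)_*(s) = t^{-1} f_*(s)$, and hence $(D_t f)^* = D_t(f^*)$. Since $X$ is rearrangement-invariant and $f$ is equimeasurable with $f^*$, this yields $\lVert D_t f \rVert_X = \lVert (D_t f)^* \rVert_X = \lVert D_t(f^*) \rVert_X$ and $\lVert f \rVert_X = \lVert f^* \rVert_X$. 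It therefore suffices to bound $\lVert D_t \varphi \rVert_X$ by a multiple of $\lVert \varphi \rVert_X$ for a non-increasing $\varphi = f^* \in \mathcal{M}_+$.

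For such a $\varphi$ the case $t \ge 1$ is immediate: then $ts \ge s$, so $D_t\varphi(s) = \varphi(ts) \le \varphi(s)$ pointwise, and the lattice property \ref{P2} gives $\lVert D_t\varphi \rVert_X \le \lVert \varphi \rVert_X$. For $t \in (0,1)$ I would set $n = \lceil 1/t \rceil$, so that $1/n \le t$ and hence $D_t\varphi \le D_{1/n}\varphi$ pointwise by monotonicity; again by \ref{P2} this reduces the problem to proving, for each fixed $n \in \mathbb{N}$, a bound $\lVert D_{1/n}\varphi \rVert_X \le C_n \lVert \varphi \rVert_X$ with $C_n$ independent of $\varphi$.

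The heart of the argument is this last bound. Using the Fatou property \ref{P3} together with the fact that $\varphi_m \uparrow \varphi$ forces $D_{1/n}\varphi_m \uparrow D_{1/n}\varphi$, I would approximate $\varphi$ from below by non-increasing simple functions and so assume $\varphi$ is a non-increasing step function, equal to $v_i$ on finitely many intervals $[a_{i-1},a_i)$. Then $D_{1/n}\varphi$ is the horizontal stretch of $\varphi$ by the factor $n$: it equals $v_i$ on the block $[na_{i-1}, na_i)$, which is $n$ times longer. Splitting each block into $n$ consecutive subintervals of equal length $a_i - a_{i-1}$ and collecting, for $k = 0,\dots,n-1$, the $k$-th subinterval of every block produces a partition $A_0,\dots,A_{n-1}$ of the support of $D_{1/n}\varphi$ for which each $h_k := (D_{1/n}\varphi)\chi_{A_k}$ attains each value $v_i$ on a set of measure exactly $a_i - a_{i-1}$. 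Thus every $h_k$ is equimeasurable with $\varphi$ and $D_{1/n}\varphi = \sum_{k=0}^{n-1} h_k$. Iterating the quasi-triangle inequality \ref{Q1c} yields a constant $C_n$, depending only on $n$ and the modulus of concavity, with $\lVert \sum_k h_k \rVert_X \le C_n \sum_k \lVert h_k \rVert_X$; since rearrangement-invariance gives $\lVert h_k \rVert_X = \lVert \varphi \rVert_X$, we conclude $\lVert D_{1/n}\varphi \rVert_X \le n C_n \lVert \varphi \rVert_X$, which combined with the previous reductions finishes the proof.

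The main obstacle is the decomposition step. The classical proof rests on the Hardy--Littlewood--P\'olya relation, namely that $\varphi^{**} \le \psi^{**}$ pointwise implies $\lVert \varphi \rVert_X \le \lVert \psi \rVert_X$; this is genuinely a feature of the normed theory and is unavailable for quasinorms, so the argument must be rebuilt around the additive structure \ref{Q1c} instead. The subtle point is that no fixed partition of $[0,\infty)$ into $n$ sets of density $1/n$ can split every initial segment $[0,L)$ into pieces of exactly equal measure, so for a general non-increasing function there is no domain partition giving equimeasurable summands. This is precisely why the passage to step functions — for which only finitely many initial segments need to be split exactly, a constraint that the block-by-block construction above does satisfy — is indispensable rather than a mere convenience.
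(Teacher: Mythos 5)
Your proof is correct. Note that this paper does not itself prove Theorem~\ref{TDRIS}: the statement is quoted from \cite{NekvindaPesa20}, so there is no in-paper argument to compare against; judged on its own, your chain of reductions — $(D_tf)^* = D_t(f^*)$ plus rearrangement-invariance to pass to non-increasing $\varphi$, pointwise majorization of $D_t$ by $D_{1/n}$ with $n = \lceil 1/t\rceil$, approximation from below by non-increasing step functions with \ref{P3} and \ref{P2} closing the limit, and the block-wise splitting of $D_{1/n}\varphi$ into $n$ summands equimeasurable with $\varphi$ so that iterating \ref{Q1c} replaces the unavailable Hardy--Littlewood--P\'olya principle — is sound at every step and is precisely the natural quasinorm substitute for the classical normed proof, in the same spirit as the cited source. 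Your closing observation is also accurate and well placed: no single measurable set of density $1/n$ can split every initial segment $[0,L)$ exactly, which is why the equimeasurable decomposition only exists after the reduction to step functions, whose finitely many jump points $na_i$ are the only initial segments the partition $A_0,\dots,A_{n-1}$ must bisect exactly.
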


We conclude this section with a result which shows that for r.i.~Banach function spaces, triangle inequality implies local integrability. This result is both useful and interesting from the conceptual point of view, it is therefore rather surprising that it is quite often overlooked.

\begin{proposition} \label{PropP1P5}
	Assume that $(\mathcal{R}, \mu)$ is resonant and let $\lVert \cdot \rVert_X$ be an r.i.~quasi-Banach function norm which satisfies \textup{(P1)} (i.e.~it satisfies the axioms \ref{P1}-\ref{P4}). Then $\lVert \cdot \rVert_X$ also satisfies \textup{(P5)}, i.e.~it is an r.i.~Banach function norm.
\end{proposition}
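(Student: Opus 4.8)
The plan is to combine the Hardy--Littlewood inequality with an averaging argument that rests squarely on the honest triangle inequality \ref{P1c}; this last point is the conceptual heart of the matter, since it is exactly the property that separates the present hypothesis from a general quasinorm. Fix $E\subseteq\mathcal R$ with $\mu(E)=a<\infty$ and $f\in\mathcal{M}_+$. We may assume $a>0$ and $f\in X$, as otherwise the claimed inequality is trivial. By the Hardy--Littlewood inequality (Theorem~\ref{THLI}) applied to $f$ and $\chi_E$, and using $(\chi_E)^*=\chi_{[0,a)}$, we get
\[
\int_E f\,d\mu=\int_{\mathcal R} f\chi_E\,d\mu\le\int_0^\infty f^*(\chi_E)^*\,d\lambda=\int_0^a f^*\,d\lambda=a\,f^{**}(a).
\]
Thus it suffices to bound $f^{**}(a)$ by $\lVert f\rVert_X$. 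Since $\lVert\chi_E\rVert_X<\infty$ by \ref{P4} and $\lVert\chi_E\rVert_X>0$ by \ref{P1b} (as $a>0$), the natural target is the sharp estimate $f^{**}(a)\,\lVert\chi_E\rVert_X\le\lVert f\rVert_X$, which then yields \ref{P5} with the admissible constant $C_E=a/\lVert\chi_E\rVert_X$.

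To prove this sharp estimate I would first treat the completely atomic case, which by Theorem~\ref{TheoremCharResonance} has all atoms of a common measure $\alpha$, so that $a=N\alpha$ for some $N\in\mathbb N$ and $E$ is, up to a null set, the union of $N$ atoms $A_1,\dots,A_N$. Let $v_1\ge\cdots\ge v_N$ be the $N$ largest values of $|f|$ on the atoms, so that $f^{**}(a)=\frac1N\sum_{k=1}^N v_k$. For each cyclic permutation $\sigma_j$ ($j=1,\dots,N$) of $\{1,\dots,N\}$ define $g_j\in\mathcal{M}_+$ supported on $E$ by $g_j|_{A_k}=v_{\sigma_j(k)}$. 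Because all atoms share the measure $\alpha$, each $g_j$ is equimeasurable with the function that keeps $|f|$ on its $N$ largest atoms and vanishes elsewhere; as this function is dominated by $|f|$, the lattice property \ref{P2} together with rearrangement-invariance gives $\lVert g_j\rVert_X\le\lVert f\rVert_X$. Since the $\sigma_j$ exhaust the cyclic shifts, the values on any fixed atom run through $v_1,\dots,v_N$ as $j$ varies, whence $\frac1N\sum_{j=1}^N g_j=f^{**}(a)\,\chi_E$. Applying the finite triangle inequality \ref{P1c} (iterated) and homogeneity \ref{P1a} yields
\[
f^{**}(a)\,\lVert\chi_E\rVert_X=\left\lVert\tfrac1N\sum_{j=1}^N g_j\right\rVert_X\le\tfrac1N\sum_{j=1}^N\lVert g_j\rVert_X\le\lVert f\rVert_X.
\]

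In the non-atomic case the same averaging principle applies, but the discreteness must be manufactured by hand, and this is the step I expect to be the main obstacle. I would first approximate $|f|$ from below by simple functions $s_m\uparrow|f|$; the lattice property \ref{P2} gives $\lVert s_m\rVert_X\le\lVert f\rVert_X$, while $s_m^*\uparrow f^*$ forces $s_m^{**}(a)\uparrow f^{**}(a)$ by monotone convergence, so it is enough to establish the sharp estimate for each $s_m$. For a fixed simple function one can, using non-atomicity, partition $E$ into $M$ cells of equal measure $a/M$ on which $s_m^*$ is resolved into constants $v_1\ge\cdots\ge v_M$ (after an approximation arranging the jumps of $s_m^*$ to fall at multiples of $a/M$), and then run verbatim the cyclic-permutation averaging above, the equal measure of the cells again guaranteeing $\lVert g_j\rVert_X\le\lVert s_m\rVert_X\le\lVert f\rVert_X$ through rearrangement-invariance and \ref{P2}. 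Alternatively, one may bypass the commensurability bookkeeping by using the continuous family $f_\theta(x)=f^*((\Phi(x)+\theta)\bmod a)$, where $\Phi\colon E\to[0,a)$ is a measure isomorphism: each $f_\theta$ is equimeasurable with the top part of $|f|$ of total measure $a$, and averaging in $\theta$ produces exactly $f^{**}(a)\chi_E$, so that the conclusion follows from Minkowski's integral inequality for $\lVert\cdot\rVert_X$. Either way the genuine difficulty is justifying the passage from the finite triangle inequality to this continuous (or approximate) averaging while remaining inside the axioms \ref{P1}--\ref{P4}; and the whole scheme relies essentially on the honest subadditivity \ref{P1c} rather than its quasinorm weakening, which is precisely why the conclusion would be expected to fail without the full hypothesis \ref{P1}.
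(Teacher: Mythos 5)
Your proposal is correct and takes essentially the same route as the paper: approximate by simple functions built on pairwise disjoint cells of equal measure (manufactured via non-atomicity, i.e.\ Sierpi\'nski's theorem), average the cyclic permutations of the coefficients over the cells, and invoke the genuine triangle inequality \ref{P1c} together with rearrangement-invariance and \ref{P2} — the paper differs only cosmetically, estimating $\int_E \varphi \, d\mu$ directly (accepting the constant $2\mu(E)/\lVert \chi_E \rVert_X$ via a half-measure set $\widetilde{E}$, and carving the cells out of the level sets of the simple function so that no alignment of jumps of the rearrangement is needed) rather than going through Hardy--Littlewood and $f^{**}(a)$, and dismissing the atomic case by a citation where you argue it directly. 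One caveat: your alternative via Minkowski's integral inequality for $\lVert \cdot \rVert_X$ should be dropped, since that inequality is not available from \ref{P1}--\ref{P4} alone (its standard proof passes through the associate space and hence through \ref{P5}, the very property being established), so the discrete cyclic-shift averaging with the Sierpi\'nski cell construction carried out in full — exactly the paper's labor — is the branch to keep.
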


The proof is based on that of \cite[Chapter~II, Theorem~4.1]{KreinPetunin82}. We will include it here for the sake of completeness, and also because it seems to us that this book is not that well known. 

\begin{proof}
	We will only consider the case when $(\mathcal{R}, \mu)$ is non-atomic as the remaining case of sequences is trivial thanks to \cite[Theorem~3.12]{NekvindaPesa20}. 
	
	Fix $E \subseteq \mathcal{R}$ with $\mu(E) < \infty$. We may assume without any loss of generality that $\mu(E) >0 $. In the first step, we consider non-negative simple functions of a special form. We let $n \in \mathbb{N}$, $n>0$, be arbitrary and assume that we have some pairwise disjoint sets $E_0, \dots, E_{n-1} \subseteq E$ such that we have $\mu(E_i) = \mu(E_j)$ for all pairs $i, j$. We denote this shared value by $\delta$ and also put $\widetilde{E} = \bigcup_{i=1}^{n-1} E_i$. Clearly, $\mu(\widetilde{E}) = n\delta \leq \mu(E)$. We further assume that the sets are chosen in such a way that $\mu(\widetilde{E}) \geq \frac{\mu(E)}{2}$. We may then establish the following lower bound on $\lVert \chi_{\widetilde{E}} \rVert_{X}$:
	
	Since $(\mathcal{R}, \mu)$ is non-atomic, there is some $F \subseteq \widetilde{E}$ satisfying $\mu(F) = \frac{\mu(E)}{2}$ (using the classical Sierpiński theorem). It follows that $\chi_F$ and $\chi_{E \setminus F}$ are equimeasurable, hence we obtain from the rearrangement-invariance of $\lVert \cdot \rVert_X$ and its properties \ref{P1} and \ref{P2} that
	\begin{equation} \label{PropP1P5:01}
		\lVert \chi_E \rVert_X \leq \lVert \chi_F \rVert_X + \lVert \chi_{E \setminus F} \rVert_X = 2 \lVert \chi_F \rVert_X \leq 2 \lVert \chi_{\widetilde{E}} \rVert_X.
	\end{equation}	
		
	Let now $\varphi$ be a simple function of the form
	\begin{equation*}
		\varphi = \sum_{i=0}^{n-1} \alpha_i \chi_{E_i}
	\end{equation*}
	for some numbers $\alpha_i \in [0, \infty)$; this is the special case we consider in this first step of the proof. We further introduce the derived functions $\varphi_k$, for $k \in \{0, \dots, n-1\}$, given by the formulas
	\begin{equation*}
		\varphi_k = \sum_{i=0}^{n-1} \alpha_i \chi_{E_{[(i+k) \mod n ]}}.
	\end{equation*}
	To provide some intuition, the functions $\varphi_k$ are created from $\varphi = \varphi_0$ by a process that can be vaguely described as ``rotating the coefficients over the sets''. It follows that the functions $\varphi_k$ are all equimeasurable and that
	\begin{equation*}
		\sum_{k = 0}^{n-1} \varphi_k = \chi_{\widetilde{E}} \sum_{i = 0}^{n-1} \alpha_i.
	\end{equation*}
	Hence, using the assumptions that $\lVert \cdot \rVert_X$ satisfies \ref{P1} and \ref{P4} and that it is rearrangement-invariant, together with \eqref{PropP1P5:01}, we get
	\begin{equation} \label{PropP1P5:1}
		\int_E \varphi \: d\mu = \int_{\widetilde{E}} \varphi \: d\mu = \left ( \sum_{i=0}^{n-1}  \delta \alpha_i \right ) \frac{\lVert \chi_{\widetilde{E}} \rVert_X}{\lVert \chi_{\widetilde{E}} \rVert_X} = \frac{\delta}{\lVert \chi_{\widetilde{E}} \rVert_X} \left \lVert \sum_{k=0}^{n-1} \varphi_k \right \rVert_X \leq \frac{2\mu(E)}{\lVert \chi_E \rVert_X} \left \lVert \varphi \right \rVert_X.
	\end{equation}
	Note that the constant $\frac{2\mu(E)}{\lVert \chi_E \rVert_X}$ depends only on the measure of $E$ and the norm of its characteristic function.
	
	The next step is to extend the estimate \eqref{PropP1P5:1} to general non-negative simple functions supported on the still fixed set $E$. Consider thus the simple function $\psi$ of the form
	\begin{equation*}
		\psi = \sum_{i=0}^{N-1} \alpha_i \chi_{E_i},
	\end{equation*}
	where $N \in \mathbb{N}$ and $\alpha_i \in [0, \infty)$ are arbitrary and $E_i$ are now arbitrary pairwise disjoint subsets of $E$ (in contrast to the previous step where they all had the same measure). We may assume without loss of generality that
	\begin{equation*}
		\bigcup_{i=0}^{N-1} E_i = E.
	\end{equation*}
	because otherwise we consider
	\begin{equation*}
		\widetilde{\psi} = \psi + 0 \chi_{E \setminus \bigcup_{i=0}^{N-1} E_i},
	\end{equation*}
	instead of $\psi$, which is the same function just written in a way that satisfies this extra requirement. We shall construct a sequence $\varphi_n$ of non-negative simple functions supported on $E$ that are of the form considered in the previous step and that satisfy $\varphi_n \uparrow \psi$ $\mu$-a.e. Then, by applying (in this order) the lattice property \ref{P2} of $\lVert \cdot \rVert_X$ on the right-hand side of \eqref{PropP1P5:1} and the monotone convergence theorem on its left-hand side, we obtain that \eqref{PropP1P5:1} also holds for $\psi$.
	
	The construction proceeds as follows. For each $n \in \mathbb{N}$, we put $\delta_n = 2^{-n}$. For each fixed $E_i$, we find by induction a sequence of partial coverings by sets of measure $\delta_n$. Each of the individual partial coverings will also be constructed by (finite) induction.
	
	Fix $i$ and assume that $\mu(E_i)>0$, otherwise there is nothing to prove. Let $n=0$ and let $K_{i,0} = \lfloor \mu(E_i) \rfloor$ denote the integer part of $\mu(E_i)$. By applying the classical Sierpiński theorem $K_{i,0}$ times, we obtain pairwise disjoint sets $E_{i,0,k}$, for $k \in \{1, \dots, K_{i,0}\}$ such that $\mu(E_{i,0,k}) = \delta_0 = 1$ for all $k$. We further obtain the set 
	\begin{equation*}
		\widehat{E}_{i,0} = E \setminus \left ( \bigcup_{k=1}^{K_{i,0}} E_{i,0,k} \right )
	\end{equation*}
	with $\mu(\widehat{E}_{i,0}) < \delta_0 = 1$.
	
	We note that it is possible for $K_{i,0}$ to be zero, but that is of no consequence, we simply construct no sets in this step and put $\widehat{E}_{i,0} = E$.
	
	Suppose now that we already have for some $n \in \mathbb{N}$ the pairwise disjoint sets $E_{i,n,k}$, $k \in \{1, \dots, K_{i,n}\}$, of measure $\delta_n$ and the set 
	\begin{equation*}
		\widehat{E}_{i,n} = E \setminus \left ( \bigcup_{k=1}^{K_{i,n}} E_{i,n,k} \right )
	\end{equation*}
	 with $\mu(\widehat{E}_{i,n}) < \delta_n$. For $k \in \{1, \dots, K_{i,n}\}$, we simply apply the classical Sierpiński theorem on $E_{i, n, k}$ to ``divide it in halves'', i.e.~obtain two disjoint sets, $E_{i, n+1, 2k}, E_{i, n+1, 2k+1} \subseteq E_{i, n, k}$, with $\mu(E_{i, n+1, 2k}) = \mu(E_{i, n+1, 2k+1}) = \delta_{n+1}$ ($K_{i,n}$ can possibly be zero, in that case we simply do nothing). As for the remainder set $\widehat{E}_{i,n}$, there are two options:
	 \begin{enumerate}
	 	\item Either $\mu(\widehat{E}_{i,n}) < \delta_{n+1} = \frac{\delta_n}{2}$, then we simply put $K_{i, n+1} = 2K_{i, n}$ and $\widehat{E}_{i,n+1} = \widehat{E}_{i,n}$;
	 	\item or $\mu(\widehat{E}_{i,n}) \geq \delta_{n+1}$, then we put $K_{i, n+1} = 2K_{i, n} +1$ and apply again the classical Sierpiński theorem (once) on $\widehat{E}_{i,n}$ to obtain a set $E_{i, n+1, K_{i,n+1}} \subseteq \widehat{E}_{i,n}$ with $\mu(E_{i, n+1, k}) = \delta_{n+1}$. Finally, we put $\widehat{E}_{i,n+1} = \widehat{E}_{i,n} \setminus E_{i,n+1, K_{i, n+1}}$.
	 \end{enumerate}
	 
	 We observe that it holds in either case that
	 \begin{equation*}
	 	\widehat{E}_{i,n+1} = E \setminus \left ( \bigcup_{k=1}^{K_{i,n+1}} E_{i,n+1,k} \right )
	 \end{equation*}
	 and that $\mu(\widehat{E}_{i,n+1}) < \delta_{n+1}$.
	
	Having constructed the sequences of partial coverings of sets $E_i$, we now define the simple functions $\varphi_n$ by
	\begin{equation*}
		\varphi_n = \sum_{i = 0}^{N-1} \sum_{k = 1}^{K_{i,n}} \alpha_i \chi_{E_{i, n, k}}.
	\end{equation*}
	We further put 
	\begin{equation*}
		\widetilde{E}_n = E \setminus \left( \bigcup_{i=0}^{N-1} \widehat{E}_{i, n} \right).
	\end{equation*}
	We observe, that we have $\mu(E_{i, n, k}) = \delta_n$ for all $i, k$ and that $\varphi_n = \psi \chi_{\widetilde{E}}$. Since $\mu(\widehat{E}_{i,n}) < \delta_n$, we see that $\mu(E \setminus \widetilde{E}_n) \to 0$ as $n \to \infty$. Hence, for $n$ large enough, $\varphi_n$ are of the form considered in the first step. Finally, we have for every $i$ and every $n$ that $\widehat{E}_{i, n+1} \subseteq \widehat{E}_{i, n}$, and thus it clearly follows that $\varphi_n \uparrow \psi$ $\mu$-a.e. This concludes the second step of the proof, i.e.~we have shown that \eqref{PropP1P5:1} holds for arbitrary non-negative simple functions supported on $E$.
	
	Finally, to expand \eqref{PropP1P5:1} to arbitrary $f \in \mathcal{M}_+$, one only needs to consider an arbitrary sequence of non-negative simple functions $\psi_n$, supported on $E$, such that $\psi_n \uparrow f \chi_E$ $\mu$-a.e.~and then repeat verbatim the extension procedure employed in the previous step. This construction is classical but also brief, hence we include it for completeness.
	
	For $n \in \mathbb{N}$ we consider the sets $E_{n,i}$ given for $i \in \{0, \dots, n2^n\}$ by
	\begin{equation*}
		E_{n, i} = \left \{x \in E; \; i 2^{-n} \leq f(x) < (i+1) 2^{-n} \right \}
	\end{equation*}
	and put
	\begin{equation*}
		\psi_n = \sum_{i = 0}^{n 2^n} i 2^{-n} \chi_{E_{n,i}}.
	\end{equation*}
\end{proof}

We would like to note that the proof does not use the Fatou property \textup{(P3)}. Furthermore, the assumption that $(\mathcal{R}, \mu)$ is resonant constitutes only a minor loss in generality, as the measure spaces that do not satisfy this assumption are of little interest in the context of r.i.~quasi-Banach function spaces. On the other hand, most of the technical difficulties of the proof are caused by the relative lack of structure in a general non-atomic measure space. For a non-decreasing function defined on an interval, it would be rather trivial to construct an approximating sequence of simple functions of the special type required for the first step of the proof. Hence, Theorem~\ref{TheoremRepresentation} and Corollary~\ref{CorollaryRepresentation} that we present below (together with the Hardy--Littlewood inequality, Theorem~\ref{THLI}) allows for a much simpler proof.

\subsection{Luxemburg representation theorem}

An extremely important result in the theory of r.i.~Banach function spaces it the Luxemburg representation theorem which we already mentioned in the Introduction: 
\begin{theorem} \label{TheoremLuxemburgRepresentation}
	Let $(\mathcal{R},\mu)$ be resonant and let $\lVert \cdot \rVert_X$ be an r.i.~Banach function norm on $\mathcal{M}(\mathcal{R},\mu)$. Then there is an r.i.~Banach function norm $\lVert \cdot \rVert_{\overline{X}}$ on $\mathcal{M}((0,\mu(\mathcal{R})), \lambda)$ such that for every $f \in \mathcal{M}(\mathcal{R},\mu)$ it holds that $\lVert f \rVert_X=\lVert f^* \rVert_{\overline{X}}$.
\end{theorem}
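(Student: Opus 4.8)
The plan is to follow the classical route for this Banach-space statement, building $\lVert \cdot \rVert_{\overline{X}}$ out of the associate norm of $\lVert \cdot \rVert_X$ and extracting the representation identity from the interplay of Köthe reflexivity and the resonance hypothesis. Write $\rho = \lVert \cdot \rVert_X$ and let $\rho'$ denote its associate norm, given on $\mathcal{M}_+(\mathcal{R},\mu)$ by $\rho'(h) = \sup\{\int_{\mathcal{R}} fh\,d\mu : \rho(f)\le 1\}$. Since $\rho$ is an r.i.\ Banach function norm, the classical theory of associate spaces ensures that $\rho'$ is again an r.i.\ Banach function norm and that $\rho$ is recovered by Köthe reflexivity, i.e.\ $\rho(f)=\sup\{\int_{\mathcal{R}} fh\,d\mu : \rho'(h)\le 1\}$ for every $f\in\mathcal{M}_+$.

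Next I would write down the candidate representation: for $g\in\mathcal{M}_+((0,\mu(\mathcal{R})),\lambda)$ set
\[
	\lVert g\rVert_{\overline{X}} = \sup\Big\{ \int_0^{\mu(\mathcal{R})} g^* h^*\,d\lambda : h\in\mathcal{M}(\mathcal{R},\mu),\ \rho'(h)\le 1 \Big\},
\]
extended to all of $\mathcal{M}((0,\mu(\mathcal{R})),\lambda)$ via $\lVert g\rVert_{\overline{X}}=\lVert\,\lvert g\rvert\,\rVert_{\overline{X}}$. Because this quantity depends on $g$ only through $g^*$, the functional is rearrangement-invariant by construction; this is what lets me avoid having to realise every non-increasing function on $(0,\mu(\mathcal{R}))$ as some $f^*$, a step that would fail in the completely atomic case. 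The representation identity is then almost formal. For $f\in\mathcal{M}_+$ one has $(f^*)^*=f^*$, so $\lVert f^*\rVert_{\overline{X}}=\sup_{\rho'(h)\le1}\int_0^{\infty}f^*h^*\,d\lambda$ (the integrand is supported on $(0,\mu(\mathcal{R}))$). Fixing $h$ and supremising over $\tilde h$ equimeasurable with $h$, the resonance hypothesis converts $\sup_{\tilde h^*=h^*}\int_{\mathcal{R}} f\tilde h\,d\mu$ into $\int_0^{\infty} f^*h^*\,d\lambda$, while the constraint $\rho'(\tilde h)\le1$ is unaffected since $\rho'$ is r.i.; comparing the two suprema in both directions gives $\lVert f^*\rVert_{\overline{X}}=\rho(f)$.

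It then remains to verify that $\lVert \cdot \rVert_{\overline{X}}$ is an r.i.\ Banach function norm. Positive homogeneity, the lattice property (from $g_1\le g_2\Rightarrow g_1^*\le g_2^*$), and the Fatou property (from $g_n\uparrow g\Rightarrow g_n^*\uparrow g^*$ together with monotone convergence and the interchange of the two suprema) are routine, and the nontriviality $\lVert g\rVert_{\overline{X}}=0\Leftrightarrow g=0$ follows by testing against a normalised characteristic function $\chi_E/\rho'(\chi_E)$ with $0<\mu(E)<\infty$, using that $\rho'$ is a genuine function norm so that $\rho'(\chi_E)\in(0,\infty)$. The one delicate axiom is subadditivity, where the nonlinearity of $g\mapsto g^*$ must be circumvented: I would combine the Hardy--Littlewood--Pólya relation $\int_0^t(g_1+g_2)^*\,d\lambda\le\int_0^t g_1^*\,d\lambda+\int_0^t g_2^*\,d\lambda$ with Hardy's lemma applied to the non-increasing weight $h^*$ to obtain $\int_0^{\mu(\mathcal{R})}(g_1+g_2)^*h^*\,d\lambda\le\int_0^{\mu(\mathcal{R})}g_1^*h^*\,d\lambda+\int_0^{\mu(\mathcal{R})}g_2^*h^*\,d\lambda$, and then take suprema.

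For the remaining axiom (P5) I would not argue by hand but instead invoke Proposition~\ref{PropP1P5}: the target space $((0,\mu(\mathcal{R})),\lambda)$ is non-atomic, hence resonant, so once (P1)--(P4) are in place for the r.i.\ functional $\lVert \cdot \rVert_{\overline{X}}$, property (P5) is automatic and $\lVert \cdot \rVert_{\overline{X}}$ is a genuine r.i.\ Banach function norm. I expect the subadditivity step to be the main obstacle, precisely because $g\mapsto g^*$ is not linear and the triangle inequality cannot be read off termwise; Hardy's lemma is the tool that makes it go through, and everything else reduces to bookkeeping and standard facts about rearrangements.
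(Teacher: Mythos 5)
Your proposal is correct and follows essentially the same route as the paper's (cited) proof of Theorem~\ref{TheoremLuxemburgRepresentation}, namely the classical Köthe-duality argument of \cite[Chapter~2, Theorem~4.10]{BennettSharpley88}: define $\lVert g \rVert_{\overline{X}}$ as the supremum of $\int_0^{\mu(\mathcal{R})} g^* h^* \: d\lambda$ over the unit ball of the associate norm, extract the representation identity from resonance combined with the Lorentz--Luxemburg reflexivity $\rho'' = \rho$, and obtain the triangle inequality from the subadditivity of $g \mapsto \int_0^t g^* \: d\lambda$ together with Hardy's lemma. Your only deviation is obtaining \ref{P5} from Proposition~\ref{PropP1P5} (using that $((0,\mu(\mathcal{R})), \lambda)$ is non-atomic, hence resonant) instead of verifying it directly via Hölder's inequality as in the classical source, which is a legitimate shortcut.
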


Since we already discussed the history and importance of this result in the Introduction, let us just repeat that this result was first obtained in \cite{Luxemburg67} and that the standard modern reference is \cite[Chapter~2, Theorem~4.10]{BennettSharpley88}.

\subsection{Associate spaces}
Another important concept is that of an associate space. The detailed study of associate spaces of Banach function spaces can be found in \cite[Chapter 1, Sections 2, 3 and 4]{BennettSharpley88}. We will approach the issue in a slightly more general way. The definition of an associate space requires no assumptions on the functional defining the original space.

\begin{definition} \label{DAS}
	Let $\lVert \cdot \rVert_X: \mathcal{M} \to [0, \infty]$ be some non-negative functional and put
	\begin{equation*}
		X = \{ f \in \mathcal{M}; \lVert f \rVert_X < \infty \}.
	\end{equation*} 
	Then the functional $\lVert \cdot \rVert_{X'}$ defined for $f \in \mathcal{M}$ by 
	\begin{equation*}
		\lVert f \rVert_{X'} = \sup_{g \in X} \frac{1}{\lVert g \rVert_X} \int_\mathcal{R} \lvert f g \rvert \: d\mu, \label{DAS1}
	\end{equation*}
	where we interpret $\frac{0}{0} = 0$ and $\frac{a}{0} = \infty$ for any $a>0$, will be called the associate functional of $\lVert \cdot \rVert_X$ while the set
	\begin{equation*}
		X' = \left \{ f \in \mathcal{M}; \lVert f \rVert_{X'} < \infty \right \}
	\end{equation*}
	will be called the associate space of $X$.
\end{definition}

As suggested by the notation, we will be interested mainly in the case when  $\lVert \cdot \rVert_X$ is at least a quasinorm, but we wanted to indicate that such assumption is not necessary for the definition. In fact, it is not even required for the following result, which is the Hölder inequality for associate spaces.

\begin{theorem} \label{THAS}
	Let $\lVert \cdot \rVert_X: \mathcal{M} \to [0, \infty]$ be some non-negative functional and denote by $\lVert \cdot \rVert_{X'}$ its associate functional. Then it holds for all $f,g \in \mathcal{M}$ that
	\begin{equation*}
		\int_\mathcal{R} \lvert f g \rvert \: d\mu \leq \lVert g \rVert_X \lVert f \rVert_{X'}
	\end{equation*}
	provided that we interpret $0 \cdot \infty = -\infty \cdot \infty = \infty$ on the right-hand side.
\end{theorem}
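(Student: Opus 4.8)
The plan is to establish the inequality by a direct case analysis on the value of $\lVert g \rVert_X \in [0, \infty]$, since the assertion is essentially immediate from the definition of the associate functional once the degenerate cases are accounted for.

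First I would treat the generic case $0 < \lVert g \rVert_X < \infty$, in which $g$ itself belongs to $X$. The term corresponding to this particular $g$ in the supremum defining $\lVert f \rVert_{X'}$ provides the lower bound
\begin{equation*}
	\frac{1}{\lVert g \rVert_X} \int_\mathcal{R} \lvert f g \rvert \: d\mu \leq \sup_{h \in X} \frac{1}{\lVert h \rVert_X} \int_\mathcal{R} \lvert f h \rvert \: d\mu = \lVert f \rVert_{X'},
\end{equation*}
and multiplying through by the positive finite number $\lVert g \rVert_X$ yields exactly the claimed inequality.

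Next I would dispose of the case $\lVert g \rVert_X = \infty$, that is $g \notin X$. Here the right-hand side is $\infty \cdot \lVert f \rVert_{X'}$, which equals $\infty$ irrespective of the value of $\lVert f \rVert_{X'}$: if $\lVert f \rVert_{X'} > 0$ the product is infinite in the ordinary sense, while if $\lVert f \rVert_{X'} = 0$ the stipulated convention $0 \cdot \infty = \infty$ applies. In either event the inequality holds trivially.

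The only case demanding a genuine argument is $\lVert g \rVert_X = 0$, and this is where I expect the (admittedly modest) crux to lie. If $\lVert f \rVert_{X'} = \infty$ then the right-hand side is $0 \cdot \infty = \infty$ by convention and nothing need be shown, so I may assume $\lVert f \rVert_{X'} < \infty$ and must verify that $\int_\mathcal{R} \lvert f g \rvert \: d\mu = 0$. The key is to invoke the definitional conventions $\frac{0}{0} = 0$ and $\frac{a}{0} = \infty$ for $a > 0$: since $\lVert g \rVert_X = 0$, the term attached to $g$ in the supremum defining $\lVert f \rVert_{X'}$ equals $\frac{1}{\lVert g \rVert_X} \int_\mathcal{R} \lvert f g \rvert \: d\mu$, which is forced to be $\infty$ the moment the integral is positive. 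As this term is bounded above by $\lVert f \rVert_{X'} < \infty$, the integral cannot be positive, whence $\int_\mathcal{R} \lvert f g \rvert \: d\mu = 0 \leq 0 = \lVert g \rVert_X \lVert f \rVert_{X'}$, as desired.
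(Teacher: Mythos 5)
Your overall strategy---a direct case analysis on the value of $\lVert g \rVert_X$---is exactly the intended argument: the paper states Theorem~\ref{THAS} without any proof, accompanying it only with a remark explaining why the two conventions are needed, so your write-up supplies precisely the verification the paper treats as immediate. Your first case is complete, and your third case correctly exploits the definitional conventions $\frac{0}{0}=0$ and $\frac{a}{0}=\infty$ to force $\int_{\mathcal{R}} \lvert fg \rvert \: d\mu = 0$ when $\lVert g \rVert_X = 0$ and $\lVert f \rVert_{X'} < \infty$; that is indeed the one spot where something must actually be argued.

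There is, however, one small but genuine hole, in the case $\lVert g \rVert_X = \infty$. You assert that the right-hand side is infinite ``irrespective of the value of $\lVert f \rVert_{X'}$'' but then enumerate only the possibilities $\lVert f \rVert_{X'} > 0$ and $\lVert f \rVert_{X'} = 0$. This tacitly assumes $\lVert f \rVert_{X'} \geq 0$, which fails precisely when $X = \emptyset$ (i.e.\ when $\lVert \cdot \rVert_X \equiv \infty$, so that \emph{every} $g$ falls into your second case): then $\lVert f \rVert_{X'} = \sup \emptyset = -\infty$. This is exactly the pathology for which the theorem stipulates $-\infty \cdot \infty = \infty$---a convention your proof never invokes---and it is the one the paper's remark immediately after the statement singles out. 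The repair is one line: if $X = \emptyset$, the right-hand side equals $\infty \cdot (-\infty) = \infty$ by that convention, and the inequality holds trivially. It would also be worth noting explicitly that in your first and third cases the value $-\infty$ cannot occur, since there $g \in X$ guarantees that the supremum defining $\lVert f \rVert_{X'}$ runs over a nonempty set of nonnegative terms.
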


The convention concerning the products at the end of this theorem is necessary precisely because we put no restrictions on $\lVert \cdot \rVert_X$ and thus there occur some pathological cases which need to be taken care of. Specifically, $0 \cdot \infty = \infty$ is required because we allow $\lVert g \rVert_X = 0$ even for non-zero $g$ while $-\infty \cdot \infty = \infty$ is required because Definition~\ref{DAS} allows $X = \emptyset$ which implies $\lVert f \rVert_{X'} = \sup \emptyset = -\infty$.

In order for the associate functional to be well behaved some assumptions on $\lVert \cdot \rVert_X$ are needed. The following result, due to Gogatishvili and Soudsk{\'y} in \cite{GogatishviliSoudsky14}, provides a sufficient condition for the associate functional to be a Banach function norm.

\begin{theorem} \label{TFA}
	Let $\lVert \cdot \rVert_X : \mathcal{M} \to [0, \infty]$ be a functional that satisfies the axioms \ref{P4} and \ref{P5} from the definition of Banach function spaces and which also satisfies for all $f \in \mathcal{M}$ that $\lVert f \rVert_X$ = $\lVert \, \lvert f \rvert \, \rVert_X$. Then the functional $\lVert \cdot \rVert_{X'}$ is a Banach function norm. In addition, $\lVert \cdot \rVert_X$ is equivalent to a Banach function norm if and only if $\lVert \cdot \rVert_X \approx \lVert \cdot \rVert_{X''}$, where $\lVert \cdot \rVert_{X''}$ denotes the associate functional of $\lVert \cdot \rVert_{X'}$.
\end{theorem}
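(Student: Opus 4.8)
The strategy splits along the two assertions. For the first, the plan is to verify the axioms \ref{P1}--\ref{P5} for $\lVert\cdot\rVert_{X'}$ directly from its defining supremum, exploiting a duality between \ref{P4} and \ref{P5}: property \ref{P5} of $X$ will force \ref{P4} of $X'$, while \ref{P4} of $X$ will force \ref{P5} of $X'$. Indeed, positive homogeneity \ref{P1a}, subadditivity \ref{P1c}, and the lattice property \ref{P2} are immediate, since $\lVert\cdot\rVert_{X'}$ is a supremum of the order-preserving sublinear functionals $f\mapsto\lVert g\rVert_X^{-1}\int_{\mathcal R}\lvert fg\rvert\,d\mu$ (the property $\lVert f\rVert_X=\lVert\lvert f\rvert\rVert_X$ also passing trivially to $X'$). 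For \ref{P4}, fix $E$ with $\mu(E)<\infty$; property \ref{P5} of $X$ gives $\int_E\lvert g\rvert\,d\mu\le C_E\lVert g\rVert_X$ for every $g$, and dividing by $\lVert g\rVert_X$ and taking the supremum yields $\lVert\chi_E\rVert_{X'}\le C_E<\infty$. Conversely, for \ref{P5} of $X'$, property \ref{P4} of $X$ guarantees $\chi_E\in X$, so testing the supremum defining $\lVert f\rVert_{X'}$ against $g=\chi_E$ gives $\int_E f\,d\mu\le\lVert\chi_E\rVert_X\lVert f\rVert_{X'}$, i.e.\ \ref{P5} holds with $C_E=\lVert\chi_E\rVert_X$.

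The two slightly more delicate points in the first half are the nondegeneracy half of \ref{P1b} and the Fatou property \ref{P3}. For nondegeneracy, I would suppose $\lVert f\rVert_{X'}=0$ yet $f\neq0$ on a set of positive measure; by $\sigma$-finiteness one extracts a set $E$ with $0<\mu(E)<\infty$ on which $\lvert f\rvert\ge\varepsilon>0$, and then testing against $\chi_E$ (finite-norm by \ref{P4} of $X$) forces $\lVert f\rVert_{X'}\ge\varepsilon\mu(E)/\lVert\chi_E\rVert_X>0$, or $=\infty$ if $\lVert\chi_E\rVert_X=0$ via the conventions $\tfrac00=0$ and $\tfrac a0=\infty$, a contradiction in either case. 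For \ref{P3}, if $f_n\uparrow f$ then for each fixed $g$ the monotone convergence theorem gives $\int_{\mathcal R}\lvert f_n g\rvert\,d\mu\uparrow\int_{\mathcal R}\lvert fg\rvert\,d\mu$, and interchanging the two suprema (over $n$ and over $g$) yields $\lVert f_n\rVert_{X'}\uparrow\lVert f\rVert_{X'}$. This completes the first assertion, using only \ref{P4}, \ref{P5}, and the absolute-value property of $X$.

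For the second assertion, I would first observe that since $\lVert\cdot\rVert_{X'}$ is now a Banach function norm it satisfies \ref{P4}, \ref{P5}, and the absolute-value property, so the first part applied to $\lVert\cdot\rVert_{X'}$ shows that $\lVert\cdot\rVert_{X''}$ is also a Banach function norm. The backward implication is then immediate: if $\lVert\cdot\rVert_X\approx\lVert\cdot\rVert_{X''}$ then $\lVert\cdot\rVert_X$ is equivalent to the Banach function norm $\lVert\cdot\rVert_{X''}$. For the forward implication, let $\lVert\cdot\rVert_Y$ be a Banach function norm with $\lVert\cdot\rVert_X\approx\lVert\cdot\rVert_Y$. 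The key is that the associate construction is order-reversing and hence preserves equivalence: since $X$ and $Y$ coincide as sets and the norms are two-sidedly comparable, comparing termwise in the defining suprema gives $\lVert\cdot\rVert_{X'}\approx\lVert\cdot\rVert_{Y'}$, and applying the same step once more yields $\lVert\cdot\rVert_{X''}\approx\lVert\cdot\rVert_{Y''}$ (the equivalence constants merely interchange at each step but never degenerate). Invoking the classical Lorentz--Luxemburg theorem (the K\"othe reflexivity $\lVert\cdot\rVert_{Y''}=\lVert\cdot\rVert_Y$ for genuine Banach function norms) then gives $\lVert\cdot\rVert_{X''}\approx\lVert\cdot\rVert_Y\approx\lVert\cdot\rVert_X$, as required.

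The main obstacle is concentrated entirely in this forward implication, and it is the one place where genuine external input is needed rather than bookkeeping: the classical fact that every Banach function norm equals its own second associate. Everything else reduces to careful manipulation of the defining supremum, the only real traps being that the degenerate conventions $\tfrac00=0$ and $\tfrac a0=\infty$ must be respected wherever some $\lVert g\rVert_X$ vanishes (notably in the nondegeneracy argument), and that taking associates reverses inequalities, so one must track that the equivalence constants swap but stay positive and finite across the two applications.
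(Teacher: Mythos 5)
Your proposal is correct, and it is essentially the proof that exists in the literature: the paper itself does not prove Theorem~\ref{TFA} but quotes it from Gogatishvili and Soudsk{\'y} \cite{GogatishviliSoudsky14}, whose argument likewise verifies the axioms for $\lVert \cdot \rVert_{X'}$ directly from the defining supremum (with \ref{P4} and \ref{P5} exchanging roles exactly as you describe) and settles the forward implication by the order-reversing property of the associate construction (Proposition~\ref{PEASG}) combined with the Lorentz--Luxemburg identity $\lVert \cdot \rVert_{Y''}=\lVert \cdot \rVert_{Y}$. One minor tidying remark: in your \ref{P5} and nondegeneracy steps the case $\lVert \chi_E \rVert_X = 0$ can in fact never occur, since applying \ref{P5} of $X$ to $f=\chi_E$ yields $\mu(E)\leq C_E\lVert\chi_E\rVert_X$, so the degenerate conventions need not be invoked there at all.
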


As a special case, we get that the associate functional of any quasi-Banach function space that also satisfies the axiom \ref{P5} is a Banach function norm. This has been observed earlier in \cite[Remark~2.3.(iii)]{EdmundsKerman00}.

Additionally, if $\lVert \cdot \rVert_X$ is a Banach function norm then in fact $\lVert \cdot \rVert_X = \lVert \cdot \rVert_{X''}$. This is a classical result of Lorenz and Luxemburg, proof of which can be found for example in \cite[Chapter~1, Theorem~2.7]{BennettSharpley88}.

Let us point out that even in the case when $\lVert \cdot \rVert_X$, satisfying the assumptions of Theorem~\ref{TFA}, is not equivalent to any Banach function norm we still have the embedding of the space into its second associate space, as formalised in the following statement. The proof is an easy exercise.

\begin{proposition} \label{PESSAS}
	Let $\lVert \cdot \rVert_X$ satisfy the assumptions of Theorem~\ref{TFA}. Then it holds for all $f \in \mathcal{M}$ that
	\begin{equation*}
		\lVert f \rVert_{X''} \leq  \lVert f \rVert_X,
	\end{equation*}
	where $\lVert \cdot \rVert_{X''}$ denotes the associate functional of $\lVert \cdot \rVert_{X'}$.
\end{proposition}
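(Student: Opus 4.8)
The plan is to read off the estimate directly from the Hölder inequality for associate spaces, Theorem~\ref{THAS}, which holds for an arbitrary non-negative functional and its associate without any structural hypotheses. The crucial point is that the integral $\int_\mathcal{R} \lvert fg \rvert \: d\mu$ is symmetric in its two arguments. Hence, although Theorem~\ref{THAS} is stated as $\int_\mathcal{R} \lvert fg \rvert \: d\mu \leq \lVert g \rVert_X \lVert f \rVert_{X'}$, interchanging the names of $f$ and $g$ yields for all $f, g \in \mathcal{M}$ the companion inequality
\[
	\int_\mathcal{R} \lvert fg \rvert \: d\mu \leq \lVert f \rVert_X \lVert g \rVert_{X'}.
\]

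First I would fix $f \in \mathcal{M}$ and recall from Definition~\ref{DAS} that $\lVert f \rVert_{X''} = \sup_{g \in X'} \frac{1}{\lVert g \rVert_{X'}} \int_\mathcal{R} \lvert fg \rvert \: d\mu$, the supremum being taken over those $g$ with $\lVert g \rVert_{X'} < \infty$. For each such $g$ with $\lVert g \rVert_{X'} \neq 0$, dividing the displayed inequality by $\lVert g \rVert_{X'}$ gives $\frac{1}{\lVert g \rVert_{X'}} \int_\mathcal{R} \lvert fg \rvert \: d\mu \leq \lVert f \rVert_X$; taking the supremum over all admissible $g$ then produces $\lVert f \rVert_{X''} \leq \lVert f \rVert_X$, which is the claim. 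If $\lVert f \rVert_X = \infty$ there is nothing to prove.

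The single point that needs care is the behaviour of the quotient in the degenerate cases, governed by the conventions $\frac{0}{0} = 0$ and $\frac{a}{0} = \infty$ of Definition~\ref{DAS}; this is precisely where the hypotheses of Theorem~\ref{TFA} enter. Under those assumptions $\lVert \cdot \rVert_{X'}$ is a genuine Banach function norm, so $\lVert g \rVert_{X'} = 0$ forces $g = 0$ $\mu$-a.e.\ and hence $\int_\mathcal{R} \lvert fg \rvert \: d\mu = 0$, making the corresponding term $\frac{0}{0} = 0$ and harmless; moreover $X'$ is non-trivial, since by \ref{P4} it contains the characteristic function of every set of finite measure, so the supremum is not taken over the empty set. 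I therefore expect no real obstacle: the whole content of the proposition is that the Hölder inequality, available for free, already encodes the embedding $X \hookrightarrow X''$, with the assumptions of Theorem~\ref{TFA} serving only to exclude the pathologies listed after that theorem.
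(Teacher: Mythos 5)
Your proof is correct and is precisely the standard argument intended here: the paper itself gives no proof (it declares the proposition ``an easy exercise''), and the expected solution is exactly your route of combining the H\"older inequality of Theorem~\ref{THAS} with the supremum in Definition~\ref{DAS}. Your handling of the degenerate cases is also right, including the observation that the hypotheses of Theorem~\ref{TFA} enter only to make $\lVert \cdot \rVert_{X'}$ a Banach function norm, so that $\lVert g \rVert_{X'} = 0$ forces $g = 0$ $\mu$-a.e.\ and the convention $\frac{0}{0} = 0$ renders those terms harmless.
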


Finally, we include three simple but useful observations. The first one describes how embeddings of spaces translate to embeddings of the corresponding associate spaces.  We formulate it in its full generality to showcase that it does not require any assumptions on the functionals, but we are of course mostly interested in the case when they are quasi-Banach function norm. The proof is an easy modification of \cite[Chapter~2, Proposition~2.10]{BennettSharpley88}.

\begin{proposition} \label{PEASG}
	Let $\lVert \cdot \rVert_X: \mathcal{M} \to [0, \infty]$ and $\lVert \cdot \rVert_Y: \mathcal{M} \to [0, \infty]$ be two non-negative functionals satisfying that there is a constant $C>0$ such that it holds for all $f \in \mathcal{M}$ that
	\begin{equation*}
		\lVert f \rVert_X \leq C \lVert f \rVert_Y.
	\end{equation*}
	Then the associate functionals $\lVert \cdot \rVert_{X'}$ and $\lVert \cdot \rVert_{Y'}$ satisfy, with the same constant $C$,
	\begin{equation*}
		\lVert f \rVert_{Y'} \leq C \lVert f \rVert_{X'}
	\end{equation*}
	for all $f \in \mathcal{M}$.
\end{proposition}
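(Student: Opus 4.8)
The plan is to unwind the two associate functionals directly from Definition~\ref{DAS} and convert the hypothesis $\lVert f \rVert_X \leq C \lVert f \rVert_Y$ into a comparison of the reciprocal weights appearing in the defining suprema. The key structural observation is that $\lVert g \rVert_X \leq C \lVert g \rVert_Y$ forces $Y \subseteq X$: any $g$ with $\lVert g \rVert_Y < \infty$ automatically satisfies $\lVert g \rVert_X < \infty$. Consequently every $g$ that contributes to the supremum defining $\lVert f \rVert_{Y'}$ also appears among the competitors for $\lVert f \rVert_{X'}$, which is precisely what will allow me to dominate the former by $C$ times the latter.

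I would fix $f \in \mathcal{M}$ and bound a single term $\frac{1}{\lVert g \rVert_Y} \int_\mathcal{R} \lvert fg \rvert \, d\mu$ for $g \in Y$. In the principal case, where both $\lVert g \rVert_Y$ and $\lVert g \rVert_X$ are strictly positive, I would estimate
\begin{equation*}
	\frac{1}{\lVert g \rVert_Y} \int_\mathcal{R} \lvert fg \rvert \, d\mu = \frac{\lVert g \rVert_X}{\lVert g \rVert_Y} \cdot \frac{1}{\lVert g \rVert_X} \int_\mathcal{R} \lvert fg \rvert \, d\mu \leq C \cdot \frac{1}{\lVert g \rVert_X} \int_\mathcal{R} \lvert fg \rvert \, d\mu \leq C \lVert f \rVert_{X'},
\end{equation*}
where the first inequality uses $\lVert g \rVert_X \leq C \lVert g \rVert_Y$ and the second uses that $g \in X$, so that the middle expression is one of the terms in the supremum defining $\lVert f \rVert_{X'}$. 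Taking the supremum over all admissible $g$ then yields the asserted bound $\lVert f \rVert_{Y'} \leq C \lVert f \rVert_{X'}$.

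The step I expect to demand the most care is the bookkeeping of the degenerate cases that the absence of structural hypotheses on the functionals permits, governed by the conventions $\frac{0}{0} = 0$ and $\frac{a}{0} = \infty$, together with the possibility $Y = \emptyset$ (whence $\lVert f \rVert_{Y'} = \sup \emptyset = -\infty$ and the inequality holds trivially). When $\lVert g \rVert_Y = 0$, the hypothesis forces $\lVert g \rVert_X = 0$ as well, so the two corresponding terms coincide exactly; when $\lVert g \rVert_Y > 0$ but $\lVert g \rVert_X = 0$, the factor $\frac{1}{\lVert g \rVert_X} \int_\mathcal{R} \lvert fg \rvert \, d\mu$ equals either $0$ or $\infty$, and in the latter situation $\lVert f \rVert_{X'} = \infty$ renders the desired bound trivial. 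Once these boundary cases are dispatched, the displayed computation covers the remainder and completes the argument; the whole proof is thus an easy modification of \cite[Chapter~2, Proposition~2.10]{BennettSharpley88}.
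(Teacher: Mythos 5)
Your proof is correct and is precisely the argument the paper has in mind: the paper gives no explicit proof but declares it an easy modification of \cite[Chapter~2, Proposition~2.10]{BennettSharpley88}, and your computation --- factoring $\frac{1}{\lVert g \rVert_Y} = \frac{\lVert g \rVert_X}{\lVert g \rVert_Y} \cdot \frac{1}{\lVert g \rVert_X}$ for each competitor $g$, using $Y \subseteq X$ so that every competitor for $\lVert f \rVert_{Y'}$ is also one for $\lVert f \rVert_{X'}$ --- is exactly that modification. Your handling of the degenerate cases ($\lVert g \rVert_X = 0$ or $\lVert g \rVert_Y = 0$ under the conventions of Definition~\ref{DAS}, and $Y = \emptyset$) is complete, since in each such case the relevant term is $0$ or forces $\lVert f \rVert_{X'} = \infty$, so the constant $C$ is immaterial there.
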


The second observation is a direct consequence of the previous three statements which states that the second associate space $X''$ of $X$ is the smallest Banach function space containing $X$.
\begin{corollary}
	Let $\lVert \cdot \rVert_X$ be a quasi-Banach function norm satisfying \ref{P5} and let $\lVert \cdot \rVert_Y$ be a Banach function norm such that
	\begin{equation*}
		\lVert \cdot \rVert_Y \lesssim \lVert \cdot \rVert_X.
	\end{equation*}
	Then $\lVert \cdot \rVert_{X''}$ is a Banach function norm and we have
	\begin{equation*}
		\lVert \cdot \rVert_Y \lesssim \lVert \cdot \rVert_{X''} \leq \lVert \cdot \rVert_X.
	\end{equation*}
	Equivalently, the corresponding (quasi-)Banach function spaces satisfy
	\begin{equation*}
		X \hookrightarrow X'' \hookrightarrow Y.
	\end{equation*}
\end{corollary}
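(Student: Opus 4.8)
The plan is to obtain everything by chaining together the results established immediately above, since all the genuine analytic content is already encoded in Theorem~\ref{TFA}, Proposition~\ref{PESSAS}, and Proposition~\ref{PEASG}. First I would verify that $\lVert \cdot \rVert_{X''}$ is a bona fide Banach function norm. As a quasi-Banach function norm, $\lVert \cdot \rVert_X$ satisfies \ref{P4} and the absolute-value symmetry $\lVert f \rVert_X = \lVert\,\lvert f\rvert\,\rVert_X$, and by hypothesis it also satisfies \ref{P5}; hence Theorem~\ref{TFA} applies and yields that $\lVert \cdot \rVert_{X'}$ is a Banach function norm. A Banach function norm automatically satisfies \ref{P4}, \ref{P5}, and the absolute-value symmetry, so Theorem~\ref{TFA} may be applied a second time, now to $\lVert \cdot \rVert_{X'}$, giving that its associate functional $\lVert \cdot \rVert_{X''}$ is again a Banach function norm.

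For the two-sided estimate, the upper bound $\lVert \cdot \rVert_{X''} \leq \lVert \cdot \rVert_X$ is exactly the content of Proposition~\ref{PESSAS}, whose hypotheses we have just checked. For the lower bound, I would start from the assumed inequality $\lVert f \rVert_Y \leq C \lVert f \rVert_X$ and feed it into Proposition~\ref{PEASG} (with the roles of the two functionals interchanged), obtaining $\lVert f \rVert_{X'} \leq C \lVert f \rVert_{Y'}$ for all $f \in \mathcal{M}$. Applying Proposition~\ref{PEASG} a second time to this inequality yields $\lVert f \rVert_{Y''} \leq C \lVert f \rVert_{X''}$. Since $\lVert \cdot \rVert_Y$ is a Banach function norm, the classical Lorenz--Luxemburg identity $\lVert \cdot \rVert_Y = \lVert \cdot \rVert_{Y''}$ converts this into $\lVert f \rVert_Y \leq C \lVert f \rVert_{X''}$, that is, $\lVert \cdot \rVert_Y \lesssim \lVert \cdot \rVert_{X''}$, as required.

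Finally, the space embeddings follow mechanically. The inequality $\lVert \cdot \rVert_{X''} \leq \lVert \cdot \rVert_X$ gives the set inclusion $X \subseteq X''$, and $\lVert \cdot \rVert_Y \lesssim \lVert \cdot \rVert_{X''}$ gives $X'' \subseteq Y$; Theorem~\ref{TEQBFS} then upgrades both inclusions to continuous embeddings $X \hookrightarrow X'' \hookrightarrow Y$, noting that all three functionals are at least quasi-Banach function norms. I do not expect any substantial obstacle here: the only points demanding care are bookkeeping the direction of the inequalities when Proposition~\ref{PEASG} is iterated, and remembering to invoke the Lorenz--Luxemburg identity for $Y$ (and \emph{not} for $X$, which need not even be equivalent to a Banach function norm) in order to close the lower estimate.
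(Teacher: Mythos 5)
Your proof is correct and follows exactly the route the paper intends: the paper gives no explicit argument, calling the corollary a direct consequence of Theorem~\ref{TFA}, Proposition~\ref{PESSAS}, and Proposition~\ref{PEASG}, and your chain (two applications of Theorem~\ref{TFA}, Proposition~\ref{PESSAS} for the upper bound, Proposition~\ref{PEASG} iterated twice plus the Lorentz--Luxemburg identity $\lVert \cdot \rVert_Y = \lVert \cdot \rVert_{Y''}$ for the lower bound) is precisely that intended argument, including the correct observation that Lorentz--Luxemburg may be invoked only for $Y$. The only superfluous step is citing Theorem~\ref{TEQBFS} at the end, since the quantitative norm inequalities already yield the continuity of the inclusions directly.
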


The final statement shows, that in the case when the underlying measure space is resonant, the associate functional of an r.i.~quasi-Banach function norm can be expressed in terms of non-increasing rearrangement. The proof is the same as in \cite[Chapter~2, Proposition~4.2]{BennettSharpley88}.

\begin{proposition} \label{PAS}
	Let $\lVert \cdot \rVert_X$ be an r.i.~quasi-Banach function norm over a resonant measure space. Then its associate functional $\lVert \cdot \rVert_{X'}$ satisfies
	\begin{equation*}
		\lVert f \rVert_{X'} = \sup_{g \in X} \frac{1}{\lVert g \rVert_{X}} \int_0^{\infty} f^* g^* \: d\lambda.
	\end{equation*}
\end{proposition}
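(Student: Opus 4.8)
The plan is to prove the identity by two opposite inequalities. By Definition~\ref{DAS} the associate functional is
\begin{equation*}
	\lVert f \rVert_{X'} = \sup_{g \in X} \frac{1}{\lVert g \rVert_X} \int_\mathcal{R} \lvert f g \rvert \: d\mu,
\end{equation*}
so, writing $B$ for the quantity on the right-hand side of the proposition, the two differ only in that $\int_\mathcal{R} \lvert fg \rvert \, d\mu$ is replaced by $\int_0^\infty f^* g^* \, d\lambda$; I must therefore show $\lVert f \rVert_{X'} = B$.

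The inequality $\lVert f \rVert_{X'} \leq B$ is immediate from the Hardy--Littlewood inequality (Theorem~\ref{THLI}), which gives $\int_\mathcal{R} \lvert fg \rvert \, d\mu \leq \int_0^\infty f^* g^* \, d\lambda$ for every $g$. Dividing by $\lVert g \rVert_X$ and taking the supremum over $g \in X$ yields the claim, and neither resonance nor rearrangement-invariance is needed for this direction.

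The reverse inequality $B \leq \lVert f \rVert_{X'}$ is the heart of the matter and is where both hypotheses come into play. Fix $g \in X$. Resonance of $(\mathcal{R}, \mu)$ gives
\begin{equation*}
	\int_0^\infty f^* g^* \, d\lambda = \sup_{\substack{\tilde g \in \mathcal{M} \\ \tilde g^* = g^*}} \int_\mathcal{R} \lvert f \tilde g \rvert \, d\mu.
\end{equation*}
The key point is that each admissible $\tilde g$ is equimeasurable with $g$, so rearrangement-invariance forces $\lVert \tilde g \rVert_X = \lVert g \rVert_X$; in particular $\tilde g \in X$, and hence $\tilde g$ is itself an admissible competitor in the supremum defining $\lVert f \rVert_{X'}$. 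Consequently
\begin{equation*}
	\frac{1}{\lVert g \rVert_X} \int_\mathcal{R} \lvert f \tilde g \rvert \, d\mu = \frac{1}{\lVert \tilde g \rVert_X} \int_\mathcal{R} \lvert f \tilde g \rvert \, d\mu \leq \lVert f \rVert_{X'},
\end{equation*}
and taking the supremum first over all such $\tilde g$ and then over $g \in X$ gives $B \leq \lVert f \rVert_{X'}$.

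Finally, the degenerate behaviour encoded in the conventions $\frac{0}{0} = 0$ and $\frac{a}{0} = \infty$ of Definition~\ref{DAS} causes no difficulty: since $\lVert \cdot \rVert_X$ is a quasinorm, $\lVert g \rVert_X = 0$ holds only for $g = 0$, in which case both integrals vanish and the corresponding term equals $\frac{0}{0} = 0$ in either supremum. I therefore anticipate no genuine analytic obstacle; the whole statement reduces to coupling the Hardy--Littlewood inequality with the resonance identity, the latter being exploitable precisely because the $X$-norm is constant along equimeasurable rearrangements.
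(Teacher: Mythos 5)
Your proof is correct and is essentially the argument the paper invokes: the paper proves Proposition~\ref{PAS} simply by citing \cite[Chapter~2, Proposition~4.2]{BennettSharpley88}, whose proof is exactly your combination of the Hardy--Littlewood inequality (Theorem~\ref{THLI}) for one direction and resonance plus the invariance of $\lVert \cdot \rVert_X$ along equimeasurable functions for the other. Your explicit treatment of the degenerate conventions $\frac{0}{0}=0$ and $\frac{a}{0}=\infty$ is a harmless (and sound) addition.
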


An obvious consequence of Proposition~\ref{PAS} is that an associate space of an r.i.~quasi-Banach function space (over a resonant measure space) is also rearrangement-invariant.

\subsection{Fundamental function and endpoint spaces} \label{SectionFundamentalFunction}
An important tool for understanding the r.i.~Banach function spaces is the fundamental function, which captures some essential behaviour of a given space. Spaces sharing the same fundamental function are in certain sense similar and their differences are therefore necessarily more subtle. Furthermore, any given fundamental function naturally induces three important function spaces: the Lorentz and Marcinkiewicz endpoint spaces, in which we are very much interested in this paper, and also an Orlicz space. All these concepts extend naturally to the wider class of r.i.~quasi-Banach function spaces; however, many of the related theorems do not. We will study the properties of the fundamental function of r.i.~quasi-Banach function spaces in Section~\ref{SectionFundFuncResearch}, whence we present here the relevant definitions and classical results for r.i.~Banach function spaces.

The definitions and statements in this section are adopted from \cite[Chapter~2, Section~5]{BennettSharpley88} and \cite[Sections~7.9 and 7.10]{FucikKufner13} (with only few trivial modifications). As our study in Section~\ref{SectionFundFuncResearch} will be limited to the case when $(\mathcal{R}, \mu)$ is non-atomic, this will also be our standing assumption in this section. This will allow us to present the theory in simpler form and avoid some technicalities that are unnecessary for our purposes. For a more general approach, we recommend the publications mentioned above.

Finally, throughout this section, $\alpha$ will be a number belonging to the interval $(0, \infty]$. Its purpose will be to replace $\mu(\mathcal{R})$ in situations where the terms in question should not depend on any particular choices of of measure spaces.

\begin{definition}
	Let $\lVert \cdot \rVert_X$ be an r.i.~quasi-Banach function norm and $X$ the corresponding r.i.~quasi-Banach function space. We then define the fundamental function $\varphi_X$ of the space $X$ and corresponding to the quasinorm $\lVert \cdot \rVert_X$ for every finite $t$ in the range of $\mu$ by
	\begin{equation*}
		\varphi_X(t) = \lVert \chi_{E_t} \rVert_{X} \text{ for any } E_t \subseteq \mathcal{R} \text{ such that } \mu(E_t) = t.
	\end{equation*}
\end{definition}

This function is well defined as its values do not depend on any particular choices of the sets $E_t$ (because $\lVert \cdot \rVert_X$ is assumed to be rearrangement-invariant). Also, if $X$ and $Y$ are r.i.~quasi-Banach function spaces such that the corresponding fundamental functions $\varphi_X, \varphi_Y$ satisfy $\varphi_X \approx \varphi_Y$ then it is sometimes said that $X$ and $Y$ lie on the same fundamental level (see e.g.~\cite{MusilPick23}).

The following definition is crucial as it characterises which functions are the fundamental function of some r.i.~Banach function space.

\begin{definition} \label{DefinitionQuasiconcave}
	Let $\varphi: [0, \alpha) \to [0, \infty)$. We say that $\varphi$ is quasiconcave on $[0, \alpha)$ if it satisfies the following conditions:
	\begin{enumerate}
		\item $\varphi(t) = 0 \iff t=0$.
		\item $\varphi$ is non-decreasing.
		\item $t \mapsto \frac{\varphi(t)}{t}$ is non-increasing on $(0, \alpha)$.
	\end{enumerate}
	When $\alpha = \infty$ we simply say that $\varphi$ is quasiconcave.
\end{definition}

\begin{proposition} \label{PropositionPropertiesOfFundFunc}
	Let $X$ be an r.i.~Banach function space. Then its fundamental function $\varphi_X$ is quasiconcave on $[0,  \mu(\mathcal{R}))$ and also continuous on $(0,  \mu(\mathcal{R}))$.
\end{proposition}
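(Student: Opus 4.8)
The plan is to verify the three defining conditions of quasiconcavity from Definition~\ref{DefinitionQuasiconcave} directly from the axioms of an r.i.~Banach function norm, exploiting that $(\mathcal{R},\mu)$ is non-atomic; continuity on $(0,\mu(\mathcal{R}))$ will then come for free. Indeed, any function that is non-decreasing and whose ratio $\varphi(t)/t$ is non-increasing is automatically continuous on the open interval: monotonicity gives $\varphi(t_0-)\le\varphi(t_0)\le\varphi(t_0+)$, while letting $t\downarrow t_0$ and $t\uparrow t_0$ in the inequalities $\varphi(t)/t\le\varphi(t_0)/t_0$ (for $t>t_0$) and $\varphi(t)/t\ge\varphi(t_0)/t_0$ (for $t<t_0$) forces the reverse inequalities, so $\varphi(t_0\pm)=\varphi(t_0)$. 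Thus the whole content is the quasiconcavity of $\varphi_X$. The normalization condition is immediate: $\varphi_X(0)=\lVert 0\rVert_X=0$ by \ref{P1b}, and for $t>0$ any $E_t$ with $\mu(E_t)=t$ has $\chi_{E_t}\neq 0$ $\mu$-a.e., whence $\varphi_X(t)>0$, again by \ref{P1b}. For monotonicity, given $0\le s\le t<\mu(\mathcal{R})$ I would use non-atomicity (the Sierpiński theorem, as already invoked in the proof of Proposition~\ref{PropP1P5}) to choose $E_s\subseteq E_t$ with $\mu(E_s)=s$ and $\mu(E_t)=t$; then $\chi_{E_s}\le\chi_{E_t}$ pointwise and the lattice property \ref{P2} gives $\varphi_X(s)\le\varphi_X(t)$.

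The heart of the matter is that $t\mapsto\varphi_X(t)/t$ is non-increasing, equivalently $s\,\varphi_X(t)\le t\,\varphi_X(s)$ for $0<s\le t$. I would first treat commensurable pairs, writing $t=(p/q)s$ with integers $p\ge q\ge 1$. Using non-atomicity, partition a set $E$ of measure $t$ into $p$ pairwise disjoint pieces of equal measure $t/p=s/q$, and let $W_1,\dots,W_p$ be the $p$ cyclic ``windows'', each a union of $q$ consecutive pieces, so that $\mu(W_j)=s$ for every $j$ while each piece lies in exactly $q$ windows, giving $\sum_{j=1}^{p}\chi_{W_j}=q\,\chi_E$. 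Applying homogeneity \ref{P1a}, the exact subadditivity \ref{P1c}, and rearrangement-invariance then yields $q\,\varphi_X(t)=\lVert q\,\chi_E\rVert_X=\bigl\lVert\sum_{j}\chi_{W_j}\bigr\rVert_X\le\sum_{j}\lVert\chi_{W_j}\rVert_X=p\,\varphi_X(s)$, which is precisely $\varphi_X(t)/t\le\varphi_X(s)/s$. It is exactly here that the Banach (rather than merely quasi-Banach) hypothesis is genuinely used: iterating a modulus of concavity over $p$ windows would blow up the constant and destroy the estimate, which is why this clean quasiconcavity statement is specific to the normed case.

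It remains to pass from commensurable to arbitrary pairs. I would fix $0<s\le t<\mu(\mathcal{R})$ and choose rationals $r_n\downarrow t/s$ with $r_n\ge 1$; setting $t_n=r_n s$ we have $t_n\downarrow t$ with $t_n<\mu(\mathcal{R})$ for large $n$, and each $t_n$ commensurable with $s$. The commensurable case gives $\varphi_X(t_n)\le(t_n/s)\,\varphi_X(s)$, and monotonicity gives $\varphi_X(t)\le\varphi_X(t_n)$; letting $n\to\infty$ yields $\varphi_X(t)\le(t/s)\,\varphi_X(s)$. This establishes the third condition of Definition~\ref{DefinitionQuasiconcave} on all of $(0,\mu(\mathcal{R}))$, completing the proof of quasiconcavity, after which continuity on $(0,\mu(\mathcal{R}))$ follows automatically as observed above. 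The main obstacle I anticipate is the bookkeeping of the cyclic-window construction, namely verifying rigorously that it produces a $q$-fold cover by sets of measure exactly $s$; by contrast, the final approximation step is routine once monotonicity is available.
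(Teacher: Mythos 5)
Your proof is correct, but it takes a genuinely different route from the paper, which offers no proof of its own and instead cites \cite[Chapter~2, Corollary~5.3]{BennettSharpley88}. The classical argument there is duality-based: one first proves the identity $\varphi_X(t)\,\varphi_{X'}(t)=t$ via the H\"older inequality for the associate space together with the Lorentz--Luxemburg theorem $X=X''$, so that $\varphi_X(t)/t=1/\varphi_{X'}(t)$ is non-increasing simply because $\varphi_{X'}$ is non-decreasing. Your cyclic-window averaging is instead elementary and self-contained, using only \ref{P1}, \ref{P2}, rearrangement-invariance, and the Sierpi\'nski theorem; it is the same device as the ``rotating the coefficients over the sets'' step in the paper's proof of Proposition~\ref{PropP1P5}. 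Each approach buys something: the duality route yields the identity $\varphi_X\varphi_{X'}=t$ as a byproduct, which is exactly the relation that degenerates to the inequality \eqref{EstimateFundamentalFunctionAssociateSpace} in the quasinormed setting (cf.~Corollary~\ref{CorollaryFundamentalFunctionX'}); your route makes visible precisely where exact subadditivity enters --- iterating a modulus of concavity over the $p$ windows would blow up the constant, consistent with $L^p$, $p<1$, whose fundamental function is not quasiconcave (Remark~\ref{RemarkExNoWqc}) --- and it avoids the associate-space machinery altogether. The details check out: the windows have measure $q\cdot t/p=s$ and cover each piece exactly $q$ times, so $\sum_{j=1}^{p}\chi_{W_j}=q\,\chi_E$ holds; the passage to incommensurable ratios correctly approximates $t$ from above so that only monotonicity, not continuity, is invoked; and your derivation of continuity on the open interval from quasiconcavity is the standard one. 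Two cosmetic points: you should record that $\varphi_X$ is finite-valued, which is \ref{P4} and is needed for Definition~\ref{DefinitionQuasiconcave} to apply at all, and note that the standing non-atomicity assumption of Section~\ref{SectionFundamentalFunction} is what licenses both the nested sets in the monotonicity step and the equal-measure partition in the window construction --- both of which you do use, so these are omissions of mention rather than of substance.
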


Proof of this proposition can be found in \cite[Chapter~2, Corollary~5.3]{BennettSharpley88}.

The most natural r.i.~Banach function space associated with any quasiconcave function is the Marcinkiewicz endpoint space. As it turns out, it is also the largest such space.

\begin{definition} \label{DefinitionMarcinkiewicz}
	Let $\varphi: [0, \mu(\mathcal{R}) ) \to [0, \infty)$ be quasiconcave on $[0, \mu(\mathcal{R}))$. We then define the Marcinkiewicz endpoint norm $\lVert \cdot \rVert_{M_{\varphi}}$ for $f \in \mathcal{M}$ by
	\begin{equation*}
		\lVert f \rVert_{M_{\varphi}} = \sup_{t \in [0, \mu(\mathcal{R}) )} \varphi(t) f^{**}(t)
	\end{equation*}
	and the corresponding Marcinkiewicz endpoint space $M_{\varphi}$ by
	\begin{equation*}
		M_\varphi = \left \{ f \in \mathcal{M}; \; \lVert f \rVert_{M_\varphi} < \infty \right\}.
	\end{equation*}
\end{definition}

\begin{theorem} \label{TheoremLargestSpace}
	Let $\varphi: [0, \mu(\mathcal{R}) ) \to [0, \infty)$ be quasiconcave on $[0, \mu(\mathcal{R}))$. Then $\lVert \cdot \rVert_{M_{\varphi}}$ is an r.i.~Banach function norm and its corresponding fundamental function is precisely $\varphi$. Furthermore, if an r.i.~Banach function space $X$ satisfies $\varphi_X \approx \varphi$ then $X \hookrightarrow M_{\varphi}$.
\end{theorem}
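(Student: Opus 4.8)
The plan is to verify the norm axioms in the order \ref{P1}--\ref{P4}, obtain \ref{P5} for free from Proposition~\ref{PropP1P5}, read off the fundamental function from the same computation, and finally deduce the maximality from a contraction property of the map $f \mapsto f^{**}$.

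First I would record the three elementary facts about the elementary maximal function: that $(af)^{**} = \lvert a\rvert f^{**}$, that $f \le g$ forces $f^{**}\le g^{**}$ for $f,g\in\mathcal M_+$, and, crucially, that $h\mapsto\int_0^t h^*$ is subadditive, whence $(f+g)^{**}(t)\le f^{**}(t)+g^{**}(t)$. Since $\varphi\ge0$, multiplying by $\varphi(t)$ and taking the supremum over $t$ yields \ref{P1a}, \ref{P1c}, and \ref{P2} at once; \ref{P1b} holds because $\varphi(t)>0$ for $t>0$ while $f^{**}(t)>0$ for every $t>0$ whenever $f\neq0$. Rearrangement-invariance is immediate, as $\lVert\cdot\rVert_{M_\varphi}$ depends on $f$ only through $f^*$. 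For \ref{P3}, if $f_n\uparrow f$ then $f_n^*\uparrow f^*$, so $f_n^{**}\uparrow f^{**}$ pointwise by monotone convergence, and since everything is non-negative and monotone the suprema increase to $\lVert f\rVert_{M_\varphi}$.

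Next, for $E$ with $\mu(E)=a<\mu(\mathcal R)$ one computes $\chi_E^{**}(t)=\min(1,a/t)$, so that $\varphi(t)\chi_E^{**}(t)$ equals $\varphi(t)$ for $t\le a$ and $a\,\varphi(t)/t$ for $t>a$. By the two monotonicity conditions in Definition~\ref{DefinitionQuasiconcave} (monotonicity of $\varphi$ on the first range, of $\varphi(t)/t$ on the second) the supremum is $\varphi(a)$, attained at $t=a$. This simultaneously establishes \ref{P4} and shows $\varphi_{M_\varphi}=\varphi$. At this point $\lVert\cdot\rVert_{M_\varphi}$ is an r.i.\ quasi-Banach function norm satisfying \ref{P1}--\ref{P4}; since the standing assumption is that $(\mathcal R,\mu)$ is non-atomic, hence resonant, Proposition~\ref{PropP1P5} supplies \ref{P5}, and $\lVert\cdot\rVert_{M_\varphi}$ is an r.i.\ Banach function norm.

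For the maximality, let $X$ be an r.i.\ Banach function space with $\varphi_X\approx\varphi$. Because $M_\varphi$ is a quasi-Banach function space, Theorem~\ref{TEQBFS} reduces the embedding to the inclusion $X\subseteq M_\varphi$, which follows from $\lVert\cdot\rVert_{M_\varphi}\lesssim\lVert\cdot\rVert_X$; and since $\varphi_X\approx\varphi$, it suffices to prove the pointwise estimate
\begin{equation*}
	\varphi_X(t)\,f^{**}(t)\le\lVert f\rVert_X\qquad\text{for all }t\in[0,\mu(\mathcal R))\text{ and all }f.
\end{equation*}
I would pass to the representation space $\overline X$ on $(0,\mu(\mathcal R))$ granted by Theorem~\ref{TheoremLuxemburgRepresentation}, where $\varphi_X(t)=\lVert\chi_{[0,t)}\rVert_{\overline X}$ and $\lVert f\rVert_X=\lVert f^*\rVert_{\overline X}$. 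The left-hand side then equals $\lVert f^{**}(t)\chi_{[0,t)}\rVert_{\overline X}$, and as $f^{**}(t)\chi_{[0,t)}$ is precisely the mean value of $f^*\chi_{[0,t)}$ spread as a constant over $[0,t)$, the estimate reduces, via the lattice bound $\lVert f^*\chi_{[0,t)}\rVert_{\overline X}\le\lVert f^*\rVert_{\overline X}$, to the assertion that averaging a function over $[0,t)$ does not increase its $\overline X$-norm.

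The \emph{main obstacle}, and the one place where the triangle inequality (i.e.\ Banach-ness, not merely quasi-Banach-ness) is genuinely needed, is this averaging contraction. I would prove it by a rotation argument parallel to the one in Proposition~\ref{PropP1P5}: for $g=f^*\chi_{[0,t)}$ and each $n$, set $g_j(s)=g\bigl((s+jt/n)\bmod t\bigr)$ on $[0,t)$; each $g_j$ is equimeasurable with $g$, so \ref{P1c} and rearrangement-invariance give $\bigl\lVert\frac1n\sum_{j=0}^{n-1}g_j\bigr\rVert_{\overline X}\le\lVert g\rVert_{\overline X}$, while $\frac1n\sum_j g_j$ converges (in measure, and a.e.\ along a subsequence) to the constant $f^{**}(t)$ on $[0,t)$. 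The only delicate point is the passage to the limit, since the convergence need not be monotone; I would handle it either via the Fatou property \ref{P3} along a subsequence or, more robustly, by testing against the associate space and invoking $\overline X=\overline X''$. With the contraction established, the displayed estimate holds, whence $\lVert f\rVert_{M_\varphi}=\sup_t\varphi(t)f^{**}(t)\lesssim\sup_t\varphi_X(t)f^{**}(t)\le\lVert f\rVert_X$, giving $X\hookrightarrow M_\varphi$. This averaging (equivalently, Hardy--Littlewood--P\'olya majorization) step is exactly the reason the Marcinkiewicz space is the largest r.i.\ Banach function space on its fundamental level.
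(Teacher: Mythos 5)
Your proof is correct, and its first half (the norm axioms, the computation $\chi_E^{**}(t)=\min\{1,\mu(E)/t\}$ yielding \ref{P4} and $\varphi_{M_\varphi}=\varphi$) coincides with the proof the paper cites, namely \cite[Chapter~2, Theorem~5.13]{BennettSharpley88}; note only that \ref{P5} admits a one-line direct verification, $\int_E \lvert f\rvert \: d\mu \le \mu(E) f^{**}(\mu(E)) \le \frac{\mu(E)}{\varphi(\mu(E))}\lVert f\rVert_{M_\varphi}$, so appealing to Proposition~\ref{PropP1P5} is heavier machinery than necessary (though legitimate under the section's standing assumption that $(\mathcal{R},\mu)$ is non-atomic, hence resonant). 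Where you genuinely diverge is the maximality step: Bennett--Sharpley obtain $\varphi_X(t)f^{**}(t)\le\lVert f\rVert_X$ from the H\"older inequality combined with the identity $\varphi_X(t)\varphi_{X'}(t)=t$, the underlying majorization $f^{**}(t)\chi_{(0,t)}\prec f^*$ being absorbed by the Hardy--Littlewood--P\'olya principle, whose classical proof runs through K\"othe duality $X=X''$. You instead prove the averaging contraction directly by the rotation trick (the same device the paper itself uses in the proof of Proposition~\ref{PropP1P5}), using equimeasurability of the rotated copies, the exact triangle inequality, and a Fatou-type passage to the limit. This buys a duality-free argument that isolates precisely where Banach-ness, as opposed to quasi-Banach-ness, enters --- fitting, given the paper's remark that the Hardy--Littlewood--P\'olya principle can fail for quasi-Banach function norms. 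Two details would make your limiting step airtight: run the rotation argument only when $f^*\in L^1(0,t)$, since if $f^{**}(t)=\infty$ then $\lVert f\rVert_X=\infty$ anyway by \ref{P5} of $\overline{X}$; and justify that $\frac{1}{n}\sum_{j=0}^{n-1}g_j\to f^{**}(t)\chi_{[0,t)}$ in $L^1(0,t)$ (rotations are $L^1$-isometries, so the averaging operators are uniformly bounded, and the claim follows from the continuous periodic case by density), after which the liminf form of Fatou, derived from \ref{P2} and \ref{P3} by considering $\inf_{k\ge n}$ of the averages, applies along an a.e.\ convergent subsequence exactly as you indicate.
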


Proof of this theorem can be found in \cite[Chapter~2, Theorem~5.13]{BennettSharpley88}.

An attentive reader will notice, that Theorem~\ref{TheoremLargestSpace} and Proposition~\ref{PropositionPropertiesOfFundFunc} together imply that quasiconcave functions are continuous (except possibly at zero) and thus they are left continuous on their entire domain. We now state it explicitly, as it is important for understanding the relationship of quasiconcavity with the weaker notions we introduce in Section~\ref{SectionFundFuncResearch}. We also include only the weaker property of left-continuity, as that is precisely what we will need later.

\begin{corollary} \label{CorollaryLeftCont}
	Let $\varphi: [0,  \alpha ) \to [0, \infty)$ be quasiconcave on $[0, \mu(\mathcal{R}))$. Then it is left-continuous.
\end{corollary}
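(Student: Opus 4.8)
The plan is to assemble the two results that the discussion immediately preceding the corollary has already set up. First I would take an arbitrary $\varphi$ that is quasiconcave on $[0, \alpha)$ (recall $\alpha$ plays the role of $\mu(\mathcal{R})$ here) and feed it into Theorem~\ref{TheoremLargestSpace}. That result guarantees that the associated Marcinkiewicz functional $\lVert \cdot \rVert_{M_{\varphi}}$ is an r.i.~Banach function norm whose fundamental function is precisely $\varphi$; in other words $\varphi = \varphi_{M_{\varphi}}$, so $\varphi$ is genuinely the fundamental function of an honest r.i.~Banach function space, namely $M_{\varphi}$. I would then invoke Proposition~\ref{PropositionPropertiesOfFundFunc}, which asserts that the fundamental function of any r.i.~Banach function space is continuous on the open interval $(0, \mu(\mathcal{R})) = (0, \alpha)$. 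Applied to $M_{\varphi}$, this immediately gives that $\varphi$ is continuous, hence in particular left-continuous, at every point of $(0, \alpha)$.

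The only point not covered by the above is the left endpoint $t = 0$, and there left-continuity is automatic: since $0$ is the left endpoint of the domain $[0, \alpha)$, it has no left neighbourhood and the left-continuity condition at $0$ is vacuously satisfied. (Note that a quasiconcave function need not be \emph{right}-continuous at $0$, since it may jump from $\varphi(0) = 0$ to a strictly positive limit, but this is irrelevant to left-continuity.) Combining the interior continuity with this trivial endpoint case yields left-continuity on all of $[0, \alpha)$.

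I do not anticipate a genuine obstacle, as the statement is essentially an assembly of two previously established results; the only things requiring care are the bookkeeping between $\alpha$ and $\mu(\mathcal{R})$ and the handling of the endpoint. As a self-contained alternative that sidesteps the Banach-space machinery entirely, one can argue directly from Definition~\ref{DefinitionQuasiconcave}: fix $t_0 \in (0, \alpha)$ and let $L = \lim_{t \to t_0^-} \varphi(t)$, which exists by monotonicity and satisfies $L \leq \varphi(t_0)$. For $t \in (0, t_0)$ the fact that $s \mapsto \varphi(s)/s$ is non-increasing gives $\varphi(t) \geq \frac{t}{t_0}\varphi(t_0)$, so letting $t \to t_0^-$ forces $L \geq \varphi(t_0)$; together these yield $L = \varphi(t_0)$, i.e.~left-continuity at $t_0$. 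This alternative has the virtue of not relying on Theorem~\ref{TheoremLargestSpace}, but since the corollary is stated as a consequence of the preceding two results, I would present the short route above as the main argument.
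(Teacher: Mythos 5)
Your main argument is exactly the paper's intended proof: the corollary is stated there as an immediate consequence of Theorem~\ref{TheoremLargestSpace} (which realises $\varphi$ as the fundamental function of the r.i.~Banach function space $M_\varphi$) and Proposition~\ref{PropositionPropertiesOfFundFunc} (continuity on the open interval), with left-continuity at $0$ vacuous, just as you say. Your self-contained alternative via $\varphi(t) \geq \frac{t}{t_0}\varphi(t_0)$ is also correct and is a nice elementary bypass, but it is not needed given the agreement of the main route with the paper.
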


As we have the largest r.i.~quasi-Banach function space for any given fundamental function $\varphi$, the natural question arises whether there is also the smallest such space. The answer is positive, with the caveat that the space can be defined only for convex functions $\varphi$. Luckily, this is of no issue thanks to the following result (see e.g.~\cite[Chapter~2,  Proposition~5.10]{BennettSharpley88}).

\begin{proposition} \label{PropositionEquivConcave}
	Let $\varphi: [0, \alpha) \to [0, \infty)$ be quasiconcave on $[0, \mu(\mathcal{R}))$. Then there is a concave function $\widetilde{\varphi}$ such that $\varphi \approx \widetilde{\varphi}$.
\end{proposition}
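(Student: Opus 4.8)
The plan is to construct $\widetilde{\varphi}$ as the \emph{least concave majorant} of $\varphi$ on $[0,\alpha)$, namely the concave hull from above, and to show that it satisfies the two-sided estimate $\varphi(t) \leq \widetilde{\varphi}(t) \leq 2\varphi(t)$ for all $t \in [0,\alpha)$. The lower bound is immediate since $\widetilde{\varphi}$ is by definition a majorant of $\varphi$, so $\varphi \lesssim \widetilde{\varphi}$ holds trivially with constant $1$; all the work lies in the upper bound, which will simultaneously yield finiteness of $\widetilde{\varphi}$ and the reverse relation $\widetilde{\varphi} \lesssim \varphi$ with constant $2$. Together these give $\varphi \approx \widetilde{\varphi}$.

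First I would set up the construction carefully. Rather than defining $\widetilde{\varphi}$ as an infimum of concave majorants (whose existence is a priori unclear when $\varphi(s)/s$ is unbounded near $0$), I would define it directly as the supremum of all convex combinations of the graph, i.e.
\begin{equation*}
	\widetilde{\varphi}(t) = \sup \left\{ \lambda \varphi(a) + (1-\lambda)\varphi(b) : a,b \in [0,\alpha),\ \lambda \in [0,1],\ \lambda a + (1-\lambda)b = t \right\}.
\end{equation*}
In one dimension it is a standard fact that the concave hull at a given abscissa is attained by a convex combination of at most two points of the graph, so this two-point formula already describes the full hull; once the upper bound below shows the supremum is finite, $\widetilde{\varphi}$ is a genuine real-valued function, it is concave (a pointwise infimum of concave functions is concave, and the hull coincides with the least concave majorant), and one checks $\widetilde{\varphi}(0) = 0$ since $0$ is the left endpoint and admits only the trivial representation.

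The core of the argument, and the main obstacle, is the estimate $\widetilde{\varphi}(t) \leq 2\varphi(t)$. Fix $t \in (0,\alpha)$ and take any representation $t = \lambda a + (1-\lambda) b$ with $a \leq t \leq b$. Here I would invoke the two defining inequalities of quasiconcavity: monotonicity gives $\varphi(a) \leq \varphi(t)$, whence $\lambda \varphi(a) \leq \lambda \varphi(t) \leq \varphi(t)$; and the fact that $s \mapsto \varphi(s)/s$ is non-increasing gives $\varphi(b) \leq \tfrac{b}{t}\varphi(t)$. Since $(1-\lambda) b = t - \lambda a \leq t$, we get
\begin{equation*}
	(1-\lambda)\varphi(b) \leq (1-\lambda)\frac{b}{t}\varphi(t) = \frac{(1-\lambda)b}{t}\varphi(t) \leq \varphi(t).
\end{equation*}
Adding the two contributions yields $\lambda\varphi(a) + (1-\lambda)\varphi(b) \leq 2\varphi(t)$, and taking the supremum over all admissible representations gives $\widetilde{\varphi}(t) \leq 2\varphi(t)$, as claimed. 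I expect the only delicate points to be the reduction to a two-point chord (the one-dimensional Carathéodory-type fact) and the bookkeeping that guarantees one may always take $a \leq t \leq b$; the estimate itself then follows cleanly from the two monotonicity properties, and finiteness of $\widetilde{\varphi}$ on $[0,\alpha)$ is a byproduct of the same bound.
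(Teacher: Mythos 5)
Your proposal is correct and takes essentially the same route as the paper, which offers no proof of its own but cites \cite[Chapter~2, Proposition~5.10]{BennettSharpley88}, where the argument is precisely your least-concave-majorant construction: the lower bound $\varphi \leq \widetilde{\varphi}$ is trivial, and the two-point chord estimate $\lambda\varphi(a) + (1-\lambda)\varphi(b) \leq \lambda\varphi(t) + \frac{(1-\lambda)b}{t}\varphi(t) \leq 2\varphi(t)$ (valid since $a \leq t \leq b$ forces $\lambda\varphi(a)\leq\varphi(t)$ and $(1-\lambda)b \leq t$) gives $\widetilde{\varphi} \leq 2\varphi$. Your handling of the delicate points — the one-dimensional reduction to two-point combinations, the automatic ordering $a \leq t \leq b$, and finiteness of the hull as a byproduct of the bound — is sound.
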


\begin{definition} \label{DefinitionLorentz}
	Let $\varphi: [0, \mu(\mathcal{R}) ) \to [0, \infty)$, vanishing at the origin and positive elsewhere, be non-decreasing and concave. We then define the Lorentz endpoint norm $\lVert \cdot \rVert_{\Lambda_{\varphi}}$ for $f \in \mathcal{M}$ by
	\begin{equation*}
		\lVert f \rVert_{\Lambda_{\varphi}} = \int_{0}^{\infty} f^* \: d\varphi, 
	\end{equation*}
	where the integral on the right-hand side is to be interpreted as a Lebesgue--Stieltjes integral with respect to the non-decreasing function $\varphi$. The corresponding Lorentz endpoint space $\Lambda_{\varphi}$ is then defined as
	\begin{equation*}
		\Lambda_\varphi = \left \{ f \in \mathcal{M}; \; \lVert f \rVert_{\Lambda_\varphi} < \infty \right\}.
	\end{equation*}
\end{definition}

\begin{theorem} \label{TheoremSmallestSpace}
	Let $\varphi: [0, \mu(\mathcal{R}) ) \to [0, \infty)$ be as in Definition~\ref{DefinitionLorentz}. Then $\lVert \cdot \rVert_{\Lambda_{\varphi}}$ is an r.i.~Banach function norm and its corresponding fundamental function is precisely $\varphi$. Furthermore, if an r.i.~Banach function space $X$ satisfies $\varphi_X \approx \varphi$ then $\Lambda_{\varphi} \hookrightarrow X$.
\end{theorem}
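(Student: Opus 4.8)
\emph{Overview and the routine axioms.} The statement bundles three claims: that $\lVert \cdot \rVert_{\Lambda_\varphi}$ is an r.i.\ Banach function norm, that its fundamental function is exactly $\varphi$, and that $\Lambda_\varphi$ is minimal among r.i.\ Banach function spaces on its fundamental level. The plan is to dispose of the easy axioms first. Rearrangement-invariance is immediate since the functional depends on $f$ only through $f^*$; positive homogeneity follows from $(af)^* = |a|f^*$; the lattice property (P2) and the Fatou property (P3) follow from the monotonicity of $f \mapsto f^*$ together with the monotone convergence theorem for the Lebesgue--Stieltjes integral. For the nondegeneracy in (P1b), I would use that $\varphi$ vanishes at $0$ and is positive on $(0,\alpha)$, so $d\varphi$ assigns positive mass to every interval $(0,\varepsilon)$; hence $\int_0^\infty f^* \, d\varphi = 0$ forces $f^*$ to vanish near the origin and thus, being non-increasing, to vanish identically. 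Axiom (P4) reduces to $\chi_E^* = \chi_{[0,\mu(E))}$, which gives $\lVert \chi_E \rVert_{\Lambda_\varphi} = \varphi(\mu(E))$; this is finite because a non-decreasing concave function on $[0,\alpha)$ has a non-increasing, hence locally bounded, derivative and therefore stays finite up to $\alpha$.

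\emph{The triangle inequality.} The one genuinely nontrivial axiom is subadditivity (P1c), and the plan is to reduce it to the classical subadditivity of the partial-integral functional, namely $\int_0^t (f+g)^* \, d\lambda \leq \int_0^t f^* \, d\lambda + \int_0^t g^* \, d\lambda$ for every $t$. Using concavity, I would decompose $d\varphi$ into a point mass $\varphi(0^+)$ at the origin plus an absolutely continuous part $\varphi'(t)\,dt$ with $\varphi'$ non-increasing and non-negative; writing $\varphi'(t) = \varphi'(\infty) + \nu((t,\infty))$ for the non-negative measure $\nu = -d\varphi'$ and applying Fubini yields
\begin{equation*}
	\int_0^\infty h^* \, d\varphi = \varphi(0^+) \, h^*(0) + \varphi'(\infty) \int_0^\infty h^* \, d\lambda + \int_{(0,\infty)} \left( \int_0^s h^* \, d\lambda \right) d\nu(s).
\end{equation*}
Every building block on the right---$h^*(0) = \esssup |h|$ and each $\int_0^s h^* \, d\lambda$---is subadditive in $h$, and all weights $\varphi(0^+)$, $\varphi'(\infty)$, $\nu$ are non-negative, so the whole functional is subadditive. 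Note this gives genuine subadditivity (modulus of concavity $1$), confirming the \emph{Banach} claim rather than merely a quasinorm.

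\emph{The fundamental function.} Evaluating on characteristic functions again, $\lVert \chi_{E_t} \rVert_{\Lambda_\varphi} = \int_{[0,t)} d\varphi$. Since $\varphi$ is continuous on $(0,\alpha)$ and vanishes at the origin (the possible atom at $0$ being swept into this value), this equals $\varphi(t)$, so $\varphi_{\Lambda_\varphi} = \varphi$ exactly.

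\emph{Minimality and the main obstacle.} For the embedding, take an r.i.\ Banach function space $X$ with $\varphi_X \lesssim \varphi$ (only this half of $\approx$ is used) and aim at $\lVert f \rVert_X \leq C \lVert f \rVert_{\Lambda_\varphi}$. The plan is the layer-cake representation $|f| = \int_0^\infty \chi_{\{|f|>s\}} \, ds$ followed by the continuous (integral) form of the triangle inequality for the genuine norm $\lVert \cdot \rVert_X$:
\begin{equation*}
	\lVert f \rVert_X \leq \int_0^\infty \lVert \chi_{\{|f|>s\}} \rVert_X \, ds = \int_0^\infty \varphi_X(f_*(s)) \, ds \leq C \int_0^\infty \varphi(f_*(s)) \, ds.
\end{equation*}
A Fubini computation combined with the equimeasurability identity $\lambda(\{s : f_*(s) > t\}) = f^*(t)$ then converts the last integral into $\int_0^\infty f^* \, d\varphi = \lVert f \rVert_{\Lambda_\varphi}$, and the norm inequality yields the continuous embedding $\Lambda_\varphi \hookrightarrow X$ directly (or via Theorem~\ref{TEQBFS}). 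I expect the main obstacle to be justifying the integral form of the triangle inequality: this is precisely where I must exploit that $X$ is \emph{Banach}, not merely quasi-Banach. I would obtain it by approximating the layer-cake integral from below by its Riemann sums $\sum_j \chi_{\{|f|>s_j\}} \Delta s_j$, which increase pointwise to $|f|$, applying the ordinary finite triangle inequality to each sum, and passing to the limit through the Fatou property (P3). The only remaining delicate bookkeeping is the atom of $d\varphi$ at the origin and the interpretation of $\varphi$ and $\varphi_X$ at the endpoint $\alpha$, both of which are handled by the left-continuity and local boundedness established at the outset.
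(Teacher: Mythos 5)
Your argument is sound and in fact retraces the classical route: the paper does not prove this theorem itself but defers to \cite[Chapter~2, Theorem~5.13]{BennettSharpley88}, and both of your key devices are the ones used there. Your Fubini decomposition of the non-increasing density $\varphi'$ is precisely a proof of Hardy's lemma, which is how Bennett--Sharpley derive (P1c) from the subadditivity of $h \mapsto \int_0^t h^* \: d\lambda$; and your layer-cake/Riemann-sum argument closed by the Fatou property is their simple-function argument for minimality (you also correctly observe that only the half $\varphi_X \lesssim \varphi$ of the equivalence is needed). The bookkeeping you flag---the atom of $d\varphi$ at the origin, and finiteness of $\varphi$ up to the endpoint via concavity---is handled correctly. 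One small point to patch in the minimality step: when $f_*(s) = \infty$ on a set of positive measure, $\varphi_X(f_*(s))$ must be read as $\lim_{t \to \infty} \varphi_X(t) = \lVert \chi_{\mathcal{R}} \rVert_X$, which is legitimate by (P3), and the identity $\lVert \chi_{\{|f|>s\}} \rVert_X = \varphi_X(f_*(s))$ then persists.

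There is, however, one genuine gap: you never verify axiom (P5), which in this paper's definition is part of being a Banach function norm---the theorem asserts an r.i.\ \emph{Banach} function norm, meaning (P1)--(P5)---yet your list of ``easy axioms'' stops at (P4). The gap is real but easily closed, in either of two ways. Directly: your own decomposition yields $\varphi(t) f^{**}(t) \leq \lVert f \rVert_{\Lambda_\varphi}$, since $\varphi(0^+) f^{**}(t) \leq \varphi(0^+) f^*(0)$, while $f^*$ and $\varphi'$ are both non-increasing, hence similarly ordered, so Chebyshev's integral inequality gives
\begin{equation*}
	f^{**}(t) \int_0^t \varphi' \: d\lambda \leq \int_0^t f^* \varphi' \: d\lambda;
\end{equation*}
combining this with the Hardy--Littlewood inequality (Theorem~\ref{THLI}) then gives $\int_E \lvert f \rvert \: d\mu \leq \int_0^{\mu(E)} f^* \: d\lambda \leq \frac{\mu(E)}{\varphi(\mu(E))} \lVert f \rVert_{\Lambda_\varphi}$, i.e.~(P5) with $C_E = \mu(E)/\varphi(\mu(E))$. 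Alternatively, within this paper you may simply invoke Proposition~\ref{PropP1P5}: the standing assumption of the section is that $(\mathcal{R}, \mu)$ is non-atomic, hence resonant, and that proposition upgrades any r.i.~functional satisfying (P1)--(P4)---which is exactly what you established---to one satisfying (P5) for free.
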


Proof of this theorem can be found in \cite[Chapter~2, Theorem~5.13]{BennettSharpley88}.

It is clear that the endpoint spaces do not change, up to equivalence of norms, if one replaces the defining function $\varphi$ with an equivalent one, as long as the required (quasi)convexity is preserved. As working with equivalent functions is often desirable, we will introduce the following convention that will also allow us to avoid some unnecessary technicalities.

\begin{convention} \label{ConventionEndpoints}
	Let $\varphi: [0, \mu(\mathcal{R}) ) \to [0, \infty)$ be equivalent to a function that is concave and non-decreasing and vanishes at zero while being positive elsewhere. Denote this function $\widetilde{\varphi}$. We then put
	\begin{align*}
		\Lambda_{\varphi} &= \Lambda_{\widetilde{\varphi}}, & M_\varphi = M_{\widetilde{\varphi}}.
	\end{align*}
\end{convention}

Clearly, it follows from Proposition~\ref{PropositionEquivConcave} that this convention can be used for any function that is equivalent to a quasiconcave function. We will use this convention extensively, identifying all the spaces defined via equivalent functions and passing to equivalent quasinorms without explicit mentions.

\subsection{Wiener--Luxemburg amalgam spaces} \label{SectionWLA}

The formulation and/or proofs of some of our result will require the use of the so-called Wiener--Luxemburg amalgam spaces that were introduced by the third author \cite{Pesa22}. Said spaces are an abstract construction that provides a rigorous tool for defining, for a given pair of r.i.~quasi-Banach function spaces, a new space, in which the behaviour of functions ``near zero'' corresponds to that of functions belonging to the first space while the behaviour said functions ``near infinity'' corresponds to that of functions belonging to the second space.

As the related theory is only important for the proof of our statements, we do not wish to delve deep into in now and only present the fundamental definition of Wiener--Luxemburg amalgams that is necessary to understanding the formulation of some of our results.

\begin{definition} \label{DefWL}
	Let $\lVert \cdot \rVert_A$ and $\lVert \cdot \rVert_B$ be r.i.~quasi-Banach function norms over $\mathcal{M}([0, \infty), \lambda)$. We then define the Wiener--Luxemburg quasinorm $\lVert \cdot \rVert_{WL(A, B)}$, for $f \in \mathcal{M}([0, \infty), \lambda)$, by
	\begin{equation}
		\lVert f \rVert_{WL(A, B)} = \lVert f^* \chi_{[0,1]} \rVert_A + \lVert f^* \chi_{(1, \infty)} \rVert_B \label{DefWLN}
	\end{equation}
	and the corresponding Wiener--Luxemburg amalgam space $WL(A, B)$ as
	\begin{equation*}
		WL(A, B) = \{f \in \mathcal{M}([0, \infty), \lambda); \; \lVert f \rVert_{WL(A, B)} < \infty \}.
	\end{equation*}
	
	Furthermore, we will call the first summand in \eqref{DefWLN} the local component of $\lVert \cdot \rVert_{WL(A, B)}$ while the second summand will be called the global component of $\lVert \cdot \rVert_{WL(A, B)}$.
\end{definition}

We would like to point out, that the reason for the restriction to $\mathcal{M}([0, \infty), \lambda)$ in \cite{Pesa22} is the lack of a version of the Luxemburg representation theorem that would cover the r.i.~quasi-Banach function spaces. Hence, using the results we present in Section~\ref{SectionRepresentation}, this definition can be easily extended to cover pairs of r.i.~quasi-Banach function spaces over any resonant measure space.

\begin{example} \label{ExampleCap&SumAmalgams}
	The spaces $L^1 + L^{\infty}$ and $L^{1} \cap L^{\infty}$, which were defined in Example~\ref{ExampleInter&Sum}, can be characterised as the following Wiener--Luxemburg amalgam spaces:
	\begin{align*}
		L^1 + L^{\infty} &= WL(L^1, L^{\infty}) \\
		L^{1} \cap L^{\infty} &= WL(L^{\infty}, L^1),
	\end{align*}
	up to equivalence of quasinorms. This characterisation follows from \cite[Corollary~3.12]{Pesa22} together with Theorem~\ref{TEQBFS} (as all four spaces are quasi-Banach function spaces).
\end{example}

\section{Representation theorem}\label{SectionRepresentation}

The purpose of this section is to provide the following generalisation of the extremely important Luxemburg representation theorem (see Theorem~\ref{TheoremLuxemburgRepresentation} for the original result):

\begin{theorem} \label{TheoremRepresentation}
	Assume that $(\mathcal{R},\mu)$ is resonant, let $\lVert \cdot \rVert_X$ be an r.i.~quasi-Banach function norm on $\mathcal{M}(\mathcal{R},\mu)$, and denote its modulus of concavity by $C_X$. Then there is an r.i.~quasi-Banach function norm $\lVert \cdot \rVert_{\overline{X}}$ on $\mathcal{M}([0,\mu(\mathcal{R})), \lambda)$ such that for every $f \in \mathcal{M}(\mathcal{R},\mu)$ it holds that $\lVert f \rVert_X=\lVert f^* \rVert_{\overline{X}}$. Furthermore, $\lVert \cdot \rVert_{\overline{X}}$ has the property \ref{P5} whenever $\lVert \cdot \rVert_X$ does and the modulus of concavity $C_{\overline{X}}$ (of $\lVert \cdot \rVert_{\overline{X}}$) satisfies $C_{\overline{X}} \leq C_X$ when $(\mathcal{R},\mu)$ is non-atomic and $C_{\overline{X}} \leq 4C_X^2$ otherwise. Finally, when $(\mathcal{R},\mu)$ is non-atomic then $\lVert \cdot \rVert_{\overline{X}}$ is uniquely determined.
\end{theorem}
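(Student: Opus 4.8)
The plan is to exploit the dichotomy of Theorem~\ref{TheoremCharResonance}: a resonant space is either non-atomic or completely atomic with all atoms sharing a common measure $a$, and I would construct $\lVert \cdot \rVert_{\overline X}$ by a genuinely different device in each case, as the introduction anticipates. In both cases the guiding principle is that the representing functional may depend on $g \in \mathcal M([0,\mu(\mathcal R)),\lambda)$ only through $g^*$, which forces rearrangement-invariance and $\lVert\,|g|\,\rVert_{\overline X}=\lVert g\rVert_{\overline X}$ for free; the cases then differ only in how a function on the interval is transported back to $\mathcal R$.

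\emph{Non-atomic case.} First I would produce, using the Sierpiński theorem exactly as in the proof of Proposition~\ref{PropP1P5}, a measure-preserving map $\sigma \colon \mathcal R \to [0,\mu(\mathcal R))$, i.e.\ one with $\mu(\sigma^{-1}A)=\lambda(A)$ for every measurable $A$; concretely one builds a uniformly distributed function on each piece of a $\sigma$-finite exhaustion by halving measures along dyadic levels and passing to the limit. I would then simply set $\lVert g \rVert_{\overline X} = \lVert g\circ\sigma \rVert_X$. Since $\sigma$ is measure preserving, $g\circ\sigma$ and $g$ are equimeasurable, so this value depends only on $g^*$ (hence is independent of the choice of $\sigma$), and taking $g=f^*$ yields $\lVert f^*\rVert_{\overline X}=\lVert f\rVert_X$. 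All axioms transfer verbatim through $g\mapsto g\circ\sigma$: homogeneity and nontriviality are immediate, the lattice property, the Fatou property, \ref{P4}, and (when present) \ref{P5} follow from the corresponding axioms for $\lVert\cdot\rVert_X$ together with $\mu(\sigma^{-1}A)=\lambda(A)$ and the Hardy--Littlewood inequality (Theorem~\ref{THLI}) for \ref{P5}, while the quasi-triangle inequality gives $\lVert g_1+g_2\rVert_{\overline X}=\lVert g_1\circ\sigma+g_2\circ\sigma\rVert_X\le C_X(\lVert g_1\rVert_{\overline X}+\lVert g_2\rVert_{\overline X})$, so $C_{\overline X}\le C_X$. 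Uniqueness is then automatic, since $g=(g\circ\sigma)^*$ for every non-increasing $g$ forces any representing norm to agree on all rearrangements, and hence on all of $\mathcal M$ by rearrangement-invariance.

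\emph{Completely atomic case.} Here the obstruction is that $f^*$ is constant on each $[ka,(k+1)a)$, so \emph{no} function on $\mathcal R$ is equimeasurable with a general $g$ on the interval and the transport above is unavailable. Instead I would sample: let $\psi_g\in\mathcal M(\mathcal R,\mu)$ have, when sorted, the values $c_k(g)=g^*(ka)$, and set $\lVert g\rVert_{\overline X}=\lVert \psi_g\rVert_X$. For $g=f^*$ the values of $f^*$ at the breakpoints $ka$ are precisely the sorted values of $f$, so again $\lVert f^*\rVert_{\overline X}=\lVert f\rVert_X$, and dependence on $g^*$ alone gives rearrangement-invariance. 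The lattice property, the Fatou property, and \ref{P4} transfer using $g_1\le g_2\Rightarrow g_1^*\le g_2^*$, $g_n\uparrow g\Rightarrow g_n^*\uparrow g^*$, and the explicit form of $\chi_E^*$; property \ref{P5}, when present, follows from $\int_{ka}^{(k+1)a}g^*\le a\,c_k(g)$ together with Theorem~\ref{THLI} and \ref{P5} for $X$.

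\emph{Main obstacle.} The real difficulty is the quasi-triangle inequality in the atomic case, precisely because the Hardy--Littlewood--Pólya majorisation principle, which would settle it at once in the normed setting, is unavailable for quasinorms. I would circumvent this by a dilation argument built on the pointwise bound $(g_1+g_2)^*(t)\le g_1^*(t/2)+g_2^*(t/2)$: evaluating at $t=ka$ and using monotonicity to replace $ka/2$ by $\lfloor k/2\rfloor a$ gives $c_k(g_1+g_2)\le c_{\lfloor k/2\rfloor}(g_1)+c_{\lfloor k/2\rfloor}(g_2)$, whence the lattice and quasi-triangle axioms yield $\lVert g_1+g_2\rVert_{\overline X}\le C_X(\lVert D\psi_{g_1}\rVert_X+\lVert D\psi_{g_2}\rVert_X)$, with $D$ the discrete ``doubling'' $c_k\mapsto c_{\lfloor k/2\rfloor}$. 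Finally $D\psi_g$ splits into two copies of $\psi_g$ carried by the even and the odd atoms, each equimeasurable with $\psi_g$, so $\lVert D\psi_g\rVert_X\le 2C_X\lVert\psi_g\rVert_X$; combining the two estimates gives $C_{\overline X}\le 4C_X^2$. The same sampling, being blind to the values of $g$ strictly between breakpoints, is exactly why uniqueness cannot be expected here, in accordance with the statement.
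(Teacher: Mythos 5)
Your proof is correct. The non-atomic half is essentially the paper's own argument: the same measure-preserving transport $g \mapsto g\circ\sigma$ (the paper's Lemma~\ref{LMPT} followed by the operator $T$ of \eqref{TheoremRepresentation:Tna}), the same verbatim transfer of the axioms with $C_{\overline X}\le C_X$, and the same uniqueness argument — your $\widetilde g = g\circ\sigma$ simply makes explicit the paper's appeal to the Sierpi\'{n}ski theorem for realising every non-increasing right-continuous $g$ as a rearrangement. In the atomic case you keep the paper's architecture (a block-wise transport of $g^*$ to a sequence, then an even/odd splitting to obtain the quasi-triangle inequality) but use a genuinely different transport: you \emph{sample}, $\psi_g(k)=g^*(ka)$, where the paper \emph{averages}, $T(g)(n)=\beta^{-1}\int_{\beta n}^{\beta(n+1)}g^*\,d\lambda$ as in \eqref{TheoremRepresentation:T}. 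In the language of the paper's post-proof discussion, the representation is an amalgam with global component $X$ and a freely chosen local component; the paper chooses $L^1$, you choose $L^\infty$, and the paper itself notes that any r.i.~Banach local component works, so your construction is a legitimate member of this family. The two functionals agree on the block-constant functions $f^*$, $f\in\mathcal M(\mathcal R,\mu)$, which is all the representing identity requires; off that cone they differ (for instance, a $g$ with $g^*(0)=\infty$ but $g^*$ integrable near zero can lie in the paper's $\overline X$ yet never lies in yours), consistently with the non-uniqueness in the atomic case. Your choice buys a cleaner quasi-triangle argument: since sampling interacts directly with monotonicity, $c_k(g_1+g_2)\le c_{\lfloor k/2\rfloor}(g_1)+c_{\lfloor k/2\rfloor}(g_2)$ needs no change of variables, the even and odd parts of $D\psi_g$ are exactly equimeasurable with $\psi_g$ (truncations of it when the number of atoms is finite, which you should note but which is harmless by the lattice property), and your two estimates actually combine to $C_{\overline X}\le 2C_X^2$, comfortably within the stated $4C_X^2$. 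Your transfer of \ref{P5} is also sound because the sample dominates the average: $\int_0^{\lambda(E)}g^*\,d\lambda\le a\sum_{k:\,ka<\lambda(E)}g^*(ka)$, after which \ref{P5} of $X$ applies to the corresponding finite set of atoms, mirroring the paper's computation via the Hardy--Littlewood inequality. What the paper's $L^1$ choice buys instead, and yours forfeits, is compatibility with the classical theory: when $\lVert\cdot\rVert_X$ is an r.i.~Banach function norm, the averaged construction reproduces the classical Luxemburg representation and is itself a norm, whereas your sampled functional is in general only a quasinorm even then. Since neither feature is demanded by the statement, both proofs are complete.
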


We note, that by Theorem~\ref{TheoremCharResonance}, resonant measure space that is not non-atomic is completely atomic with all atoms having the same measure. As this description is rather cumbersome, we prefer the more elegant formulation above.

For the non-atomic case, will need the following lemma:
\begin{lemma} \label{LMPT}
	Assume  that $(\mathcal{R},\mu)$ is a non-atomic measure space. Then there exists a measure-preserving mapping $\sigma$ from $(\mathcal{R},\mu)$ onto the range of $\mu$ (without $\infty$, if applicable), i.e.~$\mu (\sigma^{-1}(A))=\lambda(A)$ for any Lebesgue measurable $A\subseteq [0,\infty)$ such that $\sup A \leq \mu(\mathcal{R})$.
\end{lemma}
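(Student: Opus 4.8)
The plan is to build $\sigma$ by a dyadic refinement construction, reducing first to the case of finite total measure. When $\mu(\mathcal{R}) = \infty$, I would use $\sigma$-finiteness to write $\mathcal{R} = \bigsqcup_{n} R_n$ as a disjoint union of sets of finite positive measure, set $s_n = \sum_{k<n} \mu(R_k)$ (so that $s_n \uparrow \infty = \mu(\mathcal{R})$), and construct on each $R_n$ a measure-preserving map onto the block $[s_n, s_{n+1})$; patching these together yields a map onto $[0,\infty) = [0,\mu(\mathcal{R}))$, and the measure-preserving property on each block transfers to the whole by countable additivity of $\mu$ and $\lambda$. Thus it suffices to treat the case $a := \mu(\mathcal{R}) < \infty$ and produce a measure-preserving map onto $[0,a)$.

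For the finite case I would construct a nested family of finite measurable partitions of $\mathcal{R}$ indexed by dyadic intervals. Starting from $\mathcal{R}$ itself, repeated application of the classical Sierpiński theorem (splitting any set into two halves of equal measure) gives, at each level $n$, pairwise disjoint sets $A^n_0,\dots,A^n_{2^n-1}$ of measure $a 2^{-n}$ with union $\mathcal{R}$, refined so that $A^n_k = A^{n+1}_{2k} \cup A^{n+1}_{2k+1}$. I associate to $A^n_k$ the half-open dyadic interval $I^n_k = [k a 2^{-n}, (k+1) a 2^{-n})$, so that the combinatorics of refinement matches the nesting of dyadic intervals. For $x \in \mathcal{R}$ the intervals $I^n_{k_n(x)}$ corresponding to the sets $A^n_{k_n(x)}$ containing $x$ are nested with lengths tending to zero, and I define $\sigma(x)$ to be the single point of their intersection; equivalently $\sigma$ is the everywhere-pointwise limit of the simple functions given by the left endpoints of $I^n_{k_n(x)}$, and is therefore measurable.

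The measure-preserving identity would then be verified in stages. By construction $\sigma^{-1}(I^n_k)$ coincides with $A^n_k$ up to a $\mu$-null set (carried by the countably many points mapping to dyadic endpoints), so $\mu(\sigma^{-1}(I^n_k)) = a 2^{-n} = \lambda(I^n_k)$. Finite disjoint unions of dyadic intervals form an algebra generating the Borel $\sigma$-algebra of $[0,a)$, and the two finite measures $A \mapsto \mu(\sigma^{-1}(A))$ and $\lambda$ agree on this algebra; the uniqueness part of the Carathéodory extension theorem then forces agreement on all Borel sets, and passing to the completion extends the equality to every Lebesgue measurable $A \subseteq [0,a)$.

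The main obstacle I anticipate is not any single deep step but the careful bookkeeping. I must ensure the dyadic partitions refine consistently (which is precisely what iterated Sierpiński provides), confirm that the nested intervals collapse to a single point so that $\sigma$ is genuinely well defined, and check that the boundary ambiguity at dyadic rationals is confined to $\mu$-null sets, so that $\mu(\sigma^{-1}(I^n_k)) = \lambda(I^n_k)$ holds exactly even though $\sigma^{-1}(I^n_k)$ and $A^n_k$ may differ on such a set. Once the identity is secured on dyadic intervals, the extension to arbitrary measurable sets and the patching across blocks in the infinite-measure case are routine.
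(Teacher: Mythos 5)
Your proposal is correct, and it splits from the paper precisely at the finite-measure core. For $\mu(\mathcal{R}) = \infty$ you do essentially what the paper does: decompose $\mathcal{R}$ into countably many pieces of finite measure, map each piece measure-preservingly onto a consecutive block of $[0,\infty)$, and transfer the identity $\mu(\sigma^{-1}(A)) = \lambda(A)$ by countable additivity. The difference is that the paper treats the finite case as a black box, citing \cite[Chapter~2, Proposition~7.4]{BennettSharpley88}, whereas you prove it from scratch: iterated Sierpi\'nski splitting gives consistently refining dyadic partitions $A^n_k$ of measure $a2^{-n}$, you read off $\sigma$ as the limit of left endpoints of the matched dyadic intervals, verify $\mu(\sigma^{-1}(I^n_k)) = \lambda(I^n_k)$ after confining the endpoint ambiguity to a $\mu$-null set, and conclude by uniqueness of extension from the dyadic algebra (a $\pi$-system generating the Borel $\sigma$-algebra, both measures finite with total mass $a$), then pass to the completion. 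This is in effect a self-contained proof of the Bennett--Sharpley proposition; what it buys is transparency and independence from the reference, at the cost of length. Two small points of bookkeeping, both of which your own caveats essentially anticipate: first, the intersection of the half-open intervals $I^n_{k_n(x)}$ can be genuinely empty (for points that fall into the ``right child'' at every level, the common limit is the fixed right endpoint, which lies in none of the half-open intervals), so the primary definition of $\sigma(x)$ must be the left-endpoint limit --- which you do supply as the ``equivalent'' formulation --- rather than the intersection point; second, the nullity of the ambiguous set is immediate once one notes that for each fixed dyadic point $c$ the preimage $\sigma^{-1}(\{c\})$ is contained, for every $n$, in the union of at most two cells $A^n_k$, hence has measure at most $2a2^{-n}$, and there are only countably many such $c$. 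Note also that, as with the paper's construction, your $\sigma$ need not be literally surjective (the nested cells $\bigcap_n A^n_{k_n}$ over a point of $[0,a)$ may be empty), but the displayed identity $\mu(\sigma^{-1}(A)) = \lambda(A)$ is the operative content of the lemma and is all that the proof of Theorem~\ref{TheoremRepresentation} uses, so this costs nothing.
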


\begin{proof}
	For finite measure spaces this result is a special case of \cite[Proposition~7.4]{BennettSharpley88}. Assume that $\mu(\mathcal{R}) = \infty$. Then there is an increasing sequence $\mathcal{R}_n$ of subsets of $\mathcal{R}$ with finite measure such that
	\begin{equation*}
		\bigcup_{n=1}^{\infty} \mathcal{R}_n = \mathcal{R}.
	\end{equation*}
	Put $\mathcal{R}_0 = \emptyset$, $\widetilde{\mathcal{R}_n} = \mathcal{R}_{n} \setminus \mathcal{R}_{n-1}$, and $I_n = [\mu(\mathcal{R}_{n-1}), \mu(\mathcal{R}_{n})]$. Then, for every $n$, $\widetilde{\mathcal{R}_n}$ is a finite measure space and by \cite[Proposition~7.4]{BennettSharpley88} there is a measure preserving transformation $\sigma_n$ that maps $\widetilde{\mathcal{R}_n}$ onto $I_n$. We now define $\sigma$ on $\widetilde{\mathcal{R}_n}$ by $\sigma(x) = \sigma_n(x)$.
	
	It holds for any Lebesgue measurable $A\subseteq [0,\infty)$ such that $\sup A \leq \mu(\mathcal{R})$ that
	\begin{equation*}
		\sigma^{-1}(A) = \sigma^{-1} \left ( \bigcup_{n=1}^{\infty} A \cap I_n \right ) =\bigcup_{n=1}^{\infty} \sigma_n^{-1} \left ( A \cap I_n \right ),
	\end{equation*}
	where the right-hand side is clearly $\mu$-measurable. Furthermore, the sets on the right-hand side are disjoint, and thus
	\begin{equation*}
		\mu(\sigma^{-1}(A)) = \sum_{n=1}^{\infty} \mu(\sigma_n^{-1} \left ( A \cap I_n \right )) = \sum_{n=1}^{\infty} \lambda \left ( A \cap I_n \right ) = \lambda(A),
	\end{equation*}
	where we also use that the sets $A \cap I_n$ intersect each other only on sets of Lebesgue measure zero.
\end{proof}

\begin{proof}[Proof of Theorem~\ref{TheoremRepresentation}]
	We know from Theorem~\ref{TheoremCharResonance} that resonant measure spaces are either non-atomic or completely atomic with all atoms having the same measure. We treat those spaces separately, beginning with the former.
		
	Assume that $(\mathcal{R},\mu)$ is non-atomic and let $\sigma$ be the measure-preserving transform from Lemma~\ref{LMPT} and consider the operator $T :\mathcal{M}([0,\mu(\mathcal{R})), \lambda) \to \mathcal{M}(\mathcal{R},\mu)$ defined for every $f \in \mathcal{M}([0,\mu(\mathcal{R})), \lambda)$ by
	\begin{equation} \label{TheoremRepresentation:Tna}
		T(f) = f \circ \sigma.
	\end{equation}
	It is immediately obvious that $T$ is linear mapping from $\mathcal{M}([0,\mu(\mathcal{R})), \lambda)$ into $\mathcal{M}(\mathcal{R},\mu)$. Moreover, we have for $f, f_1, f_2, f_n \in \mathcal{M}([0,\mu(\mathcal{R}))$ the following:
	\begin{equation} \label{TheoremRepresentation:1}
		\begin{split}
			&T(\lvert f \rvert )=\lvert f \rvert \circ \sigma =  \lvert f \circ \sigma \rvert =  \lvert T(f) \rvert ,\\
			&0 \leq f_1 \leq f_2 \implies 0 \leq f_1 \circ \sigma \leq f_2 \circ \sigma \implies 0 \leq T(f_1) \leq T(f_2), \\
			&0 \le f_n \uparrow f\ \implies 0 \leq f_n \circ \sigma \uparrow f \circ \sigma \implies 0 \leq T(f_n) \uparrow T(f), \\
			&T(\chi_E) = \chi_{\sigma^{-1}(E)} \text{ for every measurable } E\subseteq [0,\mu(\mathcal{R})),
		\end{split}
	\end{equation}
	and the functions $f$ and $T(f)$ are clearly equimeasurable since $\sigma$ is measure-preserving (see \cite[Chapter~1, Proposition~7.1]{BennettSharpley88}). Finally, $T$ is injective, since $T(f) = 0$ $\mu$-a.e.~implies that $f = 0$ $\lambda$-a.e.~on the range of $\sigma$ and this range is the whole of $[0,\mu(\mathcal{R}))$.
	
	We may now define $\lVert \cdot \rVert_{\overline{X}}$ for every $f \in \mathcal{M}([0,\mu(\mathcal{R})), \lambda)$ by the formula
	\begin{align*}
		&\lVert f \rVert_{\overline{X}} = \lVert T(f) \rVert_X.
	\end{align*}
	
	Properties of $T$ and $\sigma$ now ensure that the axioms satisfied by $\lVert \cdot \rVert_X$ are also satisfied by $\lVert \cdot \rVert_{\overline{X}}$. More specifically, as $T$ is linear and injective, it is clear that this functional is a quasinorm and that $C_X$ is a valid constant for the quasi-triangle inequality, i.e.~that the modulus of concavity of $\lVert \cdot \rVert_{\overline{X}}$ is at most $C_X$. Further, the properties of $T$ listed in \eqref{TheoremRepresentation:1} ensure that $\lVert \cdot \rVert_{\overline{X}}$ satisfies \ref{P2}, \ref{P3}, and \ref{P4} and that $\lVert \, \lvert f \rvert \, \rVert_{\overline{X}} = \lVert f \rVert_{\overline{X}}$ for all $f \in \mathcal{M}([0,\mu(\mathcal{R})), \lambda)$. The rearrangement-invariance of $\lVert \cdot \rVert_{\overline{X}}$ now follows from the fact that when $f_1, f_2 \in \mathcal{M}([0,\mu(\mathcal{R})), \lambda)$ are equimeasurable, then so are $T(f_1)$ and $T(f_2)$. Finally, when $\lVert \cdot \rVert_X$ satisfies \ref{P5}, then the property that $\sigma$ is measure preserving implies for every $E \subseteq (0, \mu(\mathcal{R}))$ and every  $f \in \mathcal{M}([0,\mu(\mathcal{R})), \lambda)$ that
	\begin{equation*}
		\int_E \lvert f \rvert \: d\lambda = \int_{\sigma^{-1}(E)} \lvert f \circ \sigma \rvert \: d\mu \leq C_{\sigma^{-1}(E)} \lVert T(f) \rVert_X = C_{\sigma^{-1}(E)} \lVert f \rVert_{\overline{X}},
	\end{equation*}
	where $ C_{\sigma^{-1}(E)}$ is the constant from \ref{P5} of $\lVert \cdot \rVert_X$ for the set $\sigma^{-1}(E)$, and thus $\lVert \cdot \rVert_{\overline{X}}$ also satisfies \ref{P5}.
	
	To finish the construction, it only remains to show that we have for every $g \in \mathcal{M}(\mathcal{R}, \mu)$ that  $\lVert g \rVert_X = \lVert g^* \rVert_{\overline{X}}$ but that is immediate, as $g$ and $T(g^*)$ are equimeasurable, because they are both equimeasurable with $g^*$.
	
	To observe that $\lVert \cdot \rVert_{\overline{X}}$ is in this case uniquely determined, it suffices to realise that by the classical Sierpinski theorem, for every non-increasing right continuous function $g \in \mathcal{M}([0,\mu(\mathcal{R})), \lambda)$ there is some function $\widetilde{g} \in \mathcal{M}(\mathcal{R},\mu)$ such that $g = (\widetilde{g})^*$. Hence, when	$\lVert \cdot \rVert_{\overline{X}}$ is as above and $\lVert \cdot \rVert$ is any other r.i.~quasi-Banach function norm on $\mathcal{M}([0,\mu(\mathcal{R})), \lambda)$ that represents  $\lVert \cdot \rVert_X$, then we have for every $f \in \mathcal{M}([0,\mu(\mathcal{R})), \lambda)$ that
	 \begin{equation*}
	 	\lVert f \rVert = \lVert f^* \rVert = \left \lVert \left( \widetilde{f^*} \right)^* \right \rVert = \left \lVert \widetilde{f^*} \right \rVert_X = \left \lVert \left( \widetilde{f^*} \right)^* \right \rVert_{\overline{X}} = \left \lVert f^* \right \rVert_{\overline{X}} = \left \lVert f \right \rVert_{\overline{X}}.
	 \end{equation*}
	
	Assume now that $(\mathcal{R},\mu)$ is completely atomic with all atoms having the same measure which we denote by $\beta \in (0, \infty)$. As the space is assumed to be $\sigma$-finite, it follows that the set $\mathcal{R}$ is at most countable. We thus may, without loss of generality, assume that $(\mathcal{R},\mu)$ is the set $\mathcal{N} = \mathbb{N} \cap [0, \mu(\mathcal{R}))$ equipped with the measure $\nu = \beta m$, the appropriate multiple of the classical counting measure $m$. We then consider the operator $T :\mathcal{M}([0,\nu(\mathcal{N})), \lambda) \to \mathcal{M}(\mathcal{N}, \nu)$, which is defined for every $f \in \mathcal{M}([0,\nu(\mathcal{N})), \lambda)$ by the pointwise formula
	\begin{align} \label{TheoremRepresentation:T}
		T(f)(n) &= \beta^{-1} \int_{\beta n}^{\beta (n+1)} f^* \: d\lambda, & \text{for } n \in \mathcal{N}. 
	\end{align}
	
	Now, if $g \in \mathcal{M}(\mathcal{N}, \nu)$, then it is easy to verify that
	\begin{equation*}
		g^* = \sum_{n \in \mathcal{N}} g^*(n) \chi_{[\beta n, \beta (n+1))}
	\end{equation*}
	and that, consequently, $g$ is equimeasurable with $T(g^*)$. Furthermore, if follows from the properties of the non-increasing rearrangement and those of Lebesgue integral that it holds for all $f, f_1, f_2, f_n \in \mathcal{M}([0,\nu(\mathcal{N})), \lambda)$ that
	\begin{equation*} \label{TheoremRepresentation:2}
	\begin{split}
		&T(\lvert f \rvert ) =  \lvert T(f) \rvert = T(f),\\
		&T(a f  ) =  \lvert a \rvert T(f), \text{ for all } a \in \mathbb{C}\\
		&0 \leq f_1 \leq f_2 \text{ $\lambda$-a.e.} \implies 0 \leq T(f_1) \leq T(f_2) \text{ $\nu$-a.e.}, \\
		&0 \le f_n \uparrow f \text{ $\lambda$-a.e.}\implies 0 \leq T(f_n) \uparrow T(f) \text{ $\nu$-a.e.}, \\
		&f = 0 \text{ $\lambda$-a.e.} \iff T(f) = 0 \text{ $\nu$-a.e.},
	\end{split}
	\end{equation*}
	and that if $f,g \in \mathcal{M}([0,\nu(\mathcal{N})), \lambda)$ are equimeasurable, then $T(f) = T(g)$ on $\mathcal{N}$. We also easily get, that when $E \subseteq (0,\nu(\mathcal{N}))$ satisfies $\lambda(E) \in (0, \infty)$ and we define
	\begin{align} \label{TheoremRepresentation:3}
		{E}_{-} &= \{n \in \mathcal{N}; \; \beta (n+1) \leq \lambda(E) \} & {E}_{+} &= \{n \in \mathcal{N}; \; \beta n \leq \lambda(E) \},
	\end{align}
	then we have
	\begin{equation*} 
		\chi_{{E}_{-}} \leq T(\chi_E) \leq \chi_{{E}_{+}}
	\end{equation*}
	and
	\begin{equation*}
		\lambda(E) - \beta \leq \nu(E_{-}) \leq \lambda(E) \leq \nu(E_{+}) \leq \lambda(E) + \beta.
	\end{equation*}
	
	We may now define the representation functional $\lVert \cdot \rVert_{\overline{X}}$ is for every $f \in \mathcal{M}([0,\mu(\mathcal{R})), \lambda)$ by the formula
	\begin{equation*}
		\lVert f \rVert_{\overline{X}} =\left \lVert T(f) \right \rVert_X.
	\end{equation*}
	From the properties of $T$ listed above, it immediately follows that this functional is rearrangement-invariant, it satisfies \ref{P2}, \ref{P3}, and \ref{P4}, and that $\lVert \, \lvert f \rvert \, \rVert_{\overline{X}} = \lVert f \rVert_{\overline{X}}$ for all $f \in \mathcal{M}([0,\nu(\mathcal{N})), \lambda)$. Furthermore, to obtain that $\lVert \cdot \rVert_{\overline{X}}$ is a quasinorm, we only need to show that it is subadditive up to a constant, which is done by the following argument:
	
	We start by fixing some arbitrary $f_1, f_2 \in \mathcal{M}([0,\nu(\mathcal{N})), \lambda)$ and performing the following computation for arbitrary fixed $n \in \mathcal{N}$, using the properties of non-increasing rearrangement and a simple change of coordinates:
	\begin{equation*}
	\begin{split}
		\beta^{-1} \int_{\beta n}^{\beta (n+1)} (f_1 + f_2)^*(t) \: dt &\leq \beta^{-1} \int_{\beta n}^{\beta (n+1)} f_1^*\left(\frac{t}{2}\right) + f_2^*\left(\frac{t}{2}\right) \: dt \\
		&= 2\left(\beta^{-1} \int_{\frac{\beta n}{2}}^{\frac{\beta (n+1)}{2}} f_1^*(t) \: dt + \beta^{-1}\int_{\frac{\beta n}{2}}^{\frac{\beta (n+1)}{2}} f_2^*(t) \: dt \right)
	\end{split}
	\end{equation*}
	Hence, if we define an operator $S :\mathcal{M}([0,\nu(\mathcal{N})), \lambda) \to \mathcal{M}(\mathcal{N}, \nu)$ for every $f \in \mathcal{M}([0,\nu(\mathcal{N})), \lambda)$ by the pointwise formula
	\begin{align*}
		S(f)(n) &= \beta^{-1} \int_{\frac{\beta n}{2}}^{\frac{\beta (n+1)}{2}} f^* \: d\lambda,  &\text{for } n \in \mathcal{N}. 
	\end{align*}
	then we have
	\begin{equation} \label{TheoremRepresentation:4}
		T(f_1+f_2) \leq 2 (S(f_1) + S(f_2)).
	\end{equation}
	It remains to show that $\lVert S(f) \rVert_X \lesssim \lVert T(f) \rVert_X$ for every $f \in \mathcal{M}([0,\nu(\mathcal{N})), \lambda)$. To this end, consider for a fixed $f \in \mathcal{M}([0,\nu(\mathcal{N})), \lambda)$ the following sequences:
	\begin{align*}
		\psi_f (n) &= \begin{cases}
			\beta^{-1} \int_{\frac{\beta n}{2}}^{\frac{\beta (n+2)}{2}} f^* \: d\lambda & \text{for } n \in \mathcal{N}, \text{ $n$ even,} \\[0.8em]
			\beta^{-1} \int_{\frac{\beta (n-1)}{2}}^{\frac{\beta (n+1)}{2}} f^* \: d\lambda &\text{for } n \in \mathcal{N}, \text{ $n$ odd.}
		\end{cases} \\
		\psi_{f, \text{ even}} (n) &= \begin{cases}
			\psi_f (n) & \text{for } n \in \mathcal{N}, \text{ $n$ even,} \\
			0 &\text{for } n \in \mathcal{N}, \text{ $n$ odd.}
		\end{cases} \\
		\psi_{f, \text{ odd}} (n) &= \begin{cases}
			0 & \text{for } n \in \mathcal{N}, \text{ $n$ even,} \\
			\psi_f (n) &\text{for } n \in \mathcal{N}, \text{ $n$ odd.}
		\end{cases} \\
		\overline{\psi_{f, \text{ even}}} (n) &= \begin{cases}
			\psi_{f, \text{ even}} (2n) & \text{for } n \in \mathcal{N}, 2n \in \mathcal{N} \\
			0 &\text{for } n \in \mathcal{N}, 2n \notin \mathcal{N}
		\end{cases} \\
		\overline{\psi_{f, \text{ odd}}} (n) &= \begin{cases}
			\psi_{f, \text{ odd}} (2n+1) & \text{for } n \in \mathcal{N}, 2n+1 \in \mathcal{N} \\
			0 &\text{for } n \in \mathcal{N}, 2n+1 \notin \mathcal{N}
		\end{cases}
	\end{align*}
	Clearly, we have $S(f) \leq \psi_f = \psi_{f, \text{ even}} + \psi_{f, \text{ odd}}$ on $\mathcal{N}$ and it is easy to show that $\psi_{f, \text{ even}}$ and $\psi_{f, \text{ odd}}$ are equimeasurable with $\overline{\psi_{f, \text{ even}}}$ and $\overline{\psi_{f, \text{ odd}}}$, respectively, and that we have the estimates $\overline{\psi_{f, \text{ even}}} \leq T(f)$ and $\overline{\psi_{f, \text{ odd}}} \leq T(f)$ on $\mathcal{N}$. It follows, that
	\begin{equation}\label{TheoremRepresentation:5}
		\lVert S(f) \rVert_X \leq \lVert \psi_f \rVert_X \leq C_X \left ( \left \lVert \overline{\psi_{f, \text{ even}}} \right \rVert_X + \left \lVert \overline{\psi_{f, \text{ odd}}} \right \rVert_X  \right ) \leq 2C_X \lVert T(f) \rVert_X.
	\end{equation}
	Combining \eqref{TheoremRepresentation:4} and \eqref{TheoremRepresentation:5} now yields
	\begin{equation*}
		\lVert f_1 + f_2 \rVert_{\overline{X}} \leq 2C_X \left(\lVert S(f_1) \rVert_X + \lVert S(f_2) \rVert_X\right) \leq 4C_X^2 \left(  \lVert f_1 \rVert_{\overline{X}} + \lVert f_2 \rVert_{\overline{X}} \right ),
	\end{equation*}
	which shows that $\lVert \cdot \rVert_{\overline{X}}$ satisfies the triangle inequality up to the constant $4C_X^2$.
	
	Finally, when $\lVert \cdot \rVert_X$ satisfies \ref{P5}, we may use the Hardy--Littlewood inequality (Theorem~\ref{THLI}) and the notation established in \eqref{TheoremRepresentation:3} to compute (for a fixed $E \subseteq (0,\nu(\mathcal{N}))$ with $\lambda(E)<\infty$ and a fixed $f \in \mathcal{M}([0,\nu(\mathcal{N})), \lambda)$)
	\begin{equation*}
		\int_E \lvert f \rvert \: d\lambda \leq \int_0^{\lambda(E)} f^* \: d\lambda \leq \sum_{n=0}^{\nu(E_{+})} \beta \left(  \beta^{-1} \int_{\beta n}^{\beta (n+1)} f^* \: d\lambda \right ) = \int_{E_{+}} T(f) \: d\nu \leq C_{E_{+}} \lVert f \rVert_{\overline{X}},
	\end{equation*}
	where $C_{E_{+}}$ is the constant from the property \ref{P5} of $\lVert \cdot \rVert_X$ for the set $E_{+}$ (which has finite measure).
\end{proof}

We stress that when $(\mathcal{R},\mu)$ is non-atomic, then the quasinorm $\lVert \cdot \rVert_{\overline{X}}$ is uniquely defined, so it is independent of any particular choice of the measure preserving transformation in the construction presented above. On the other hand, in the atomic case the resulting space depends on the particular choice we have made when defining the operator $T$ in \eqref{TheoremRepresentation:T}, which we will now discuss in more detail.

The construction in is a sort of  $\lVert \cdot \rVert_{\overline{X}}$ in the non-atomic case uses a sort of an amalgam approach, which conceptually follows the ideas from \cite{Pesa22}, especially its Appendix~A. The operator $T$ splits the interval $(0,\nu(\mathcal{N}))$ into the appropriate number of subintervals of length $\beta$, takes normalised $L^1$ norm (of the rearranged function) over each of those subintervals, and finally creates a sequence the $n$-th value of which is the norm on the $n$-th interval. This sequence is then measured in the original quasinorm $\lVert \cdot \rVert_X$. Hence, the result is a variant of an r.i.~amalgam space with $X$ being its global component and $L^1$ being its local component. The choice of the global component is of course dictated by the fact that $X$ is the space we want to represent, but since the quasinorm $\lVert \cdot \rVert_X$ is defined on an atomic measure space, it does not posses any local component that we could use. Hence, we must make a choice and we have plenty of options, as it is not hard to see that essentially the same proof would work with $L^1$ being replaced by any r.i.~Banach function space. Clearly, the resulting space depends on this choice. The reason why we have chosen $L^1$ is that in the case when $\lVert \cdot \rVert_X$ is an r.i.~Banach function norm, then with this choice of the local component the quasinorm $\lVert \cdot \rVert_{\overline{X}}$ satisfies for all $f \in \mathcal{M}([0,\nu(\mathcal{N})), \lambda)$
\begin{equation*}
	\lVert f \rVert_{\overline{X}} = \sup_{\substack{g \in X' \\ \lVert g \rVert_{X'} \leq 1}} \int_0^{\infty} f^* g^* \: d\lambda,
\end{equation*}
i.e.~our construction in this case corresponds to the classical one as presented in \cite[Chapter~2, Theorem~4.10]{BennettSharpley88} and the resulting functional is an r.i.~Banach function norm (we leave the details to the reader).

The reverse of Theorem~\ref{TheoremRepresentation} is also true, i.e.~given any r.i.~quasi-Banach function norm over $\mathcal{M}([0,\mu(\mathcal{R})), \lambda)$ we may use it to define an r.i.~quasi-Banach function norm over arbitrary $\sigma$-finite measure spaces.

\begin{proposition} \label{PropositionInverseRepresentation}
	Let $\lVert \cdot \rVert_{Y}$ be an r.i.~quasi-Banach function norm on $\mathcal{M}([0,\alpha), \lambda)$, $\alpha \in (0, \infty]$, and let $(\mathcal{R},\mu)$ be an arbitrary $\sigma$-finite measure space satisfying $\mu(\mathcal{R}) = \alpha$. Then the functional $\lVert \cdot \rVert_{\widetilde{Y}}$ on $\mathcal{M}(\mathcal{R},\mu)$ given for every $g \in \mathcal{M}(\mathcal{R},\mu)$ by
	\begin{equation*}
		\lVert g \rVert_{\widetilde{Y}} = \lVert g^* \rVert_{Y}
	\end{equation*}
	is also an r.i.~quasi-Banach function norm which satisfies \ref{P5} whenever $\lVert \cdot \rVert_{Y}$ does.
\end{proposition}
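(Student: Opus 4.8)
The plan is to check directly that $\lVert \cdot \rVert_{\widetilde{Y}}$ inherits each defining axiom from $\lVert \cdot \rVert_{Y}$, transporting every property across the non-increasing rearrangement. Since $\lVert g \rVert_{\widetilde{Y}}$ depends on $g$ only through $g^*$ and $\lvert g \rvert^* = g^*$, both the identity $\lVert \, \lvert g \rvert \, \rVert_{\widetilde{Y}} = \lVert g \rVert_{\widetilde{Y}}$ and the rearrangement-invariance of $\lVert \cdot \rVert_{\widetilde{Y}}$ are immediate (equimeasurable functions share the same rearrangement). It then remains to verify the axioms on $\mathcal{M}_+(\mathcal{R}, \mu)$.

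Most of the axioms transfer routinely. Property \ref{Q1a} follows from $(ag)^* = \lvert a \rvert g^*$ together with \ref{Q1a} for $\lVert \cdot \rVert_{Y}$, and \ref{Q1b} from the fact that $g^* = 0$ $\lambda$-a.e.\ if and only if $g = 0$ $\mu$-a.e., combined with \ref{Q1b} for $\lVert \cdot \rVert_Y$. For the lattice property \ref{P2} I would use that $g_1 \leq g_2$ $\mu$-a.e.\ implies $g_1^* \leq g_2^*$, and for the Fatou property \ref{P3} that $g_n \uparrow g$ $\mu$-a.e.\ implies $g_n^* \uparrow g^*$; in each case the corresponding axiom for $\lVert \cdot \rVert_Y$ yields the claim. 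Finally, \ref{P4} follows because $(\chi_E)^* = \chi_{[0, \mu(E))}$, so that $\lVert \chi_E \rVert_{\widetilde{Y}} = \lVert \chi_{[0, \mu(E))} \rVert_Y$, which is finite by \ref{P4} for $\lVert \cdot \rVert_Y$ whenever $\mu(E) < \infty$, since then $[0, \mu(E))$ is a subset of $[0, \alpha)$ of finite Lebesgue measure.

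The genuinely non-trivial point is the quasi-triangle inequality \ref{Q1c}. Here I would start from the pointwise subadditivity of the non-increasing rearrangement, which gives for all $t$ that $(g_1 + g_2)^*(t) \leq g_1^*(t/2) + g_2^*(t/2) = D_{1/2} g_1^*(t) + D_{1/2} g_2^*(t)$, where $D_{1/2}$ is the dilation operator of Definition~\ref{DDO}. The boundedness of this operator (Theorem~\ref{TDRIS}) furnishes a constant $D$ with $\lVert D_{1/2} h \rVert_Y \leq D \lVert h \rVert_Y$ for every $h$. Applying the lattice property \ref{P2} and then \ref{Q1c} for $\lVert \cdot \rVert_Y$ (with modulus of concavity $C_Y$), followed by this estimate, I obtain
\begin{equation*}
	\lVert g_1 + g_2 \rVert_{\widetilde{Y}} = \lVert (g_1 + g_2)^* \rVert_Y \leq C_Y \left( \lVert D_{1/2} g_1^* \rVert_Y + \lVert D_{1/2} g_2^* \rVert_Y \right) \leq C_Y D \left( \lVert g_1 \rVert_{\widetilde{Y}} + \lVert g_2 \rVert_{\widetilde{Y}} \right),
\end{equation*}
so $\lVert \cdot \rVert_{\widetilde{Y}}$ is subadditive up to the constant $C_Y D$. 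I expect this to be the main obstacle and the one place requiring care: Theorem~\ref{TDRIS} is stated for spaces over $([0, \infty), \lambda)$, whereas here $\alpha$ may be finite. For $\alpha = \infty$ it applies verbatim; for finite $\alpha$ one must invoke the analogous boundedness of $D_{1/2}$ over $[0, \alpha)$ (which holds by the same general theory, cf.~\cite{NekvindaPesa20}), or else dominate each $D_{1/2} g_i^*$ by a sum of two functions equimeasurable with $g_i^*$ and use the rearrangement-invariance of $\lVert \cdot \rVert_Y$ directly.

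It remains to treat \ref{P5}. Assuming $\lVert \cdot \rVert_Y$ satisfies \ref{P5}, fix $E \subseteq \mathcal{R}$ with $\mu(E) < \infty$ and $g \in \mathcal{M}_+$. By the Hardy--Littlewood inequality (Theorem~\ref{THLI}) and $(\chi_E)^* = \chi_{[0, \mu(E))}$,
\begin{equation*}
	\int_E g \: d\mu \leq \int_0^{\infty} g^* (\chi_E)^* \: d\lambda = \int_{[0, \mu(E))} g^* \: d\lambda \leq C_{[0, \mu(E))} \lVert g^* \rVert_Y = C_{[0, \mu(E))} \lVert g \rVert_{\widetilde{Y}},
\end{equation*}
where $C_{[0, \mu(E))}$ is the constant furnished by \ref{P5} for $\lVert \cdot \rVert_Y$ applied to the finite-measure set $[0, \mu(E)) \subseteq [0, \alpha)$. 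Since this constant depends only on $\mu(E)$, it serves as the required $C_E$, completing the verification of \ref{P5} and hence of the proposition.
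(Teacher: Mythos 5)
Your proof is correct and follows essentially the same route as the paper's: the axioms \ref{P2}, \ref{P3}, \ref{P4} and the quasinorm properties transfer through standard properties of the non-increasing rearrangement, the quasi-triangle inequality is obtained from $(g_1+g_2)^* \leq D_{1/2}\,g_1^* + D_{1/2}\,g_2^*$ combined with the boundedness of the dilation operator (Theorem~\ref{TDRIS}), and \ref{P5} follows via the Hardy--Littlewood inequality (Theorem~\ref{THLI}), exactly as in the paper. Your side remark about $\alpha < \infty$, where Theorem~\ref{TDRIS} as stated over $([0,\infty),\lambda)$ does not literally apply, flags a point the paper's proof passes over silently, and either of your suggested remedies closes it.
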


\begin{proof}
	The rearrangement-invariance of $\lVert \cdot \rVert_{\widetilde{Y}}$ is obvious. That the properties \ref{P2}, \ref{P3}, and \ref{P4} translate from $\lVert \cdot \rVert_{Y}$ to $\lVert \cdot \rVert_{\widetilde{Y}}$ follows immediately from the properties of non-increasing rearrangement. The same is true for \ref{Q1}, with the exception of part~\ref{Q1c}, i.e.~the quasi-triangle inequality, because the non-increasing rearrangement is not subadditive and the Hardy--Littlewood--P{\' o}lya principle that could bypass this limitation does not hold in some quasi-Banach function spaces (see \cite[Lemma~2.25 and Corollary~5.10]{Pesa22}). We must therefore apply Theorem~\ref{TDRIS} which provides the required estimate, albeit at the cost of worsening the constant. Indeed, it follows from this theorem and the properties of the non-increasing rearrangement that it holds for any pair of functions $g_1, g_2 \in \mathcal{M}(\mathcal{R},\mu)$ that 
	\begin{equation*}
		\lVert g_1 + g_2 \rVert_{\widetilde{Y}} = \lVert (g_1 + g_2)^* \rVert_{Y} \leq C_Y (\lVert D_{\frac{1}{2}} g_1^* \rVert_{Y} + \lVert D_{\frac{1}{2}} g_2^* \rVert_{Y}) \leq C_Y \lVert D_{\frac{1}{2}} \rVert (\lVert g_1 \rVert_{\widetilde{Y}} + \lVert g_2 \rVert_{\widetilde{Y}}),
	\end{equation*}
	where $\lVert D_{\frac{1}{2}} \rVert$ is the norm of the operator $D_{\frac{1}{2}}: Y \to Y$.
	
	Finally, if $\lVert \cdot \rVert_{X}$ satisfies \ref{P5} then it follows from the Hardy--Littlewood inequality (Theorem~\ref{THLI}) that it holds for any $E \subseteq \mathcal{R}$, satisfying both $\mu(E) \leq \alpha = \mu(\mathcal{R})$ and $\mu(E) < \infty$, and any $g \in (\mathcal{R},\mu)$ that
	\begin{equation*}
		\int_E \lvert g \rvert \: d\mu \leq \int_0^{\mu(E)} g^* \: d\lambda \leq C_{(0, \mu(E))} \lVert g^* \rVert_{Y} = C_{(0, \mu(E))} \lVert g \rVert_{\widetilde{Y}},
	\end{equation*}
	where $C_{(0, \mu(E))}$ is the constant from \ref{P5} of $\lVert \cdot \rVert_Y$ for the set $(0, \mu(E))$, and thus $\lVert \cdot \rVert_{\widetilde{Y}}$ also satisfies \ref{P5}.
\end{proof}

Note that the only point in the proof where we needed the rearrangement-invariance of $\lVert \cdot \rVert_{Y}$ was when proving the quasi-triangle inequality. We also did not even need resonance, much less non-atomicity.

Considering the two concepts presented above, one would naturally expect them to be mutually inverse, at least in the case when $(\mathcal{R},\mu)$ is non-atomic and everything is uniquely determined. When the measure is not non-atomic, then transforming a function $(0,\mu(\mathcal{R})), \lambda)$ to a function on $(\mathcal{R},\mu)$ (i.e.~a sequence) must clearly cause some loss of information that cannot be retrieved in the reverse step, but one could still hope for the inverse relation to hold at least in the opposite order. Luckily, this intuition is completely true, as can be seen from the following result:

\begin{corollary} \label{CorollaryRepresentation}
	Let $(\mathcal{R},\mu)$ be a resonant measure space and consider the r.i.~quasi-Banach function norms $\lVert \cdot \rVert_X$ on $\mathcal{M}(\mathcal{R},\mu)$ and $\lVert \cdot \rVert_{Y}$ on $\mathcal{M}([0,\mu(\mathcal{R})), \lambda)$. Then
	\begin{align*}
		\lVert \cdot \rVert_X &= \lVert \cdot \rVert_{\widetilde{\overline{X}}},
	\end{align*}
	and thus $\lVert \cdot \rVert_{\overline{X}}$ satisfies \ref{P5} if and only if $\lVert \cdot \rVert_X$ does.
	
	When further $(\mathcal{R},\mu)$ is non-atomic then
	\begin{equation*}
		\lVert \cdot \rVert_Y = \lVert \cdot \rVert_{\overline{\widetilde{Y}}},
	\end{equation*}
	and thus $\lVert \cdot \rVert_{\widetilde{Y}}$ satisfies \ref{P5} if and only if $\lVert \cdot \rVert_Y$ does.
\end{corollary}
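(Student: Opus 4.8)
The plan is to obtain both equalities by formally composing the two main results of this section, Theorem~\ref{TheoremRepresentation} and Proposition~\ref{PropositionInverseRepresentation}, and then to read off the assertions about property \ref{P5}. First I would dispose of the equality $\lVert \cdot \rVert_X = \lVert \cdot \rVert_{\widetilde{\overline{X}}}$, which I expect to be essentially a matter of unwinding the definitions. Fixing $g \in \mathcal{M}(\mathcal{R},\mu)$, the definition of the tilde construction in Proposition~\ref{PropositionInverseRepresentation}, applied to the norm $\lVert \cdot \rVert_{\overline{X}}$, gives $\lVert g \rVert_{\widetilde{\overline{X}}} = \lVert g^* \rVert_{\overline{X}}$, while Theorem~\ref{TheoremRepresentation} gives $\lVert g^* \rVert_{\overline{X}} = \lVert g \rVert_X$; chaining these two identities yields $\lVert g \rVert_{\widetilde{\overline{X}}} = \lVert g \rVert_X$. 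For the \ref{P5} equivalence: if $\lVert \cdot \rVert_X$ has \ref{P5}, then Theorem~\ref{TheoremRepresentation} guarantees that $\lVert \cdot \rVert_{\overline{X}}$ does; conversely, if $\lVert \cdot \rVert_{\overline{X}}$ has \ref{P5}, then Proposition~\ref{PropositionInverseRepresentation} applied to $\lVert \cdot \rVert_{\overline{X}}$ shows that $\lVert \cdot \rVert_{\widetilde{\overline{X}}} = \lVert \cdot \rVert_X$ has \ref{P5}.

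The equality $\lVert \cdot \rVert_Y = \lVert \cdot \rVert_{\overline{\widetilde{Y}}}$ in the non-atomic case is where the real content lies, and the decisive ingredient is the uniqueness clause of Theorem~\ref{TheoremRepresentation}. Since $(\mathcal{R},\mu)$ is non-atomic, Proposition~\ref{PropositionInverseRepresentation} tells us that $\lVert \cdot \rVert_{\widetilde{Y}}$ is itself an r.i.~quasi-Banach function norm on $\mathcal{M}(\mathcal{R},\mu)$, so Theorem~\ref{TheoremRepresentation} applies and produces its representation $\lVert \cdot \rVert_{\overline{\widetilde{Y}}}$, which in the non-atomic case is the \emph{unique} r.i.~quasi-Banach function norm on $\mathcal{M}([0,\mu(\mathcal{R})), \lambda)$ satisfying $\lVert g \rVert_{\widetilde{Y}} = \lVert g^* \rVert_{\overline{\widetilde{Y}}}$ for all $g \in \mathcal{M}(\mathcal{R},\mu)$. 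But by the very definition of $\lVert \cdot \rVert_{\widetilde{Y}}$ we also have $\lVert g \rVert_{\widetilde{Y}} = \lVert g^* \rVert_Y$ for every such $g$, so the given norm $\lVert \cdot \rVert_Y$ is likewise an r.i.~quasi-Banach function norm on the interval that represents $\lVert \cdot \rVert_{\widetilde{Y}}$. The uniqueness then forces $\lVert \cdot \rVert_Y = \lVert \cdot \rVert_{\overline{\widetilde{Y}}}$. The \ref{P5} equivalence follows exactly as before: $\lVert \cdot \rVert_Y$ has \ref{P5} iff, by Proposition~\ref{PropositionInverseRepresentation}, $\lVert \cdot \rVert_{\widetilde{Y}}$ does iff, by Theorem~\ref{TheoremRepresentation} together with the identification just proved, $\lVert \cdot \rVert_{\overline{\widetilde{Y}}} = \lVert \cdot \rVert_Y$ does.

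I do not anticipate a genuine obstacle, as the argument is a formal composition of the two representation results; the one point that truly demands care is that the second equality cannot hold without the uniqueness of the representation, and it is precisely this uniqueness---available only in the non-atomic case---that is invoked. In the merely atomic case the passage from an interval to a sequence discards information, so one should not expect $\lVert \cdot \rVert_Y = \lVert \cdot \rVert_{\overline{\widetilde{Y}}}$ to persist there, which is consistent with the non-atomicity hypothesis in the statement.
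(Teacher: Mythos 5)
Your proposal is correct, and its first half coincides with the paper's proof: the equality $\lVert g \rVert_X = \lVert g^* \rVert_{\overline{X}} = \lVert g \rVert_{\widetilde{\overline{X}}}$ is obtained by exactly the same two-step unwinding, and the \ref{P5} equivalences are read off from Theorem~\ref{TheoremRepresentation} and Proposition~\ref{PropositionInverseRepresentation} just as the paper intends by ``the remaining part is clear''. For the second equality you take a genuinely different (if closely related) route. The paper argues directly: for $f \in \mathcal{M}([0,\mu(\mathcal{R})),\lambda)$ it runs the chain
\begin{equation*}
	\lVert f \rVert_Y = \lVert f^* \rVert_Y = \lVert T(f)^* \rVert_Y = \lVert T(f) \rVert_{\widetilde{Y}} = \lVert T(f)^* \rVert_{\overline{\widetilde{Y}}} = \lVert f^* \rVert_{\overline{\widetilde{Y}}} = \lVert f \rVert_{\overline{\widetilde{Y}}},
\end{equation*}
where $T(f) = f \circ \sigma$ is the operator from the non-atomic construction in Theorem~\ref{TheoremRepresentation} and the key fact is that $f$ and $T(f)$ are equimeasurable, so $T(f)^* = f^*$. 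You instead note that both $\lVert \cdot \rVert_Y$ (by the very definition of $\lVert \cdot \rVert_{\widetilde{Y}}$) and $\lVert \cdot \rVert_{\overline{\widetilde{Y}}}$ (by Theorem~\ref{TheoremRepresentation}) represent $\lVert \cdot \rVert_{\widetilde{Y}}$, and then invoke the uniqueness clause of Theorem~\ref{TheoremRepresentation}. The two arguments are equivalent in substance---the uniqueness clause is itself proved in the paper by essentially the same Sierpi\'nski-flavoured manipulation---but your packaging is more conceptual and has the merit of making completely transparent where non-atomicity enters, namely only through uniqueness, whereas the paper's version is self-contained and exhibits the mechanism ($T(f)^* = f^*$) explicitly. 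One small imprecision: you write ``Since $(\mathcal{R},\mu)$ is non-atomic, Proposition~\ref{PropositionInverseRepresentation} tells us that $\lVert \cdot \rVert_{\widetilde{Y}}$ is an r.i.~quasi-Banach function norm'', but that proposition requires no non-atomicity at all (the paper explicitly remarks it holds over arbitrary $\sigma$-finite measure spaces); the hypothesis is consumed solely by the uniqueness clause, exactly as you yourself observe at the end.
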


\begin{proof}
	Let $g \in \mathcal{M}(\mathcal{R},\mu)$, then
	\begin{equation*}
		\lVert g \rVert_X = \lVert g^* \rVert_{\overline{X}} = \lVert g \rVert_{\widetilde{\overline{X}}}.
	\end{equation*}
	Similarly, when $(\mathcal{R},\mu)$ is non-atomic and $f \in \mathcal{M}([0,\mu(\mathcal{R})), \lambda)$, then
	\begin{equation} \label{CorollaryRepresentation:1}
		\lVert f \rVert_Y = \lVert f^* \rVert_Y = \lVert T(f)^* \rVert_Y = \lVert T(f) \rVert_{\widetilde{Y}} = \lVert T(f)^* \rVert_{\overline{\widetilde{Y}}} = \lVert f^* \rVert_{\overline{\widetilde{Y}}} = \lVert f \rVert_{\overline{\widetilde{Y}}},
	\end{equation}
	where $T$ is the operator used in the proof of Theorem~\ref{TheoremRepresentation}, as defined in \eqref{TheoremRepresentation:Tna}, because $T(f)$ and $f$ are equimeasurable and thus $T(f)^* = f^*$.
	
	The remaining part is clear.
\end{proof}

Let us note that the argument presented in \eqref{CorollaryRepresentation:1} cannot be used for the case of atomic measures, because the operator $T$ as defined in \eqref{TheoremRepresentation:T} is not injective and the functions $f \in \mathcal{M}([0,\mu(\mathcal{R})), \lambda)$ and $T(f) \in \mathcal{M}(\mathcal{R},\mu)$ are generally not equimeasurable.

To conclude this section, we present the following useful corollary of Theorem~\ref{TheoremRepresentation} that strengthens the property \ref{P2} of quasi-Banach function norms that are rearrangement-invariant:

\begin{corollary} \label{Corollary*<*=>||<||}
	Assume that $(\mathcal{R},\mu)$ is resonant, let $\lVert \cdot \rVert_X$ be an r.i.~quasi-Banach function norm on $\mathcal{M}(\mathcal{R},\mu)$ and let $g_1, g_2 \in \mathcal{M}(\mathcal{R},\mu)$. If $g_1^* \leq g_2^*$ then also $\lVert g_1 \rVert_X \leq \lVert g_2 \rVert_X$.
\end{corollary}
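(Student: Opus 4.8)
The plan is to transport the problem to the interval $[0, \mu(\mathcal{R}))$ by means of the representation theorem, where the hypothesis becomes a genuine pointwise inequality and the conclusion follows at once from the lattice property. This is precisely the kind of ``reduction to the interval'' argument that Theorem~\ref{TheoremRepresentation} was designed to enable, so essentially all the work has already been done.

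Concretely, I would first invoke Theorem~\ref{TheoremRepresentation} (which applies since $(\mathcal{R}, \mu)$ is resonant and $\lVert \cdot \rVert_X$ is an r.i.~quasi-Banach function norm) to obtain the representation quasinorm $\lVert \cdot \rVert_{\overline{X}}$ on $\mathcal{M}([0, \mu(\mathcal{R})), \lambda)$ satisfying $\lVert f \rVert_X = \lVert f^* \rVert_{\overline{X}}$ for every $f \in \mathcal{M}(\mathcal{R}, \mu)$. Applying this identity to both $g_1$ and $g_2$ reduces the claim to showing that $\lVert g_1^* \rVert_{\overline{X}} \leq \lVert g_2^* \rVert_{\overline{X}}$.

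Next I would observe that $g_1^*$ and $g_2^*$ are genuine non-negative functions in $\mathcal{M}([0, \mu(\mathcal{R})), \lambda)$ (the non-increasing rearrangements vanish beyond $\mu(\mathcal{R})$ whenever that quantity is finite, so their restriction to this domain loses nothing), and that the hypothesis $g_1^* \leq g_2^*$ is exactly a pointwise inequality there. Since $\lVert \cdot \rVert_{\overline{X}}$ is a quasi-Banach function norm it enjoys the lattice property~\ref{P2}, and hence $g_1^* \leq g_2^*$ immediately yields $\lVert g_1^* \rVert_{\overline{X}} \leq \lVert g_2^* \rVert_{\overline{X}}$. Chaining this with the representation identity gives
\begin{equation*}
	\lVert g_1 \rVert_X = \lVert g_1^* \rVert_{\overline{X}} \leq \lVert g_2^* \rVert_{\overline{X}} = \lVert g_2 \rVert_X,
\end{equation*}
which is the desired conclusion.

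There is no real obstacle here once the representation theorem is available: the only subtlety worth a remark is that the original hypothesis $g_1^* \leq g_2^*$ is a statement about rearrangements rather than about $g_1$ and $g_2$ themselves, so it cannot be fed directly into the lattice property~\ref{P2} on the abstract space $\mathcal{R}$ (on which $g_1$ and $g_2$ need not be pointwise comparable at all). It is exactly the passage through $\lVert \cdot \rVert_{\overline{X}}$ that converts the rearrangement comparison into an honest pointwise one, after which~\ref{P2} applies verbatim.
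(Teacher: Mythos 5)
Your proof is correct and is exactly the paper's argument: pass to the representation quasinorm $\lVert \cdot \rVert_{\overline{X}}$ from Theorem~\ref{TheoremRepresentation}, apply the lattice property \ref{P2} to the now-pointwise inequality $g_1^* \leq g_2^*$, and chain with the identity $\lVert g \rVert_X = \lVert g^* \rVert_{\overline{X}}$. Your closing remark correctly identifies why the detour through $\overline{X}$ is needed rather than applying \ref{P2} directly on $\mathcal{R}$.
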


\begin{proof}
	Let $\lVert \cdot \rVert_{\overline{X}}$ be the representation spaces as constructed in Theorem~\ref{TheoremRepresentation}. Then we have, using the property \ref{P2} of $\lVert \cdot \rVert_{\overline{X}}$, that
	\begin{equation*}
		\lVert g_1 \rVert_X = \lVert g_1^* \rVert_{\overline{X}} \leq \lVert g_2^* \rVert_{\overline{X}} = \lVert g_2 \rVert_X.
	\end{equation*}
\end{proof}

\section{Fundamental function and endpoint spaces} \label{SectionFundFuncResearch}
This section is dedicated to extending the very useful theory of fundamental function and endpoint spaces to the context of r.i.~quasi-Banach function spaces.

In this section, we will follow two conventions. Firstly, we will restrict ourselves to the case when $(\mathcal{R}, \mu)$ is non-atomic and $\mu(\mathcal{R}) = \infty$. This we believe to be the most interesting case (of resonant measure spaces), as the functions defined on such space can exhibit both local behaviour (i.e.~blow-ups), which is not possible on completely atomic measure spaces, and global behaviour (i.e.~decay), which is not possible on spaces of finite measure. Hence, when covering this case, one is necessarily forced to deal with all the complexities a difficulties of the theory, and this theory then covers all the interesting behaviour which might be absent in the remaining cases. Further advantage is that this restriction allows us to present the theory in a concise way, without having to distinguish several distinct cases, especially when the differences are often only technical in nature.

That being said, we recognise that spaces of finite measure are of significant interest in the applications, whence we explain the modifications that our theory requires to cover this case in Section~\ref{SectionFiniteMeasure}. The section also nicely illustrates our point that the case of infinite measure is most interesting, as it makes apparent that the case of finite measure does not add any new ideas while significant portions of the theory either do not apply or trivialise.

The case when the measure is completely atomic and all the atoms have the same measure (which is the only case of resonant measure spaces beside non-atomic ones by Theorem~\ref{TheoremCharResonance}) we omit entirely as we are unaware of any significant applications that the concepts of fundamental function and endpoint spaces would have in this setting, and because the spaces of sequence are of less interest in general. Furthermore, there is an easy way how to extend many of the results presented below, namely applying the presented theory on the representation space as introduced above (which is always a space over a non-atomic measure space, namely $((0,\nu(\mathcal{N})), \lambda)$, and whose fundamental function is a linear extension of that of the original space) and then transferring the results the back to the original space; we would however like to stress that one has to be careful to check that the transferred properties correspond to the original space and not to the local component introduced in the construction.

The second convention we will follow throughout the section is simply that $a \cdot 0 = 0$ for all $a \in [0, \infty]$.

We start by introducing the weak Marcinkiewicz endpoint space which play a similar role for r.i.~quasi-Banach function spaces to that of the classical Marcinkiewicz endpoint space for r.i.~Banach function spaces. We approach the matter in a broad way to showcase that the concepts appear naturally and work with very weak a~priori assumptions. We would also like to note that we are not the first to consider this type of space, see e.g.~\cite[Section~7.10]{FucikKufner13}, but the studies already present in the literature are neither too deep nor comprehensive, especially when compared to those of the classical endpoint spaces as discussed in Section~\ref{SectionFundamentalFunction}, and many interesting questions have therefore been open until now.

\begin{definition}
	Let $\varphi: [0, \infty) \to [0, \infty)$. We define the functional $\lVert \cdot \rVert_{m_\varphi}$ for $f \in \mathcal{M}$ by
	\begin{align*}
		\lVert f \rVert_{m_\varphi} &= \sup_{t \in [0, \infty)} \varphi(t) f^*(t).
	\end{align*}
\end{definition}

We will be only interested in this functional when the function $\varphi$ satisfies the following minimal assumptions:

\begin{definition}
	A function $\varphi: [0, \infty) \to [0, \infty)$ will be called admissible, provided it is non-decreasing and left-continuous, it satisfies the $\Delta_2$-condition, and it holds that $\varphi(t) = 0$ if and only if $t=0$.
\end{definition}

Note that, for admissible function $\varphi$, the supremum defining $\lVert \cdot \rVert_{m_\varphi}$ could equivalently be taken over $(0, \infty)$. We chose to include the redundant point $0$ because all the functions are naturally defined on $[0, \infty)$ and we thus wanted to reflect this in the definition.

The motivation for the definition of admissible functions is that said functions are precisely those that are fundamental functions of some r.i.~quasi-Banach function space. This follows from the following two results.

\begin{proposition} \label{PropositionAdmiss}
	Let $\varphi: [0, \infty) \to [0, \infty)$ be admissible. Then  $\lVert \cdot \rVert_{m_\varphi}$ is an r.i.~quasi-Banach function norm. Furthermore, the fundamental function corresponding to $\lVert \cdot \rVert_{m_\varphi}$ is $\varphi$.
\end{proposition}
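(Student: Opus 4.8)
The plan is to verify the defining axioms of an r.i.~quasi-Banach function norm one at a time, taking advantage of the fact that $\lVert \cdot \rVert_{m_\varphi}$ is built entirely from the non-increasing rearrangement. Since equimeasurable functions share the same $f^*$, both the identity $\lVert \, \lvert f \rvert \, \rVert_{m_\varphi} = \lVert f \rVert_{m_\varphi}$ and rearrangement-invariance are immediate, as is positive homogeneity \ref{Q1a} via $(af)^* = \lvert a \rvert f^*$. The lattice property \ref{P2} follows because $f \le g$ for $f, g \in \mathcal{M}_+$ forces $f^* \le g^*$ pointwise, so the suprema are ordered accordingly. For the nondegeneracy \ref{Q1b} I would use the hypothesis that $\varphi$ vanishes only at the origin: if $\lVert f \rVert_{m_\varphi} = 0$ then $\varphi(t) f^*(t) = 0$ for every $t$, and since $\varphi(t) > 0$ for $t > 0$ this forces $f^*(t) = 0$ for all $t > 0$, i.e.~$f = 0$ $\mu$-a.e.; the converse is trivial.

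The key analytic step is the quasi-triangle inequality \ref{Q1c}, and this is where both remaining hypotheses on $\varphi$ enter. Starting from the subadditivity of the rearrangement, $(f+g)^*(t) \le f^*(t/2) + g^*(t/2)$, I would multiply through by $\varphi(t)$ and invoke the $\Delta_2$-condition in the form $\varphi(t) = \varphi(2 \cdot \tfrac{t}{2}) \le C_\varphi \varphi(\tfrac{t}{2})$ to estimate $\varphi(t) f^*(\tfrac{t}{2}) \le C_\varphi \varphi(\tfrac{t}{2}) f^*(\tfrac{t}{2}) \le C_\varphi \lVert f \rVert_{m_\varphi}$, and analogously for $g$. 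Taking the supremum over $t \in [0, \infty)$ then yields $\lVert f + g \rVert_{m_\varphi} \le C_\varphi (\lVert f \rVert_{m_\varphi} + \lVert g \rVert_{m_\varphi})$, so that the modulus of concavity is at most $C_\varphi$. I expect this to be the main obstacle in the sense that it is the one place where a clever combination (subadditivity of $f^*$ together with $\Delta_2$) is genuinely needed rather than a direct unwinding of definitions.

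For the Fatou property \ref{P3}, if $0 \le f_n \uparrow f$ $\mu$-a.e.~then $f_n^* \uparrow f^*$ pointwise, and the assertion reduces to the interchange of suprema $\sup_n \sup_t \varphi(t) f_n^*(t) = \sup_t \varphi(t) \sup_n f_n^*(t) = \sup_t \varphi(t) f^*(t)$, which is always permissible for nonnegative quantities; combined with the monotonicity already established via \ref{P2}, this gives $\lVert f_n \rVert_{m_\varphi} \uparrow \lVert f \rVert_{m_\varphi}$.

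Finally, I would handle \ref{P4} and the identification of the fundamental function by a single direct computation. Since $\chi_E^* = \chi_{[0, \mu(E))}$, one has $\lVert \chi_E \rVert_{m_\varphi} = \sup_{t \in [0, \mu(E))} \varphi(t)$, and because $\varphi$ is non-decreasing and left-continuous this supremum equals $\varphi(\mu(E))$, which is finite; this establishes \ref{P4} and simultaneously shows that the fundamental function of $\lVert \cdot \rVert_{m_\varphi}$ is $\varphi(t)$ for every finite $t$. The two delicate points are thus the use of $\Delta_2$ in \ref{Q1c} and the role of left-continuity here, which is exactly what pins the fundamental function down to $\varphi$ itself rather than to some left-continuous regularisation of it; everything else is routine manipulation of $f^*$.
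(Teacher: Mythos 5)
Your proposal is correct and follows essentially the same route as the paper: the quasi-triangle inequality via $(f+g)^*(t) \le f^*(t/2) + g^*(t/2)$ combined with the $\Delta_2$-condition (yielding modulus of concavity at most $C_\varphi$), and the identification $\lVert \chi_E \rVert_{m_\varphi} = \sup_{t \in [0, \mu(E))} \varphi(t) = \varphi(\mu(E))$ using monotonicity and left-continuity, are exactly the paper's two key computations. Your explicit interchange-of-suprema argument for \ref{P3} and the nondegeneracy check are just slightly more detailed versions of steps the paper dispatches as immediate consequences of the properties of the non-increasing rearrangement.
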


\begin{proof}
	Given the properties of the non-increasing rearrangement, it is clear that $\lVert \cdot \rVert_{m_\varphi}$ satisfies \ref{P2} and \ref{P3} and that it is rearrangement-invariant. Furthermore, since $\varphi$ is assumed to be non-decreasing, we get for every $t \in [0, \infty)$ and every set $E_t$ with $\mu(E_t) = t$ that
	\begin{equation*}
		\lVert \chi_{E_t} \rVert_{m_\varphi} = \sup_{s \in [0, t)} \varphi(s) = \varphi(t)
	\end{equation*}
	and thus the fundamental function corresponding to $\lVert \cdot \rVert_{m_\varphi}$ is precisely $\varphi$. Moreover, the finiteness of $\varphi$ now implies that $\lVert \cdot \rVert_{m_\varphi}$ satisfies \ref{P4}.  
	
	It remains to show that $\lVert \cdot \rVert_{m_\varphi}$ is a quasinorm. That
	\begin{equation*}
		\lVert af \rVert_{m_\varphi} = \lvert a \rvert \lVert f \rVert_{m_\varphi}
	\end{equation*}
	for every $f \in \mathcal{M}(\mathcal{R},\mu)$ and every $a \in \mathbb{C}$ follows immediately from the properties of the non-increasing rearrangement. It is also clear that $\lVert 0 \rVert_{m_\varphi} = 0$. When $f \in \mathcal{M}(\mathcal{R},\mu)$ is not zero $\mu$-a.e.~then there exists $t>0$ such that $f^*(t) > 0$ and thus
	\begin{equation*}
		\lVert f \rVert_{m_\varphi} \geq \varphi(t) f^*(t) > 0.
	\end{equation*}
	Finally, let $f,g \in \mathcal{M}(\mathcal{R},\mu)$, denote by $C_\varphi$ the constant from the $\Delta_2$-condition of $\varphi$, and compute
	\begin{equation*}
	\begin{split}
		\lVert f + g \rVert_{m_\varphi} &\leq \sup_{t \in [0, \infty)} \varphi(t) \left( f^* \left ( \frac{t}{2} \right ) + g^* \left ( \frac{t}{2} \right ) \right ) \\
		&\leq \left ( \sup_{t \in [0, \infty)} C_{\varphi} \varphi \left ( \frac{t}{2} \right ) f^* \left ( \frac{t}{2} \right ) \right ) + \left ( \sup_{t \in [0, \infty)} C_{\varphi} \varphi \left ( \frac{t}{2} \right ) g^* \left ( \frac{t}{2} \right ) \right ) \\
		&\leq C_{\varphi} (\lVert f \rVert_{m_\varphi} + \lVert g \rVert_{m_\varphi}).
	\end{split}
	\end{equation*}
	Hence, $\lVert \cdot \rVert_{m_\varphi}$ is a quasinorm with the modulus of concavity at most $C_{\varphi}$ and the proof is concluded.
\end{proof}

\begin{proposition} \label{Proposition_m}
	Let $\lVert \cdot \rVert_X$ be an r.i.~quasi-Banach function norm. Then its fundamental function $\varphi_X$ is admissible. Whence, $\lVert \cdot \rVert_{m_{\varphi_X}}$ is an r.i.~quasi-Banach function norm. Furthermore, $\lVert \cdot \rVert_{m_{\varphi_X}} \leq \lVert \cdot \rVert_X$.
	
	More generally, whenever $\varphi$ is admissible and $\lVert \cdot \rVert_Y$ is an r.i.~quasi-Banach function norm whose fundamental function $\varphi_Y$ satisfies $\varphi_Y \approx \varphi$ then $\lVert \cdot \rVert_{m_{\varphi}} \lesssim \lVert \cdot \rVert_Y$.
\end{proposition}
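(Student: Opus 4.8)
The plan is to verify the four defining properties of admissibility for the fundamental function $\varphi_X$ directly from the quasi-Banach function norm axioms, then deduce the norm inequality from Corollary~\ref{Corollary*<*=>||<||}, and finally obtain the general statement by combining with the hypothesis $\varphi_Y \approx \varphi$.

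First I would check admissibility of $\varphi_X$. Monotonicity is immediate from the lattice property \ref{P2}: for $s \leq t$ one chooses $E_s \subseteq E_t$ and uses $\chi_{E_s} \leq \chi_{E_t}$. The condition $\varphi_X(t) = 0 \iff t = 0$ follows from \ref{Q1b}, since $\chi_{E_t}$ vanishes $\mu$-a.e.\ exactly when $t = 0$. For the $\Delta_2$-condition I would use non-atomicity to split a set $E_{2t}$ of measure $2t$ into two disjoint sets $A, B$ each of measure $t$; as $\chi_A$ and $\chi_B$ are equimeasurable with $\chi_{E_t}$, the quasi-triangle inequality \ref{Q1c} gives $\varphi_X(2t) = \lVert \chi_A + \chi_B \rVert_X \leq C_X(\lVert\chi_A\rVert_X + \lVert\chi_B\rVert_X) = 2C_X \varphi_X(t)$, so $\varphi_X$ satisfies $\Delta_2$ with constant $2C_X$.

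The least routine point is left-continuity, for which I would invoke the Fatou property \ref{P3}. Given $t_n \uparrow t$, I would build inductively a nested sequence of sets $E_{t_n}$ with $\mu(E_{t_n}) = t_n$, enlarging $E_{t_n}$ at each step by a subset of its complement of measure $t_{n+1} - t_n$ (which exists by non-atomicity). Their union $E$ satisfies $\mu(E) = t$ and $\chi_{E_{t_n}} \uparrow \chi_E$ $\mu$-a.e., so \ref{P3} yields $\varphi_X(t_n) = \lVert \chi_{E_{t_n}} \rVert_X \uparrow \lVert \chi_E \rVert_X = \varphi_X(t)$. This establishes admissibility, and Proposition~\ref{PropositionAdmiss} then guarantees that $\lVert \cdot \rVert_{m_{\varphi_X}}$ is an r.i.~quasi-Banach function norm.

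For the inequality $\lVert \cdot \rVert_{m_{\varphi_X}} \leq \lVert \cdot \rVert_X$, I would fix $f$ and $t > 0$ and consider the truncation $g = f^*(t)\chi_{E_t}$ with $\mu(E_t) = t$. Its rearrangement is $g^* = f^*(t)\chi_{[0,t)}$, which satisfies $g^* \leq f^*$ because $f^*$ is non-increasing. Corollary~\ref{Corollary*<*=>||<||} then gives $\lVert g \rVert_X \leq \lVert f \rVert_X$, and by homogeneity \ref{Q1a} the left-hand side equals $f^*(t)\varphi_X(t)$; taking the supremum over $t$ yields the claim. Finally, for the general statement I would combine this with $\varphi \lesssim \varphi_Y$ (a consequence of $\varphi_Y \approx \varphi$): the pointwise bound $\varphi(t) f^*(t) \lesssim \varphi_Y(t) f^*(t)$ gives $\lVert \cdot \rVert_{m_\varphi} \lesssim \lVert \cdot \rVert_{m_{\varphi_Y}} \leq \lVert \cdot \rVert_Y$. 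I expect the left-continuity step to be the main obstacle, as it is the only place requiring a genuine construction (the nested approximating sets) together with the Fatou property, whereas everything else is a direct application of the quasinorm axioms or of Corollary~\ref{Corollary*<*=>||<||}.
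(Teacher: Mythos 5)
Your proposal is correct and takes essentially the same route as the paper: the admissibility checks (monotonicity via \ref{P2}, left-continuity via \ref{P3} with a nested-sets construction, the $\Delta_2$-condition with constant $2C_X$ by splitting a set of measure $2t$) match the paper's sketch, and your use of Corollary~\ref{Corollary*<*=>||<||} for the inequality $\lVert \cdot \rVert_{m_{\varphi_X}} \leq \lVert \cdot \rVert_X$ is just the paper's argument via the representation quasinorm of Theorem~\ref{TheoremRepresentation} repackaged, since that corollary is itself an immediate consequence of the theorem. The only detail you omit is that $\varphi_X$ takes finite values, which the paper gets from \ref{P4}; this is trivial to add.
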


\begin{proof}
	That $\varphi_X$ is non-decreasing follows from \ref{P2}, that it is left-continuous from \ref{P3}. The property \ref{P4} ensures that all the values are finite. Finally, \ref{Q1} ensures that $\varphi(t) = 0$ if and only if $t=0$ and that it satisfies the $\Delta_2$-condition, where the constant is at most twice the modulus of concavity of $\lVert \cdot \rVert_X$.
	
	It remains to show that $\lVert \cdot \rVert_{m_{\varphi_X}} \leq \lVert \cdot \rVert_X$, consider therefore the representation quasinorm $\lVert \cdot \rVert_{\overline{X}}$ whose existence was proved in Theorem~\ref{TheoremRepresentation} and let $f \in \mathcal{M}(\mathcal{R},\mu)$ and $t \in [0, \infty)$ be arbitrary. Then clearly $f^*(\cdot) \geq f^*(t) \chi_{[0, t]}(\cdot)$ and thus
	\begin{equation*}
		\lVert f \rVert_X = \lVert f^* \rVert_{\overline{X}} \geq \lVert  f^*(t) \chi_{[0, t]} \rVert_{\overline{X}} = f^*(t) \varphi_X(t),
	\end{equation*}
	where we employ the obvious fact that the fundamental function of a $\lVert \cdot \rVert_X$ coincides with that of its representation quasinorm. The result now follows by taking on the right-hand side the supremum over all relevant $t$.
	
	The more general formulation is more general only in appearance, as it immediately follows from the assumption $\varphi_Y \approx \varphi$ together with the already proved part that
	\begin{equation*}
		\lVert \cdot \rVert_{m_{\varphi}} \approx \lVert \cdot \rVert_{m_{\varphi_Y}} \leq \lVert \cdot \rVert_Y.
	\end{equation*}
\end{proof}

The fact that $\lVert \cdot \rVert_{m_\varphi}$ is the weakest r.i.~quasi-Banach function norm with a given fundamental function motivates the following terminology.

\begin{definition}
	Let $\varphi: [0, \infty) \to [0, \infty)$ be admissible. Then the functional $\lVert \cdot \rVert_{m_\varphi}$ will be called the weak Marcinkiewicz endpoint quasinorm and the corresponding set
	\begin{equation*}
		m_\varphi = \left \{ f \in \mathcal{M}; \; \lVert f \rVert_{m_\varphi} < \infty \right\}
	\end{equation*}
	will be called the weak Marcinkiewicz endpoint space.
\end{definition}

Let us now observe that many classical and naturally appearing r.i.~quasi-Banach function spaces, including weak $L^1$ which is likely the most important member of the class, are in fact weak Marcinkiewicz endpoint spaces for appropriately chosen functions $\varphi$.

\begin{example}
	Let $\varphi(t) = t$ on $[0, \infty)$. Then it follows directly from the respective definitions that $m_{\varphi}$ is the space weak $L^1$, i.e.~$m_{\varphi} = L^{1,\infty}$, with equal quasinorms.
	
	More generally, when $p \in (0, \infty)$ and $\varphi(t) = t^{\frac{1}{p}}$, then $m_{\varphi}$ is the space weak $L^p$, i.e.~$m_{\varphi} = L^{p,\infty}$, with equal quasinorms.
\end{example}

This terminology of course refers to the classical Marcinkiewicz endpoint space (see Section~\ref{SectionFundamentalFunction} for details). Before we study the relationships between these concepts, let us introduce a generalisation of quasiconcavity that has proven useful.

\begin{definition}
	Let $\varphi: [0, \infty) \to [0, \infty)$. We say that $\varphi$ is weakly quasiconcave if it satisfies the following conditions:
	\begin{enumerate}
		\item $\varphi$ is left-continuous.
		\item $\varphi(t) = 0 \iff t=0$.
		\item $\varphi$ is non-decreasing.
		\item $t \mapsto \frac{\varphi(t)}{t}$ is equivalent to a non-increasing function on $(0, \infty)$.
	\end{enumerate}
\end{definition}

It is clear from Corollary~\ref{CorollaryLeftCont} that quasiconcavity implies weak quasiconcavity and it is also easy to check that weakly quasiconcave functions are admissible. In the opposite direction, only the last condition of (weak) quasiconcavity needs to be checked provided that one knows a~priori that a function is admissible. The motivation for weakening this condition is that the weaker version is much easier to verify and the resulting property is still strong enough for our purposes, as seen from the following result, proof of which is virtually the same as that of \cite[Chapter~2, Proposition~5.8]{BennettSharpley88}.

\begin{proposition} \label{PropositionMarcinkiewiczWqc}
	Let $\varphi: [0, \infty) \to [0, \infty)$ be weakly quasiconcave. Then the Marcinkiewicz endpoint space $M_{\varphi}$, defined in the same way as in Definition~\ref{DefinitionMarcinkiewicz}, is an r.i.~Banach function space whose fundamental function is equivalent to  $\varphi$. Specially, $\varphi$ is equivalent to a quasiconcave (and therefore even a concave) function.
\end{proposition}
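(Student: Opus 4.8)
The plan is to verify directly that $\lVert \cdot \rVert_{M_{\varphi}}$ satisfies all the axioms \ref{P1}--\ref{P5} of a Banach function norm together with rearrangement-invariance, then to read off its fundamental function from the computation used for \ref{P4}, and finally to invoke the structural results of Section~\ref{SectionFundamentalFunction} to deduce the claimed equivalence with a quasiconcave (hence concave) function. The structural fact that drives the whole argument is that the elementary maximal function is genuinely subadditive, i.e.\ $(f+g)^{**}(t) \leq f^{**}(t) + g^{**}(t)$ for all $t$, which is the classical consequence of $\int_0^t (f+g)^* \: d\lambda \leq \int_0^t f^* \: d\lambda + \int_0^t g^* \: d\lambda$. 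This is precisely why $M_{\varphi}$ is an honest \emph{Banach} function space and not merely a quasi-Banach one, and it is what allows weak quasiconcavity to suffice.

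First I would dispatch the routine axioms. Positive homogeneity \ref{P1a} and the lattice property \ref{P2} are immediate from the corresponding properties of $f \mapsto f^{**}$, while \ref{P1b} follows because $\varphi(t) > 0$ for every $t > 0$, so $\lVert f \rVert_{M_{\varphi}} = 0$ forces $f^{**} \equiv 0$ and hence $f = 0$ $\mu$-a.e. The Fatou property \ref{P3} is obtained by interchanging the two suprema: if $f_n \uparrow f$ then $f_n^{**} \uparrow f^{**}$ by monotone convergence, so $\lVert f_n \rVert_{M_{\varphi}} = \sup_t \varphi(t) f_n^{**}(t) \uparrow \sup_t \varphi(t) f^{**}(t) = \lVert f \rVert_{M_{\varphi}}$. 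The subadditivity \ref{P1c} is then exactly the subadditivity of $f^{**}$ combined with monotonicity of the supremum, and rearrangement-invariance is clear since the functional depends on $f$ only through $f^*$.

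The heart of the proof, and the only place where condition~(iv) of weak quasiconcavity enters, is the simultaneous treatment of \ref{P4}, \ref{P5} and the fundamental function. For a set $E$ with $\mu(E) = a \in (0, \infty)$ one has $\chi_E^*= \chi_{[0,a)}$ and hence $\chi_E^{**}(t) = \min\{1, a/t\}$, so that
\begin{equation*}
	\lVert \chi_E \rVert_{M_{\varphi}} = \max\left\{ \sup_{0 < t \leq a} \varphi(t), \; a \sup_{t > a} \tfrac{\varphi(t)}{t} \right\}.
\end{equation*}
The first supremum equals $\varphi(a)$ by monotonicity; for the second I would fix a non-increasing $h$ with $\varphi(t)/t \approx h(t)$ (condition~(iv)) and bound, for $t > a$, the ratio $\varphi(t)/t \lesssim h(t) \leq h(a) \lesssim \varphi(a)/a$, whence $a \sup_{t>a}\varphi(t)/t \lesssim \varphi(a)$. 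Together with the trivial lower bound $\lVert \chi_E \rVert_{M_{\varphi}} \geq \varphi(a) \chi_E^{**}(a) = \varphi(a)$, this yields both the finiteness required for \ref{P4} and the equivalence $\varphi_{M_{\varphi}} \approx \varphi$ of the fundamental function. Property \ref{P5} is then immediate from the Hardy--Littlewood inequality (Theorem~\ref{THLI}): for $\mu(E) = a < \infty$ we get $\int_E \lvert f \rvert \: d\mu \leq \int_0^{a} f^* \: d\lambda = a\, f^{**}(a) \leq \tfrac{a}{\varphi(a)} \lVert f \rVert_{M_{\varphi}}$, so $C_E = a/\varphi(a)$ works.

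Having established that $M_{\varphi}$ is an r.i.~Banach function space, the final assertions follow formally: Proposition~\ref{PropositionPropertiesOfFundFunc} guarantees that its fundamental function $\varphi_{M_{\varphi}}$ is quasiconcave on $[0, \infty)$, and since $\varphi \approx \varphi_{M_{\varphi}}$ the function $\varphi$ is equivalent to a quasiconcave function; Proposition~\ref{PropositionEquivConcave} then upgrades this to equivalence with a concave function. I expect the only genuinely delicate point to be the bookkeeping of constants in the estimate of $a \sup_{t>a}\varphi(t)/t$, since condition~(iv) only furnishes a function equivalent to $\varphi(t)/t$ rather than $\varphi(t)/t$ itself; everything else is a direct transcription of the classical argument behind \cite[Chapter~2, Proposition~5.8]{BennettSharpley88}.
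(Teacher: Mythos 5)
Your proof is correct and follows essentially the same route as the paper, which simply defers to the classical argument of \cite[Chapter~2, Proposition~5.8]{BennettSharpley88}: genuine subadditivity of $f \mapsto f^{**}$ for the norm axioms, the computation $\chi_E^{**}(t)=\min\{1,\mu(E)/t\}$ to handle \ref{P4}, \ref{P5} and the fundamental function, and then Propositions~\ref{PropositionPropertiesOfFundFunc} and~\ref{PropositionEquivConcave} for the concluding equivalence with a (quasi)concave function. Your treatment of condition~(iv) — replacing $\varphi(t)/t$ by an equivalent non-increasing function $h$ and absorbing the resulting constant in $a\sup_{t>a}\varphi(t)/t\lesssim\varphi(a)$ — is precisely the adjustment needed to carry the classical quasiconcave proof over to the weakly quasiconcave setting.
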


It is now easy to observe the following characterisation of weakly quasiconcave functions.

\begin{corollary} \label{CorollaryQuasiconcaveEquivalentToConcave}
	Admissible function  $\varphi: [0, \infty) \to [0, \infty)$ is weakly quasiconcave if and only if it is equivalent to some (quasi)concave function that vanishes at zero and is positive elsewhere.
\end{corollary}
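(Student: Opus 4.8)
The plan is to prove the two implications separately, observing that nearly all the content is already contained in the preceding results, so that only the matching up of the various conditions remains.

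For the forward implication, suppose $\varphi$ is weakly quasiconcave. Then the final assertion of Proposition~\ref{PropositionMarcinkiewiczWqc} states precisely that $\varphi$ is equivalent to a quasiconcave (and therefore even a concave) function. Since any quasiconcave function $\psi$ satisfies, by the first condition of Definition~\ref{DefinitionQuasiconcave}, the equivalence $\psi(t)=0 \iff t=0$, and since $\psi$ is non-negative, $\psi$ vanishes at zero and is positive elsewhere. This is exactly the asserted conclusion, so the forward direction follows immediately.

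For the reverse implication, suppose $\varphi$ is admissible and $\varphi \approx \psi$ for some (quasi)concave $\psi$ vanishing at zero and positive elsewhere. I would verify the four defining conditions of weak quasiconcavity in turn. The first three—left-continuity, the property $\varphi(t)=0 \iff t=0$, and monotonicity—are built directly into the definition of admissibility and so require no argument. For the fourth condition I would pass from $\varphi$ to $\psi$ using the equivalence: since $\varphi \approx \psi$, we also have $\frac{\varphi(t)}{t} \approx \frac{\psi(t)}{t}$ on $(0,\infty)$, so it suffices to show that $t \mapsto \frac{\psi(t)}{t}$ is non-increasing. When $\psi$ is quasiconcave this is precisely the third condition of Definition~\ref{DefinitionQuasiconcave}; when $\psi$ is concave with $\psi(0)=0$, it follows from the chord estimate $\psi(s) \geq \frac{s}{t}\psi(t)$ valid for $0<s<t$, obtained by writing $s$ as the convex combination $\frac{s}{t}\,t + (1-\frac{s}{t})\,0$ and applying concavity.

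I do not expect any genuine obstacle here, as befits a corollary. The only point meriting a moment's attention is the parenthetical ``(quasi)concave'': one should check that the single property actually needed in the reverse direction, namely that $t \mapsto \psi(t)/t$ be non-increasing, is available under both readings of the hypothesis, which is exactly what the two cases above confirm.
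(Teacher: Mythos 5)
Your proposal is correct and follows exactly the route the paper intends (the paper leaves the proof implicit, remarking only that the corollary is ``easy to observe'' from Proposition~\ref{PropositionMarcinkiewiczWqc}): the forward direction is the final assertion of that proposition, and the reverse direction amounts to noting that admissibility supplies the first three conditions of weak quasiconcavity while the equivalence $\varphi(t)/t \approx \psi(t)/t$, with $\psi(t)/t$ non-increasing under either reading of ``(quasi)concave,'' supplies the fourth. Your explicit chord-estimate check for the concave case is a correct and welcome verification of the parenthetical.
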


\begin{remark} \label{RemarkExNoWqc}
	It is clear that there are r.i.~quasi-Banach function spaces whose fundamental function is not weakly quasiconcave. The most obvious examples are the Lebesgue spaces $L^p$ for $p < 1$.
\end{remark}

We now characterise the relationship between the weak and regular Marcinkiewicz endpoint spaces. We would like to note that this result is already present in literature, see e.g.~\cite[Theorem~5.1]{GogatishviliSoudsky14} or \cite[Proposition~7.10.5]{FucikKufner13}. However, we wish to present our take on the proof, where we tried to highlight while the result is a natural consequence of the properties of said spaces (and make clear which of the properties are relevant).

\begin{theorem} \label{TheoremEquivalenceMm}
	Let $\varphi: [0, \infty) \to [0, \infty)$ be admissible. Then the following three statements are equivalent:
	\begin{enumerate}
		\item  $\lVert \cdot \rVert_{m_\varphi}$ is equivalent to an r.i.~Banach function norm. \label{TheoremEquivalenceMmi}
		\item  $\varphi$ is weakly quasiconcave and we have $m_{\varphi} = M_{\varphi}$ up to equivalence of quasinorms. \label{TheoremEquivalenceMmii}
		\item  $\varphi$ is weakly quasiconcave and it holds that \label{TheoremEquivalenceMmiii}
		\begin{equation}
			\sup_{t \in (0, \infty)} \frac{\varphi(t)}{t} \int_0^{t} \frac{1}{\varphi(s)} \: ds < \infty. \label{TheoremEquivalenceMm:1}
		\end{equation}
	\end{enumerate} 
\end{theorem}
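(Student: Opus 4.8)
The plan is to prove the cyclic chain (ii)$\,\Rightarrow\,$(i)$\,\Rightarrow\,$(iii)$\,\Rightarrow\,$(ii), with essentially all the difficulty concentrated in (i)$\,\Rightarrow\,$(iii). Two of the three links are short. For (ii)$\,\Rightarrow\,$(i): if $\varphi$ is weakly quasiconcave then $\lVert\cdot\rVert_{M_\varphi}$ is an r.i.\ Banach function norm by Proposition~\ref{PropositionMarcinkiewiczWqc}, and since $m_\varphi=M_\varphi$ up to equivalence of quasinorms, $\lVert\cdot\rVert_{m_\varphi}$ is equivalent to it. For (iii)$\,\Rightarrow\,$(ii): weak quasiconcavity is part of the hypothesis, so $M_\varphi$ is again an r.i.\ Banach function space; the bound $\lVert f\rVert_{m_\varphi}\le\lVert f\rVert_{M_\varphi}$ always holds because $f^*\le f^{**}$, and for the reverse I would normalise $\lVert f\rVert_{m_\varphi}=1$, so that $f^*(s)\le1/\varphi(s)$, and estimate
\[
	\varphi(t)f^{**}(t)=\frac{\varphi(t)}{t}\int_0^t f^*\,d\lambda\le\frac{\varphi(t)}{t}\int_0^t\frac{ds}{\varphi(s)};
\]
taking the supremum over $t$ and invoking \eqref{TheoremEquivalenceMm:1} gives $\lVert f\rVert_{M_\varphi}\le C\lVert f\rVert_{m_\varphi}$, whence $m_\varphi=M_\varphi$ up to equivalence.

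For (i)$\,\Rightarrow\,$(iii) I would first extract weak quasiconcavity and then pass to associate spaces. Assuming $\lVert\cdot\rVert_{m_\varphi}\approx\lVert\cdot\rVert_Z$ for some r.i.\ Banach function norm $Z$, the fundamental functions satisfy $\varphi=\varphi_{m_\varphi}\approx\varphi_Z$, and $\varphi_Z$ is quasiconcave by Proposition~\ref{PropositionPropertiesOfFundFunc}; since $\varphi$ is admissible, Corollary~\ref{CorollaryQuasiconcaveEquivalentToConcave} then gives that $\varphi$ is weakly quasiconcave. Moreover $\lVert\cdot\rVert_{m_\varphi}$ inherits \ref{P5} from $Z$ and satisfies \ref{P4} by Proposition~\ref{PropositionAdmiss}, so the associate-space machinery of Theorem~\ref{TFA} applies. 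Using Proposition~\ref{PAS} and a routine truncation of the weight $1/\varphi$, I would compute the associate functional explicitly,
\[
	\lVert f\rVert_{(m_\varphi)'}=\int_0^\infty\frac{f^*(t)}{\varphi(t)}\,dt,
\]
the optimal competitor in the supremum being $g^*\approx1/\varphi$. Since $(m_\varphi)'$ is a genuine Banach function norm, its property \ref{P4} forces its fundamental function $t\mapsto\int_0^t ds/\varphi(s)$ to be finite; in particular $1/\varphi\in L^1_{\loc}$, and I set $\Phi(t)=\int_0^t ds/\varphi(s)$.

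It remains to identify the second associate and conclude. The associate of the weighted Lorentz functional above is a Marcinkiewicz-type functional, obtained by Hardy's lemma: writing $C=\sup_{t}\Phi(t)^{-1}\int_0^t f^*$, the inequality $\int_0^t f^*\le C\,\Phi(t)$ transfers against the non-increasing $g^*$ to $\int_0^\infty f^*g^*\le C\int_0^\infty g^*/\varphi$, while the choice $g^*=\chi_{[0,t]}$ shows sharpness, so that
\[
	\lVert f\rVert_{(m_\varphi)''}=\sup_{t>0}\frac{1}{\Phi(t)}\int_0^t f^*(s)\,ds.
\]
By the normability criterion in Theorem~\ref{TFA}, assumption (i) yields $\lVert\cdot\rVert_{m_\varphi}\approx\lVert\cdot\rVert_{(m_\varphi)''}$. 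Evaluating both functionals on $\chi_{E}$ with $\mu(E)=t$ gives $\varphi(t)$ on the left and $t/\Phi(t)$ on the right (the latter because $s\mapsto\Phi(s)/s$, being the average of the non-increasing function $1/\varphi$, is non-increasing), hence $\varphi(t)\approx t/\Phi(t)$. The nontrivial inequality $\varphi(t)\Phi(t)\le C\,t$ is exactly the finiteness asserted in \eqref{TheoremEquivalenceMm:1}, so (iii) follows.

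The main obstacle is this last implication, and within it the explicit determination of the first and second associate functionals. The delicate point is that it is precisely the normability hypothesis (through property \ref{P4} of $(m_\varphi)'$) that forces $1/\varphi$ to be locally integrable, so that $\Phi$ is finite and the Marcinkiewicz description of $(m_\varphi)''$ is valid; without this one cannot even write down \eqref{TheoremEquivalenceMm:1}. Once $(m_\varphi)''$ is identified as a Marcinkiewicz space with fundamental function $t/\Phi(t)$, condition (iii) drops out by comparison of fundamental functions, the reverse estimate $t\le\varphi(t)\Phi(t)$ being a free consequence of Hölder's inequality (Theorem~\ref{THAS}) applied to $\chi_E$.
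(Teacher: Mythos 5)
Your proposal is correct, and while your (ii)$\Rightarrow$(i) and (iii)$\Rightarrow$(ii) coincide in substance with the paper's argument, your treatment of the hardest implication is genuinely different. The paper proves (i)$\iff$(ii) and (ii)$\iff$(iii): for (i)$\Rightarrow$(ii) it extracts weak quasiconcavity from the fundamental function exactly as you do, but then obtains $m_\varphi \hookrightarrow M_\varphi$ in one stroke from Theorem~\ref{TheoremLargestSpace} (the classical fact that $M_\varphi$ is the largest r.i.~Banach function space on its fundamental level), with the converse embedding from Proposition~\ref{Proposition_m}; for (ii)$\iff$(iii) it observes, via Theorem~\ref{TEQBFS} and Corollary~\ref{Corollary*<*=>||<||}, that $m_\varphi = M_\varphi$ holds if and only if $\frac{1}{\varphi} \in \overline{M_{\varphi}}$, which is precisely \eqref{TheoremEquivalenceMm:1} --- your normalised estimate $\varphi(t)f^{**}(t) \leq \frac{\varphi(t)}{t}\int_0^t \frac{ds}{\varphi(s)}$ is this same membership criterion made quantitative. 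Where you diverge is (i)$\Rightarrow$(iii): instead of routing through (ii) and Theorem~\ref{TheoremLargestSpace}, you run the associate-space machinery directly, and all the steps check out: \ref{P5} is indeed inherited from the equivalent Banach norm, so Theorem~\ref{TFA} applies; your formula $\lVert f \rVert_{(m_\varphi)'} = \int_0^\infty f^*/\varphi \: d\lambda$ is exactly the paper's later Proposition~\ref{PropositionAssOf_m}, which you rederive, and you correctly trace the local integrability of $1/\varphi$ back to the normability hypothesis through \ref{P4} of $(m_\varphi)'$ (without \ref{P5} the associate space would be trivial and $\Phi$ could be infinite); the identification of $(m_\varphi)''$ as the Marcinkiewicz functional $\sup_t \Phi(t)^{-1}\int_0^t f^*$ via Hardy's lemma is sound, as is the evaluation on characteristic functions, since $s \mapsto \Phi(s)/s$ is non-increasing as the average of the non-increasing $1/\varphi$, giving $\lVert \chi_E \rVert_{(m_\varphi)''} = t/\Phi(t)$ for $\mu(E) = t$ and hence $\varphi(t)\Phi(t) \lesssim t$, which is \eqref{TheoremEquivalenceMm:1}. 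As for what each approach buys: the paper's detour through (ii) is shorter because Theorem~\ref{TheoremLargestSpace} packages all the duality work into a quoted classical result, whereas your argument is more self-contained and yields explicit by-products --- the identifications $(m_\varphi)' = \Lambda_{\Phi}$ and $(m_\varphi)'' = M_{t/\Phi(t)}$, the second of which does not appear in the paper --- at the cost of invoking Hardy's lemma, which the paper's proof of this theorem never needs.
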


\begin{proof}
	We first show that \ref{TheoremEquivalenceMmi} is equivalent to \ref{TheoremEquivalenceMmii}. When $\varphi$ is weakly quasiconcave and we have $m_{\varphi} = M_{\varphi}$ up to equivalence of quasinorms, then $\lVert \cdot \rVert_{m_\varphi}$ is of course equivalent to the r.i.~Banach function norm $\lVert \cdot \rVert_{M_\varphi}$. On the other hand, when $\lVert \cdot \rVert_{m_\varphi}$ is equivalent to some r.i.~Banach function norm, then the fundamental function corresponding to this norm is equivalent to $\varphi$, which is thus weakly quasiconcave by Corollary~\ref{CorollaryQuasiconcaveEquivalentToConcave}. Hence, we may consider the space $M_\varphi$ (in the sense of both Proposition~\ref{PropositionMarcinkiewiczWqc}, i.e.~defined via $\varphi$, and Convention~\ref{ConventionEndpoints}, i.e.~defined via the equivalent quasiconcave function) and observe from Theorem~\ref{TheoremLargestSpace} that $m_\varphi \hookrightarrow M_{\varphi}$. On the other hand, Proposition~\ref{Proposition_m} ensures the converse embedding, and thus $m_\varphi = M_\varphi$. 
	
	We now show that \ref{TheoremEquivalenceMmii} is equivalent to \ref{TheoremEquivalenceMmiii}. We first note that both statements assume that $\varphi$ is weakly quasiconcave; we shall thus work under this assumption. We further note that, thanks to Theorem~\ref{TEQBFS}, $m_{\varphi} = M_{\varphi}$ holds up to equivalence of quasinorms if and only if they are equal as sets. Furthermore, an arbitrary function $f \in \mathcal{M}$ belongs to $m_{\varphi}$ if and only if it satisfies $f^* \lesssim \frac{1}{\varphi}$. As we always have $M_{\varphi} \hookrightarrow m_{\varphi}$ (see Proposition~\ref{Proposition_m}), it follows from this property and Corollary~\ref{Corollary*<*=>||<||} that $m_{\varphi} = M_{\varphi}$ (in either sense) if and only if $\frac{1}{\varphi} \in \overline{M_{\varphi}}$, which is clearly equivalent to the validity of \eqref{TheoremEquivalenceMm:1}.
\end{proof}

We shall now take a more detailed look on the third condition from the previous theorem and show that appropriately weakened versions of it can be used to characterise some interesting properties of r.i.~quasi-Banach function norms.

\begin{theorem} \label{TheoremEmbeddingsOfm}
	Let $\varphi: [0, \infty) \to [0, \infty)$ be admissible. Then
	\begin{enumerate}
		\item \label{TheoremEmbeddingsOfm:1}
		\begin{equation*}
			m_{\varphi} \hookrightarrow L^1 + L^{\infty} \iff \int_0^{1} \frac{1}{\varphi} \: d\lambda < \infty,
		\end{equation*}
		\item \label{TheoremEmbeddingsOfm:2}
		\begin{equation*}
			L^1 \cap L^{\infty} \hookrightarrow m_{\varphi} \iff \sup_{t \in (2, \infty)} \frac{\varphi(t)}{t}  < \infty.
		\end{equation*}
	\end{enumerate}
	Consequently, $\lVert \cdot \rVert_{m_{\varphi}}$ satisfies \ref{P5} if and only if
	\begin{equation*}
		\int_0^{1} \frac{1}{\varphi} \: d\lambda < \infty.
	\end{equation*}
\end{theorem}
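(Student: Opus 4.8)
The plan is to reduce both equivalences to set inclusions. Since $m_\varphi$, $L^1+L^\infty$, and $L^1\cap L^\infty$ are all quasi-Banach function spaces, Theorem~\ref{TEQBFS} lets me replace each continuous embedding by the corresponding containment of sets and then recover the quantitative constant for free. The computational backbone is the observation (already used in the proof of Theorem~\ref{TheoremEquivalenceMm}) that $f\in m_\varphi$ exactly when $f^*\lesssim 1/\varphi$, combined with the formulas $\lVert f\rVert_{L^1+L^\infty}=\int_0^1 f^*\,d\lambda$ and $\lVert f\rVert_{L^1\cap L^\infty}=f^*(0)+\int_0^\infty f^*\,d\lambda$ from Example~\ref{ExampleInter&Sum}.

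For the sufficiency directions I would argue by direct estimates. In \ref{TheoremEmbeddingsOfm:1}, if $\int_0^1 1/\varphi<\infty$ and $f\in m_\varphi$ with $f^*(t)\le C/\varphi(t)$, then $\int_0^1 f^*\le C\int_0^1 1/\varphi<\infty$, giving $m_\varphi\subseteq L^1+L^\infty$. In \ref{TheoremEmbeddingsOfm:2}, if $\sup_{t>2}\varphi(t)/t=:C<\infty$ and $f\in L^1\cap L^\infty$, I would split the supremum defining $\lVert f\rVert_{m_\varphi}$ at $t=2$: on $[0,2]$ use monotonicity to bound $\varphi(t)f^*(t)\le\varphi(2)f^*(0)$, and on $(2,\infty)$ use $\varphi(t)f^*(t)\le C\,t f^*(t)\le C\int_0^t f^*\le C\lVert f\rVert_1$, where $tf^*(t)\le\int_0^t f^*$ holds because $f^*$ is non-increasing.

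The necessity directions rely on extremal test functions. For \ref{TheoremEmbeddingsOfm:2} I would feed the embedding the characteristic functions $\chi_{E_s}$ with $\mu(E_s)=s$: their rearrangement is $\chi_{[0,s)}$, so $\lVert\chi_{E_s}\rVert_{m_\varphi}=\varphi(s)$ (using left-continuity of $\varphi$) while $\lVert\chi_{E_s}\rVert_{L^1\cap L^\infty}=1+s$; the embedding then yields $\varphi(s)\le C_0(1+s)$, and for $s>2$ this gives $\varphi(s)/s\le 3C_0/2$. For \ref{TheoremEmbeddingsOfm:1} the natural test object is a function $f$ whose rearrangement is the right-continuous version of $1/\varphi$; such $f$ exists by the Sierpiński theorem since $1/\varphi$ is non-increasing and finite on $(0,\infty)$, and one checks $\lVert f\rVert_{m_\varphi}=\sup_t\varphi(t)f^*(t)=1$, because $\varphi(t)f^*(t)=1$ at continuity points of $\varphi$ and is $\le 1$ elsewhere. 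Thus $f\in m_\varphi$, so the embedding forces $f\in L^1+L^\infty$, i.e.\ $\int_0^1 1/\varphi=\int_0^1 f^*<\infty$.

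Finally, for the concluding claim about \ref{P5}, I would show directly that $\lVert\cdot\rVert_{m_\varphi}$ satisfies \ref{P5} if and only if $m_\varphi\hookrightarrow L^1+L^\infty$, and then invoke part \ref{TheoremEmbeddingsOfm:1}. Given a set $E$ with $\mu(E)=a$, the Hardy--Littlewood inequality (Theorem~\ref{THLI}) bounds $\int_E f\le\int_0^a f^*$, which reduces \ref{P5} to the estimate $\int_0^a f^*\lesssim\lVert f\rVert_{m_\varphi}$; since $\int_0^a f^*\le\max(1,a)\int_0^1 f^*$, it suffices to treat $a=1$, and $\int_0^1 f^*\lesssim\lVert f\rVert_{m_\varphi}$ is precisely $m_\varphi\hookrightarrow L^1+L^\infty$. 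The step I expect to be the main obstacle is the converse implication: extracting from \ref{P5} — whose constant $C_E$ is allowed to depend on the individual set $E$ — a single constant controlling $\int_0^1 f^*=\sup_{\mu(\widetilde E)=1}\int_{\widetilde E}f\,d\mu$ uniformly over all sets of measure one. Here I would exploit rearrangement-invariance and resonance: for fixed $f$ and $\varepsilon>0$ choose $\widetilde f$ equimeasurable with $f$ and arranged so that $\int_{E_0}\widetilde f\ge\int_0^1 f^*-\varepsilon$ for one fixed reference set $E_0$ of measure one, whence $\int_0^1 f^*-\varepsilon\le\int_{E_0}\widetilde f\le C_{E_0}\lVert\widetilde f\rVert_{m_\varphi}=C_{E_0}\lVert f\rVert_{m_\varphi}$, and then let $\varepsilon\to0$.
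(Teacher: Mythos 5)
Your proof is correct, and while part \ref{TheoremEmbeddingsOfm:1} essentially coincides with the paper's argument (the same test function whose rearrangement is $1/\varphi$ and the same one-line H\"older-type estimate for sufficiency --- you are in fact more careful than the paper, which writes $f^* = \frac{1}{\varphi}$ without addressing that $1/\varphi$ need only be left-continuous, whereas you correctly pass to the right-continuous version, which changes nothing since it agrees with $1/\varphi$ outside a countable set), your treatment of part \ref{TheoremEmbeddingsOfm:2} and of the \ref{P5} claim takes a genuinely different and more elementary route. For \ref{TheoremEmbeddingsOfm:2}, the paper does not test the embedding directly: it first translates $L^1 \cap L^\infty \hookrightarrow m_\varphi$ into a statement about global components via the Wiener--Luxemburg amalgam characterisations \cite[Theorems~5.6 and 5.7]{Pesa22}, which introduces the shifted function $\varphi(t-1)$ and forces the $\Delta_2$-based estimate $C_\varphi^{-1}\varphi(t) - \varphi(1) \leq \varphi(t-1)$ before a final testing step; your two-sided direct argument --- splitting the supremum at $t=2$, using $t f^*(t) \leq \int_0^t f^* \: d\lambda$ for sufficiency, and testing with $\chi_{E_s}$ (where $\lVert \chi_{E_s} \rVert_{m_\varphi} = \varphi(s)$ by left-continuity and $\lVert \chi_{E_s} \rVert_{L^1 \cap L^\infty} = 1+s$) for necessity --- reaches the same conclusion without any amalgam machinery and, notably, without using the $\Delta_2$-condition at all in this part. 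Similarly, for the \ref{P5} consequence the paper simply cites \cite[Theorem~5.7]{Pesa22}, whereas you reprove the equivalence of \ref{P5} with $m_\varphi \hookrightarrow L^1 + L^\infty$ from scratch: the forward direction via Theorem~\ref{THLI} and $\int_0^a f^* \: d\lambda \leq \max(1,a) \int_0^1 f^* \: d\lambda$ is fine, and your resolution of the genuine obstacle (the set-dependence of $C_E$) is sound --- applying the definition of resonance with the roles of the two functions swapped, i.e.\ with $\chi_{E_0}$ fixed and the rearrangements of $f$ varying, gives exactly $\sup \left \{ \int_{E_0} \lvert \widetilde{f} \rvert \: d\mu ; \; \widetilde{f}^* = f^* \right \} = \int_0^1 f^* \: d\lambda$, so the single constant $C_{E_0}$ suffices, and rearrangement-invariance ensures $\lVert \widetilde{f} \rVert_{m_\varphi} = \lVert f \rVert_{m_\varphi}$. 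What the paper's route buys is uniformity with the amalgam framework used throughout Section~\ref{SectionFundFuncResearch}; what your route buys is a self-contained proof with weaker effective hypotheses in part \ref{TheoremEmbeddingsOfm:2}, and the appeal to Theorem~\ref{TEQBFS} to upgrade set inclusions to continuous embeddings is legitimate (though your direct estimates in fact already produce the constants).
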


We note that the embedding on the left-hand side of \ref{TheoremEmbeddingsOfm:2} is a necessary condition for the validity of the Hardy--Littlewood--P{\' o}lya principle (by \cite[Theorem~5.9]{Pesa22}).

\begin{proof}
	We first consider the part \ref{TheoremEmbeddingsOfm:1}. Since any function $f \in \mathcal{M}(\mathcal{R}, \mu)$ satisfying $f^* = \frac{1}{\varphi}$ clearly belongs to $m_\varphi$ (and such a function exists thanks to our assumption that $(\mathcal{R}, \mu)$ is non-atomic), it is clear that the left-hand side implies the right-hand side. For the opposite direction, one employs the natural estimate
	\begin{equation*}
		\lVert f \rVert_{L^1 + L^{\infty}} = \int_0^{1} f^* \: d\lambda \leq \Big ( \sup_{t \in [0, 1)} \varphi(t) f^*(t) \Big )  \int_0^{1} \frac{1}{\varphi(t)} \: dt \leq \lVert f \rVert_{m_{\varphi}} \int_0^{1} \frac{1}{\varphi(t)},
	\end{equation*}
	which holds for every  $f \in \mathcal{M}(\mathcal{R}, \mu)$. Furthermore, it follows from \cite[Theorem~5.7]{Pesa22} that the embedding on the left-hand side of \ref{TheoremEmbeddingsOfm:1} is equivalent to $\lVert \cdot \rVert_{m_{\varphi}}$ satisfying \ref{P5} (see also Example~\ref{ExampleCap&SumAmalgams}). 
	
	Consider now the part \ref{TheoremEmbeddingsOfm:2}. We first note that the embedding on the left-hand side is equivalent to the statement that it holds for every $f \in \mathcal{M}(\mathcal{R}, \mu)$ satisfying $f^*(1) < \infty$ that
	\begin{equation} \label{TheoremEmbeddingsOfm:Eq01}
		 \int_1^{\infty} f^* \: d\lambda < \infty \implies f^* \chi_{(1, \infty)} \in\overline{ m_{\varphi}}.
	\end{equation}
	This characterisation follows from \cite[Theorem~5.6]{Pesa22} and \cite[Theorem~5.7]{Pesa22}. Next step is to reformulate the condition on the right hand side. For our computation we will need to use the properties of admissible functions, including the following estimate that holds for every $t \in [1, \infty)$ (we use Proposition~\ref{PropositionD2ImpliesSubadditivity} and a simple calculation):
	\begin{equation}\label{TheoremEmbeddingsOfmEq:1}
		C_{\varphi}^{-1} \varphi(t) - \varphi(1) \leq \varphi(t-1),
	\end{equation}
	where $C_{\varphi}$ is the constant from the $\Delta_2$-condition of $\varphi$. We may now compute the following estimates:
	\begin{equation*}
		\sup_{t \in [1, \infty)} f^*(t) \varphi(t) \geq \sup_{t \in [1, \infty)} f^*(t) \varphi(t-1) \geq C_{\varphi}^{-1} \Big ( \sup_{t \in [1, \infty)} f^*(t) \varphi(t) \Big ) - \varphi(1) f^*(1).
	\end{equation*}
	As clearly
	\begin{equation*}
		\sup_{t \in [1, \infty)} f^*(t) \varphi(t-1) = \lVert f^* \chi_{(1, \infty)} \rVert_{\overline{m_{\varphi}}}
	\end{equation*}
	and $\varphi(1) < \infty$, we have shown that we have the following equivalence for all $f \in \mathcal{M}(\mathcal{R}, \mu)$ satisfying the a~priori assumption $f^*(1) < \infty$:
	\begin{equation*}
		 f^* \chi_{(1, \infty)} \in \overline{m_{\varphi}} \iff \sup_{t \in [1, \infty)} f^*(t) \varphi(t) < \infty.
	\end{equation*}
	
	Assume now that $\varphi$ satisfies the right-hand side of \ref{TheoremEmbeddingsOfm:2}, i.e.
	\begin{equation*}
		\sup_{t \in (2, \infty)} \frac{\varphi(t)}{t}  < \infty,
	\end{equation*}
	and $f \in \mathcal{M}(\mathcal{R}, \mu)$ satisfies $f^*(1)< \infty$ and the left-hand side of \eqref{TheoremEmbeddingsOfm:Eq01}, i.e.
	\begin{equation*}
		\int_1^{\infty} f^* \: d\lambda < \infty.
	\end{equation*}
	Observing that it holds for every $t \in (2, \infty)$ that
	\begin{equation*}
		f^*(t) \leq \frac{1}{t-1} \int_1^{t} f^* \: d\lambda \leq \frac{2}{t} \int_1^{\infty} f^* \: d\lambda,
	\end{equation*}
	we may estimate
	\begin{equation*}
		\sup_{t \in (2, \infty)} f^*(t) \varphi(t) \leq 2 \int_1^{\infty} f^* \: d\lambda \; \sup_{t \in (2, \infty)} \frac{\varphi(t)}{t} < \infty.
	\end{equation*}
	Since
	\begin{equation*}
		\sup_{t \in [1, 2]} f^*(t) \varphi(t) \leq f^*(1) \varphi(2) < \infty,
	\end{equation*}
	we have shown that
	\begin{equation*}
		f^* \chi_{(1, \infty)} \in \overline{m_{\varphi}}.
	\end{equation*}
	Thus, \eqref{TheoremEmbeddingsOfm:Eq01} holds and we have proved that the right-hand side of \ref{TheoremEmbeddingsOfm:2} implies its left-hand side.
	
	As for the opposite direction, since $L^1 \hookrightarrow L^{1, \infty}$, the assumption that $L^1 \cap L^{\infty} \hookrightarrow m_{\varphi}$ together with \cite[Theorem~5.6]{Pesa22} implies that it holds for every $f \in \mathcal{M}(\mathcal{R}, \mu)$ that 
	\begin{equation*}
		\sup_{t \in [0, 1]} t f^*(t) + \sup_{t \in (1, \infty)} \varphi(t-1) f^*(t) \lesssim \int_0^{\infty} f^* \: d\lambda.
	\end{equation*}
	Applying the estimate \eqref{TheoremEmbeddingsOfmEq:1} on the left-hand side, we reach the conclusion that
	\begin{equation} \label{TheoremEmbeddingsOfmEq:2}
		\sup_{t \in [0, \infty)} \widetilde{\varphi}(t) f^*(t) \lesssim f^*(1)\varphi(1) + \int_0^{\infty} f^* \: d\lambda,
	\end{equation}
	where
	\begin{align*} 
		\widetilde{\varphi}(t) &= \begin{cases}
			t &\text{for } t \in [0,1], \\
			\varphi(t) &\text{for } t \in (1, \infty).
		\end{cases}
	\end{align*}
	Assume now that $t_0 \geq 2$. By testing the estimate \eqref{TheoremEmbeddingsOfmEq:2} by the function $\chi_{E_{t_0}}$, where $E_{t_0} \subseteq \mathcal{R}$ satisfies $\mu(E_{t_0}) = t_0$, we obtain
	\begin{equation*}
		\varphi(t_0) = \widetilde{\varphi}(t_0) = \sup_{t \in [0, t_0]} \widetilde{\varphi}(t) \lesssim \varphi(1) + t_0 \lesssim t_0.
	\end{equation*}
	Whence,
	\begin{equation*}
		\sup_{t \in (2, \infty)} \frac{\varphi(t)}{t}  < \infty,
	\end{equation*}
	as desired.
\end{proof}

As the next step in our study of basic properties of the weak Marcinkiewicz endpoint spaces, we present a very useful result that provides a tool for estimating the quasinorm of a given function. This result serves as a sort of replacement for the arguments using Köthe duality that are very common when working with Banach function spaces; it shows, that even though the dual of $m_{\varphi}$ may be trivial (e.g.~$L^{p, \infty}$, $p<1$), the quasinorm can still be estimated via a formula that closely resembles the one used in the arguments via duality.

\begin{theorem}\label{TheoremDualisation}
	Let $\varphi:[0,\infty) \to [0,\infty)$ be admissible, $f\in\mathcal{M}$, and $A \in (0, \infty)$. Denote by $C_{\varphi}$ the constant from the $\Delta_2$-condition of $\varphi$. Then the following statements are true:
	\begin{enumerate}
		\item \label{TheoremDualisation:1} If $\lVert f \rVert_{m_\varphi} \leq A$, then it holds for every $k\in\mathbb{N}$ and every set $E \subseteq \mathcal{R}$ satisfying $\mu(E) \in (0,\infty)$ that there exists a set $E'\subseteq E$ such that $(1-\frac{1}{2^k})\mu(E) \leq \mu(E')$ and
		\begin{equation*}
			\int_{E'} \lvert f \rvert \: d\mu  \leq \frac{\mu(E)}{\varphi(\mu(E))} C_{\varphi}^k A.
		\end{equation*}
		
		\item \label{TheoremDualisation:2} Assume that there is a constant $C \in (0,1]$ such that it holds for every set $E \subseteq \mathcal{R}$ satisfying $\mu(E) \in (0,\infty)$ that there exists a set $E'\subseteq E$ satisfying both $C\mu(E) \leq \mu(E')$ and 
		\begin{equation*}
			\int_{E'} \lvert f \rvert \: d\mu  \leq \frac{\mu(E)}{\varphi(\mu(E))} A.
		\end{equation*}
		Then $\lVert f \rVert_{m_\varphi} \leq C^{-1} A$.
	\end{enumerate}
\end{theorem}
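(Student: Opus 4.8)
The plan is to translate everything into the language of the non-increasing rearrangement. The hypothesis $\lVert f \rVert_{m_\varphi} \le A$ means exactly that $f^*(t) \le A/\varphi(t)$ for every $t>0$, and the conclusion sought in part~\ref{TheoremDualisation:2} is $f^*(t) \le A/(C\varphi(t))$ for every $t>0$. The two parts are then mirror images: part~\ref{TheoremDualisation:1} converts a pointwise bound on $f^*$ into an integral bound over a large subset, while part~\ref{TheoremDualisation:2} recovers the pointwise bound from the integral bounds. Throughout I would use the two standard facts that $\mu(\{\lvert f\rvert>f^*(\tau)\}) \le \tau$ and that the $\Delta_2$-condition iterates to $\varphi(2^k\tau) \le C_\varphi^k \varphi(\tau)$, equivalently $1/\varphi(t_0/2^k) \le C_\varphi^k/\varphi(t_0)$.

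For part~\ref{TheoremDualisation:1}, fix $E$ with $t_0 := \mu(E) \in (0,\infty)$ and $k \in \mathbb{N}$, and set $\tau = 2^{-k} t_0$. I would simply cut off the large values of $f$ by putting $E' = E \cap \{ \lvert f\rvert \le f^*(\tau)\}$. Since $\mu(\{\lvert f\rvert>f^*(\tau)\}) \le \tau$, the removed part of $E$ has measure at most $\tau$, so $\mu(E') \ge t_0 - \tau = (1-2^{-k})\mu(E)$, which is the required lower bound. On $E'$ we have $\lvert f\rvert \le f^*(\tau)$ by construction, hence, using $\mu(E') \le t_0$, the hypothesis $f^*(\tau) \le A/\varphi(\tau)$ (finite since $\lVert f\rVert_{m_\varphi}\le A<\infty$ and $\varphi(\tau)>0$), and the iterated $\Delta_2$-estimate $1/\varphi(\tau) \le C_\varphi^k/\varphi(t_0)$,
\[
  \int_{E'} \lvert f\rvert \: d\mu \le f^*(\tau)\,\mu(E') \le \frac{A}{\varphi(\tau)}\, t_0 \le \frac{\mu(E)}{\varphi(\mu(E))}\,C_\varphi^k A,
\]
which is exactly the desired bound. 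Note this direction uses no property of the measure space beyond $\mu(E)<\infty$.

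For part~\ref{TheoremDualisation:2} I would argue pointwise at a fixed $t_0 \in (0,\infty)$, aiming at $\varphi(t_0) f^*(t_0) \le C^{-1}A$; the case $f^*(t_0)=0$ being trivial, assume $f^*(t_0)>0$. For any $s$ with $0<s<f^*(t_0)$ the definition of the rearrangement gives $\mu(\{\lvert f\rvert>s\})>t_0$, so by non-atomicity (Sierpiński, as used elsewhere in the paper) I can choose $E \subseteq \{\lvert f\rvert>s\}$ with $\mu(E)=t_0$. Applying the hypothesis to this $E$ yields a subset $E'\subseteq E$ with $\mu(E')\ge C t_0$ and $\int_{E'}\lvert f\rvert\,d\mu \le \tfrac{t_0}{\varphi(t_0)}A$. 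Since $\lvert f\rvert>s$ on $E'$, we also have $\int_{E'}\lvert f\rvert\,d\mu \ge s\,\mu(E') \ge sC t_0$, and comparing the two bounds gives $s \le A/(C\varphi(t_0))$. Letting $s\uparrow f^*(t_0)$ and then taking the supremum over $t_0$ yields $\lVert f\rVert_{m_\varphi}\le C^{-1}A$.

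The argument is short once the right sets are chosen, so there is no single hard computation; the main subtlety is structural. In part~\ref{TheoremDualisation:1} the delicate point is realising that one should remove the super-level set $\{\lvert f\rvert>f^*(\tau)\}$ and then bound $\int_{E'}\lvert f\rvert$ crudely by $f^*(\tau)\,\mu(E')$; this is precisely what makes the clean constant $C_\varphi^k$ emerge, whereas integrating $1/\varphi$ term-by-term over dyadic blocks would force an awkward geometric series that fails to collapse to $C_\varphi^k$ when $C_\varphi<2$. In part~\ref{TheoremDualisation:2} the essential use of the standing hypotheses on $(\mathcal{R},\mu)$ is selecting a set of measure exactly $t_0$ on which $\lvert f\rvert$ exceeds a value arbitrarily close to $f^*(t_0)$; this is the only place non-atomicity enters, and it is what lets the integral hypothesis be converted back into a pointwise rearrangement bound.
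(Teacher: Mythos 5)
Your proposal is correct, and it takes a genuinely different route from the paper, chiefly in part~\ref{TheoremDualisation:2}. The paper's proof pivots on Lemma~\ref{LemmaEquivCharOfm}, the reformulation $\lVert f \rVert_{m_\varphi} = \sup_{s} s\,\varphi(f_*(s))$, which you avoid entirely. In part~\ref{TheoremDualisation:1} the underlying idea is the same — delete a super-level set of $f$ from $E$ — but the paper fixes the threshold $(C_\varphi^k+\varepsilon)A/\varphi(\mu(E))$, bounds the measure of the resulting set $\Omega_\varepsilon$ through the distribution-function characterisation, and then removes $\varepsilon$ by a union argument; your choice of threshold $f^*(2^{-k}\mu(E))$ together with the standard fact $f_*(f^*(\tau)) \leq \tau$ (legitimate here since $f^*(\tau) \leq A/\varphi(\tau) < \infty$) gives the same conclusion with the same constant and no limiting step. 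In part~\ref{TheoremDualisation:2} the difference is structural: the paper tests the hypothesis on the full super-level sets $E_t = \{\lvert f \rvert > t\}$, which forces a separate and fairly lengthy treatment of the case $\mu(E_t) = \infty$ (boundedness of $\varphi$, the threshold $t_0 = \sup\{t;\, \mu(E_t)=\infty\}$, etc.), whereas you use non-atomicity to carve out of $\{\lvert f\rvert > s\}$, $s < f^*(t_0)$, a test set of measure exactly $t_0$, converting the integral hypothesis directly into the pointwise bound $f^*(t_0) \leq A/(C\varphi(t_0))$; this makes the infinite-measure pathology invisible (your bound holds for every $s < f^*(t_0)$, so even $f^*(t_0) = \infty$ is excluded automatically). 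The trade-offs: your argument invokes non-atomicity at every point $t_0$, while the paper needs it only in the degenerate case — immaterial here, since non-atomicity is a standing assumption of the section — and the paper's detour through Lemma~\ref{LemmaEquivCharOfm} is not wasted, as that lemma is stated and proved as an independent characterisation; your proof simply shows the theorem does not depend on it.
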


We note that this result has previously been known for the special case of weak Lebesgue spaces $L^{p, \infty}$ (for $p \in (0, 1]$), see e.g.~\cite[Lemma~2.6]{MuscaluSchlag13-2}.

To prove Theorem~\ref{TheoremDualisation}, we will need the following lemma that characterises the weak Marcinkiewicz quasinorm via the distribution function.

\begin{lemma} \label{LemmaEquivCharOfm}
	Let $\varphi: [0, \infty) \to [0, \infty)$ be admissible. Then we have for every $f\in\mathcal{M}$ that
	\begin{equation} \label{LemmaEquivCharOfm:0}
		\sup_{t \in [0, \infty)}f^*(t)\varphi(t) = \sup_{s \in [0, \infty)} s \varphi(f_{*}(s)),
	\end{equation}
	where we interpret $\varphi(\infty) = \lim_{t \to \infty} \varphi(t)$. 
\end{lemma}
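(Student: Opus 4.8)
The plan is to exploit the well-known duality between the non-increasing rearrangement and the distribution function: for all $s, t \in [0, \infty)$ one has the equivalence $f^*(t) > s \iff t < f_*(s)$, which follows from the equimeasurability of $f$ and $f^*$ together with the monotonicity and right-continuity of both $f^*$ and $f_*$ (a standard consequence of \cite[Chapter~2, Proposition~1.7]{BennettSharpley88}). Writing $L$ for the left-hand side and $R$ for the right-hand side of \eqref{LemmaEquivCharOfm:0}, I would establish the two inequalities $L \leq R$ and $R \leq L$ separately, throughout invoking the convention $a \cdot 0 = 0$ so that the degenerate contributions at $t=0$ or $s=0$ (where $\varphi(0)=0$) may be ignored.

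For $L \leq R$, I would fix $t \in [0, \infty)$ with $f^*(t) > 0$ and take an arbitrary $s < f^*(t)$. The duality equivalence gives $t < f_*(s)$, whence the monotonicity of $\varphi$ yields $\varphi(t) \leq \varphi(f_*(s))$ and therefore $s\varphi(t) \leq s\varphi(f_*(s)) \leq R$. Since $\varphi(t)$ does not depend on $s$, taking the supremum over $s < f^*(t)$ (for which $\sup s = f^*(t)$) gives $f^*(t)\varphi(t) \leq R$; the case $f^*(t) = \infty$ is recovered by letting $s \to \infty$, which forces $R = \infty$. The supremum over $t$ then finishes this direction, and I note that it uses only that $\varphi$ is non-decreasing.

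For $R \leq L$, I would fix $s \in (0, \infty)$ and take an arbitrary $t < f_*(s)$. Now the duality equivalence gives $f^*(t) > s$, so $s\varphi(t) \leq f^*(t)\varphi(t) \leq L$. Taking the supremum over $t < f_*(s)$ produces $s \sup_{t < f_*(s)} \varphi(t) \leq L$, and here lies the crux of the argument: because $\varphi$ is non-decreasing and \emph{left-continuous}, the inner supremum equals exactly $\varphi(f_*(s))$ (when $f_*(s) < \infty$ this is left-continuity at the point $f_*(s)$, while when $f_*(s) = \infty$ it is the definition $\varphi(\infty) = \lim_{t \to \infty}\varphi(t)$ adopted in the statement). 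Hence $s\varphi(f_*(s)) \leq L$, and the supremum over $s$ concludes.

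The main obstacle, and the only place where admissibility is genuinely needed beyond monotonicity, is the identity $\sup_{t < f_*(s)} \varphi(t) = \varphi(f_*(s))$ in the second direction; without left-continuity one would only obtain the left limit of $\varphi$ at $f_*(s)$, which could be strictly smaller and would break the equality. Everything else reduces to the duality equivalence plus careful bookkeeping of the extended-real values (the cases $f^*(t) = \infty$ and $f_*(s) = \infty$) together with the convention $a \cdot 0 = 0$.
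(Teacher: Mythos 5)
Your proof is correct, and it is organized around the same two elementary mechanisms as the paper's proof: the ``Galois'' duality $s < f^*(t) \iff t < f_*(s)$ (which indeed requires the right-continuity of $f_*$ in the direction $t < f_*(s) \implies s < f^*(t)$, as you note), and the left-continuity of $\varphi$, used in exactly the same place --- only the inequality $\sup_{s} s\varphi(f_*(s)) \leq \sup_t f^*(t)\varphi(t)$ needs it, while the converse needs only monotonicity. The difference is in the packaging, and it is a genuine improvement in economy: the paper does not isolate the duality equivalence but instead works with the pointwise consequences $f^*(f_*(s)-\varepsilon) > s$ and $f_*(f^*(t)-\varepsilon) > t$ and an $\varepsilon \to 0_+$ limit, which forces it to first assume $f^*$ and $f_*$ finite on $(0,\infty)$ and then handle separately the case where $f^*$ attains $\infty$ and the more delicate case where $f^* < \infty$ but $f_*(s) = \infty$ for small $s$ (introducing the threshold $s_0 = \sup\{s;\, f_*(s)=\infty\}$, the bound $\lim_{t\to\infty}f^*(t) \geq s_0$, and a subdivision according to whether $\varphi$ is bounded). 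Because your duality equivalence is valid verbatim for extended values (right-continuity of $f_*$ holds at points where $f_*(s)=\infty$, by continuity of $\mu$ from below), and because $\sup_{t < f_*(s)}\varphi(t) = \varphi(f_*(s))$ holds uniformly --- by left-continuity when $f_*(s)<\infty$ and by the convention $\varphi(\infty)=\lim_{t\to\infty}\varphi(t)$ otherwise --- your argument absorbs all of the paper's case analysis into a single computation, at the cost only of the brief bookkeeping for the degenerate points $t=0$, $s=0$, $f_*(s)=0$ under the convention $a \cdot 0 = 0$, which you carry out correctly.
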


\begin{proof}
	We first note that, by our standing convention, $f^*(0) \varphi(0) = 0 = 0 \cdot \varphi(f_{*}(0))$, so both suprema can be without a loss of generality restricted to $(0, \infty)$. Furthermore, the equality clearly holds when $f = 0$ $\mu$-a.e., so we will assume the contrary. We first consider functions $f \in \mathcal{M}$ such that both $f_* < \infty$ and $f^* < \infty$ on $(0, \infty)$. 
	
	We begin by showing that the left-hand side is greater than or equal to the right-hand side. To this end, let $s \in (0, \infty)$ be such that $f_{*}(s) \in (0, \infty)$ and let $\varepsilon \in (0, f_{*}(s))$. Then 
	\begin{equation*}
		f^*(f_{*}(s)-\varepsilon) > s,
	\end{equation*} so we get
	\begin{equation} \label{LemmaEquivCharOfm:1}
		\sup_{t \in (0, \infty)} f^*(t)\varphi(t) \geq f^*(f_{*}(s)-\varepsilon)\varphi(f_{*}(s)-\varepsilon) > s \varphi(f_{*}(s)-\varepsilon).
	\end{equation}
	Now, by sending $\varepsilon \rightarrow 0_+$  and taking the supremum over all $s \in (0, \infty)$ we obtain the desired inequality. 
	
	For the opposite inequality, let $t \in (0, \infty)$ be such that $f^*(t) \in (0, \infty)$ and let $\varepsilon \in (0, f^*(t))$. Then 
	\begin{equation*} 
		f_{*}(f^*(t)-\varepsilon)>t,
	\end{equation*}
	thus we get
	\begin{align} \label{LemmaEquivCharOfm:2}
		\sup_{s \in (0, \infty)} s \varphi(f_{*}(s)) \geq (f^*(t)-\varepsilon)\varphi(f_{*}(f^*(t)-\varepsilon)) \geq (f^*(t)-\varepsilon)\varphi(t).
	\end{align}
	Again, the desired inequality follows by sending $\varepsilon \rightarrow 0_+$ and taking the supremum over all $t \in (0, \infty)$. 
		
	Consider now function $f \in \mathcal{M}$ such that there is a $t_0 \in (0, \infty)$ for which $f^*(t_0) = \infty$. Then the left-hand side of \eqref{LemmaEquivCharOfm:0} is clearly infinite. On the other hand, it follows that $f_*(s) \geq t_0$ for all $s \in (0, \infty)$, and thus the right-hand side is also infinite and the equality holds.
	
	Assume now that we have $f \in \mathcal{M}$ for which $f^* < \infty$ on $(0, \infty)$ but there is an $s \in (0, \infty)$ such that $f_*(s) = \infty$. 	We observe, that for any such $s$ we have that $f^* \geq s$ on $(0, \infty)$. Hence, setting
	\begin{equation*} 
		s_0 = \sup \{s \in (0, \infty); \; f_*(s) = \infty\},
	\end{equation*}
	we conclude that
	\begin{equation} \label{LemmaEquivCharOfm:3}
		\lim_{t \to \infty} f^*(t) \geq s_0.
	\end{equation} 
	Now, if $\lim_{t \to \infty} \varphi(t) = \infty$, then this estimate yields that the left-hand side of \eqref{LemmaEquivCharOfm:0} is infinite, while the infiniteness of the right-hand side follows directly. Hence, it remains only to prove the case when $\lim_{t \to \infty} \varphi(t) < \infty$.
	
	Assume $\lim_{t \to \infty} \varphi(t) = \alpha < \infty$. As $f^* < \infty$ on $(0, \infty)$, the proof that the right-hand side of \eqref{LemmaEquivCharOfm:0} is larger than or equal to the left-hand side is exactly the same as presented above, because the estimate \eqref{LemmaEquivCharOfm:2} can be obtained for all $t \in (0, \infty)$. On the other hand, the estimate \eqref{LemmaEquivCharOfm:1} can be obtained by the method presented above only for $s \in (s_0, \infty)$, because it requires $f_*(s) < \infty$. Considering that $f_*$ is right-continuous and non-increasing while $\varphi$ is left-continuous and non-decreasing, we conclude that
	\begin{equation*}
		\sup_{s \in [s_0, \infty)} s \varphi(f_{*}(s)) \leq \sup_{t \in (0, \infty)}f^*(t)\varphi(t).
	\end{equation*}
	On the other hand, it follows from \eqref{LemmaEquivCharOfm:3} that
	\begin{equation*}
		\sup_{s \in (0, s_0)} s \varphi(f_*(s)) \leq s_0 \alpha \leq \lim_{t \to \infty} f^*(t) \varphi(t) \leq \sup_{t \in (0, \infty)}f^*(t)\varphi(t).
	\end{equation*}
	Combining these two estimates shows that the left-hand side of \eqref{LemmaEquivCharOfm:0} is larger than or equal to the right-hand side.
\end{proof}

\begin{proof}[Proof of Theorem~\ref{TheoremDualisation}]
	In both steps, we will use that
	\begin{equation*}
		 \lVert f \rVert_{m_\varphi} = \sup_{s \in [0, \infty)} s \varphi(f_{*}(s)),
	\end{equation*}
	where we interpret $\varphi(\infty) = \lim_{t \to \infty} \varphi(t)$, which is the characterisation obtained in Lemma~\ref{LemmaEquivCharOfm}. 
	
	We first prove \ref{TheoremDualisation:1}. To this end, fix $k\in\mathbb{N}$, $f\in\mathcal{M}$ such that $\lVert f \rVert_{m_\varphi} \leq A$, and $E\subseteq \mathcal{R}$ such that $\mu(E) \in (0, \infty)$. Furthermore, fix $\varepsilon \in (0, \infty)$ and put $C_{\varepsilon} = C_{\varphi}^k + \varepsilon$. Consider the set $\Omega_{\varepsilon} = \left \{ \lvert f \rvert > \frac{C_{\varepsilon} A}{\varphi(\mu(E))} \right \}$. Then
	\begin{align*}
		\frac{C_{\varepsilon} A}{\varphi(\mu(E))}\varphi(\mu(\Omega_{\varepsilon})) \leq \sup_{t \in [0, \infty)} t\varphi(f_{*}(t)) = \lVert f\rVert_{m_\varphi},
	\end{align*}
	whence 
	\begin{align*}
		\varphi(\mu(\Omega_{\varepsilon})) \leq \frac{\lVert f\rVert_{m_\varphi} \varphi(\mu(E))}{C_{\varepsilon} A} \leq \frac{\varphi(\mu(E))}{C_{\varepsilon}}.
	\end{align*}
	After applying $k$-times the $\Delta_2$-condition of $\varphi$ on this estimate, we obtain that
	\begin{align*}
		\varphi(\mu(\Omega_{\varepsilon})) \leq \frac{C_{\varphi}^k \varphi\left(\frac{\mu(E)}{2^k}\right)}{C_{\varepsilon}} < \varphi\left(\frac{\mu(E)}{2^k}\right),
	\end{align*}
	which implies
	\begin{displaymath}
		\mu(\Omega_{\varepsilon}) \leq \frac{\mu(E)}{2^k},
	\end{displaymath}
	because $\varphi$ is non-decreasing. Furthermore, the sets $\Omega_{\varepsilon}$ grow as $\varepsilon$ decreases. Hence, the set $\Omega = \left \{ \lvert f \rvert > \frac{C_{\varphi}^k A}{\varphi(\mu(E))} \right \} = \bigcup_{\varepsilon > 0} \Omega_{\varepsilon}$ also satisfies
	\begin{displaymath}
		\mu(\Omega) \leq \frac{\mu(E)}{2^k}.
	\end{displaymath}
	Consequently, the set $E' = E \setminus \Omega$ satisfies $\left (1-\frac{1}{2^k} \right )\mu(E) \leq \mu(E')$ and we have
	\begin{align*}
		\int_{E'} \lvert f \rvert \: d\mu \leq \frac{\mu(E') C_{\varphi}^k A}{\varphi(\mu(E))} \leq \frac{\mu(E)}{\varphi(\mu(E))} C_{\varphi}^k A,
	\end{align*}
	as desired.	
	
	We now move on to \ref{TheoremDualisation:2}. Let $C \in (0, 1]$ be as in the assumptions. Let $t \in (0, \infty)$ and put $E_t = \{ \lvert f(x) \rvert >t\}$. Assume for now that $\mu(E_t) < \infty$ and that $E'_t \subseteq E_t$ is as in the assumptions. Then
	\begin{equation} \label{TheoremDualisationEq:1}
		t C \mu(E_t) \leq t \mu(E'_t)  \leq \int_{E'_t} \lvert f \rvert \: d\mu \leq \frac{\mu(E_t)}{\varphi(\mu(E_t))} A,
	\end{equation}
	and thus 
	\begin{equation}  \label{TheoremDualisationEq:2}
		t \varphi(f_*(t)) = t \varphi(\mu(E_t)) \leq \frac{A}{C}.
	\end{equation}
	
	Now, if $f$ is such that $\mu(E_t) < \infty$ for all $t \in (0, \infty)$, we get that 
	\begin{equation*}
		\lVert f\rVert_{m_\varphi} = \sup_{t \in [0, \infty)} t \varphi(f_{*}(t)) \leq \frac {A}{C},
	\end{equation*}
	as desired. On the other hand, if there is a $t \in (0, \infty)$ such that $\mu(E_t) = \infty$, then we may find for every $s \in (0,\infty)$ a set $F_s \subseteq E_t$ satisfying $\mu(F_s) = s$ and for this set the same reasoning that led to \eqref{TheoremDualisationEq:1} produces the estimate
	\begin{equation} \label{TheoremDualisationEq:3}
		\varphi(s) \leq \frac{A}{Ct},
	\end{equation}
	i.e.~$\varphi$ must in this case be a bounded function. We now denote
	\begin{equation*}
		t_0 = \sup \{ t \in (0, \infty); \; \mu(E_t) = \infty\}
	\end{equation*}
	and note that $t_0 < \infty$, because otherwise we would get from \eqref{TheoremDualisationEq:3} that $\varphi$ is constantly equal to zero, which would be a contradiction. It follows that
	\begin{equation*}
		\lim_{s \to \infty} \varphi(s) \leq \frac{A}{Ct_0}
	\end{equation*}
	and therefore
	\begin{equation*}
		\sup_{t \in [0, t_0]} t \varphi(f_{*}(t)) \leq \sup_{t \in [0, t_0]} \frac{At}{Ct_0} \leq \frac {A}{C}.
	\end{equation*}
	Since for the remaining values of $t$ we already have the estimate \eqref{TheoremDualisationEq:2}, we again conclude that
	\begin{equation*}
		\lVert f\rVert_{m_\varphi} \leq \frac {A}{C}.
	\end{equation*}
\end{proof}

We now turn our attention to the space $L^{\infty}$ and show, that its properties are somewhat unique in the class or r.i.~quasi-Banach function spaces. To properly formulate our results, we will use the Wiener--Luxemburg amalgam spaces presented in Section~\ref{SectionWLA}. As we work with an arbitrary non-atomic measure space $\mathcal{M}(\mathcal{R}, \mu)$, we will need to extend their definition to our setting via Theorem~\ref{TheoremRepresentation} and Proposition~\ref{PropositionInverseRepresentation}. To keep our notation reasonable, we will simply write $WL(A,B)$ even when $A, B$ are r.i.~quasi-Banach function spaces over arbitrary $\mathcal{M}(\mathcal{R}, \mu)$, i.e.~in this case we put
\begin{equation*}
	WL(A,B) = \widetilde{WL(\overline{A}, \overline{B})}.
\end{equation*}

\begin{theorem} \label{TheoFFLinftyLoc}
	Let $\lVert \cdot \rVert_X$ be an r.i.~quasi-Banach function norm and let $X$ and $\varphi_X$ be, respectively, the corresponding r.i.~quasi-Banach function space and its fundamental function. Then the following statement are equivalent:
	\begin{enumerate}
		\item $\lim_{t \to 0_+} \varphi_X(t) > 0$, \label{TheoFFLinftyLoc:i}\\
		\item there is a set $E \subseteq \mathcal{R}$ with $\mu(E) < \infty$ such that $\chi_E \notin X_a$, \label{TheoFFLinftyLoc:iib} \\
		\item $X_a = \{0\}$, \label{TheoFFLinftyLoc:ii} \\
		\item $X = WL(L^{\infty}, X)$ up to the equivalence of quasinorms, \label{TheoFFLinftyLoc:iii}
	\end{enumerate}
	where in \ref{TheoFFLinftyLoc:iii} we use the notation of Wiener--Luxemburg amalgam spaces as presented in Definition~\ref{DefWL}.
\end{theorem}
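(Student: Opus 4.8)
The plan is to run two short cycles through statement (i). First I would establish the equivalence of (i), (ii) and (iii) by the chain (i)$\Rightarrow$(iii)$\Rightarrow$(ii)$\Rightarrow$(i), and then separately close (iv) by proving (i)$\Rightarrow$(iv)$\Rightarrow$(i). Throughout, I write $c=\lim_{t\to 0_+}\varphi_X(t)$, which exists and equals $\inf_{t>0}\varphi_X(t)$ because $\varphi_X$ is non-decreasing (Proposition~\ref{Proposition_m}); thus (i) is precisely the statement $c>0$.

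For (i)$\Rightarrow$(iii), fix $f\in X$ with $f\neq 0$ and pick $a>0$ with $\mu(\{|f|>a\})>0$. Using non-atomicity, choose a decreasing sequence $G_k\subseteq\{|f|>a\}$ with $0<\mu(G_k)=2^{-k}\mu(G_0)\to0$, so that $\chi_{G_k}\to0$ $\mu$-a.e.; then $|f|\chi_{G_k}\ge a\chi_{G_k}$ and the lattice property \ref{P2} give $\lVert f\chi_{G_k}\rVert_X\ge a\varphi_X(\mu(G_k))\ge ac>0$, whence $f\notin X_a$ and $X_a=\{0\}$. The implication (iii)$\Rightarrow$(ii) is immediate, since any $E$ with $0<\mu(E)<\infty$ has $\chi_E\neq0$, so $\chi_E\notin X_a=\{0\}$. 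For (ii)$\Rightarrow$(i) I argue by contraposition: if $c=0$ and $\mu(E)<\infty$, then for any $E_k$ with $\chi_{E_k}\to0$ a.e. dominated convergence (the dominating function $\chi_E$ is integrable) yields $\mu(E\cap E_k)\to0$, hence $\lVert\chi_E\chi_{E_k}\rVert_X=\varphi_X(\mu(E\cap E_k))\to c=0$; thus every such $\chi_E$ lies in $X_a$ and (ii) fails.

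For (i)$\Rightarrow$(iv) I would unwind the definition so that, via Theorem~\ref{TheoremRepresentation}, $\lVert f\rVert_{WL(L^\infty,X)}=f^*(0)+\lVert f^*\chi_{(1,\infty)}\rVert_{\overline X}$. One inclusion is free: since $f^*\chi_{[0,1]}\le f^*(0)\chi_{[0,1]}$ and $f^*\chi_{(1,\infty)}\le f^*$, the lattice property and the quasi-triangle inequality give $\lVert f\rVert_X=\lVert f^*\rVert_{\overline X}\lesssim f^*(0)\varphi_X(1)+\lVert f^*\chi_{(1,\infty)}\rVert_{\overline X}\lesssim\lVert f\rVert_{WL(L^\infty,X)}$. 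For the reverse inclusion, the global term satisfies $\lVert f^*\chi_{(1,\infty)}\rVert_{\overline X}\le\lVert f\rVert_X$ by \ref{P2}, while the local term is controlled using $\lVert\cdot\rVert_{m_{\varphi_X}}\le\lVert\cdot\rVert_X$ from Proposition~\ref{Proposition_m}: as $\varphi_X(t)\ge c$ for all $t>0$, we get $cf^*(t)\le\varphi_X(t)f^*(t)\le\lVert f\rVert_X$, so $f^*(0)=\sup_{t>0}f^*(t)\le c^{-1}\lVert f\rVert_X$. Together these give $\lVert f\rVert_{WL(L^\infty,X)}\lesssim\lVert f\rVert_X$, completing (iv).

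The main obstacle, and the place requiring care, is (iv)$\Rightarrow$(i), which I prove by contraposition: assuming $c=0$ I construct $f\in X$ that is essentially unbounded, so that $f^*(0)=\infty$ forces $\lVert f\rVert_{WL(L^\infty,X)}=\infty$ while $\lVert f\rVert_X<\infty$, contradicting (iv). Since $c=0$, one can pick $s_k\downarrow0$ with $\varphi_X(s_k)\le(2C_{\overline X})^{-k}$, where $C_{\overline X}$ is the modulus of concavity of $\lVert\cdot\rVert_{\overline X}$. Setting $F_n=\sum_{k=1}^n\chi_{[0,s_k]}$, the iterated quasi-triangle inequality gives $\lVert F_n\rVert_{\overline X}\le\sum_{k=1}^n C_{\overline X}^{\,k}\varphi_X(s_k)\le\sum_{k\ge1}2^{-k}=1$; since $F_n\uparrow F:=\sum_k\chi_{[0,s_k]}$ with $F(0_+)=\infty$, the Fatou property \ref{P3} yields $\lVert F\rVert_{\overline X}\le1<\infty$. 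Choosing $f\in\mathcal M(\mathcal R,\mu)$ with $f^*=F$ (possible by Sierpiński on the non-atomic space) furnishes the desired unbounded element of $X$. The delicate point is that the quasi-triangle constant $C_{\overline X}^{\,k}$ grows geometrically, so the rapid decay of $\varphi_X(s_k)$ near the origin must be arranged precisely to keep the series summable.
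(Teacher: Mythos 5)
Your proof is correct, and its backbone — the cycle through (i), (ii), (iii) plus the two implications linking (i) and (iv) — matches the paper's; the differences lie in how the two WL implications are handled. For (i)$\Rightarrow$(iv) the paper invokes \cite[Theorems~5.6 and~5.7]{Pesa22} to reduce the embedding $X \hookrightarrow WL(L^{\infty}, X)$ to the implication $\lVert f^* \chi_{[0,1]} \rVert_{\overline{X}} < \infty \implies f^*(0) < \infty$, which it then proves by contradiction; your quantitative bound $f^*(0) \leq c^{-1} \lVert f \rVert_X$, obtained from $\lVert \cdot \rVert_{m_{\varphi_X}} \leq \lVert \cdot \rVert_X$ of Proposition~\ref{Proposition_m}, is exactly the same estimate in direct form, and you additionally verify the converse embedding $WL(L^{\infty}, X) \hookrightarrow X$ by hand instead of citing it, so your version is self-contained where the paper is citation-based. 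Conversely, for (iv)$\Rightarrow$(i) the paper simply notes the implication is clear — indeed, equivalence of quasinorms forces equivalence of fundamental functions, and $\varphi_{WL(L^{\infty},X)}(t) \geq \lVert (\chi_{E_t})^* \chi_{[0,1]} \rVert_{L^{\infty}} = 1$ for all $t>0$ — whereas you construct, assuming $c=0$, the unbounded function $F = \sum_k \chi_{[0,s_k]}$ with $\lVert F \rVert_{\overline{X}} \leq 1$ by playing geometric decay of $\varphi_X(s_k)$ against the iterated quasi-triangle constant and then applying the Fatou property \ref{P3}. Your construction is valid (and yields the strictly stronger conclusion that $X \not\subseteq WL(L^{\infty},X)$ as sets, i.e.\ $X$ contains an essentially unbounded function whenever $\varphi_X$ vanishes at $0_+$), but it is considerably more work than the equivalence requires. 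Two pedantic repairs: in (i)$\Rightarrow$(iii) you should first replace $\{\lvert f \rvert > a\}$ by a subset $G_0$ of \emph{finite} positive measure (available by $\sigma$-finiteness) before halving, since otherwise $2^{-k}\mu(G_0) \to 0$ is vacuous; and at the end of (iv)$\Rightarrow$(i) one should pass to the right-continuous version of $F$ (equal to it $\lambda$-a.e., so with the same quasinorm) before invoking the Sierpi\'nski-type realisation of a non-increasing right-continuous function as a rearrangement, as in the uniqueness part of the proof of Theorem~\ref{TheoremRepresentation}.
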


We recall that the set $X_a$ is the subspace of functions having absolutely continuous quasinorm as defined in Definition~\ref{DefACqN}. We would like to note, that for r.i.~Banach function spaces over non-atomic measure spaces the equivalence of \ref{TheoFFLinftyLoc:i}, \ref{TheoFFLinftyLoc:iib}, and \ref{TheoFFLinftyLoc:ii} follows from \cite[Chapter~2, Theorem~5.5]{BennettSharpley88}.

\begin{proof}
	We begin by noting that it is clear that \ref{TheoFFLinftyLoc:iii} implies \ref{TheoFFLinftyLoc:i}. As for the opposite direction, i.e.~\ref{TheoFFLinftyLoc:i}$\implies$\ref{TheoFFLinftyLoc:iii}, we know from \cite[Theorem~5.7]{Pesa22} that $WL(L^{\infty}, X) \hookrightarrow X$, and therefore only the opposite embedding needs to be considered. We further know from \cite[Theorem~5.6]{Pesa22} that this embedding is equivalent to the validity of the following implication (for every $f \in \mathcal{M}$):
	\begin{equation*}
		\lVert f^* \chi_{[0,1]} \rVert_{\overline{X}} < \infty \implies f^*(0) = \lVert f^* \chi_{[0,1]} \rVert_{L^{\infty}} < \infty.
	\end{equation*}
	To prove this implication, we assume that $\lim_{t \to 0_+} f^*(t) = f^*(0) = \infty$ and find some $\varepsilon$ such that $\varphi_X > \varepsilon$ on $(0,\infty)$ (using \ref{TheoFFLinftyLoc:i}). Then for every $n \in \mathbb{N}$ there is a $\delta_n \in (0, \infty)$ such that $f^* > n \chi_{[0, \delta_n]}$ and thus
	\begin{equation*}
		\lVert f^* \chi_{[0,1]} \rVert_{\overline{X}} \geq n \varphi_X(\delta_n) > n \varepsilon.
	\end{equation*}
	Hence, $\lVert f^* \chi_{[0,1]} \rVert_{\overline{X}} = \infty$, as desired.
	
	We shall now prove that \ref{TheoFFLinftyLoc:iib} implies \ref{TheoFFLinftyLoc:i}. When $E$ is as in \ref{TheoFFLinftyLoc:iib}, we may find a sequence $E_n$ of subsets of $\mathcal{R}$ such that $\chi_{E_n} \to 0$ $\mu$-a.e.~while we have $\varepsilon < \lVert \chi_{E_n} \chi_E \rVert_X$ for some $\varepsilon \in (0, \infty)$ and all $n \in \mathbb{N}$. This further implies that $0 < \mu(E_n \cap E) \to 0$. Thence, we obtain the desired conclusion by plugging the values $\mu(E_n \cap E)$ into $\varphi_X$. 
	
	That \ref{TheoFFLinftyLoc:ii} implies \ref{TheoFFLinftyLoc:iib} is trivial. The remaining implication from \ref{TheoFFLinftyLoc:i} to \ref{TheoFFLinftyLoc:ii} follows easily by taking for any characteristic function corresponding to a set $E \subseteq \mathcal{R}$ of finite measure such a sequence of sets $E_n$ for which $\mu(E_n \cap E) > 0$ for all $n \in \mathbb{N}$ (which can always be done by our assumption that $(\mathcal{R}, \mu)$ is non-atomic) and the realising that any $f$ that is not identically equal to zero can be estimated from below by a characteristic function. 
\end{proof}

\begin{theorem} \label{TheoFFLinftyGlob}
	Let $\lVert \cdot \rVert_X$ be an r.i.~quasi-Banach function norm and let $X$ and $\varphi_X$ be, respectively, the corresponding r.i.~quasi-Banach function space and its fundamental function. Then the following statement are equivalent:
	\begin{enumerate}
		\item $\lim_{t \to \infty} \varphi_X(t) < \infty$, \label{TheoFFLinftyGlob:i}\\
		\item there is a function $f \in X$ such that $\lim_{t \to \infty} f^*(t) > 0$, \label{TheoFFLinftyGlob:iib} \\
		\item $\chi_{\mathcal{R}} \in X$, \label{TheoFFLinftyGlob:ii} \\
		\item $X = WL(X, L^{\infty})$ up to the equivalence of quasinorms, \label{TheoFFLinftyGlob:iii}
	\end{enumerate}
	where in \ref{TheoFFLinftyGlob:iii} we use the notation of Wiener--Luxemburg amalgam spaces as presented in Definition~\ref{DefWL}.
\end{theorem}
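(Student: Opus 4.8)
The plan is to read Theorem~\ref{TheoFFLinftyGlob} as the ``near-infinity'' mirror of Theorem~\ref{TheoFFLinftyLoc} and to prove the four conditions equivalent through a short cycle whose pivot is the identity $\lim_{t \to \infty} \varphi_X(t) = \lVert \chi_{\mathcal{R}} \rVert_X$. Throughout I would pass to the representation norm $\lVert \cdot \rVert_{\overline{X}}$ furnished by Theorem~\ref{TheoremRepresentation}, so that $\varphi_X(t) = \lVert \chi_{[0,t)} \rVert_{\overline{X}}$ and $\lVert f \rVert_X = \lVert f^* \rVert_{\overline{X}}$, and I would use the extended Wiener--Luxemburg norm $\lVert f \rVert_{WL(X, L^{\infty})} = \lVert f^* \chi_{[0,1]} \rVert_{\overline{X}} + \lVert f^* \chi_{(1, \infty)} \rVert_{L^{\infty}}$ dictated by Definition~\ref{DefWL} together with the extension to general measure spaces introduced above.

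First I would dispatch the equivalence \ref{TheoFFLinftyGlob:i}~$\iff$~\ref{TheoFFLinftyGlob:ii}. Since $\chi_{[0,t)} \uparrow \chi_{[0, \infty)}$ as $t \to \infty$, the Fatou property \ref{P3} of $\lVert \cdot \rVert_{\overline{X}}$ yields $\lim_{t \to \infty} \varphi_X(t) = \lVert \chi_{[0,\infty)} \rVert_{\overline{X}} = \lVert \chi_{\mathcal{R}} \rVert_X$ (as $\chi_{\mathcal{R}}^* = \chi_{[0,\infty)}$), so this limit is finite exactly when $\chi_{\mathcal{R}} \in X$. Next, \ref{TheoFFLinftyGlob:ii}~$\implies$~\ref{TheoFFLinftyGlob:iib} is immediate by taking $f = \chi_{\mathcal{R}}$, for which $\lim_{t \to \infty} f^*(t) = 1 > 0$. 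For the converse \ref{TheoFFLinftyGlob:iib}~$\implies$~\ref{TheoFFLinftyGlob:ii}, if $f \in X$ has $c = \lim_{t \to \infty} f^*(t) > 0$ then monotonicity of $f^*$ gives $f^* \geq c \chi_{[0, \infty)} = (c \chi_{\mathcal{R}})^*$, so Corollary~\ref{Corollary*<*=>||<||} provides $\lVert c \chi_{\mathcal{R}} \rVert_X \leq \lVert f \rVert_X < \infty$ and hence $\chi_{\mathcal{R}} \in X$.

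The substantive part is the equivalence with \ref{TheoFFLinftyGlob:iii}. I would first record that $X \hookrightarrow WL(X, L^{\infty})$ holds unconditionally: the local term obeys $\lVert f^* \chi_{[0,1]} \rVert_{\overline{X}} \leq \lVert f^* \rVert_{\overline{X}} = \lVert f \rVert_X$ by the lattice property \ref{P2}, while the global term obeys $\lVert f^* \chi_{(1, \infty)} \rVert_{L^{\infty}} \leq f^*(1) \leq \varphi_X(1)^{-1} \lVert f \rVert_X$, the last step coming from $f^* \geq f^*(1) \chi_{[0,1)}$ (the lower bound behind Proposition~\ref{Proposition_m}, valid since $\varphi_X(1) > 0$). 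Assuming \ref{TheoFFLinftyGlob:ii}, set $C = \lVert \chi_{[0,\infty)} \rVert_{\overline{X}} < \infty$; then for any $f$, writing $M = \lVert f^* \chi_{(1,\infty)} \rVert_{L^{\infty}}$ we have $f^* \chi_{(1, \infty)} \leq M \chi_{[0,\infty)}$, hence $\lVert f^* \chi_{(1,\infty)} \rVert_{\overline{X}} \leq MC$, and the quasi-triangle inequality for $\lVert \cdot \rVert_{\overline{X}}$ gives $\lVert f \rVert_X = \lVert f^* \rVert_{\overline{X}} \lesssim \lVert f^* \chi_{[0,1]} \rVert_{\overline{X}} + \lVert f^* \chi_{(1,\infty)} \rVert_{\overline{X}} \lesssim \lVert f \rVert_{WL(X, L^{\infty})}$; this is the reverse embedding $WL(X, L^{\infty}) \hookrightarrow X$, and together with the automatic one it yields \ref{TheoFFLinftyGlob:iii}. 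Finally, for \ref{TheoFFLinftyGlob:iii}~$\implies$~\ref{TheoFFLinftyGlob:ii} I would simply test the norm equivalence on $\chi_{\mathcal{R}}$, whose Wiener--Luxemburg norm equals $\varphi_X(1) + 1 < \infty$, so that $\lVert \chi_{\mathcal{R}} \rVert_X < \infty$, using Theorem~\ref{TEQBFS} to pass between equality of sets and equivalence of quasinorms.

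I expect the only delicate points to be bookkeeping rather than conceptual: correctly unwinding the extended definition of $WL(X, L^{\infty})$ so that its global component really reads off $f^*(1)$, and controlling the global tail inside $\lVert \cdot \rVert_{\overline{X}}$, where (unlike in the Banach case) no Hardy--Littlewood--P\'olya principle is available, which is precisely why I route the tail estimate through the crude pointwise bound $f^* \chi_{(1,\infty)} \leq M \chi_{[0,\infty)}$ together with Corollary~\ref{Corollary*<*=>||<||}. One could alternatively invoke \cite[Theorems~5.6 and~5.7]{Pesa22} exactly as in the proof of Theorem~\ref{TheoFFLinftyLoc}, but since both Wiener--Luxemburg embeddings here collapse to one-line monotonicity estimates, the self-contained route seems cleaner.
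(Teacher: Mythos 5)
Your proof is correct, and its skeleton coincides with the paper's: both pivot on the Fatou-property identity $\lim_{t \to \infty} \varphi_X(t) = \lVert \chi_{\mathcal{R}} \rVert_X$ (the paper's \ref{TheoFFLinftyGlob:i}$\implies$\ref{TheoFFLinftyGlob:ii} step, via \ref{P3}) and on the tail bound $f^* \chi_{(1,\infty)} \leq f^*(1) \chi_{[0,\infty)}$, which is word for word the paper's key estimate in proving \ref{TheoFFLinftyGlob:ii}$\implies$\ref{TheoFFLinftyGlob:iii}. The one genuine difference is logistical rather than conceptual: where the paper invokes \cite[Theorem~5.7]{Pesa22} for the unconditional embedding $X \hookrightarrow WL(X, L^{\infty})$ and \cite[Theorem~5.6]{Pesa22} to reduce the reverse embedding to the implication ``$g^*(1) < \infty \implies g^* \chi_{(1,\infty)} \in \overline{X}$'', you re-derive both embeddings by hand with explicit constants ($\varphi_X(1)^{-1}$ for the global $L^{\infty}$ component of the automatic embedding, and $C = \lVert \chi_{[0,\infty)} \rVert_{\overline{X}}$ together with the modulus of concavity of $\lVert \cdot \rVert_{\overline{X}}$ for the reverse one). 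This buys a self-contained argument with quantitative control of the equivalence constants at essentially no extra length; the paper's route buys uniformity with the proof of Theorem~\ref{TheoFFLinftyLoc} and with the rest of Section~\ref{SectionFundFuncResearch}, where the same two results of \cite{Pesa22} are the standard workhorses. Two minor bookkeeping remarks, neither a gap: your step \ref{TheoFFLinftyGlob:iib}$\implies$\ref{TheoFFLinftyGlob:ii} does not actually need Corollary~\ref{Corollary*<*=>||<||}, since $f^* \geq c \chi_{[0,\infty)}$ is a pointwise inequality on $[0,\infty)$ and the lattice property \ref{P2} of $\lVert \cdot \rVert_{\overline{X}}$ suffices; and the closing appeal to Theorem~\ref{TEQBFS} is redundant on your route, because you establish the two-sided norm estimate directly rather than mere equality of sets (it would be needed only under the set-equality reading of \ref{TheoFFLinftyGlob:iii}, in which case one should note that $WL(X, L^{\infty})$ is itself a quasi-Banach function space, via Proposition~\ref{PropositionInverseRepresentation}, so that the theorem applies). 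Note also that $\chi_{\mathcal{R}}^* = \chi_{[0,\infty)}$ silently uses the section's standing assumption $\mu(\mathcal{R}) = \infty$, which is legitimate but worth flagging.
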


\begin{proof}
	It is clear that \ref{TheoFFLinftyGlob:iib} and \ref{TheoFFLinftyGlob:ii} are equivalent. Moreover, it follows from \ref{TheoFFLinftyGlob:i} and the property \ref{P3} of $\lVert \cdot \rVert_X$ that 
	\begin{equation*}
		\lVert \chi_{\mathcal{R}} \rVert_X = \lim_{t \to \infty} \varphi_X(t) < \infty
	\end{equation*}
	and thus \ref{TheoFFLinftyGlob:i} implies \ref{TheoFFLinftyGlob:ii}. It is also evident that \ref{TheoFFLinftyGlob:iii} implies \ref{TheoFFLinftyGlob:i}, whence it remains only to show that \ref{TheoFFLinftyGlob:ii} implies \ref{TheoFFLinftyGlob:iii}. We note that we always have $X \subseteq WL(X, L^{\infty})$ (thanks to \cite[Theorem~5.7]{Pesa22}) and for the opposite embedding we employ \cite[Theorem~5.6]{Pesa22} which reduces the problem to showing that every $g \in \mathcal{M}$ such that $g^*(1) < \infty$ satisfies $g^*\chi_{(1,\infty)} \in {\overline{X}}$. But this is clear, since by our assumption
	\begin{equation*}
		g^*\chi_{(1,\infty)} \leq g^*(1) \chi_{[0, \infty)} \in \overline{X}.
	\end{equation*}
\end{proof}

Let us note that it follows from the two preceding results that $L^{\infty}$ is the only r.i.~quasi-Banach function space on its fundamental level.

\begin{corollary} \label{CorollaryFundLevelLinfty}
	Let $\lVert \cdot \rVert_X$ be an r.i.~quasi-Banach function norm and let $X$ and $\varphi_X$ be, respectively, the corresponding r.i.~quasi-Banach function space and its fundamental function. Then $\varphi_X \approx 1$ if and only if $X = L^{\infty}$ up to equivalence of quasinorms.
\end{corollary}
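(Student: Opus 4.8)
The plan is to prove the two implications separately, deriving the nontrivial direction from Theorems~\ref{TheoFFLinftyLoc} and~\ref{TheoFFLinftyGlob}. The forward implication is immediate: if $X = L^{\infty}$ up to equivalence of quasinorms, then $\lVert \cdot \rVert_X \approx \lVert \cdot \rVert_{L^{\infty}}$, and evaluating both functionals on a characteristic function $\chi_{E_t}$ with $\mu(E_t) = t > 0$ gives $\varphi_X(t) = \lVert \chi_{E_t} \rVert_X \approx \lVert \chi_{E_t} \rVert_{L^{\infty}} = 1$, so $\varphi_X \approx 1$.

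For the reverse implication, I would first observe that since $\varphi_X$ is admissible (Proposition~\ref{Proposition_m}), it is non-decreasing, and hence $\inf_{t > 0} \varphi_X(t) = \lim_{t \to 0_+} \varphi_X(t)$ and $\sup_{t > 0} \varphi_X(t) = \lim_{t \to \infty} \varphi_X(t)$. Consequently $\varphi_X \approx 1$ holds if and only if both $\lim_{t \to 0_+} \varphi_X(t) > 0$ and $\lim_{t \to \infty} \varphi_X(t) < \infty$, i.e.~precisely when conditions \ref{TheoFFLinftyLoc:i} of Theorem~\ref{TheoFFLinftyLoc} and \ref{TheoFFLinftyGlob:i} of Theorem~\ref{TheoFFLinftyGlob} are simultaneously satisfied. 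Assume this from now on and fix a constant $C$ with $\varphi_X \le C$ on $(0, \infty)$.

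Now I would establish the two embeddings. For $X \hookrightarrow L^{\infty}$, condition \ref{TheoFFLinftyLoc:i} and Theorem~\ref{TheoFFLinftyLoc} give $X = WL(L^{\infty}, X)$ up to equivalence of quasinorms, so using Definition~\ref{DefWL} and that $f^*$ is non-increasing, $\lVert f \rVert_X \gtrsim \lVert f^* \chi_{[0,1]} \rVert_{L^{\infty}} = f^*(0) = \lVert f \rVert_{L^{\infty}}$ for every $f \in \mathcal{M}$. (Alternatively, the same lower bound follows at once from Proposition~\ref{Proposition_m}, since $\lVert f \rVert_X \ge \lVert f \rVert_{m_{\varphi_X}} = \sup_t \varphi_X(t) f^*(t) \gtrsim \sup_t f^*(t) = f^*(0)$.) For $L^{\infty} \hookrightarrow X$, condition \ref{TheoFFLinftyGlob:i} and Theorem~\ref{TheoFFLinftyGlob} yield $\chi_{\mathcal{R}} \in X$ (condition \ref{TheoFFLinftyGlob:ii}), and by the Fatou property \ref{P3} its quasinorm equals $\lim_{t \to \infty} \varphi_X(t) \le C$. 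Since every $f \in L^{\infty}$ satisfies $\lvert f \rvert \le \lVert f \rVert_{L^{\infty}} \chi_{\mathcal{R}}$ $\mu$-a.e., the lattice property \ref{P2} together with positive homogeneity \ref{Q1a} gives $\lVert f \rVert_X \le \lVert f \rVert_{L^{\infty}} \lVert \chi_{\mathcal{R}} \rVert_X \le C \lVert f \rVert_{L^{\infty}}$. Combining the two embeddings yields $X = L^{\infty}$ up to equivalence of quasinorms.

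The argument is essentially routine once the reduction in the second paragraph is made; the only point requiring a little care is the step $L^{\infty} \hookrightarrow X$, where one must not merely know that $\chi_{\mathcal{R}} \in X$ but also control its quasinorm by the (finite) limit of $\varphi_X$ at infinity, which is exactly the finiteness encoded in condition \ref{TheoFFLinftyGlob:i}. I do not anticipate any genuine obstacle beyond correctly matching the two one-sided limits of $\varphi_X$ to conditions \ref{TheoFFLinftyLoc:i} and \ref{TheoFFLinftyGlob:i}.
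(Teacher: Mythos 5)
Your proof is correct and takes essentially the approach the paper intends: the paper states Corollary~\ref{CorollaryFundLevelLinfty} without a written proof, as an immediate consequence of Theorems~\ref{TheoFFLinftyLoc} and~\ref{TheoFFLinftyGlob}, and your argument is exactly that derivation spelled out, matching $\varphi_X \approx 1$ (read on $(0,\infty)$) with the two one-sided limit conditions and extracting the two embeddings. All steps check out, including the two finer points: the lower bound $\lVert f \rVert_X \gtrsim f^*(0)$ (where your alternative via Proposition~\ref{Proposition_m} is valid because $f^*$ is right-continuous at $0$, so $\sup_{t>0} f^*(t) = f^*(0)$) and the norm control $\lVert \chi_{\mathcal{R}} \rVert_X = \lim_{t\to\infty}\varphi_X(t)$ via \ref{P3}, which is the same computation the paper uses inside the proof of Theorem~\ref{TheoFFLinftyGlob}.
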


There is a classical result (see e.g.~\cite[Chapter~2, Theorem~6.6]{BennettSharpley88}) that says that every r.i.~Banach function space is sandwiched between the intersection and sum of $L^1$ and $L^{\infty}$. This result has been recently made more precise and general in \cite[Theorem~5.7]{Pesa22} (we have already used this result in this paper a few times), where it has been show that $L^1$ is among r.i.~Banach function spaces the locally largest one and the globally smallest one and that $L^{\infty}$ is in the same context the locally smallest one and the globally largest one. Furthermore, it has been shown that the statements concerning $L^{\infty}$ hold even in the wider context of r.i.~quasi-Banach function spaces while the statements concerning $L^1$ do not. The question then naturally arises whether there is some other space (or a pair of spaces) that would replace $L^1$ and serve as the appropriate local and global endpoints for this wider scale of spaces. Using the theory developed above, we show in the following result that in the local case the answer to this question is negative.

\begin{corollary} \label{CorollaryNoLocLargestSpace}
	For every r.i.~quasi-Banach function space $X$ there is an r.i.~quasi-Banach function space $X_0$ such that $X \hookrightarrow WL(X_0, X)$ and the embedding is sharp.
\end{corollary}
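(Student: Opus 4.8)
The plan is to build $X_0$ as a \emph{weak Marcinkiewicz endpoint space} whose fundamental function decays strictly faster near the origin than $\varphi_X$ does. This makes $X_0$ locally strictly larger than $X$, while the global component of the amalgam $WL(X_0,X)$ is kept equal to $X$; hence the strict improvement guaranteed by sharpness is purely local, which is precisely the phenomenon behind ``there is no locally largest space''.

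Concretely, I would first set $\omega(t) = \min\{\sqrt{t},\,1\}$ and define $\psi = \varphi_X \cdot \omega$ on $[0,\infty)$. Using that $\varphi_X$ is admissible (by Proposition~\ref{Proposition_m}) and that $\omega$ is non-decreasing, continuous, bounded, and satisfies the $\Delta_2$-condition, I would verify that $\psi$ is again admissible: monotonicity and left-continuity pass to the product, $\psi(t)=0 \iff t=0$ because both factors are positive on $(0,\infty)$, and $\psi(2t)\leq C_{\varphi_X}\sqrt{2}\,\psi(t)$ gives the $\Delta_2$-condition. By Proposition~\ref{PropositionAdmiss}, $X_0 := m_\psi$ is then an r.i.~quasi-Banach function space. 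The two structural features that drive everything are $\psi \leq \varphi_X$ everywhere and $\psi(t)/\varphi_X(t) = \omega(t) \to 0$ as $t \to 0_+$.

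Next I would establish the embedding $X \hookrightarrow WL(X_0,X)$ by bounding both components of $\lVert f \rVert_{WL(X_0,X)}$ by $\lVert f \rVert_X$. For the global component, the lattice property \ref{P2} of $\overline{X}$ gives $\lVert f^*\chi_{(1,\infty)}\rVert_{\overline{X}} \leq \lVert f^*\rVert_{\overline{X}} = \lVert f \rVert_X$. For the local component, the representation $\overline{m_\psi}$ is given by the same supremum formula, so $\lVert f^*\chi_{[0,1]}\rVert_{\overline{X_0}} = \sup_{t\in[0,1]}\psi(t)f^*(t) \leq \sup_{t\geq 0}\varphi_X(t)f^*(t) = \lVert f\rVert_{m_{\varphi_X}} \leq \lVert f\rVert_X$, where the final inequality is Proposition~\ref{Proposition_m}. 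Theorem~\ref{TEQBFS} then upgrades the resulting set inclusion to a genuine embedding. To prove sharpness I would exhibit a single separating function: take $g\in\mathcal{M}(\mathcal{R},\mu)$ supported on a set of measure at most $1$ whose non-increasing rearrangement is comparable to $\tfrac{1}{\psi}$ on $(0,1)$ and vanishes on $(1,\infty)$ (such $g$ exists since $(\mathcal{R},\mu)$ is non-atomic). Then $\lVert g\rVert_{WL(X_0,X)} = \sup_{t\in(0,1)}\psi(t)g^*(t) \approx 1 < \infty$, so $g \in WL(X_0,X)$, whereas $\lVert g\rVert_X \geq \lVert g\rVert_{m_{\varphi_X}} = \sup_{t\in(0,1)}\varphi_X(t)g^*(t) \approx \sup_{t\in(0,1)}\omega(t)^{-1} = \infty$, so $g \notin X$. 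This proper inclusion $X \subsetneq WL(X_0,X)$, combined with the embedding above, is exactly the sharpness assertion.

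I expect the main obstacle to be technical rather than conceptual: confirming that the product $\psi = \varphi_X\omega$ remains admissible (above all the $\Delta_2$-condition) and handling the right-continuity normalisation of $1/\psi$ when choosing the witness $g$ — here the $\Delta_2$-condition is again what saves the day, since it forces $\psi$ to be comparable to its right-continuous version, rendering the left-/right-continuity mismatch harmless. The conceptual core, namely that depressing the fundamental function near zero enlarges the local endpoint while leaving the global behaviour untouched, follows transparently from Propositions~\ref{PropositionAdmiss} and \ref{Proposition_m}.
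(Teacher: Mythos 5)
Your proof is correct, and while it shares the paper's core idea --- take $X_0 = m_\psi$ for an admissible $\psi$ decaying strictly faster than $\varphi_X$ at the origin, then lean on Propositions~\ref{PropositionAdmiss} and \ref{Proposition_m} --- the execution is genuinely different in two respects. The paper first reduces, via Theorem~\ref{TheoFFLinftyLoc}, to the case $\lim_{t\to 0_+}\varphi_X(t)=0$ and $X = WL(m_{\varphi_X},X)$ (declaring the construction trivial otherwise), and only then sets $\varphi = \varphi_X^2$; that choice depresses the fundamental function near zero \emph{only because of} the first WLOG assumption, and both the embedding and the sharpness are then obtained by feeding the function $\frac{1}{\varphi}\chi_{(0,1]}$ into the amalgam-embedding characterisation of \cite[Theorem~5.6]{Pesa22}. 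Your multiplicative factor $\omega(t)=\min\{\sqrt{t},1\}$ forces $\psi/\varphi_X\to 0$ at $0_+$ \emph{unconditionally}, so you bypass the case analysis entirely and treat all $X$ uniformly (even those with $\lim_{t\to 0_+}\varphi_X(t)>0$, where the paper's $\varphi_X^2$ would fail to enlarge the local component); moreover you verify the embedding and exhibit the separating function by direct quasinorm computations rather than by citation --- your argument that $g\notin X$ because $g\notin m_{\varphi_X}$ and $\lVert\cdot\rVert_{m_{\varphi_X}}\le\lVert\cdot\rVert_X$ is in effect a hands-on proof of the relevant half of \cite[Theorem~5.6]{Pesa22}. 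What each route buys: the paper's is shorter given machinery already deployed throughout the section; yours is more self-contained and uniform, at the modest cost of the right-continuity normalisation of $1/\psi$, which you correctly flag and correctly resolve via the $\Delta_2$-condition (indeed $\psi(t)\le\psi(t_+)\le\psi(2t)\le C\psi(t)$). Two cosmetic remarks: your appeal to Theorem~\ref{TEQBFS} is redundant, since your two-component estimate already yields $\lVert f\rVert_{WL(X_0,X)}\le 2\lVert f\rVert_X$, a quantitative embedding; and in the local-component computations one should read the suprema against $(f^*\chi_{[0,1]})^*$, which agrees with $f^*\chi_{[0,1)}$ up to the endpoint and so changes nothing.
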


\begin{proof}
	Fix $X$ and let $\varphi_X$ be the corresponding fundamental function. We may assume without loss of generality that 
	\begin{equation*}
		\lim_{t \to 0_+} \varphi_X(t) = 0,
	\end{equation*}
	i.e.~that $X \neq WL(L^{\infty}, X)$ (see Theorem~\ref{TheoFFLinftyLoc}), and that $X = WL(m_{\varphi_X}, X)$, up to equivalence of quasinorms, because if either of those assumption is violated then the construction is trivial.
	
	Consider now the function $\varphi : [0, \infty) \to [0, \infty)$ given by the formula $\varphi = \varphi_X^2$. It is easy to check that $\varphi$ is admissible and that
	\begin{equation*}
		\lim_{t \to 0_+} \frac{\varphi_X(t)}{\varphi(t)} = \infty.
	\end{equation*}
	Thence, $m_{\varphi}$ is an r.i.~Banach function space (by Proposition~\ref{PropositionAdmiss}) and 
	\begin{align*}
		\frac{1}{\varphi} \chi{(0, 1]} &\in m_{\varphi}, & \frac{1}{\varphi} \chi{(0, 1]} \notin m_{\varphi_X},
	\end{align*}
	which when plugged into \cite[Theorem~5.6]{Pesa22} shows that $WL(m_{\varphi}, X) \not \hookrightarrow X$. Finally, it is clear that $\varphi \chi_{[0,1]} \lesssim \varphi_X \chi_{[0,1]}$, so the same result yields the desired embedding $X \hookrightarrow WL(m_{\varphi}, X)$.
\end{proof}

We end this section with the following easy observation that will be useful later.

\begin{proposition} \label{PropositionSandwitchedFundamentalFunction}
	Let $Y_1$ and $Y_2$ be r.i.~quasi Banach function spaces such that their respective fundamental functions $\varphi_{Y_1}$ and $\varphi_{Y_2}$ satisfy $\varphi_{Y_1} \approx \varphi_{Y_2}$. Assume that $X$ is an r.i.~quasi-Banach function space satisfying $Y_1 \hookrightarrow X \hookrightarrow Y_2$. Then the fundamental function $\varphi_X$ of $X$ satisfies $\varphi_X \approx \varphi_{Y_1} \approx \varphi_{Y_2}$.
\end{proposition}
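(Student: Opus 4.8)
The plan is to reduce the statement to the definition of the fundamental function evaluated on characteristic functions, combined with the norm inequalities furnished by the hypothesised embeddings. The whole argument is a direct unwinding of definitions, so I do not expect any genuine obstacle.

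First I would unwind what the embeddings $Y_1 \hookrightarrow X$ and $X \hookrightarrow Y_2$ mean. By definition each asserts that the relevant identity mapping is continuous, and since all three spaces are quasi-Banach function spaces (so that boundedness and continuity coincide for linear maps between them), this is equivalent to the existence of finite positive constants $C_1$ and $C_2$ such that $\lVert f \rVert_X \leq C_1 \lVert f \rVert_{Y_1}$ holds for every $f \in Y_1$ and $\lVert f \rVert_{Y_2} \leq C_2 \lVert f \rVert_X$ holds for every $f \in X$.

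Next, for every $t$ in the range of $\mu$ I would fix a set $E_t \subseteq \mathcal{R}$ with $\mu(E_t) = t$ and test both inequalities on $\chi_{E_t}$, which is legitimate because rearrangement-invariance guarantees that the value $\lVert \chi_{E_t} \rVert$ does not depend on the particular choice of $E_t$. Reading off the definition of the fundamental function, the first inequality yields
\begin{equation*}
	\varphi_X(t) = \lVert \chi_{E_t} \rVert_X \leq C_1 \lVert \chi_{E_t} \rVert_{Y_1} = C_1 \varphi_{Y_1}(t),
\end{equation*}
so that $\varphi_X \lesssim \varphi_{Y_1}$, while the second inequality gives $\varphi_{Y_2}(t) \leq C_2 \varphi_X(t)$, that is $\varphi_{Y_2} \lesssim \varphi_X$.

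Finally I would chain these two estimates together with the hypothesis $\varphi_{Y_1} \approx \varphi_{Y_2}$: from $\varphi_X \lesssim \varphi_{Y_1} \approx \varphi_{Y_2}$ on the one hand and $\varphi_{Y_2} \lesssim \varphi_X$ (whence also $\varphi_{Y_1} \approx \varphi_{Y_2} \lesssim \varphi_X$) on the other, one concludes $\varphi_X \approx \varphi_{Y_1} \approx \varphi_{Y_2}$, which is exactly the claim. The only mild subtlety worth recording is the already-noted well-definedness of the fundamental function, which is what permits the two chains of inequalities to be evaluated at a common point $t$ using a common test set.
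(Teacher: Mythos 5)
Your proof is correct and is precisely the argument the paper has in mind: it states this proposition as an ``easy observation'' with no written proof, and the intended reasoning is exactly your testing of the two embedding inequalities on characteristic functions $\chi_{E_t}$ (legitimate by \ref{P4} and rearrangement-invariance) to obtain $\varphi_X \lesssim \varphi_{Y_1}$ and $\varphi_{Y_2} \lesssim \varphi_X$, then closing the chain with $\varphi_{Y_1} \approx \varphi_{Y_2}$. Nothing is missing; your remark that boundedness and continuity coincide here is the right justification for extracting the constants $C_1, C_2$ from the embeddings.
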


\subsection{Fundamental function of the associate space}

The focus of this section is the behaviour of the fundamental functions of the first and second associate spaces of a given r.i.~quasi-Banach function space whose norm satisfies \ref{P5}. This restriction is of course necessary, because otherwise the question is trivial and not at all interesting. 

The starting point of our examination is the observation, that it follows from the Hölder inequality (Theorem~\ref{THAS}) that 
\begin{align} \label{EstimateFundamentalFunctionAssociateSpace}
	\varphi_{X'}(t) &\geq \frac{t}{\varphi_X(t)}  \text{ for } t \in (0, \infty).
\end{align}
Furthermore, when $X$ is an r.i.~Banach function space then we have equality (see e.g.~\cite[Chapter~2, Theorem~5.2]{BennettSharpley88}). However, this is not a characterisation of r.i.~Banach function spaces, as can readily be observed from the fact that e.g.~$(L^{1, \frac{1}{2}})' = L^{\infty}$ (see e.g.~\cite[Theorem~3.30]{PesaLK}; it also follows from Proposition~\ref{PropositionAssociateSpaceLorentzEndpoint} which we present below). The question then naturally arises whether the spaces for which \eqref{EstimateFundamentalFunctionAssociateSpace} holds with equality can be characterised. It turns out that if one weakens the required equality to equivalence, so that it does not depend on the precise form of the quasinorm, then the answer is positive and the characterisation is a consequence of the following theorem.

\begin{theorem} \label{TheoremFundamentalFunctionX''}
	Let $X$ be an r.i.~quasi-Banach function space whose quasinorm satisfies \ref{P5}, $X''$ its second associate space, and $\varphi_X$ and $\varphi_{X''}$ the respective fundamental functions. Then $\varphi_X \approx \varphi_{X''}$ if and only if $\varphi_X$ is weakly quasiconcave and $X \hookrightarrow M_{\varphi_X}$.
\end{theorem}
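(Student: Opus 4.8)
The plan is to prove the two implications separately, using throughout the following standing observations. Since $\lVert \cdot \rVert_X$ satisfies \ref{P5}, Theorem~\ref{TFA} guarantees that $\lVert \cdot \rVert_{X'}$ is a Banach function norm, and consequently so is its associate $\lVert \cdot \rVert_{X''}$; moreover both are rearrangement-invariant by the remark following Proposition~\ref{PAS} (recall we work over a non-atomic, hence resonant, space). Thus $X''$ is an r.i.~Banach function space, so by Proposition~\ref{PropositionPropertiesOfFundFunc} its fundamental function $\varphi_{X''}$ is quasiconcave. I will also freely use that $\varphi_X$ is admissible (Proposition~\ref{Proposition_m}) and that $\varphi_{X''} \leq \varphi_X$ pointwise, which is immediate from Proposition~\ref{PESSAS} applied to characteristic functions (the hypotheses of Theorem~\ref{TFA} hold for $\lVert \cdot \rVert_X$ as it is a quasi-Banach function norm satisfying \ref{P5}).

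For the forward implication, assume $\varphi_X \approx \varphi_{X''}$. Since $\varphi_{X''}$ is quasiconcave (in particular it vanishes only at the origin), and $\varphi_X$ is admissible and equivalent to it, Corollary~\ref{CorollaryQuasiconcaveEquivalentToConcave} immediately yields that $\varphi_X$ is weakly quasiconcave. For the embedding, I would first note $X \hookrightarrow X''$ (from $\lVert f \rVert_{X''} \leq \lVert f \rVert_X$ via Proposition~\ref{PESSAS} and Theorem~\ref{TEQBFS}). As $X''$ is an r.i.~Banach function space with fundamental function $\varphi_{X''}$, Theorem~\ref{TheoremLargestSpace} gives $X'' \hookrightarrow M_{\varphi_{X''}}$, and since $\varphi_{X''} \approx \varphi_X$ we have $M_{\varphi_{X''}} = M_{\varphi_X}$ by Convention~\ref{ConventionEndpoints}. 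Chaining these embeddings delivers $X \hookrightarrow M_{\varphi_X}$, as required.

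For the reverse implication, assume $\varphi_X$ is weakly quasiconcave and $X \hookrightarrow M_{\varphi_X}$. Because $\varphi_{X''} \leq \varphi_X$ always holds, it suffices to establish the opposite estimate $\varphi_X \lesssim \varphi_{X''}$. The key is to bound $\varphi_{X'}$ from above. Using the rearrangement formula for the associate functional (Proposition~\ref{PAS}) with $f = \chi_{E_t}$, one has $\varphi_{X'}(t) = \sup_{g \in X} \frac{1}{\lVert g \rVert_X} \int_0^t g^* \, d\lambda$. By Proposition~\ref{PropositionMarcinkiewiczWqc} the functional $\lVert g \rVert_{M_{\varphi_X}} = \sup_{s} \varphi_X(s) g^{**}(s)$ is an r.i.~Banach function norm, and the embedding $X \hookrightarrow M_{\varphi_X}$ provides a constant $C$ with $\lVert g \rVert_{M_{\varphi_X}} \leq C \lVert g \rVert_X$ for all $g$. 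Evaluating the defining supremum at the single point $s = t$ gives $\varphi_X(t) \frac{1}{t} \int_0^t g^* \, d\lambda \leq C \lVert g \rVert_X$, that is, $\int_0^t g^* \, d\lambda \leq C \frac{t}{\varphi_X(t)} \lVert g \rVert_X$. Substituting this into the supremum yields $\varphi_{X'}(t) \leq C\, t / \varphi_X(t)$. Finally, applying the Hölder-based inequality \eqref{EstimateFundamentalFunctionAssociateSpace} to the pair $(X', X'')$ gives $\varphi_{X''}(t) \geq t / \varphi_{X'}(t) \geq \varphi_X(t) / C$, so $\varphi_X \lesssim \varphi_{X''}$, and together with $\varphi_{X''} \leq \varphi_X$ this gives $\varphi_X \approx \varphi_{X''}$.

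The one genuinely substantive step is the reverse direction's estimate $\varphi_{X'}(t) \lesssim t/\varphi_X(t)$: everything hinges on converting the abstract embedding $X \hookrightarrow M_{\varphi_X}$ into the concrete pointwise control $\int_0^t g^* \, d\lambda \leq C \frac{t}{\varphi_X(t)} \lVert g \rVert_X$ on the averages of $g^*$, which is precisely the information encoded in the Marcinkiewicz norm read off at $s = t$. The forward direction, by contrast, is essentially bookkeeping with associate spaces and the classical structure theory of r.i.~Banach function spaces.
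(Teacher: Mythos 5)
Your proof is correct, but the substantive half --- sufficiency --- follows a genuinely different route from the paper's. The paper proves it structurally: it forms the auxiliary space $X \cap \Lambda_{\varphi_X}$, notes that it satisfies the hypotheses of Proposition~\ref{PropositionAssociateSpaceLorentzEndpoint} (being embedded in $\Lambda_{\varphi_X}$ with fundamental function equivalent to $\varphi_X$), and then sandwiches $\Lambda_{\overline{\varphi_X}} \hookrightarrow X' \hookrightarrow (X \cap \Lambda_{\varphi_X})' = M_{\overline{\varphi_X}}$ using the endpoint dualities \eqref{PropositionAssociateSpaceLorentzEndpoint:E1} and Proposition~\ref{PEASG}; Proposition~\ref{PropositionSandwitchedFundamentalFunction} then gives $\varphi_{X'} \approx \overline{\varphi_X}$, and the equality case of \eqref{EstimateFundamentalFunctionAssociateSpace} for the Banach function space $X'$ yields $\varphi_{X''} = \overline{\varphi_{X'}} \approx \varphi_X$. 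You instead extract the upper bound $\varphi_{X'}(t) \lesssim t/\varphi_X(t)$ directly, by feeding $\chi_{E_t}$ into Proposition~\ref{PAS} and reading the Marcinkiewicz functional off at the single point $s=t$, and then you apply plain H\"older to the pair $(X', X'')$ --- thereby bypassing the Lorentz endpoint space, the intersection-space construction, and even the exact relation $\varphi_{X''} = \overline{\varphi_{X'}}$. Your route is shorter and more elementary (only H\"older plus the explicit form of the $M_{\varphi_X}$-norm; combined with \eqref{EstimateFundamentalFunctionAssociateSpace} it also recovers Corollary~\ref{CorollaryFundamentalFunctionX'} as a byproduct), whereas the paper's heavier machinery buys the sharper structural statement of Proposition~\ref{PropositionAssociateSpaceLorentzEndpoint}, the exact identification $X' = M_{\overline{\varphi_X}}$ under $X \hookrightarrow \Lambda_{\varphi_X}$, which is of independent interest. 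Your necessity argument is essentially the paper's in direct rather than contrapositive form (both rest on Proposition~\ref{PESSAS}, Theorem~\ref{TheoremLargestSpace}, and Corollary~\ref{CorollaryQuasiconcaveEquivalentToConcave}). One cosmetic point: under Convention~\ref{ConventionEndpoints} the norm of $M_{\varphi_X}$ may be taken with a concave $\widetilde{\varphi} \approx \varphi_X$ rather than $\varphi_X$ itself, which perturbs your key estimate only by a harmless multiplicative constant --- your appeal to Proposition~\ref{PropositionMarcinkiewiczWqc}, which works with $\varphi_X$ directly, already covers this.
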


Since the associate space of an r.i.~quasi-Banach function space whose norm satisfies \ref{P5} is an r.i.~Banach function space (see Theorem~\ref{TFA}), we know that $\varphi_{X'} = \varphi_{X'''}$, which together with the fact that we have equality in \eqref{EstimateFundamentalFunctionAssociateSpace} when we use it to compute $\varphi_{X'''}$ from $\varphi_{X''}$ yields the following answer to the original question.

\begin{corollary} \label{CorollaryFundamentalFunctionX'}
	Let $X$ be an r.i.~quasi-Banach function space whose quasinorm satisfies \ref{P5}, $X'$ its associate space, and $\varphi_X$ and $\varphi_{X'}$ the respective fundamental functions. Then
	\begin{equation*}
		\varphi_{X'}(t) \approx \frac{t}{\varphi_X(t)}
	\end{equation*}	
	if and only if $\varphi_X$ is weakly quasiconcave and $X \hookrightarrow M_{\varphi_X}$.
\end{corollary}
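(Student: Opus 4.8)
The plan is to obtain this corollary as an immediate consequence of Theorem~\ref{TheoremFundamentalFunctionX''}, by passing through the exact duality identity for fundamental functions of r.i.~Banach function spaces. The first thing I would record is that $X'$ is an honest r.i.~Banach function space: since $\lVert \cdot \rVert_X$ is a quasi-Banach function norm satisfying \ref{P5}, Theorem~\ref{TFA} shows $\lVert \cdot \rVert_{X'}$ is a Banach function norm, and the remark following Proposition~\ref{PAS} shows it is rearrangement-invariant. For r.i.~Banach function spaces the estimate \eqref{EstimateFundamentalFunctionAssociateSpace} holds with equality (the classical result cited right after \eqref{EstimateFundamentalFunctionAssociateSpace}), so applying it to the space $X'$ gives the exact identity
\begin{equation*}
	\varphi_{X'}(t)\,\varphi_{X''}(t)=t\qquad\text{for all }t\in(0,\infty),
\end{equation*}
that is, $\varphi_{X'}(t)=t/\varphi_{X''}(t)$. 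All of $\varphi_X$, $\varphi_{X'}$, $\varphi_{X''}$ are finite and strictly positive on $(0,\infty)$ because they are admissible by Proposition~\ref{Proposition_m}, so this inversion is legitimate.

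Next I would use this identity to turn the desired equivalence into a statement about $X$ and its second associate space alone. Since $\varphi_X$ and $\varphi_{X''}$ are positive and finite on $(0,\infty)$, inverting the positive quantities and cancelling the common factor $t$ yields the chain
\begin{equation*}
	\varphi_{X'}(t)\approx\frac{t}{\varphi_X(t)}\iff\frac{t}{\varphi_{X''}(t)}\approx\frac{t}{\varphi_X(t)}\iff\varphi_{X''}(t)\approx\varphi_X(t).
\end{equation*}
Thus the condition under scrutiny is exactly $\varphi_X\approx\varphi_{X''}$.

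Finally, I would invoke Theorem~\ref{TheoremFundamentalFunctionX''}, which states that $\varphi_X\approx\varphi_{X''}$ holds precisely when $\varphi_X$ is weakly quasiconcave and $X\hookrightarrow M_{\varphi_X}$; combining this with the equivalence above closes the argument. I do not anticipate any real obstacle, as the corollary is essentially a reformulation of Theorem~\ref{TheoremFundamentalFunctionX''} via the exact identity $\varphi_{X'}\varphi_{X''}=t$. The only point requiring (minor) care is checking that all fundamental functions in play are finite and nonzero on $(0,\infty)$, so that the passage between $\varphi_{X'}(t)\approx t/\varphi_X(t)$ and $\varphi_{X''}\approx\varphi_X$ is genuinely reversible; this is guaranteed by the admissibility supplied by Proposition~\ref{Proposition_m}.
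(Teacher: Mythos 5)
Your proof is correct and follows essentially the same route as the paper: the paper likewise reduces the condition to $\varphi_X \approx \varphi_{X''}$ via the exact identity $\varphi_{X'}(t)\,\varphi_{X''}(t)=t$ and then invokes Theorem~\ref{TheoremFundamentalFunctionX''}. The only immaterial difference is that you derive the identity by applying the classical equality in \eqref{EstimateFundamentalFunctionAssociateSpace} directly to the r.i.~Banach function space $X'$, whereas the paper applies it to $X''$ and uses $\varphi_{X'}=\varphi_{X'''}$ coming from the Lorentz--Luxemburg theorem.
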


Before we prove Theorem~\ref{TheoremFundamentalFunctionX''}, we need to check the properties of the function on the right-hand side of \eqref{EstimateFundamentalFunctionAssociateSpace}. It will therefore be useful to introduce some notation for it.

\begin{definition} \label{NotationFundamentalFunctionAssociateSpace}
	Let $\varphi: [0, \infty) \to [0, \infty)$ be admissible. We then put
	\begin{align*}
		\overline{\varphi}(t) &=\begin{cases}
			0 & \text{for } t =0, \\
			\frac{t}{\varphi(t)} & \text{for } t \in (0, \infty).
		\end{cases} 
	\end{align*}
\end{definition}

\begin{proposition} \label{PropositionPhiBar}
	Let  $\varphi: [0, \infty) \to [0, \infty)$ be weakly quasiconcave. Then $\overline{\varphi}$ is equivalent to a weakly quasiconcave function.
\end{proposition}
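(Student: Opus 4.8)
The plan is to reduce to a genuinely concave representative of $\varphi$ and verify the defining conditions there, then transfer the conclusion back via equivalence. Since $\varphi$ is weakly quasiconcave it is in particular admissible, so Corollary~\ref{CorollaryQuasiconcaveEquivalentToConcave} together with Proposition~\ref{PropositionEquivConcave} furnishes a concave function $\psi:[0,\infty)\to[0,\infty)$ that is non-decreasing, vanishes only at the origin, and satisfies $\varphi\approx\psi$. Defining $\overline{\psi}$ from $\psi$ as in Definition~\ref{NotationFundamentalFunctionAssociateSpace}, my strategy is to show that $\overline{\psi}$ is itself (fully) weakly quasiconcave and that $\overline{\varphi}\approx\overline{\psi}$; the claim then follows at once.

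First I would record the elementary transfer of the equivalence: if $c_1\psi\le\varphi\le c_2\psi$ holds pointwise, then dividing $t$ by these inequalities gives $c_2^{-1}\overline{\psi}(t)\le\overline{\varphi}(t)\le c_1^{-1}\overline{\psi}(t)$ on $(0,\infty)$, while both functions vanish at $0$; hence $\overline{\varphi}\approx\overline{\psi}$. This is the step that lets us replace $\varphi$ by its concave representative.

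Next I would verify the four conditions of weak quasiconcavity for $\overline{\psi}(t)=t/\psi(t)$. Condition (2) is immediate, as $\psi$ is finite and positive on $(0,\infty)$ and $\overline{\psi}(0)=0$. For left-continuity (condition (1)) I would use that a concave function is continuous on the open interval $(0,\infty)$, so $\overline{\psi}$ is continuous (hence left-continuous) there, while left-continuity at the left endpoint $0$ is vacuous. For condition (4), observe that $\overline{\psi}(t)/t=1/\psi(t)$ is non-increasing because $\psi$ is non-decreasing. The one structural step is condition (3): here I would invoke the standard fact that, for a concave $\psi$ with $\psi(0)=0$, the map $t\mapsto\psi(t)/t$ is non-increasing (for $0<s<t$, concavity and $\psi(0)=0$ give $\psi(s)\ge(s/t)\psi(t)$), whence $\overline{\psi}(t)=(\psi(t)/t)^{-1}$ is non-decreasing.

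The main point to be careful about is precisely why one cannot simply claim that $\overline{\varphi}$ itself is weakly quasiconcave: condition (3), genuine monotonicity, may fail for $\overline{\varphi}$, since $t\mapsto\varphi(t)/t$ is only assumed \emph{equivalent} to a non-increasing function rather than non-increasing itself, so $\overline{\varphi}=t/\varphi(t)$ need not be non-decreasing. Passing to the concave representative $\psi$ removes exactly this defect, which is why the statement is phrased as equivalence to a weakly quasiconcave function rather than weak quasiconcavity of $\overline{\varphi}$. All remaining verifications are routine.
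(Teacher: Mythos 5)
Your proof is correct, but it takes a genuinely different route from the paper's. The paper argues directly from the definition: condition (4) of weak quasiconcavity says $\varphi(t)/t \approx h(t)$ for some non-increasing $h$, so $\overline{\varphi}(t)=t/\varphi(t) \approx 1/h(t)$, which is non-decreasing; this equivalent function is then adjusted to be left-continuous (a small fiddly step the paper dispatches with ``it is easy to check that this will not break the equivalence''), and its condition (4) follows because $\overline{\varphi}(t)/t = 1/\varphi(t)$ is genuinely non-increasing. You instead pass through a concave representative: weak quasiconcavity gives admissibility, so Corollary~\ref{CorollaryQuasiconcaveEquivalentToConcave} (with Proposition~\ref{PropositionEquivConcave}) yields a concave $\psi\approx\varphi$, and you verify weak quasiconcavity of $\overline{\psi}=t/\psi(t)$ directly, using the standard fact that $\psi(s)\ge (s/t)\psi(t)$ for $0<s<t$ when $\psi$ is concave with $\psi(0)=0$. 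This is heavier machinery --- Corollary~\ref{CorollaryQuasiconcaveEquivalentToConcave} rests on Proposition~\ref{PropositionMarcinkiewiczWqc}, i.e.\ on the fact that $M_\varphi$ is an r.i.~Banach function space --- but there is no circularity, since those results precede Proposition~\ref{PropositionPhiBar} and do not use it. Your route also buys two things: concavity of $\psi$ makes $\overline{\psi}$ continuous on $(0,\infty)$, so the paper's left-continuity adjustment disappears entirely; and your $\overline{\psi}$ is in fact \emph{quasiconcave} in the sense of Definition~\ref{DefinitionQuasiconcave} ($\overline{\psi}$ is non-decreasing and $\overline{\psi}(t)/t=1/\psi(t)$ is genuinely non-increasing), so you prove the slightly stronger statement that $\overline{\varphi}$ is equivalent to a quasiconcave function --- which is what is actually needed later when $\Lambda_{\overline{\varphi}}$ and $M_{\overline{\varphi}}$ are formed via Convention~\ref{ConventionEndpoints}. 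One nitpick: you assert the corollary furnishes a $\psi$ that is non-decreasing; this deserves a word of justification (either note that a non-negative concave function on $[0,\infty)$ is automatically non-decreasing, or that the least concave majorant used in Proposition~\ref{PropositionEquivConcave} is), but this is routine. Your closing counterexample-flavoured remark about why $\overline{\varphi}$ itself can fail condition (3) is accurate and matches the paper's own caveat.
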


\begin{proof}
	It is clear that $\overline{\varphi}$ is left-continuous and zero only at zero. It does not have to be non-decreasing, but our assumptions ensure that there is an equivalent non-decreasing function. This function is also zero only at zero and can be chosen to be left-continuous (it is easy to check that this will not break the equivalence in our case). The final requirement then easily follows from the fact that $\frac{1}{\varphi}$ is non-increasing on $(0, \infty)$.
\end{proof}

Proposition~\ref{PropositionPhiBar} implies that when $\varphi$ is weakly quasiconcave, we may consider the spaces $\Lambda_{\overline{\varphi}}$ and $M_{\overline{\varphi}}$ in the sense of Convention~\ref{ConventionEndpoints}.

We now begin to prove Theorem~\ref{TheoremFundamentalFunctionX''} by first treating the following special case that is of independent interest.

\begin{proposition} \label{PropositionAssociateSpaceLorentzEndpoint}
	Let $X$ be an r.i.~quasi-Banach function space whose quasinorm satisfies \ref{P5} and whose fundamental function $\varphi_X$ is weakly quasiconcave. If $X \hookrightarrow \Lambda_{\varphi_X}$ then $X' = M_{\overline{\varphi_X}}$, or equivalently $X'' = \Lambda_{\varphi_X}$, up to equivalence of quasinorms.
\end{proposition}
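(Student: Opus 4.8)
The plan is to establish the identity $X' = M_{\overline{\varphi_X}}$ by proving the two opposite embeddings $M_{\overline{\varphi_X}} \hookrightarrow X'$ and $X' \hookrightarrow M_{\overline{\varphi_X}}$ and then invoking Theorem~\ref{TEQBFS} to upgrade the resulting set-theoretic equality to equivalence of quasinorms. First I would record the preliminary observations that make all objects meaningful. Since $\varphi_X$ is weakly quasiconcave, Corollary~\ref{CorollaryQuasiconcaveEquivalentToConcave} lets us form $\Lambda_{\varphi_X}$ via Convention~\ref{ConventionEndpoints}, while Proposition~\ref{PropositionPhiBar} guarantees that $\overline{\varphi_X}$ is equivalent to a weakly quasiconcave function, so that $M_{\overline{\varphi_X}}$ is a genuine r.i.~Banach function space by Proposition~\ref{PropositionMarcinkiewiczWqc}. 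Moreover, since $X$ satisfies \ref{P4} and \ref{P5}, Theorem~\ref{TFA} ensures that $X'$ is an r.i.~Banach function norm; hence all three spaces are r.i.~Banach function spaces and Theorem~\ref{TEQBFS} applies.

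For the embedding $X' \hookrightarrow M_{\overline{\varphi_X}}$ I would argue directly from Proposition~\ref{PAS}, which gives $\lVert f \rVert_{X'} = \sup_{g \in X} \lVert g \rVert_X^{-1} \int_0^\infty f^* g^* \, d\lambda$. Fixing $t \in (0, \infty)$ and testing with $g = \chi_{E_t}$ for a set $E_t$ with $\mu(E_t) = t$ yields $g^* = \chi_{[0,t)}$ and $\lVert g \rVert_X = \varphi_X(t)$, whence $\lVert f \rVert_{X'} \geq \varphi_X(t)^{-1} \int_0^t f^* \, d\lambda$. Taking the supremum over $t$ and recalling that $\lVert f \rVert_{M_{\overline{\varphi_X}}} = \sup_{t} \overline{\varphi_X}(t) f^{**}(t) = \sup_{t} \varphi_X(t)^{-1} \int_0^t f^* \, d\lambda$ gives $\lVert f \rVert_{M_{\overline{\varphi_X}}} \leq \lVert f \rVert_{X'}$. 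Notably, this direction uses neither the hypothesis $X \hookrightarrow \Lambda_{\varphi_X}$ nor anything beyond the definition of the fundamental function.

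The opposite embedding $M_{\overline{\varphi_X}} \hookrightarrow X'$ is where the hypothesis $X \hookrightarrow \Lambda_{\varphi_X}$ enters. Applying Proposition~\ref{PEASG} to this embedding reverses it at the level of associate spaces, giving $(\Lambda_{\varphi_X})' \hookrightarrow X'$, and it then remains to identify $(\Lambda_{\varphi_X})'$ with $M_{\overline{\varphi_X}}$. This is the classical duality between the Lorentz and Marcinkiewicz endpoint spaces (see \cite[Chapter~2]{BennettSharpley88}), stating precisely that the associate norm of $\Lambda_{\varphi}$ is equivalent to $\sup_{t} \varphi(t)^{-1} \int_0^t f^* \, d\lambda = \lVert f \rVert_{M_{\overline{\varphi}}}$. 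I expect this identification to be the main point requiring care: one must either cite it or reprove it, the latter amounting to the one-dimensional Hardy-type inequality $\int_0^\infty f^* g^* \, d\lambda \lesssim \lVert f \rVert_{M_{\overline{\varphi_X}}} \int_0^\infty g^* \, d\varphi_X$, which is obtained by two integrations by parts using $\int_0^t f^* \, d\lambda \leq \lVert f \rVert_{M_{\overline{\varphi_X}}} \varphi_X(t)$ together with the monotonicity of $g^*$ and $\varphi_X$ (the boundary terms vanishing thanks to $\varphi_X(0) = 0$). Combining the two embeddings proves $X' = M_{\overline{\varphi_X}}$ up to equivalence of quasinorms.

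Finally, the equivalence with the statement $X'' = \Lambda_{\varphi_X}$ follows formally. Since $X'$ is an r.i.~Banach function norm, $X'' = (X')'$, and as $M_{\overline{\varphi_X}}$ is an r.i.~Banach function space we have $(M_{\overline{\varphi_X}})'' = M_{\overline{\varphi_X}}$; hence $X' = M_{\overline{\varphi_X}}$ holds if and only if $X'' = (M_{\overline{\varphi_X}})'$. The latter associate space equals $\Lambda_{\varphi_X}$ by the companion classical duality $(M_{\overline{\varphi}})' = \Lambda_{\varphi}$, using that $\overline{\overline{\varphi_X}} = \varphi_X$. This closes the argument.
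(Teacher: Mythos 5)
Your proposal is correct, and its skeleton matches the paper's: both arguments rest on the classical endpoint duality $\left( \Lambda_{\varphi} \right)' = M_{\overline{\varphi}}$, $\left( M_{\varphi} \right)' = \Lambda_{\overline{\varphi}}$ (which the paper cites from \cite[(2.6)]{OpicPick99}, extended to weakly quasiconcave $\varphi$ via Convention~\ref{ConventionEndpoints}, Corollary~\ref{CorollaryQuasiconcaveEquivalentToConcave}, and Proposition~\ref{PropositionPhiBar}), combined with Proposition~\ref{PEASG} applied to $X \hookrightarrow \Lambda_{\varphi_X}$ to obtain $M_{\overline{\varphi_X}} \hookrightarrow X'$, and both settle the ``equivalently'' clause by the same formal manipulation via Theorem~\ref{TFA} and the Lorentz--Luxemburg identity, using $\overline{\overline{\varphi_X}} = \varphi_X$. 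Where you genuinely diverge is the reverse embedding $X' \hookrightarrow M_{\overline{\varphi_X}}$: the paper notes that $X'$ is an r.i.~Banach function space (Theorem~\ref{TFA}, Proposition~\ref{PAS}), invokes Theorem~\ref{TheoremLargestSpace} to get $X' \hookrightarrow M_{\varphi_{X'}}$, and then uses $\overline{\varphi_X} \leq \varphi_{X'}$ from \eqref{EstimateFundamentalFunctionAssociateSpace} together with the monotonicity of $\lVert \cdot \rVert_{M_{\varphi}}$ in $\varphi$ to conclude $M_{\varphi_{X'}} \hookrightarrow M_{\overline{\varphi_X}}$. You instead test the representation of $\lVert \cdot \rVert_{X'}$ from Proposition~\ref{PAS} with $g = \chi_{E_t}$, obtaining $\varphi_X(t)^{-1} \int_0^t f^* \: d\lambda \leq \lVert f \rVert_{X'}$ in one stroke; this is more elementary, bypasses the structural machinery of Theorem~\ref{TheoremLargestSpace} for this half, and correctly isolates that this direction needs nothing beyond the definition of the fundamental function (your inequality holds up to a constant only, since $M_{\overline{\varphi_X}}$ is defined via an equivalent concave function per Convention~\ref{ConventionEndpoints}, but the statement is up to equivalence of quasinorms, so this is harmless). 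One caution: if you opt to reprove rather than cite the duality $\left( \Lambda_{\varphi_X} \right)' = M_{\overline{\varphi_X}}$, your integration-by-parts sketch must dispose of the boundary term at infinity, i.e.~the case $\lim_{t \to \infty} g^*(t) > 0$ with $\varphi_X$ bounded; this is routine but not automatic, and citing the classical result, as the paper does, avoids it entirely.
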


\begin{proof}
	It is a well known result (see e.g.~\cite[(2.6)]{OpicPick99} and the references therein or \cite[Section~2.5]{PesaLK}) that for any given concave function $\varphi$ we have
	\begin{align} \label{PropositionAssociateSpaceLorentzEndpoint:E1}
		\left( \Lambda_{\varphi} \right)' &= M_{\overline{\varphi}}, & \left( M_{\varphi} \right)' = \Lambda_{\overline{\varphi}},
	\end{align}
	where we use the notation established in Definition~\ref{NotationFundamentalFunctionAssociateSpace}. In light of Convention~\ref{ConventionEndpoints}, Corollary~\ref{CorollaryQuasiconcaveEquivalentToConcave}, and Proposition~\ref{PropositionPhiBar}, this result extends weakly quasiconcave functions $\varphi$. Furthermore, we have from Theorem~\ref{TFA} and Proposition~\ref{PAS} that $X'$ is an r.i.~Banach function space. Hence, it follows from Theorem~\ref{TheoremSmallestSpace}, Proposition~\ref{PEASG}, and Theorem~\ref{TheoremLargestSpace} that
	\begin{equation*}
		M_{\overline{\varphi_X}} \hookrightarrow X' \hookrightarrow M_{\varphi_{X'}}.
	\end{equation*}
	However, since we know from \eqref{EstimateFundamentalFunctionAssociateSpace} that $\overline{\varphi_X} \leq \varphi_{X'}$, it is easy to show that we also have $M_{\varphi_{X'}} \hookrightarrow M_{\overline{\varphi_X}}$. It follows that $X' = M_{\overline{\varphi_X}}$ which is by \eqref{PropositionAssociateSpaceLorentzEndpoint:E1} and Theorem~\ref{TFA} equivalent to $X'' = \Lambda_{\varphi_X}$ (note that clearly $\overline{\overline{\varphi_X}} = \varphi_X$).
\end{proof}

\begin{proof}[Proof of Theorem~\ref{TheoremFundamentalFunctionX''}]
	Since $X''$ is under our assumptions an r.i.~Banach function space (see Theorem~\ref{TFA} and Proposition~\ref{PAS}), it follows from Proposition~\ref{PropositionPropertiesOfFundFunc} and Corollary~\ref{CorollaryQuasiconcaveEquivalentToConcave} that $\varphi_X$ being weakly quasiconcave is a necessary condition for $\varphi_X \approx \varphi_{X''}$. We therefore restrict ourselves to this case. Furthermore, if there exists some function $f \in X \setminus M_{\varphi_X}$, then this function also necessarily satisfies (see Proposition~\ref{PESSAS} and Theorem~\ref{TheoremLargestSpace})
	\begin{equation*}
		f \in X'' \setminus M_{\varphi_X} \; \subseteq \; M_{\varphi_{X''}} \setminus M_{\varphi_X},
	\end{equation*}
	whence $M_{\varphi_{X''}} \neq M_{\varphi_X}$ and this is obviously equivalent to $\varphi_X \not \approx \varphi_{X''}$. We have thus proven the necessity.
	
	As for the sufficiency, consider the space $X \cap \Lambda_{\varphi_X}$ equipped with the usual quasinorm
	\begin{align*}
		\lVert f \rVert_{X \cap \Lambda_{\varphi_X}} &= \max \left \{ \lVert f \rVert_X, \, \lVert f \rVert_{\Lambda_{\varphi_X}} \right \} & \text{for } f \in \mathcal{M}.
	\end{align*}
	It is easy to see that this space is an r.i.~quasi-Banach function spaces whose quasinorm satisfies \ref{P5}, whose fundamental function is equivalent to $\varphi_X$, and that is embedded into both $X$ and $\Lambda_{\varphi_X}$. Hence, our assumptions on $X$, the above recalled relations \eqref{PropositionAssociateSpaceLorentzEndpoint:E1}, Proposition~\ref{PropositionAssociateSpaceLorentzEndpoint}, and Proposition~\ref{PEASG} imply that
	\begin{equation*}
		\Lambda_{\overline{\varphi_X}} \hookrightarrow X' \hookrightarrow \left( X \cap \Lambda_{\varphi_X} \right)' = M_{\overline{\varphi_X}},
	\end{equation*}
	where we again use the notation established in Definition~\ref{NotationFundamentalFunctionAssociateSpace}. Since the fundamental function of both $\Lambda_{\overline{\varphi_X}}$ and $M_{\overline{\varphi_X}}$ is equivalent to $\overline{\varphi_X}$ (see Theorem~\ref{TheoremSmallestSpace} and Theorem~\ref{TheoremLargestSpace} and recall Convention~\ref{ConventionEndpoints}, i.e.~that the actual function defining the spaces is equivalent to $\overline{\varphi_X}$ but it might differ from it), Proposition~\ref{PropositionSandwitchedFundamentalFunction} implies that $\varphi_{X'}$, the fundamental function of $X'$, satisfies
	$\varphi_{X'} \approx \overline{\varphi_X}$. Since $X'$ is an r.i.~Banach function space, as guaranteed by our assumptions and Theorem~\ref{TFA}, it follows that also $\varphi_{X''} = \overline{\varphi_{X'}} \approx \overline{\overline{\varphi_{X}}} = \varphi_X$.
\end{proof}

Finally, we show another result that further illustrates that $L^{\infty}$ is a rather unique space in the class of r.i.~Banach function spaces. This time we show that it is the only space whose second associate space is $L^{\infty}$. Hence, not only is it the only space on its fundamental level (see Corollary~\ref{CorollaryFundLevelLinfty}), but this fundamental level is somehow separated from the rest.

To be as precise as possible, we use the Wiener--Luxemburg amalgam spaces to formulate our result separately for the local and global components, hence we recall the convention concerning amalgams over arbitrary non-atomic measure spaces as established before the formulation of Theorem~\ref{TheoFFLinftyLoc}.

\begin{theorem} \label{TheoremUniqueLinfty}
	Let $X$ an r.i.~quasi-Banach function space whose quasinorm satisfies \ref{P5}. If $X'' = WL(L^{\infty}, X'')$ or $X'' = WL(X'', L^{\infty})$ (up to equivalence of quasinorms), then also $X = WL(L^{\infty}, X)$ or $X = WL(X, L^{\infty})$, respectively (again up to equivalence of quasinorms).
\end{theorem}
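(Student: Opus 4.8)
The plan is to treat the local and global statements separately, in each case first translating the amalgam identity into a condition on a fundamental function via Theorems~\ref{TheoFFLinftyLoc} and \ref{TheoFFLinftyGlob}, and then exploiting the relation between $\varphi_X$ and $\varphi_{X''}$. Throughout I would use that, by \ref{P5} together with Theorem~\ref{TFA} and Proposition~\ref{PAS}, the space $X''$ is an r.i.~Banach function space, and that $\lVert \cdot \rVert_{X''} \leq \lVert \cdot \rVert_X$ by Proposition~\ref{PESSAS}, so that in particular $\varphi_{X''} \leq \varphi_X$ pointwise.

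The local case is immediate. By Theorem~\ref{TheoFFLinftyLoc} the hypothesis $X'' = WL(L^\infty, X'')$ is equivalent to $\lim_{t \to 0_+} \varphi_{X''}(t) > 0$ and the desired conclusion $X = WL(L^\infty, X)$ is equivalent to $\lim_{t \to 0_+} \varphi_X(t) > 0$. Since $\varphi_{X''} \leq \varphi_X$ and both functions are non-decreasing, $\lim_{t \to 0_+} \varphi_X(t) \geq \lim_{t \to 0_+} \varphi_{X''}(t) > 0$, which finishes this direction.

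The global case is the real content, since here the pointwise inequality $\varphi_{X''} \leq \varphi_X$ points the wrong way. By Theorem~\ref{TheoFFLinftyGlob} the hypothesis $X'' = WL(X'', L^\infty)$ is equivalent to $\chi_{\mathcal{R}} \in X''$ and the conclusion to $\chi_{\mathcal{R}} \in X$, i.e.~to $\lim_{t \to \infty} \varphi_X(t) < \infty$; so everything reduces to the implication $\chi_{\mathcal{R}} \in X'' \implies \chi_{\mathcal{R}} \in X$. My idea is to trap $\varphi_{X'}$ between two bounds. For the lower bound, Proposition~\ref{PAS} gives $\lVert \chi_{\mathcal{R}} \rVert_{X''} = \sup_{g \in X'} \lVert g \rVert_{1}/\lVert g \rVert_{X'}$, so $\chi_{\mathcal{R}} \in X''$ means precisely $X' \hookrightarrow L^1$; testing this embedding on characteristic functions produces a constant $c > 0$ with $\varphi_{X'}(t) \geq c\,t$ for all $t$. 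For the upper bound I would invoke $X \hookrightarrow m_{\varphi_X}$ from Proposition~\ref{Proposition_m}: every $g$ with $\lVert g \rVert_X \leq 1$ satisfies $g^* \leq 1/\varphi_X$, so Proposition~\ref{PAS} yields, for $t \geq 1$,
\begin{equation*}
	\varphi_{X'}(t) = \sup_{\lVert g \rVert_X \leq 1} \int_0^t g^* \, d\lambda \leq \varphi_{X'}(1) + \int_1^t \frac{1}{\varphi_X} \, d\lambda.
\end{equation*}

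Combining the two bounds gives $\tfrac{1}{t}\int_1^t \varphi_X^{-1} \, d\lambda \geq c - \varphi_{X'}(1)/t$, so the averages of $1/\varphi_X$ over $[1,t]$ remain bounded below by a positive constant as $t \to \infty$. If $\varphi_X$ were unbounded, then $1/\varphi_X \to 0$ (and $1/\varphi_X \leq 1/\varphi_X(1)$ on $[1, \infty)$), whence these Cesàro averages would tend to $0$, a contradiction; hence $\varphi_X$ is bounded and $\chi_{\mathcal{R}} \in X$. I expect the global implication to be the main obstacle: because $X$ and $X''$ have the same associate space, no purely dual argument can separate them, and the crux is to extract an upper bound on $\varphi_{X'}$ from the embedding $X \hookrightarrow m_{\varphi_X}$ and play it against the linear lower bound coming from $\chi_{\mathcal{R}} \in X''$ through the averaging step above.
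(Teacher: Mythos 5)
Your proposal is correct, and on the substantive (global) half it takes a genuinely different route from the paper's. The local half you dispatch via Theorem~\ref{TheoFFLinftyLoc} applied to both $X$ and $X''$ together with the pointwise inequality $\varphi_{X''} \leq \varphi_X$ from Proposition~\ref{PESSAS}; the paper instead combines Proposition~\ref{PESSAS} directly with the embedding characterisations of \cite[Theorems~5.6 and 5.7]{Pesa22} --- both are one-liners and essentially equivalent. For the global half, the paper first dualises the hypothesis to $X' = WL(X', L^1)$ via \cite[Theorem~3.5]{Pesa22} and Theorem~\ref{TFA}, then constructs the auxiliary space $m_\varphi$ with $\varphi = \max\{\varphi_X, \chi_{(0,\infty)}\}$ (Lemma~\ref{LemmaMaxVarphi}), identifies $m_\varphi' = WL(L^1, (m_{\varphi_X})_i')$ through integrable associate spaces, and sandwiches to conclude $m_\varphi' = L^1$; testing the explicit formula of Proposition~\ref{PropositionAssOf_m} on characteristic functions then yields $T \leq C_e \bigl( K_0 + \int_{K_0}^T \varphi^{-1} \, d\lambda \bigr)$ and the contradiction with unboundedness. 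You arrive at essentially the same terminal inequality --- a linear lower bound played against the integral $\int_1^t \varphi_X^{-1}\, d\lambda$, refuted by vanishing Ces\`{a}ro averages when $\varphi_X$ is unbounded --- but you reach it directly: unwinding $\chi_{\mathcal{R}} \in X''$ through Proposition~\ref{PAS} gives $X' \hookrightarrow L^1$, hence $\varphi_{X'}(t) \geq ct$, while $X \hookrightarrow m_{\varphi_X}$ (Proposition~\ref{Proposition_m}) combined with Proposition~\ref{PAS} gives $\varphi_{X'}(t) \leq \varphi_{X'}(1) + \int_1^t \varphi_X^{-1} \, d\lambda$, where $\varphi_{X'}(1) < \infty$ is guaranteed by \ref{P4} for the Banach function norm $\lVert \cdot \rVert_{X'}$ supplied by Theorem~\ref{TFA}. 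This bypasses the entire Wiener--Luxemburg calculus of the paper's proof (no \cite[Theorem~3.5]{Pesa22}, no Lemma~\ref{LemmaMaxVarphi}, no integrable associate spaces, no Proposition~\ref{PropositionAssOf_m}), at the cost of losing the intermediate identity $m_\varphi' = L^1$, which has some independent interest; what your version buys is a leaner, more self-contained argument that makes explicit the mechanism hidden inside the paper's amalgam computation, namely trapping $\varphi_{X'}$ between the lower bound forced by $\chi_{\mathcal{R}} \in X''$ and the upper bound forced by the universal embedding into the weak Marcinkiewicz space.
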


We first prove the following easy observation:

\begin{proposition} \label{PropositionAssOf_m}
	Let $\varphi:[0,\infty) \to [0,\infty)$ be admissible. Then the associate functional corresponding to the weak Marcinkiewicz endpoint quasinorm $\lVert \cdot \rVert_{m_\varphi}$ satisfies for all $f \in \mathcal{M}$
	\begin{equation*}
		\lVert f \rVert_{m_{\varphi}'} = \int_{0}^{\infty} \frac{f^*}{\varphi} \: d\lambda.
	\end{equation*}
\end{proposition}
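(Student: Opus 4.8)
The plan is to compute the associate functional directly from its rearrangement form and to exploit the fact that $m_\varphi$ has an obvious extremal element, namely any function whose non-increasing rearrangement equals $1/\varphi$. Since $(\mathcal{R}, \mu)$ is non-atomic and therefore resonant, and since $\lVert \cdot \rVert_{m_\varphi}$ is an r.i.~quasi-Banach function norm by Proposition~\ref{PropositionAdmiss}, Proposition~\ref{PAS} lets me write
\begin{equation*}
	\lVert f \rVert_{m_{\varphi}'} = \sup_{g \in m_\varphi} \frac{1}{\lVert g \rVert_{m_\varphi}} \int_0^{\infty} f^* g^* \: d\lambda,
\end{equation*}
which reduces the entire problem to a one-dimensional computation phrased purely in terms of rearrangements.

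For the lower bound I would test this supremum against a function $g$ with $g^* = 1/\varphi$; such a function exists because $1/\varphi$ is non-increasing (as $\varphi$ is non-decreasing and positive on $(0, \infty)$) and $(\mathcal{R}, \mu)$ is non-atomic. Using $\varphi(0) = 0$ together with the standing convention $a \cdot 0 = 0$ fixed at the start of the section, a direct computation gives $\lVert g \rVert_{m_\varphi} = \sup_{t \in [0, \infty)} \varphi(t)/\varphi(t) = 1$, so $g \in m_\varphi$ and the corresponding term of the supremum is exactly $\int_0^{\infty} \frac{f^*}{\varphi} \: d\lambda$. Hence $\lVert f \rVert_{m_{\varphi}'} \geq \int_0^{\infty} \frac{f^*}{\varphi} \: d\lambda$.

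For the reverse inequality, the key observation is a pointwise bound: for any $g \in m_\varphi$ with $\lVert g \rVert_{m_\varphi} > 0$, the very definition of $\lVert \cdot \rVert_{m_\varphi}$ forces $\varphi(t) g^*(t) \leq \lVert g \rVert_{m_\varphi}$, that is $g^*(t) \leq \lVert g \rVert_{m_\varphi}/\varphi(t)$ for every $t \in (0, \infty)$. Substituting this into the integral and cancelling the factor $\lVert g \rVert_{m_\varphi}$ bounds each term of the supremum by $\int_0^{\infty} \frac{f^*}{\varphi} \: d\lambda$ (terms with $\lVert g \rVert_{m_\varphi} = 0$ contribute nothing, since then $g^* = 0$). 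Taking the supremum yields $\lVert f \rVert_{m_{\varphi}'} \leq \int_0^{\infty} \frac{f^*}{\varphi} \: d\lambda$, and combining the two inequalities gives the claimed identity.

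I do not anticipate a genuine obstacle: the argument is the standard extremal-function computation for Lorentz--Marcinkiewicz duality, transplanted to the quasi-Banach setting through the resonance supplied by Proposition~\ref{PAS}. The only points demanding a little care are the existence and admissibility of the extremal rearrangement $1/\varphi$ (handled by non-atomicity and the monotonicity of $\varphi$) and the degenerate value at the origin, which is absorbed by the convention $a \cdot 0 = 0$.
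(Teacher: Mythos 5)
Your proposal is correct and follows essentially the same route as the paper's proof: reduce to the rearranged supremum via Proposition~\ref{PAS}, obtain the upper bound from the pointwise estimate $g^* \leq \lVert g \rVert_{m_\varphi}/\varphi$ on $(0,\infty)$, and obtain the lower bound by testing with a function whose rearrangement is $\frac{1}{\varphi}$, which exists by non-atomicity. (The only hair worth noting — which the paper glosses over identically — is that $\frac{1}{\varphi}$ is left-continuous while rearrangements are right-continuous, so strictly one realises $g^*$ as the right-continuous regularisation of $\frac{1}{\varphi}$, which agrees with it off a countable set and satisfies $\lVert g \rVert_{m_\varphi} \leq 1$, changing nothing in either bound.)
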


\begin{proof}
	The formula follows directly from Proposition~\ref{PAS} once one realises that we have for every $g \in m_{\varphi}$
	\begin{equation*}
		g^* \leq \frac{\lVert g \rVert_{m_\varphi}}{\varphi} \text{ on } (0, \infty),
	\end{equation*}
	while on the other hand there is a function $g \in \mathcal{M}(\mathcal{R}, \mu)$ satisfying $g^* = \frac{1}{\varphi}$ (because $(\mathcal{R}, \mu)$ is non-atomic) and this function clearly belongs to $m_\varphi$.
\end{proof}

Note that the above proposition holds regardless of whether $\lVert \cdot \rVert_{m_\varphi}$ satisfies \ref{P5}; the behaviour of the formula on the right-hand side is consistent with the characterisation obtained in Theorem~\ref{TheoremEmbeddingsOfm}, in the sense that finiteness of the integral for any non-zero function $f \in \mathcal{M}$ is possible if and only if $\frac{1}{\varphi}$ is integrable near zero. 

We will also need the following lemma to avoid some technical problems in the global part of Theorem~\ref{TheoremUniqueLinfty}.

\begin{lemma}\label{LemmaMaxVarphi}
	Let  $\varphi_0:[0,\infty) \to [0,\infty)$ be admissible and let $m_{\varphi_0}$ be the corresponding weak Marcinkiewicz endpoint space. Consider further the function $\varphi: [0, \infty) \to [0, \infty)$ given by
	\begin{align*}
		\varphi &= \max \{\varphi_0, \chi_{(0, \infty)}\}.
	\end{align*}
	Then  $\varphi$ is admissible and the corresponding weak Marcinkiewicz endpoint space $m_{\varphi}$ satisfies $m_{\varphi} = WL(L^{\infty}, m_{\varphi_0})$.
\end{lemma}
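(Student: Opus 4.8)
The plan is to first check that $\varphi$ is admissible, then to reduce both sides of the claimed identity to explicit suprema involving $f^*$, and finally to establish their equivalence by a short computation whose only nontrivial ingredient is the $\Delta_2$-condition.

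\emph{Admissibility.} The function $\chi_{(0,\infty)}$ is non-decreasing, left-continuous, vanishes exactly at $0$, and trivially satisfies the $\Delta_2$-condition with constant $1$. Hence $\varphi=\max\{\varphi_0,\chi_{(0,\infty)}\}$ inherits all four defining properties of admissibility: monotonicity and the zero condition are immediate, left-continuity follows because the pointwise maximum of two left-continuous functions is left-continuous (by continuity of $\max$), and the $\Delta_2$-condition follows from Proposition~\ref{PropositionD2Max}, which also yields the constant $C_\varphi=C_{\varphi_0}$. By Proposition~\ref{PropositionAdmiss}, $\lVert\cdot\rVert_{m_\varphi}$ is then an r.i.\ quasi-Banach function norm.

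\emph{Reduction to rearrangements.} For $t>0$ one has $\varphi(t)f^*(t)=\max\{\varphi_0(t)f^*(t),f^*(t)\}$, and taking the supremum over $t>0$ (the point $t=0$ contributes nothing, as $\varphi(0)=0$) gives $\lVert f\rVert_{m_\varphi}=\max\{\lVert f\rVert_{m_{\varphi_0}},f^*(0)\}$, where we used $\sup_{t>0}f^*(t)=f^*(0)$ by right-continuity of $f^*$. On the other side, I would unwind Definition~\ref{DefWL} through the representation convention $WL(L^\infty,m_{\varphi_0})=\widetilde{WL(\overline{L^\infty},\overline{m_{\varphi_0}})}$ (Theorem~\ref{TheoremRepresentation} and Proposition~\ref{PropositionInverseRepresentation}); using $\overline{L^\infty}=L^\infty$, the identity $(f^*\chi_{(1,\infty)})^*(t)=f^*(t+1)$, and $\lVert f^*\chi_{[0,1]}\rVert_{L^\infty}=f^*(0)$, this gives
\begin{equation*}
\lVert f\rVert_{WL(L^\infty,m_{\varphi_0})}=f^*(0)+\sup_{t\in[0,\infty)}\varphi_0(t)f^*(t+1).
\end{equation*}
Since $\max\{a,b\}\approx a+b$, the claim $m_\varphi=WL(L^\infty,m_{\varphi_0})$ reduces to the equivalence of the functionals $f\mapsto f^*(0)+\sup_{t>0}\varphi_0(t)f^*(t)$ and $f\mapsto f^*(0)+\sup_{t\ge0}\varphi_0(t)f^*(t+1)$.

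\emph{The equivalence, and the main obstacle.} One inequality is free: since $\varphi_0$ is non-decreasing, $\varphi_0(t)f^*(t+1)\le\varphi_0(t+1)f^*(t+1)\le\sup_{s>0}\varphi_0(s)f^*(s)$, so the shifted supremum is dominated by the unshifted one. The reverse inequality is the only real obstacle, since it requires absorbing the unit shift in the argument of $f^*$; I would treat it exactly as in the proof of Theorem~\ref{TheoremEmbeddingsOfm}. For $t>2$, Proposition~\ref{PropositionD2ImpliesSubadditivity} gives $\varphi_0(t)\le C_{\varphi_0}(\varphi_0(t-1)+\varphi_0(1))$, whence
\begin{equation*}
\varphi_0(t)f^*(t)\le C_{\varphi_0}\varphi_0(t-1)f^*(t)+C_{\varphi_0}\varphi_0(1)f^*(0)\le C_{\varphi_0}\sup_{s\ge1}\varphi_0(s-1)f^*(s)+C_{\varphi_0}\varphi_0(1)f^*(0),
\end{equation*}
while for $t\in(0,2]$ the crude bound $\varphi_0(t)f^*(t)\le\varphi_0(2)f^*(0)$ suffices. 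Taking the supremum and noting $\sup_{s\ge1}\varphi_0(s-1)f^*(s)=\sup_{t\ge0}\varphi_0(t)f^*(t+1)$ yields the missing inequality with a constant depending only on $\varphi_0$. The two directions give equivalence of the quasinorms, hence equality of the underlying sets (e.g.\ via Theorem~\ref{TEQBFS}), completing the proof. The subadditivity-up-to-a-constant step is the crux: it is the only place where the $\Delta_2$-condition is genuinely used, and it is what lets the local $L^\infty$-component and the global $m_{\varphi_0}$-component fit together into a single weak Marcinkiewicz quasinorm.
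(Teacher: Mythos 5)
Your proof is correct, but it takes a genuinely different route from the paper's. The paper never computes either quasinorm in unshifted form: it stays inside the amalgam framework, invoking \cite[Theorem~5.6]{Pesa22} to reduce the equality to a comparison of local and global components, notes the local component is trivial since $\varphi\chi_{[0,1]}\approx\chi_{[0,1]}$, and compares the global components $\lVert f^*\chi_{(1,\infty)}\rVert_{\overline{m_{\varphi}}}$ and $\lVert f^*\chi_{(1,\infty)}\rVert_{\overline{m_{\varphi_0}}}$ by splitting the supremum over the level sets $\{\varphi_0<1\}$ and $\{\varphi_0\geq 1\}$: on the first set $\varphi=1$ and the supremum of $f^*(t+1)$ is at most $f^*(1)$, on the second $\varphi=\varphi_0$. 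Because both functionals there are already evaluated at the shifted argument $f^*(t+1)$, no shift ever has to be absorbed, so the paper's equivalence argument is purely qualitative (finiteness implications) and uses the $\Delta_2$-condition only through Proposition~\ref{PropositionD2Max} for admissibility. You instead compute $\lVert f\rVert_{m_\varphi}=\max\{\lVert f\rVert_{m_{\varphi_0}},f^*(0)\}$ in unshifted form, which forces you to absorb the unit shift via Proposition~\ref{PropositionD2ImpliesSubadditivity} --- precisely the device the paper itself uses in estimate \eqref{TheoremEmbeddingsOfmEq:1} within the proof of Theorem~\ref{TheoremEmbeddingsOfm}, so your crux step is sound and all your intermediate identities ($(f^*\chi_{(1,\infty)})^*(t)=f^*(t+1)$, $\lVert f^*\chi_{[0,1]}\rVert_{L^\infty}=f^*(0)$, the formula for $\overline{m_{\varphi_0}}$) check out, including in the extended-value cases $f^*(0)=\infty$. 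What your approach buys: it is self-contained (no appeal to the embedding machinery of \cite{Pesa22} beyond unwinding Definition~\ref{DefWL} and the representation convention) and it yields a two-sided quantitative equivalence of quasinorms with explicit constants depending only on $C_{\varphi_0}$, $\varphi_0(1)$, and $\varphi_0(2)$; what the paper's buys is brevity and uniformity with the amalgam formalism used throughout the section. One small slip: your closing appeal to Theorem~\ref{TEQBFS} is backwards --- that theorem upgrades a set inclusion to a continuous embedding, whereas you already have equivalence of the quasinorms, which gives equality of the sets (and of the spaces up to equivalence of quasinorms) for free, so the citation is superfluous rather than harmful.
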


\begin{proof}
	That $\varphi$ is admissible is almost trivial (see Proposition~\ref{PropositionD2Max}), while the necessary local and global embeddings follow from \cite[Theorem~5.6]{Pesa22}.
	
	Indeed, for any $f \in \mathcal{M}$ satisfying $f^*(1) < \infty$ we have
	\begin{equation*}
		\lVert f^* \chi_{(1, \infty)} \rVert_{\overline{m_{\varphi}}} = \max\{\sup_{ t \in \{\varphi_0 < 1\}} f^*(t+1), \sup_{ t \in \{\varphi_0 \geq 1\}} \varphi_0(t) f^*(t+1) \},
	\end{equation*}
	where the first of the suprema on the right-hand side is clearly smaller than $f^*(1)$ and thus finite, while the second is estimated from above by $\lVert f^* \chi_{(1, \infty)} \rVert_{\overline{m_{\varphi_0}}}$. Hence, finiteness of $\lVert f^* \chi_{(1, \infty)} \rVert_{\overline{m_{\varphi_0}}}$ implies that of $\lVert f^* \chi_{(1, \infty)} \rVert_{\overline{m_{\varphi}}}$, while the reverse implication is trivial since $\varphi_0 \leq \varphi$. Finally, the local embedding is trivial as we have $\varphi \chi_{[0,1]} \approx \chi_{[0,1]}$.
\end{proof}

\begin{proof}[Proof of Theorem~\ref{TheoremUniqueLinfty}]
	The local part is easy. When $X'' = WL(L^{\infty}, X'')$, then by combining Proposition~\ref{PESSAS} and \cite[Theorem~5.6]{Pesa22} we obtain that $X \hookrightarrow WL(L^{\infty}, X)$. The essence of the argument is that it follows from our assumption that $X$ is ``locally embedded'' into $L^{\infty}$ while it is of course ``globally embedded'' into itself. Since the opposite embedding always holds as per \cite[Theorem~5.7]{Pesa22}, this proves the local part of the result.
	
	As for the global part, it follows from \cite[Theorem~3.5]{Pesa22} and Theorem~\ref{TFA} that $X'' = WL(X'', L^{\infty})$ if and only if $X' = WL(X', L^{1})$. Denoting by $\varphi_X$ the fundamental function of $X$ and defining a new function $\varphi: [0, \infty) \to [0, \infty)$ by
	\begin{align}
		\varphi &= \max \{\varphi_{X}, \chi_{(0, \infty)}\},
	\end{align}
	we observe from Lemma~\ref{LemmaMaxVarphi} that $\varphi$ is admissible and that the corresponding weak Marcinkiewicz endpoint space $m_{\varphi}$ satisfies $m_{\varphi} = WL(L^{\infty}, m_{\varphi_X})$, where $m_{\varphi_X}$ is of course the weak Marcinkiewicz endpoint space corresponding to $\varphi_X$. Furthermore, it is clear that
	\begin{equation*}
		\int_0^{1} \frac{1}{\varphi} \: d\lambda \leq 1 < \infty,
	\end{equation*}
	that the quasinorm $\lVert \cdot \rVert_{m_{\varphi}}$ therefore satisfies \ref{P5} (see Theorem~\ref{TheoremEmbeddingsOfm}), and that $m_{\varphi}'$, the associate space of $m_{\varphi}$, satisfies $m_{\varphi}' = WL(L^1, (m_{\varphi_X})_i')$, where $(m_{\varphi_X})_i'$ is the integrable associate space of $m_{\varphi_X}$ (see \cite[Definition~4.1]{Pesa22} for the definition of integrable associate spaces and \cite[Corollary~5.5]{Pesa22} for the characterisation of $m_{\varphi}'$).
	
	By combining the above presented observations with Theorem~\ref{TFA}, \cite[Theorem~5.7]{Pesa22}, Proposition~\ref{Proposition_m}, \cite[Proposition~5.11]{Pesa22}, and \cite[Corollary~4.5]{Pesa22} we conclude
	\begin{equation} \label{TheoremUniqueLinfty:E01}
		L^1  \hookrightarrow m_{\varphi}' =  WL(L^1, (m_{\varphi_X})_i') \hookrightarrow WL(L^1, X') = L^{1}.
	\end{equation}
	To provide more insight into how the first embedding in \eqref{TheoremUniqueLinfty:E01} is obtained: since $m_{\varphi}'$ is an r.i.~Banach function space, we know that $L^1$ is ``globally embedded'' into it, while we have observed above that we also have a ``local embedding''.
	
	Hence, we have shown that $m_{\varphi}' = L^1$, which implies boundedness of $\varphi$, as we shall now prove by contradiction. 
	
	Assume that
	\begin{equation*}
		\lim_{t \to \infty} \varphi(t) = \infty,
	\end{equation*}
	consider some arbitrary $\varepsilon \in (0, \infty)$ and find some $K_0 \in (1, \infty)$ sufficiently large such that 
	\begin{align} \label{TheoremUniqueLinfty:E1}
		\varphi(t) &> \frac{1}{\varepsilon} &\text{for all } t \in (K_0, \infty).
	\end{align}
	It is then clear that it holds for every $f \in \mathcal{M}$  that
	\begin{equation} \label{TheoremUniqueLinfty:E1b}
		\int_0^{\infty} \frac{1}{\varphi(t)} f^*(t) \: dt \leq \int_0^{K_0}  f^*(t) \: dt + \int_{K_0}^{\infty} \frac{1}{\varphi(t)} f^*(t) \: dt,
	\end{equation}
	since $\varphi \geq 1$ on $(0, \infty)$.
	
	Now, as we know $m_{\varphi}' = L^{1}$, we may combine Proposition~\ref{PropositionAssOf_m} with \eqref{TheoremUniqueLinfty:E1b} to obtain, that it holds for every $f \in \mathcal{M}$ that
	\begin{equation} \label{TheoremUniqueLinfty:E4}
		\int_0^{\infty} f^* \: d\lambda \leq C_e \int_0^{\infty} \frac{1}{\varphi(t)} f^*(t) \: dt \leq C_e \left ( \int_0^{K_0}  f^*(t) \: dt + \int_{K_0}^{\infty} \frac{1}{\varphi(t)} f^*(t) \: dt \right ),
	\end{equation}
	where $C_e \geq 1$ is some constant for which the embedding $m_{\varphi}' \hookrightarrow L^1$ holds (not necessarily the optimal one); we would like to stress, that this constant does not depend on our choice of $\varepsilon$ and $K_0$. By testing the estimate \eqref{TheoremUniqueLinfty:E4} with functions $f_T = \chi_{E_T}$ where $E_T \subseteq \mathcal{R}$ is an arbitrary set of a given measure $T > K_0$, we get
	\begin{equation*}
		T \leq  C_e \left ( K_0 + \int_{K_0}^{T} \frac{1}{\varphi(t)} \: dt \right ).
	\end{equation*}
	It follows that we have for every $T \in (2 C_e K_0, \infty)$ the estimate 
	\begin{equation*}
		T \leq 2T - 2C_e K_0  \leq 2C_e \int_{K_0}^{T} \frac{1}{\varphi(t)} \: dt,
	\end{equation*}
	and consequently
	\begin{equation*}
		1 \leq 2C_e \frac{1}{T} \int_{K_0}^{T} \frac{1}{\varphi(t)} \: dt.
	\end{equation*}
	However, it follows from \eqref{TheoremUniqueLinfty:E1} that
	\begin{equation*}
		\frac{1}{T} \int_{K_0}^{T} \frac{1}{\varphi(t)} \: dt \leq \frac{1}{T} \int_{0}^{T} \varepsilon \: dt = \varepsilon.
	\end{equation*}
	Since $\varepsilon$ may be chosen arbitrarily small, this leads to a contradiction.
	
	We have thus shown that $\varphi$ is bounded, which is clearly equivalent to $\varphi_X$ being bounded. The desired conclusion now follows directly from Theorem~\ref{TheoFFLinftyGlob}.
\end{proof}

\subsection{The case of finite measure} \label{SectionFiniteMeasure}

In this section, we explain what modifications of the theory presented above are necessary when the underlying measure space has finite measure. We will present all the necessary modifications of definitions and of those results where there are significant differences; when the changes are small and easy we do not repeat the result and trust that the readers will make the modifications, using our commentary and the presented changes to definitions as a guide.

We will, similarly as before, throughout this section use the convention that $\alpha$ is some number belonging to the interval $(0, \infty)$. Its purpose will be to replace $\mu(\mathcal{R})$ in situations where the terms in question should not depend on any particular choices of of measure spaces.

Before we start, we need a restricted version of the $\Delta_2$-condition.
\begin{definition}
	Let $\varphi: [0, \alpha ) \to [0, \infty)$ be non-decreasing. We say that $\varphi$ satisfies $\Delta_2$-condition restricted to $\left [ 0, \alpha \right )$ if there exists a constant $C_{\varphi} \in (0, \infty)$ such that it holds for all $t \in \left [ 0, \frac{ \alpha }{2} \right )$ that
	\begin{equation*}
		\varphi(2t) \leq C_{\varphi} \varphi(t).
	\end{equation*}
\end{definition}

Proposition~\ref{PropositionD2ImpliesSubadditivity} remains valid, one only needs to restrict oneself to those pairs of $s, r$ for which $s+t \in [0, \alpha)$.

We are now in position to define the class of admissible functions and the weak Marcinkiewicz endpoint space.

\begin{definition}
	A function $\varphi: [0, \alpha ) \to [0, \infty)$ will be called admissible on $[0, \alpha)$, provided it is non-decreasing and left-continuous, it satisfies the $\Delta_2$-condition restricted to $\left [ 0, \alpha  \right )$, and it holds that $\varphi(t) = 0$ if and only if $t=0$.
\end{definition}

\begin{definition}
	Let $\varphi: [0, \mu(\mathcal{R})) \to [0, \infty)$. We define the functional $\lVert \cdot \rVert_{m_\varphi}$ for $f \in \mathcal{M}$ by
	\begin{align*}
		\lVert f \rVert_{m_\varphi} &= \sup_{t \in [0, \mu(\mathcal{R}))} \varphi(t) f^*(t).
	\end{align*}
	
	When $\varphi: [0, \mu(\mathcal{R})) \to [0, \infty)$ is admissible on $[0, \mu(\mathcal{R}))$, then the functional $\lVert \cdot \rVert_{m_\varphi}$ will be called the weak Marcinkiewicz endpoint quasinorm and the corresponding set
	\begin{equation*}
		m_\varphi = \left \{ f \in \mathcal{M}; \; \lVert f \rVert_{m_\varphi} < \infty \right\}
	\end{equation*}
	will be called the weak Marcinkiewicz endpoint space.
\end{definition}

Propositions~\ref{PropositionAdmiss} and \ref{Proposition_m} hold as originally presented (except of course the domain of $\varphi$) and the modifications of proofs are trivial. The definition of weak quasiconcavity has to be modified to the following form:

\begin{definition}
	Let $\varphi: [0, \alpha) \to [0, \infty)$. We say that $\varphi$ is weakly quasiconcave on $[0, \alpha)$ if it satisfies the following conditions:
	\begin{enumerate}
		\item $\varphi$ is left-continuous.
		\item $\varphi(t) = 0 \iff t=0$.
		\item $\varphi$ is non-decreasing.
		\item $t \mapsto \frac{\varphi(t)}{t}$ is equivalent to a non-increasing function on $(0, \alpha)$.
	\end{enumerate}
\end{definition}

Proposition~\ref{PropositionMarcinkiewiczWqc} and Corollary~\ref{CorollaryQuasiconcaveEquivalentToConcave} the follow with minimal modifications, Remark~\ref{RemarkExNoWqc} precisely as stated. Theorem~\ref{TheoremEquivalenceMm} requires only one significant change to the statement: the supremum in \eqref{TheoremEquivalenceMm:1} has to be taken over $(0, \mu(\mathcal{R}))$. The proof then remains identical.

From the Theorem~\ref{TheoremEmbeddingsOfm} only the following stump is left, as the most interesting part of the original theorem gets trivialised by the finiteness of the underlying measure:
\begin{theorem} \label{TheoremEmbeddingsOfmFinMeasure}\
	Let $\varphi: [0, \mu(\mathcal{R})) \to [0, \infty)$ be admissible on $[0, \mu(\mathcal{R}))$. Then
	\begin{equation*}
		m_{\varphi} \hookrightarrow L^1 \iff \int_0^{\min\{1, \mu(\mathcal{R})\}} \frac{1}{\varphi} \: d\lambda < \infty.
	\end{equation*}
\end{theorem}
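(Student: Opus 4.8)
The plan is to adapt the argument for part~\ref{TheoremEmbeddingsOfm:1} of Theorem~\ref{TheoremEmbeddingsOfm}, the only substantive change being that on a space of finite measure the global scale collapses (indeed $L^1 = L^1 + L^{\infty}$ up to equivalence there), so $L^1$ takes the role previously played by $L^1 + L^{\infty}$. I would write $\alpha = \mu(\mathcal{R})$ and $c = \min\{1, \alpha\}$, and recall that $\lVert f \rVert_{L^1} = \int_0^{\alpha} f^* \: d\lambda$ for every $f \in \mathcal{M}$. Since both $m_\varphi$ and $L^1$ are quasi-Banach function spaces, Theorem~\ref{TEQBFS} lets me pass freely between the set inclusion $m_\varphi \subseteq L^1$ and the continuous embedding $m_\varphi \hookrightarrow L^1$, so it suffices to compare the defining functionals.

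For the implication ``$\Leftarrow$'', assuming $\int_0^{c} \frac{1}{\varphi} \: d\lambda < \infty$, I would split $\int_0^{\alpha} f^*$ at the point $c$. On $(0, c)$ the pointwise bound $\varphi(t) f^*(t) \le \lVert f \rVert_{m_\varphi}$ gives, exactly as in the original proof, the estimate $\int_0^{c} f^* \le \lVert f \rVert_{m_\varphi} \int_0^{c} \frac{1}{\varphi}$. If $\alpha \le 1$ then $c = \alpha$ and this is already all of $\lVert f \rVert_{L^1}$; if $\alpha > 1$ then $c = 1$ and there remains the tail $\int_1^{\alpha} f^*$, which I would control using that $\varphi$ is non-decreasing and positive, hence $\varphi \ge \varphi(1) > 0$ on $(1, \alpha)$, so that $\int_1^{\alpha} f^* \le \frac{1}{\varphi(1)} \int_1^{\alpha} \varphi f^* \le \frac{\alpha - 1}{\varphi(1)} \lVert f \rVert_{m_\varphi}$, a fixed multiple of $\lVert f \rVert_{m_\varphi}$ because $\alpha < \infty$. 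Summing the two contributions yields $\lVert f \rVert_{L^1} \lesssim \lVert f \rVert_{m_\varphi}$, which is the desired embedding.

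For the implication ``$\Rightarrow$'', I would use non-atomicity to select $g \in \mathcal{M}$ with $g^* = \frac{1}{\varphi}$ on $(0, \alpha)$; then $\lVert g \rVert_{m_\varphi} = \sup_{t} \varphi(t) \frac{1}{\varphi(t)} = 1$, so $g \in m_\varphi$, and the assumed embedding forces $g \in L^1$, i.e.\ $\int_0^{\alpha} \frac{1}{\varphi} < \infty$. It then remains to note that this is equivalent to the stated condition $\int_0^{c} \frac{1}{\varphi} < \infty$: when $\alpha \le 1$ the two integrals coincide, and when $\alpha > 1$ the extra piece $\int_1^{\alpha} \frac{1}{\varphi}$ is automatically finite since there $\frac{1}{\varphi} \le \frac{1}{\varphi(1)}$ on a bounded interval. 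Honestly this proof is a routine truncation and presents no real obstacle; the only points demanding a little care are the case split $\alpha \le 1$ versus $\alpha > 1$ encoded in the cutoff $c = \min\{1, \alpha\}$, and the repeated use of the lower bound $\varphi \ge \varphi(1) > 0$ away from the origin, which is precisely what renders every tail contribution harmless in both directions.
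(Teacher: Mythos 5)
Your proposal is correct and takes essentially the same route as the paper, whose proof consists precisely of the remark that the argument is ``virtually the same'' as that of part~(i) of Theorem~\ref{TheoremEmbeddingsOfm}: necessity via a test function with $g^* = \frac{1}{\varphi}$ (existing by non-atomicity) and sufficiency via the pointwise bound $f^*(t) \leq \lVert f \rVert_{m_{\varphi}}/\varphi(t)$ integrated over $(0, \mu(\mathcal{R}))$, with the tail beyond $\min\{1, \mu(\mathcal{R})\}$ made harmless by $\varphi \geq \varphi(1) > 0$, exactly as in your case split. Nothing further is needed.
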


This is caused by the fact that, when working over finite measure, we have $L^1 + L^{\infty} = L^1$, $L^1 \cap L^{\infty} = L^{\infty}$, and trivially $L^{\infty} \hookrightarrow X$ for every r.i.~quasi-Banach function space $X$. The proof of Theorem~\ref{TheoremEmbeddingsOfmFinMeasure} is again virtually the same as that of part~\ref{TheoremEmbeddingsOfm:1} of Theorem~\ref{TheoremEmbeddingsOfm}.

The changes in Lemma~\ref{LemmaEquivCharOfm} are more subtle; the right hand side of the expression does not change, as for the distribution function the range, not the domain, is the property which depends on $\mu(\mathcal{R})$.
\begin{lemma}
	Let $\varphi: [0, \mu(\mathcal{R})) \to [0, \infty)$ be admissible. Then we have for every $f\in\mathcal{M}$ that
	\begin{equation*}
		\sup_{t \in [0, \mu(\mathcal{R}))}f^*(t)\varphi(t) = \sup_{s \in [0, \infty)} s \varphi(f_{*}(s)).
	\end{equation*}
\end{lemma}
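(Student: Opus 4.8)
The plan is to reuse the argument of Lemma~\ref{LemmaEquivCharOfm} almost word for word, since passing to a finite measure simultaneously removes its hardest case and introduces only one cosmetic change. As there, the convention $a \cdot 0 = 0$ lets me restrict both suprema to $(0, \infty)$, and the identity is trivial when $f = 0$ $\mu$-a.e., so I assume otherwise. The decisive simplification is that now $f_*(s) = \mu(\{\lvert f \rvert > s\}) \leq \mu(\mathcal{R}) < \infty$ for every $s$, so the distribution function can never equal $\infty$; consequently the entire final case of the original proof, which dealt with $f_*(s) = \infty$ and a finite $\lim_{t \to \infty} \varphi(t)$, disappears. The only genuinely new point is that $f_*(s)$ may attain the value $\mu(\mathcal{R})$, which lies outside the domain $[0, \mu(\mathcal{R}))$ of $\varphi$; I therefore read $\varphi(\mu(\mathcal{R}))$ as $\lim_{t \to \mu(\mathcal{R})_-} \varphi(t)$, exactly as $\varphi(\infty)$ was read before. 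This limit is finite: for any $t \in \left( \frac{\mu(\mathcal{R})}{2}, \mu(\mathcal{R}) \right)$, writing $t = 2 \cdot \frac{t}{2}$ and applying the restricted $\Delta_2$-condition gives $\varphi(t) \leq C_{\varphi} \varphi\left( \frac{\mu(\mathcal{R})}{2} \right)$, so $\varphi$ is bounded near $\mu(\mathcal{R})$ and the left limit exists.

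With these reductions I would first treat any $f$ with $f^* < \infty$ on $(0, \infty)$. For the lower bound, fix $s$ with $f_*(s) \in (0, \mu(\mathcal{R})]$ and $\varepsilon \in (0, f_*(s))$; the standard relation between $f^*$ and $f_*$ gives $f^*(f_*(s) - \varepsilon) > s$, and since $f_*(s) - \varepsilon \in (0, \mu(\mathcal{R}))$ lies in the range of the left-hand supremum,
\[
	\sup_{t \in [0, \mu(\mathcal{R}))} f^*(t) \varphi(t) \geq f^*(f_*(s) - \varepsilon) \varphi(f_*(s) - \varepsilon) > s \, \varphi(f_*(s) - \varepsilon).
\]
Sending $\varepsilon \to 0_+$ and invoking left-continuity of $\varphi$ --- which at the endpoint $f_*(s) = \mu(\mathcal{R})$ is precisely the chosen interpretation --- and then taking the supremum over $s$ gives this inequality. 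For the upper bound, fix $t \in (0, \mu(\mathcal{R}))$ with $f^*(t) \in (0, \infty)$ and $\varepsilon \in (0, f^*(t))$; the same relation yields $f_*(f^*(t) - \varepsilon) > t$, so monotonicity of $\varphi$ gives
\[
	\sup_{s \in [0, \infty)} s \, \varphi(f_*(s)) \geq (f^*(t) - \varepsilon) \varphi(f_*(f^*(t) - \varepsilon)) \geq (f^*(t) - \varepsilon) \varphi(t),
\]
and $\varepsilon \to 0_+$ followed by the supremum over $t$ closes this case. Note that this direction never uses the interpretation, as there $\varphi$ is only bounded from below.

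It then remains to handle an $f$ for which $f^*(t_0) = \infty$ for some $t_0 \in (0, \mu(\mathcal{R}))$; here the left-hand side is manifestly infinite. But $f^*(t_0) = \infty$ forces $f_*(s) > t_0$ for every finite $s$, whence $\varphi(f_*(s)) \geq \varphi(t_0) > 0$ for all $s$, and therefore $\sup_{s \in [0, \infty)} s \, \varphi(f_*(s)) = \infty$, so both sides agree. This exhausts all possibilities, precisely because $f_* \equiv \infty$ can no longer occur. I expect the only real friction to be the bookkeeping at the boundary value $f_*(s) = \mu(\mathcal{R})$: one must verify that the interpretation $\varphi(\mu(\mathcal{R})) = \lim_{t \to \mu(\mathcal{R})_-} \varphi(t)$ is finite and meshes with the left-continuity step in the lower-bound argument, which is exactly what the boundedness estimate from the restricted $\Delta_2$-condition provides.
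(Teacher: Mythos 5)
Your proposal is correct and follows essentially the same route the paper intends for this lemma: rerun the proof of Lemma~\ref{LemmaEquivCharOfm}, observe that $f_* \leq \mu(\mathcal{R}) < \infty$ eliminates the hardest case of the original argument, and retain the two $\varepsilon$-arguments together with the case $f^*(t_0) = \infty$. Your extra care at the boundary --- reading $\varphi(\mu(\mathcal{R}))$ as $\lim_{t \to \mu(\mathcal{R})_-} \varphi(t)$ and using the restricted $\Delta_2$-condition to verify this limit is finite --- correctly supplies an interpretive clause that the paper's statement leaves implicit, but it is a refinement of detail, not a different method.
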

The modifications of the proof, however, are simple as they directly follow the modifications of the formulation; the proof is moreover simpler as we now have $f_* \leq \mu(\mathcal{R}) < \infty$. Theorem~\ref{TheoremDualisation} then remains unchanged (besides the obligatory change of the domain of $\varphi$) and its proof differs only in that it is simplified by the fact that in the second step the set $E_t$ is clearly always of finite measure.

Theorem~\ref{TheoFFLinftyLoc} simplifies to the form we present below. The proof needs some modifications, but we leave them to the reader as they are very easy.

\begin{theorem} \label{TheoFFLinftyLocFinMeasure}
	Let $\lVert \cdot \rVert_X$ be an r.i.~quasi-Banach function norm and let $X$ and $\varphi_X$ be, respectively, the corresponding r.i.~quasi-Banach function space and its fundamental function. Then the following statement are equivalent:
	\begin{enumerate}
		\item $\lim_{t \to 0_+} \varphi_X(t) > 0$, \\
		\item there is a set $E \subseteq \mathcal{R}$ with $\mu(E) < \infty$ such that $\chi_E \notin X_a$,  \\
		\item $ X_a = \{0\}$, \\
		\item $X = L^{\infty}$ up to the equivalence of quasinorms.
	\end{enumerate}
\end{theorem}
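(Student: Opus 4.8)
The plan is to observe that the first three statements are verbatim identical to statements \ref{TheoFFLinftyLoc:i}, \ref{TheoFFLinftyLoc:iib}, and \ref{TheoFFLinftyLoc:ii} of Theorem~\ref{TheoFFLinftyLoc}, and that the arguments establishing their mutual equivalence in the proof of that theorem use only non-atomicity and the axioms of an r.i.~quasi-Banach function norm, never the assumption $\mu(\mathcal{R})=\infty$. Hence those three remain equivalent here, and I would only need to link them to the new fourth statement, $X=L^{\infty}$ up to equivalence of quasinorms. I would do this by proving the equivalence of the fourth statement with the first, $\lim_{t\to 0_+}\varphi_X(t)>0$.

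The implication that $X=L^{\infty}$ yields $\lim_{t\to 0_+}\varphi_X(t)>0$ is immediate: the fundamental function of $L^{\infty}$ over a finite measure space equals $1$ on $(0,\mu(\mathcal{R}))$, since $\lVert \chi_{E} \rVert_{L^{\infty}}=1$ for any $E$ of positive finite measure. Equivalence of quasinorms then forces $\varphi_X\approx 1$, and in particular $\lim_{t\to 0_+}\varphi_X(t)>0$.

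For the converse, the decisive new ingredient is that finiteness of the measure forces $\varphi_X$ to be bounded. Indeed, by axiom \ref{P4} we have $\lVert \chi_{\mathcal{R}} \rVert_X<\infty$, and since $\varphi_X$ is non-decreasing this yields $\varphi_X(t)\le \lVert \chi_{\mathcal{R}} \rVert_X$ for every $t\in(0,\mu(\mathcal{R}))$. Setting $c=\lim_{t\to 0_+}\varphi_X(t)=\inf_{t>0}\varphi_X(t)>0$, the first statement gives $c\le \varphi_X(t)\le \lVert \chi_{\mathcal{R}} \rVert_X$ throughout $(0,\mu(\mathcal{R}))$, i.e.~$\varphi_X\approx 1$. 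From here I would establish the norm equivalence $\lVert \cdot \rVert_X\approx \lVert \cdot \rVert_{L^{\infty}}$ directly. The embedding $L^{\infty}\hookrightarrow X$ follows from the pointwise bound $\lvert f\rvert\le f^*(0)\chi_{\mathcal{R}}$ together with the lattice property \ref{P2}, giving $\lVert f \rVert_X\le \lVert \chi_{\mathcal{R}} \rVert_X\,\lVert f \rVert_{L^{\infty}}$. The reverse embedding follows from Proposition~\ref{Proposition_m}: since $\lVert f \rVert_{m_{\varphi_X}}\le \lVert f \rVert_X$ and $f^*$ is non-increasing and right-continuous (so that $\sup_{t>0}f^*(t)=f^*(0)=\lVert f \rVert_{L^{\infty}}$), we obtain $\lVert f \rVert_X\ge \sup_{t>0}\varphi_X(t)f^*(t)\ge c\,\lVert f \rVert_{L^{\infty}}$. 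Combining the two bounds yields $X=L^{\infty}$ up to equivalence of quasinorms.

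There is no genuinely hard step here; the only points requiring care are bookkeeping ones, namely that boundedness of $\varphi_X$ comes for free from \ref{P4} in the finite-measure regime (this is exactly what trivialises the global component and collapses the amalgam $WL(L^{\infty},X)$ of Theorem~\ref{TheoFFLinftyLoc} down to $L^{\infty}$), and the identification $\sup_{t>0}f^*(t)=f^*(0)=\lVert f \rVert_{L^{\infty}}$ via the right-continuity and monotonicity of the non-increasing rearrangement. This equivalence between the first and fourth statements may also be read as the finite-measure analogue of Corollary~\ref{CorollaryFundLevelLinfty}.
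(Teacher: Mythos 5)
Your proof is correct, and it supplies exactly the ``very easy'' modifications that the paper leaves to the reader at this point (the paper prints no proof of Theorem~\ref{TheoFFLinftyLocFinMeasure}, only the instruction to adapt Theorem~\ref{TheoFFLinftyLoc}). Your observation that the first three statements transfer verbatim is right: the cycle in the printed proof of Theorem~\ref{TheoFFLinftyLoc} --- the sequence argument for (ii)$\Rightarrow$(i) (which needs only $\mu(E)<\infty$ and dominated convergence), the trivial (iii)$\Rightarrow$(ii) via \ref{P4}, and the small-sets argument for (i)$\Rightarrow$(iii) (which needs only non-atomicity) --- nowhere invokes $\mu(\mathcal{R})=\infty$. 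The one place where you genuinely depart from the printed infinite-measure argument is the fourth statement: there the paper routes the equivalence (i)$\iff$(iv) through the Wiener--Luxemburg amalgam machinery of \cite{Pesa22}, whereas you prove $X=L^{\infty}$ directly, using \ref{P4} to bound $\varphi_X$ from above by $\lVert \chi_{\mathcal{R}}\rVert_X$ (the single step that is only available because $\mu(\mathcal{R})<\infty$), \ref{P2} for $L^{\infty}\hookrightarrow X$, and Proposition~\ref{Proposition_m} (whose finite-measure validity the paper asserts in this section) together with the right-continuity of $f^*$ at $0$ for $X\hookrightarrow L^{\infty}$. This direct route is if anything preferable to a literal adaptation: it is self-contained, it avoids re-justifying the amalgam results over a finite interval, and your closing remark correctly identifies the conceptual reason the statement simplifies, namely that boundedness of $\varphi_X$ collapses $WL(L^{\infty},X)$ to $L^{\infty}$, making the result the finite-measure analogue of Corollary~\ref{CorollaryFundLevelLinfty}.
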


The statement of Theorem~\ref{TheoFFLinftyGlob} makes no sense in this setting and that of Corollary~\ref{CorollaryFundLevelLinfty} is entirely included in the preceding theorem. Corollary~\ref{CorollaryNoLocLargestSpace} then takes the following form (the proof is virtually unchanged):

\begin{corollary} \label{CorollaryNoLocLargestSpaceFinMeasure}
	For every r.i.~quasi-Banach function space $X$ there is an r.i.~quasi-Banach function space $X_0$ such that $X \hookrightarrow X_0$ and the embedding is sharp.
\end{corollary}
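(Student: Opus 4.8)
The plan is to mirror the proof of Corollary~\ref{CorollaryNoLocLargestSpace}, which simplifies considerably in the finite-measure setting because there is no global component to track and hence no need for Wiener--Luxemburg amalgams: the enlarged space $X_0$ will itself serve as the target. Write $\alpha = \mu(\mathcal{R})$ and let $\varphi_X$ be the fundamental function of $X$, which is admissible on $[0,\alpha)$ by the finite-measure version of Proposition~\ref{Proposition_m}. As in the original argument, I would first dispose of the degenerate case: if $\lim_{t\to 0_+}\varphi_X(t) > 0$ then $X = L^\infty$ by Theorem~\ref{TheoFFLinftyLocFinMeasure}, and any admissible $\varphi$ that is bounded and vanishes at the origin (e.g.\ $\varphi(t) = \min\{t,1\}$) produces $X_0 = m_\varphi$ with $L^\infty \hookrightarrow m_\varphi$ and $m_\varphi \not\hookrightarrow L^\infty$; so I may assume $\lim_{t\to 0_+}\varphi_X(t) = 0$.

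In the main case I would take $\varphi = \varphi_X^2$ and set $X_0 = m_\varphi$. The first step is to check that $\varphi$ is admissible on $[0,\alpha)$: monotonicity, left-continuity and the ``$\varphi=0 \iff t=0$'' clause are immediate, while the restricted $\Delta_2$-condition follows by squaring the one for $\varphi_X$, the constant becoming $C_{\varphi_X}^2$. The second step is the comparison $\varphi \lesssim \varphi_X$: here I use that $\mu(\mathcal{R})<\infty$ forces $\varphi_X$ to be bounded, since $\varphi_X(t)\uparrow \lVert\chi_{\mathcal{R}}\rVert_X =: M < \infty$ by axiom \ref{P4} together with the Fatou property, so that $\varphi = \varphi_X^2 \le M\varphi_X$. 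Conversely $\varphi_X \not\lesssim \varphi$, since $\varphi_X \lesssim \varphi_X^2$ would force $\varphi_X$ to be bounded away from zero, contradicting $\lim_{t\to 0_+}\varphi_X(t)=0$.

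With these comparisons in hand the forward embedding is immediate: Proposition~\ref{Proposition_m} gives $X \hookrightarrow m_{\varphi_X}$, while $\varphi \le M\varphi_X$ yields $m_{\varphi_X} \hookrightarrow m_\varphi = X_0$, hence $X \hookrightarrow X_0$. For sharpness I would argue directly through fundamental functions, which replaces the amalgam mechanism of the original proof. By Proposition~\ref{PropositionAdmiss} the fundamental function of $m_\varphi$ is exactly $\varphi$; if the reverse embedding $m_\varphi \hookrightarrow X$ held, then testing it on characteristic functions $\chi_{E_t}$ would give $\varphi_X \lesssim \varphi$, which we have just excluded. Hence $m_\varphi \not\hookrightarrow X$ and the embedding $X\hookrightarrow X_0$ is sharp.

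There is no serious obstacle here; the content lies entirely in choosing the right $\varphi$ and verifying it is admissible. The one genuinely finite-measure ingredient is the boundedness of $\varphi_X$, which is what keeps the crude choice $\varphi = \varphi_X^2$ dominated by $\varphi_X$ (in infinite measure this can fail, which is precisely why the original statement had to be phrased with amalgams). I would also note that the case split can be avoided altogether by the uniform choice $\varphi(t) = \varphi_X(t)\min\{t,1\}$: it is admissible, satisfies $\varphi\le\varphi_X$ trivially, and always fails $\varphi_X\lesssim\varphi$ because $\min\{t,1\}\to 0$ as $t\to 0_+$, regardless of the behaviour of $\varphi_X$ at the origin.
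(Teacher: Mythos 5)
Your proposal is correct and follows essentially the paper's own route: the paper proves this corollary by the ``virtually unchanged'' argument of Corollary~\ref{CorollaryNoLocLargestSpace}, i.e.\ taking $\varphi = \varphi_X^2$ and $X_0 = m_{\varphi}$, using Proposition~\ref{Proposition_m} for the embedding and the non-equivalence of the fundamental functions for sharpness, with the amalgam scaffolding trivialised exactly as you observe (finiteness of $\mu(\mathcal{R})$ bounds $\varphi_X$, so $\varphi \lesssim \varphi_X$ holds globally and no Wiener--Luxemburg decomposition is needed). Your minor deviations are sound and, if anything, streamline the template: certifying sharpness by testing characteristic functions against the fundamental function of $m_{\varphi}$ (rather than exhibiting the witness $\frac{1}{\varphi}\chi_{(0,1]}$ and invoking the amalgam embedding criterion of \cite{Pesa22}) lets you skip the paper's ``without loss of generality'' reductions, and your unified choice $\varphi(t) = \varphi_X(t)\min\{t,1\}$ eliminating the case split is a nice additional observation.
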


We would like to note that while Theorem~\ref{TheoFFLinftyLocFinMeasure} and Corollary~\ref{CorollaryNoLocLargestSpaceFinMeasure} at first glance appear stronger than their infinite-measure counterparts, this is an illusion. When examined properly, it becomes apparent that the results in both cases convey the same fundamental information, only in the infinite-measure case the formulations need to account for complexity that cannot manifest in the finite-measure case.

Proposition~\ref{PropositionSandwitchedFundamentalFunction} remains unchanged. Theorem~\ref{TheoremFundamentalFunctionX''}, Corollary~\ref{CorollaryFundamentalFunctionX'}, and Proposition~\ref{PropositionAssociateSpaceLorentzEndpoint} hold as presented, with the same proofs, and Proposition~\ref{PropositionPhiBar} requires only cosmetic modifications to correspond with the following modification of Definition~\ref{NotationFundamentalFunctionAssociateSpace}:
\begin{definition}
	Let $\varphi: [0, \alpha) \to [0, \infty)$ be admissible on $[0, \alpha)$. We then put
	\begin{align*}
		\overline{\varphi}(t) &=\begin{cases}
			0 & \text{for } t =0, \\
			\frac{t}{\varphi(t)} & \text{for } t \in (0, \alpha).
		\end{cases} 
	\end{align*}
\end{definition}

Finally, we still have the local part of Theorem~\ref{TheoremUniqueLinfty} as presented below. The proof is identical as before (note that the main difficulty of the proof in the infinite-measure case lied in proving the global part that does not apply in this setting).
\begin{theorem}
	Let $X$ an r.i.~quasi-Banach function space whose quasinorm satisfies \ref{P5}. If $X'' = L^{\infty}$ up to equivalence of quasinorms, then also $X = L^{\infty}$ up to equivalence of quasinorms.
\end{theorem}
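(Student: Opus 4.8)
The plan is to establish the two embeddings $L^\infty \hookrightarrow X$ and $X \hookrightarrow L^\infty$ separately; together they yield $X = L^\infty$ up to equivalence of quasinorms. The point is that this is precisely the local part of the proof of Theorem~\ref{TheoremUniqueLinfty}, stripped of the global considerations that do not arise over a finite measure space.

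First I would dispose of the embedding $L^\infty \hookrightarrow X$, which holds for every r.i.~quasi-Banach function space over a finite measure space and needs no hypothesis on $X''$. Indeed, any $f \in L^\infty$ satisfies $\lvert f \rvert \leq \lVert f \rVert_\infty \chi_{\mathcal{R}}$ $\mu$-a.e., so the lattice property \ref{P2} together with \ref{P4} (recall that $\mu(\mathcal{R}) < \infty$) gives $\lVert f \rVert_X \leq \lVert \chi_{\mathcal{R}} \rVert_X \lVert f \rVert_\infty$ with $\lVert \chi_{\mathcal{R}} \rVert_X < \infty$; this is the trivial embedding already noted at the start of this section.

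For the reverse embedding I would first invoke the standing hypothesis \ref{P5}, which via Theorem~\ref{TFA} and Proposition~\ref{PAS} guarantees that $X''$ is an r.i.~Banach function space, so that the assumption ``$X'' = L^\infty$ up to equivalence of quasinorms'' is meaningful (i.e.~$\lVert \cdot \rVert_{X''} \approx \lVert \cdot \rVert_{L^\infty}$). Proposition~\ref{PESSAS} supplies the one-sided estimate $\lVert f \rVert_{X''} \leq \lVert f \rVert_X$ for all $f \in \mathcal{M}$, hence $X \hookrightarrow X''$. Chaining this with the assumed equivalence produces $\lVert f \rVert_\infty \lesssim \lVert f \rVert_{X''} \leq \lVert f \rVert_X$ for every $f$, that is, $X \hookrightarrow L^\infty$.

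Combining the two embeddings gives $X = L^\infty$ as sets, and Theorem~\ref{TEQBFS} upgrades this set equality to equivalence of quasinorms, as claimed. I expect essentially no genuine obstacle here: the substantive difficulty in the infinite-measure Theorem~\ref{TheoremUniqueLinfty} lay entirely in the global part, where one had to deduce boundedness of the fundamental function $\varphi_X$, whereas over a finite measure space $L^\infty$ serves simultaneously as the local and the whole endpoint, collapsing that argument. The only detail deserving a moment's care is the direction of the inequalities—ensuring that the one-sided estimate of Proposition~\ref{PESSAS} combines correctly with the two-sided equivalence $X'' \approx L^\infty$ to deliver the embedding $X \hookrightarrow L^\infty$ rather than its reverse.
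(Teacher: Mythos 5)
Your proof is correct and follows essentially the same route as the paper: the paper's proof is the local part of Theorem~\ref{TheoremUniqueLinfty}, which likewise rests on Proposition~\ref{PESSAS} to get $X \hookrightarrow X'' = L^{\infty}$, combined with the trivial embedding $L^{\infty} \hookrightarrow X$ that holds over any finite measure space (and which the paper notes explicitly at the start of Section~\ref{SectionFiniteMeasure}). Your only cosmetic redundancy is invoking Theorem~\ref{TEQBFS} at the end: your two one-sided quasinorm estimates already give the equivalence directly, so no upgrade from set equality is needed.
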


\section*{Aknowledgements}
We would like to thank Z.~Mihula, V.~Musil, L.~Pick, T.~Roskovec, L.~Slav{\' i}kov{\' a}, and especially F.~Soudský for their help with the literature.

\bibliographystyle{dabbrv}
\bibliography{bibliography}
\end{document}